\documentclass[11pt]{amsart}

\usepackage[T1]{fontenc}
\usepackage{lmodern}
\usepackage{microtype}
\usepackage{amsmath,amssymb,amsthm,mathtools,mathrsfs}
\usepackage{enumitem}
\usepackage{aliascnt}
\usepackage{xcolor}
\usepackage{geometry}
\usepackage{hyperref}
\usepackage[nameinlink,capitalize,noabbrev]{cleveref}
\allowdisplaybreaks

\hypersetup{
  colorlinks=true,
  linkcolor=blue!55!black,
  citecolor=blue!55!black,
  urlcolor=blue!55!black,
  pdftitle={Cyclotomic Newton Expansions and a Rank-Uniform Integer-Valued Newton Completion},
  pdfauthor={Honghuai Fang and Tian Zhou},
  pdfsubject={Cyclotomic expansions and a rank-uniform Newton completion for symmetric-power knot invariants},
  pdfkeywords={SU(n) invariant, cyclotomic expansion, Newton interpolation, HOMFLY-PT invariant, quantum integer-valued polynomial, Habiro ring, Newton inverse limit}
}

\newtheorem{theorem}{Theorem}[section]
\newaliascnt{proposition}{theorem}
\newtheorem{proposition}[proposition]{Proposition}
\aliascntresetthe{proposition}
\newaliascnt{lemma}{theorem}
\newtheorem{lemma}[lemma]{Lemma}
\aliascntresetthe{lemma}
\newaliascnt{corollary}{theorem}
\newtheorem{corollary}[corollary]{Corollary}
\aliascntresetthe{corollary}
\theoremstyle{definition}
\newaliascnt{definition}{theorem}
\newtheorem{definition}[definition]{Definition}
\aliascntresetthe{definition}
\newaliascnt{example}{theorem}
\newtheorem{example}[example]{Example}
\aliascntresetthe{example}
\newaliascnt{remark}{theorem}
\newtheorem{remark}[remark]{Remark}
\aliascntresetthe{remark}

\newcommand{\Z}{\mathbb Z}
\newcommand{\Q}{\mathbb Q}
\newcommand{\ord}{\operatorname{ord}}
\newcommand{\hc}{\operatorname{hc}}
\newcommand{\qbinom}[2]{\begin{bmatrix}#1\\#2\end{bmatrix}_{q}}

\newcommand{\val}{\operatorname{val}}
\newcommand{\id}{\operatorname{id}}
\newcommand{\Hab}{\widehat{\Z[q]}}
\newcommand{\HH}{\mathcal H}
\newcommand{\mfrakq}{\mathfrak q}
\newcommand{\cO}{\mathcal O}
\newcommand{\cZ}{\mathcal Z}

\title[Cyclotomic Newton expansions and rank-uniform completion]
{Cyclotomic Newton Expansions and a Rank-Uniform Integer-Valued Newton Completion}
\author{Honghuai Fang}
\author{Tian Zhou}

\date{}

\subjclass[2020]{57K16, 57K18, 17B37, 33D80}
\keywords{symmetric-power $SU(n)$ invariant, cyclotomic expansion, rank-uniform HOMFLY--PT invariant, quantum integer-valued polynomial, Newton interpolation, Habiro ring, Newton inverse limit}

\begin{document}

\begin{abstract}
Let $J_r^{SU(n)}(K;q)$ denote the reduced $SU(n)$ quantum invariant
of a zero-framed knot $K$, colored by the $r$th symmetric power of the
defining representation and normalized to be $1$ for the unknot. For
every fixed $n\ge2$ we prove the Chen--Liu--Zhu cyclotomic expansion
conjecture: there are unique coefficients
$H_k^{(n)}(K;q)\in\Z[q^{\pm1}]$ such that
\[
 J_r^{SU(n)}(K;q)=\sum_{k=0}^{r}
 \left(\prod_{i=0}^{k-1}\{r-i\}\{r+n+i\}\right)
 H_k^{(n)}(K;q),
\]
where $\{m\}=q^m-q^{-m}$. The finite dual interpolation formula of Beliakova--Gorsky gives an
integral one-sided factorial expansion.  After identifying their reduced
scalar with the Habiro--L\^e convention, we restrict the completed center to
one-row colors.  Completed Harish--Chandra reflection then yields inversion
symmetry in the variable $z$, and integral descent through $X=z+z^{-1}$
converts the one-sided expansion into the two-sided Newton basis.
Cyclotomic-local interpolation and a UFD denominator-removal argument prove
Laurent integrality of the Newton coefficients.

We also determine a natural coefficient ring for a rank-uniform expansion.
For every zero-framed knot there are unique Laurent differential coefficients
$G_k(K;A,q)\in\Z[A^{\pm1},q^{\pm1}]$.  The associated Newton coefficients
are Laurent polynomials in $A$ over $\Q(q)$ whose values at every geometric
node $A=q^n$, $n\ge2$, lie in $\Z[q^{\pm1}]$.  They define a two-variable
Newton inverse-limit element whose positive-rank specializations recover all
symmetric-color HOMFLY--PT polynomials.  The completion is taken in the
Newton kernels rather than coefficientwise at roots of unity.  After a
positive rank and a color have been fixed, the series is finite and may be
evaluated at a root of unity. 
\end{abstract}

\maketitle
\tableofcontents
\section{Introduction}\label{sec:introduction}

Habiro's cyclotomic expansion of the colored Jones polynomial gives a
hierarchy of integral divisibilities whose multiplicities grow with the
color.  This is stronger than the existence of a finite-order
$q$-difference equation: the colored Jones function is $q$-holonomic
\cite{GL05}, as are fixed-row colored HOMFLY--PT functions \cite{GLL18},
whereas the expansion proved here additionally requires integrality of
the corresponding Newton divided differences.  For $\mathfrak{sl}_2$,
Habiro obtained the cyclotomic
divisibilities from the integral universal invariant and the completed
center \cite{Habiro02,Habiro06}.

Chen--Liu--Zhu formulated the corresponding expansion for
symmetric-power-colored $SU(n)$ invariants \cite[Conjecture~1.3]{CLZ15}.
They verified it for the figure-eight knot and the trefoil
\cite[Examples~2.5 and~2.6]{CLZ15}; subsequent work proved the
colored HOMFLY--PT expansion for double-twist knots
\cite[Theorem~1.2]{CLZ21}.  Beliakova--Gorsky later constructed a
cyclotomic interpolation basis for the center of
$U_{q^2}(\mathfrak{gl}_n)$ and proved Laurent integrality of the universal
knot coefficients \cite[Theorems~8.2--8.3 and Proposition~8.7]{BG24}.
Their finite dual coefficient formula and rank-reduction identity provide
the central input used here.  Restriction to symmetric powers initially
produces a one-sided factorial basis.  The main step is to recover the
two-sided Chen--Liu--Zhu kernel by a
reflection of the completed Harish--Chandra character and integral
descent through $z\mapsto z+z^{-1}$.

Fix $n\ge2$.  The notation $SU(n)$ is the conventional knot-theoretic
label; algebraically the construction uses
$U_{q^2}(\mathfrak{sl}_n)$.  Let $J_r^{SU(n)}(K;q)$ be the reduced,
zero-framed quantum invariant of a knot $K$ colored by the $r$th symmetric
power of the defining representation, normalized by
$J_r^{SU(n)}(U;q)=1$ for the unknot.  We use
\[
 \{m\}=q^m-q^{-m},\qquad [m]_q=\frac{\{m\}}{\{1\}},\qquad
 X_r^{(n)}=q^{2r+n}+q^{-2r-n}.
\]
The identity
\begin{equation}\label{eq:intro-node-factorization}
 X_r^{(n)}-X_i^{(n)}=\{r-i\}\{r+n+i\}
\end{equation}
shows that the Chen--Liu--Zhu kernel is exactly the Newton kernel associated
with the nodes $X_r^{(n)}$ \cite{CLZ15}.  The main result is the following.

\begin{theorem}[Cyclotomic expansion for symmetric-power-colored $SU(n)$ invariants]\label{thm:main-intro}
For every zero-framed knot $K$ and every fixed $n\ge2$, there is a unique
sequence of Laurent polynomials
\[
 \bigl(H_k^{(n)}(K;q)\bigr)_{k\ge0},
 \qquad H_k^{(n)}(K;q)\in\Z[q^{\pm1}],
\]
such that, for all $r\ge0$,
\begin{equation}\label{eq:main-expansion-intro}
 J_r^{SU(n)}(K;q)=\sum_{k=0}^{r}
 \left(\prod_{i=0}^{k-1}\{r-i\}\{r+n+i\}\right)H_k^{(n)}(K;q).
\end{equation}
Moreover, $H_0^{(n)}(K;q)=1$.
\end{theorem}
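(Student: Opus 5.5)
Uniqueness and $H_0^{(n)}=1$ are immediate. Setting $r=0$ in \eqref{eq:main-expansion-intro} gives $H_0^{(n)}=J_0^{SU(n)}(K;q)=1$. The system is triangular in $r$: the coefficient of $H_r^{(n)}$ in the $r$th equation is $\prod_{i=0}^{r-1}\{r-i\}\{r+n+i\}\neq0$, so the $H_k^{(n)}$ are determined recursively in $\Q(q)$. By \eqref{eq:intro-node-factorization} they are exactly the Newton divided differences
\[
H_k^{(n)}=\sum_{j=0}^{k}\frac{J_j^{SU(n)}(K;q)}{\prod_{i\ne j,\,0\le i\le k}\bigl(X_j^{(n)}-X_i^{(n)}\bigr)}
\]
of the function $r\mapsto J_r^{SU(n)}$ at the nodes $X_r^{(n)}$. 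The whole content is therefore the Laurent integrality of these divided differences.

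\textbf{Step 1: a one-sided factorial expansion.} I would start from the Beliakova--Gorsky universal interpolation for the completed center of $U_{q^2}(\mathfrak{gl}_n)$. First I identify their reduced scalar with the Habiro--L\^e normalization, tracking framing and the $\mathfrak{gl}_n$ versus $\mathfrak{sl}_n$ twist. Then I evaluate their finite dual coefficient formula on the one-row highest weights $r\varepsilon_1$. This should give
\[
J_r^{SU(n)}=\sum_k c_k\,\prod_{i=0}^{k-1}\bigl(z q^{-2i}-z^{-1}q^{2i}\bigr)\text{-type kernels},\qquad c_k\in\Z[q^{\pm1}],
\]
with $z=q^{2r+n}$. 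Here the kernels vanish at nodes $r<k$ but involve only one of the two factors $\{r-i\}$, $\{r+n+i\}$.

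\textbf{Step 2: symmetry and descent.} The Harish--Chandra character is Weyl-invariant after the $\rho$-shift. On the one-row slice, the reflection $z\mapsto z^{-1}$ (that is, $r\mapsto -r-n$) should preserve the completed evaluation function. From this I would conclude that the interpolating function, viewed as an element of a Newton-type completion of $\Z[q^{\pm1}][z^{\pm1}]$, is $z\leftrightarrow z^{-1}$ symmetric. A symmetric Laurent polynomial in $z$ with integral coefficients is an integral polynomial in $X=z+z^{-1}$. I would carry this out level by level: modulo the ideal generated by the $k$th one-sided kernel, together with its reflected partner, the product $\prod_{i<k}(X-X_i^{(n)})$ generates the symmetric part. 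This rewrites the one-sided expansion in the two-sided Newton basis with coefficients a priori in $\Z[q^{\pm1}]$ localized at cyclotomic polynomials.

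\textbf{Step 3: integrality, the main obstacle.} The descent in Step 2 may introduce denominators of the form $\{m\}$, because passing from $\Z[z^{\pm1}]$ to $\Z[X]$ modulo products of node differences is not obviously integral. I would treat each cyclotomic prime $\Phi_d(q)$ separately. Over $\Z[q^{\pm1}]_{(\Phi_d)}$, I would use the one-sided integral expansion from Step 1 to show the divided differences have no $\Phi_d$-pole. Both the one-sided and two-sided kernels have the same $\Phi_d$-adic valuation pattern along the nodes, since $\{r-i\}\{r+n+i\}$ and the one-sided factor differ by units times factors whose zeros match after reflection. Hence Newton interpolation over that DVR stays integral. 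Since $\Z[q^{\pm1}]$ is a UFD and the only possible denominators are products of cyclotomic polynomials (node differences factor as such), integrality at every $\Phi_d$ gives $H_k^{(n)}\in\Z[q^{\pm1}]$. The delicate point is checking the valuation matching when $n$ and $d$ interact, where both factors can vanish simultaneously. I expect this to be the main work, handled by an explicit count of $\Phi_d$-multiplicities in $\prod_{i<k}\{r-i\}\{r+n+i\}$ against those in the one-sided product.
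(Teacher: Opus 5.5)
Your architecture (Beliakova--Gorsky one-sided expansion, reflection $z\leftrightarrow z^{-1}$, descent to $X=z+z^{-1}$, integrality prime by prime, then UFD gluing) is the paper's, and your uniqueness and gluing steps are fine. The two steps that carry the weight, however, are not supplied.

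\textbf{The reflection (Step~2).} It does not follow from Weyl invariance. The $\mathfrak S_n$-symmetry of the shifted Harish--Chandra image only permutes the $y_i$. No permutation carries $\gamma_n(z)=(q^{n-2}z,q^{2n-4},\ldots,q^2,1)$ to $\gamma_n(z^{-1})$, and for $n\ge3$ this stays true after an overall rescaling of the $y_i$. General central elements are in fact not reflection symmetric on the slice: $\sigma_{(1)}$ restricts to $\varepsilon_n(1-q^{-n}z)$. The symmetry is specific to the knot element. The paper derives it from two invariances of the universal knot element:
\begin{enumerate}
\item invariance under the diagram involution $\vartheta$, a ribbon Hopf automorphism whose Harish--Chandra action is $y_i\mapsto Q^{n-1}y_{n+1-i}^{-1}$;
\item invariance under the determinant shifts $\delta_c$, which uses zero framing and the $\mathfrak{gl}_n$/$\mathfrak{sl}_n$ comparison.
\end{enumerate}
Let $\mathsf I(y)=(y_n^{-1},\ldots,y_1^{-1})$, let $\mathsf D_c$ be rescaling by $q^{2c}$, and let $\mathsf P$ be the cyclic shift moving the last coordinate first. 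Then $\mathsf T=\mathsf P\circ\mathsf D_{n-2}\circ\mathsf I$ satisfies $\mathsf T\gamma_n(z)=\gamma_n(z^{-1})$.

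These invariances must be established in a completed center, which the paper does through a faithful $h$-adic realization. This is necessary because $z_r^{-1}=z_{-r-n}$ is not a color and $\sum_kb_kU_k(z)$ does not terminate there. Your Newton-type completion of $\Z[q^{\pm1}][z^{\pm1}]$ cannot carry the identity: $\sum_kb_kU_k(z)$ converges $(U_k(z))$-adically, but its reflection converges only $(U_k(z^{-1}))$-adically. The symmetry is instead an identity of germs on the disks $\widehat\cO_d[[z-z_s]]$, $s\in\Z$. There $U_k(z)\to0$ around every center, and inversion exchanges the disks at $z_s$ and $z_{-s-n}$. (Also, the one-sided kernel is $U_k(z)=\prod_{j<k}(1-q^{-n-2j}z)$; factors $zq^{-2i}-z^{-1}q^{2i}$ would not vanish at the nodes.)

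\textbf{Integrality (Step~3).} This step rests on a false claim and lacks the mechanism that produces integrality. Since $X-X_i^{(n)}=-z_i(1-q^{-n-2i}z)(1-q^{-n-2i}z^{-1})$, the two-sided kernel at $z_r$ is, up to a unit, $U_k(z_r)U_k(z_r^{-1})$. The factors $\{r+n+i\}$ carry extra $\Phi_d$-valuation, so the valuation patterns differ. Already for $n=2$ and $k=1$ you need $\{1\}\{3\}\mid J_1-J_0$, while the one-sided expansion gives only $J_1-J_0=b_1(1-q^2)$. The factor $\{3\}$ must come from the reflection, e.g.\ via $z_1^{-1}\equiv z_0\pmod{\Phi_3}$.

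In general, the one-sided data and their reflection control the knot function modulo $(U_k(z))$ and modulo $(U_k(z^{-1}))$ separately. By contrast, $\prod_{i<k}(X-X_i^{(n)})$ is, up to a unit, the product $U_k(z)U_k(z^{-1})$. Passing from the pair to the product is a Chinese remainder step that fails integrally, exactly at the cyclotomic primes where some $z_r\equiv z_i^{-1}$. This is where your level-by-level step in Step~2 breaks. No count of multiplicities repairs it: over $\Z$, nodes $0,2,4$ with values $0,0,4$ satisfy every pairwise divisibility, yet the second divided difference is $1/2$.

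\textbf{The missing idea} is analytic:
\begin{enumerate}
\item On each cyclotomic disk $\widehat\cO_d[[z-z_s]]$ the one-sided series is an integral power-series germ, and reflection identifies the germs on inverse disks.
\item The germ then descends integrally to $X$. When $\bar z^2\ne1$, $X-(a+a^{-1})$ is a coordinate on the disk of $a$. When $\bar z=\varepsilon=\pm1$, invariant germs lie in $\widehat\cO_d[[t^2]]=\widehat\cO_d[[X-2\varepsilon]]$ with $t=(z-\varepsilon)/(z+\varepsilon)$, using $2\in\widehat\cO_d^\times$.
\item The interpolation polynomial of an integral germ at nodes in one residue cluster is integral, by reduction in the finite free algebra $\widehat\cO_d[U]/\prod_i(U-u_i)$, where $u_i$ are the recentered nodes. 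The Chinese remainder theorem is used only across distinct clusters, where it is valid.
\end{enumerate}
The case you flag as delicate, $\Phi_d$ dividing both $\{r-i\}$ and $\{r+n+i\}$, is exactly the ramified case $\bar z=\pm1$. It is settled by this descent, not by counting.
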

\medskip
\noindent For the rank-uniform statement, put
\[
 \{m;A\}=Aq^m-A^{-1}q^{-m},\qquad
 \{m;A\}_k=\prod_{s=0}^{k-1}\{m-s;A\},\qquad
 \{k\}!=\prod_{j=1}^k\{j\}.
\]
Write $\HH_r(K;A,q)$ for the reduced, zero-framed HOMFLY--PT invariant
colored by the $r$th symmetric power, in the skein normalization fixed in
\eqref{eq:HOMFLY-skein}; it satisfies
$\HH_r(K;q^n,q)=J_r^{SU(n)}(K;q)$ for $n\ge2$.
The rank-uniform coefficient ring will be
\[
 \mathscr I_q^{\ge2}
 =\left\{f(A)\in\Q(q)[A^{\pm1}]:
 f(q^n)\in\Z[q^{\pm1}]\ \text{for every }n\ge2\right\}.
\]

\begin{theorem}[Rank-uniform integer-valued coefficients]
\label{thm:rank-uniform-intro}
For every zero-framed knot $K$, there is a unique sequence
$G_k(K;A,q)\in\Z[A^{\pm1},q^{\pm1}]$, $k\ge1$, such that
\begin{equation}\label{eq:general-DE-intro}
 \HH_r(K;A,q)
 =1+\sum_{k=1}^{r}
 \qbinom{r}{k}\{r+k-1;A\}_k\{-1;A\}G_k(K;A,q)
 \qquad(r\ge0).
\end{equation}
Set
\begin{equation}\label{eq:rank-uniform-H-intro}
 \mathsf H_0(K;A,q)=1,\qquad
 \mathsf H_k(K;A,q)=\frac{\{-1;A\}G_k(K;A,q)}{\{k\}!}
 \quad(k\ge1).
\end{equation}
Then, for every $k\ge0$,
\begin{equation}\label{eq:rank-specialization-intro}
 \mathsf H_k(K;A,q)\in\mathscr I_q^{\ge2},\qquad
 \mathsf H_k(K;q^n,q)=H_k^{(n)}(K;q)
 \quad(n\ge2).
\end{equation}
\end{theorem}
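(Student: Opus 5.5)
We first make the elementary comparison between \eqref{eq:general-DE-intro} and \eqref{eq:main-expansion-intro}; after that the theorem reduces to the Laurent integrality of the $G_k$. At $A=q^n$ we have $\{m;q^n\}=\{m+n\}$. Hence
\[
\{r+k-1;q^n\}_k=\prod_{s=0}^{k-1}\{r+k-1-s+n\}=\prod_{i=0}^{k-1}\{r+n+i\}.
\]
Writing $\{m\}=\{1\}[m]_q$, we also get
\[
\qbinom{r}{k}=\frac{\prod_{i=0}^{k-1}\{r-i\}}{\{k\}!}.
\]
So after the substitution $A=q^n$, the $k$th term of \eqref{eq:general-DE-intro} is exactly the Chen--Liu--Zhu kernel $\prod_{i=0}^{k-1}\{r-i\}\{r+n+i\}$ multiplied by $\{-1;q^n\}G_k(K;q^n,q)/\{k\}!=\mathsf H_k(K;q^n,q)$. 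The Newton nodes $X_r^{(n)}$ are pairwise distinct, so Newton coefficients are unique. Combining this with $\HH_r(K;q^n,q)=J_r^{SU(n)}(K;q)$ and \cref{thm:main-intro}, we obtain $\mathsf H_k(K;q^n,q)=H_k^{(n)}(K;q)\in\Z[q^{\pm1}]$ for every $n\ge2$. Since $\mathsf H_k$ is visibly in $\Q(q)[A^{\pm1}]$ once $G_k\in\Z[A^{\pm1},q^{\pm1}]$, this gives $\mathsf H_k\in\mathscr I_q^{\ge2}$ and \eqref{eq:rank-specialization-intro}. The case $k=0$ is $H_0^{(n)}=1$.

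Uniqueness of the $G_k$ is triangularity. The $r$th equation of \eqref{eq:general-DE-intro} involves $G_1,\dots,G_r$, and $G_r$ enters with coefficient $\{2r-1;A\}_r\{-1;A\}$. This coefficient is a nonzero element of the domain $\Z[A^{\pm1},q^{\pm1}]$. Hence the $G_k$ are determined recursively in $\Q(A,q)$ by
\[
\{2r-1;A\}_r\{-1;A\}\,G_r=\HH_r-1-\sum_{k<r}\qbinom{r}{k}\{r+k-1;A\}_k\{-1;A\}G_k.
\]
For existence in the Laurent ring, we induct on $r$. Call the right-hand side $R_r$; by induction it lies in $\Z[A^{\pm1},q^{\pm1}]$. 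We must show that $R_r$ is divisible by $\{-1;A\}\prod_{s=0}^{r-1}\{2r-1-s;A\}$. The factors $\{m;A\}=A^{-1}q^{-m}(A^2q^{2m}-1)$ for distinct $m$ are pairwise coprime in the UFD $\Z[A^{\pm1},q^{\pm1}]$; each splits as $(Aq^m-1)(Aq^m+1)$ up to units, and all these pieces are irreducible and pairwise non-associate. So it suffices to prove that $R_r$ vanishes on each locus $A=\pm q^{-m}$ with $m\in\{-1\}\cup\{r,\dots,2r-1\}$.

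For $m=-1$, i.e.\ $A=\pm q$: at $A=q$ the skein normalization gives rank one, so $\HH_r(K;q,q)=1$. Every earlier term also carries the factor $\{-1;A\}$, so $R_r$ vanishes there; the sign $A=-q$ is handled by the $A\mapsto -A$ behaviour of the skein relation. For $m\in\{r,\dots,2r-1\}$: every earlier term with $k<r$ already contains $\{m;A\}$ when $m\le r+k-1$, so only the terms with small $k$ survive. The needed identity is that $\HH_r$ at $A=q^{-m}$ agrees with the truncated sum. I would obtain this from the rank-level (negative-rank) behaviour of symmetric colors: at $A=q^{-m}$ the $r$th symmetric power degenerates, via the $A\mapsto A^{-1}$, $q\mapsto q^{-1}$ transposition symmetry and the supergroup interpretation, to a representation whose quantum dimension identities force these vanishings. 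Alternatively, I would take the needed vanishing directly from the Beliakova--Gorsky rank-reduction identity \cite{BG24}, whose dual coefficients depend polynomially on $A$.

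This divisibility step is the main obstacle. The specializations at $A=q^n$ alone cannot give integrality in $A$, so some genuinely $A$-uniform input is needed: either the vanishing at the points $A=\pm q^{-m}$, or an $A$-polynomial form of the Beliakova--Gorsky universal coefficients.
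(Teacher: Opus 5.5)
\textbf{There is a genuine gap at the main step.}

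Your reduction is correct as far as it goes, and it follows the paper. The identity $\{m;q^n\}=\{m+n\}$ and uniqueness of Newton coefficients give $\{k\}!\,H_k^{(n)}(K;q)=\{n-1\}G_k(K;q^n,q)$. So the specialization and integer-valuedness claims follow from the fixed-rank theorem once $G_k$ is known to be Laurent. Uniqueness by triangularity is also the paper's argument. So is the reduction of existence, in the UFD $\Z[A^{\pm1},q^{\pm1}]$, to the vanishing of $R_r$ at $A=\pm q$ and at $A=\pm q^{-m}$ with $r\le m\le 2r-1$.

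The problem is that the vanishing at $A=\pm q^{-m}$ is the entire content of the existence claim, and you do not prove it. Neither of your suggestions closes it.
\begin{itemize}
\item The Beliakova--Gorsky coefficients are constructed one rank at a time. No $A$-polynomial form of them is available; the paper raises a rank-uniform realization of their center only as an open question.
\item The supergroup heuristic points in the right direction: at $A=q^{-m}$ the symmetric color becomes an exterior power for $\mathfrak{sl}_m$. But quantum-dimension identities cannot force a knot-dependent expression to vanish. What is needed is an equality between two colored knot invariants.
\end{itemize}

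Two concrete facts are missing. The first is the finite negative-rank reflection
\[
 \HH_r(K;q^{-m},q)=\HH_{m-r}(K;q^{-m},q)\qquad(0\le r\le m).
\]
The paper derives it from the transpose symmetry $\HH_\lambda(K;A,q^{-1})=\HH_{\lambda^t}(K;A,q)$ at fixed $A$. With $\bar q=q^{-1}$, the left side becomes $\HH_{(1^r)}(K;\bar q^m,\bar q)$, the reduced $U_{\bar q^2}(\mathfrak{sl}_m)$ invariant colored by $\bigwedge^rV$. The diagram automorphism fixes $\mathcal R$, the pivotal element and the ribbon element, hence fixes the universal knot element. It also pulls $\bigwedge^rV$ back to $\bigwedge^{m-r}V$, which gives the equality.

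The second is a kernel collision. Write $Z_{r,k}=\qbinom{r}{k}\{r+k-1;A\}_k\{-1;A\}$ and $s=m-r$. Since $\{r+k-1;q^{-m}\}_k=\prod_{i=0}^{k-1}\{i-s\}$, the terms with $s<k<r$ vanish at $A=q^{-m}$. For $k\le s$ one gets
\[
 \frac{Z_{r,k}(q^{-m},q)}{\{-1;q^{-m}\}}
 =(-1)^k\frac{\{r\}!\,\{s\}!}{\{k\}!\,\{r-k\}!\,\{s-k\}!},
\]
which is symmetric in $r$ and $s$, so it equals $Z_{s,k}(q^{-m},q)/\{-1;q^{-m}\}$.

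Hence your truncated sum at $A=q^{-m}$ is exactly the color-$s$ expansion, already established by induction, evaluated at $A=q^{-m}$. It therefore equals $\HH_s(K;q^{-m},q)$, which the reflection identifies with $\HH_r(K;q^{-m},q)$. This gives $R_r(q^{-m},q)=0$.

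You also never treat the points $A=-q^{-m}$ for $m\ge r$. These follow from the evenness $\HH_r(K;-A,q)=\HH_r(K;A,q)$ together with the parity $G_k(K;-A,q)=(-1)^{k+1}G_k(K;A,q)$. The paper carries this parity through the induction, so that $R_r$ is even in $A$.
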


\subsection*{Contributions and proof strategy}
Beliakova--Gorsky provide the interpolation basis, the finite dual formula,
the rank-reduction identity, and Laurent integrality of the dual
coefficients.  After matching normalizations, restriction to one-row colors
gives the one-sided basis $U_k(z)$.  We construct the corresponding completed
central sector and a faithful $h$-adic realization on which the
Harish--Chandra symmetries act continuously.  These symmetries yield
$z\leftrightarrow z^{-1}$; descent to $X=z+z^{-1}$, local interpolation at
cyclotomic primes, and global denominator removal then give the
Chen--Liu--Zhu expansion.

For the rank-uniform result, Morton's integrality and the sign, transpose, and
negative-rank symmetries produce Laurent coefficients $G_k(K;A,q)$.  Their
fixed-rank specializations yield integer-valued Newton coefficients in
$\mathscr I_q^{\ge2}$, which assemble into the two-variable Newton completion.

\section{Conventions and quantum-group input}
\label{sec:conventions-input}

We fix the coefficient rings, quantum-group conventions, and external input
used in the proof.

\subsection{Coefficient rings and elementary factorizations}

Throughout, put
\begin{equation}\label{eq:base-rings}
 Q=q^2,\qquad
 R=\Z[q^{\pm1}],\qquad
 R_{\mathrm{ev}}=\Z[Q^{\pm1}]
 =\Z[q^{\pm2}]\subset R,\qquad
 \Bbbk=\Q(q),
\end{equation}
where $q$ is an indeterminate.  For $m\in\Z$ put
\begin{equation}\label{eq:brace}
 \{m\}=q^m-q^{-m},\qquad [m]_q=\frac{\{m\}}{\{1\}}.
\end{equation}
For $m\ge0$, set
$\{m\}!=\prod_{a=1}^{m}\{a\}$ and
$[m]_q!=\prod_{a=1}^{m}[a]_q$, with the empty product equal to $1$.
Chen--Liu--Zhu write $[m]$ for the unnormalized expression
$q^m-q^{-m}$ \cite{CLZ15}; their $[m]$ is therefore our $\{m\}$, not
our $[m]_q$.  In particular, the kernel in \cref{thm:main-intro} is
their conjectured kernel without an additional power of $q-q^{-1}$.
The symmetric Gaussian coefficient is
\begin{equation}\label{eq:qbinomial}
 \qbinom{a}{b}=\frac{[a]_q!}{[b]_q![a-b]_q!}
 \qquad(0\le b\le a).
\end{equation}
Let
\[
 \begin{bmatrix}a\\ b\end{bmatrix}_{Q}^{\mathrm{ord}}
 :=
 \frac{(Q;Q)_a}{(Q;Q)_b(Q;Q)_{a-b}}
 \in\Z[Q]
\]
denote the ordinary Gaussian coefficient.  Then
\begin{equation}\label{eq:symmetric-ordinary-Gaussian}
 \begin{bmatrix}a\\ b\end{bmatrix}_{q}
 =
 q^{-b(a-b)}
 \begin{bmatrix}a\\ b\end{bmatrix}_{Q}^{\mathrm{ord}}\in R.
\end{equation}

For $m>0$,
\begin{equation}\label{eq:brace-cyclotomic-factorization}
 \{m\}=q^{-m}(q^{2m}-1)
 =q^{-m}\prod_{d\mid 2m}\Phi_d(q),
 \qquad \{-m\}=-\{m\}.
\end{equation}
Thus every nonunit irreducible factor of a quantum integer is cyclotomic,
and each occurs with multiplicity one in that individual quantum integer.
The ring $R$ is a UFD because it is the localization of the UFD $\Z[q]$ at
the powers of $q$.

\subsection{Translation of the Beliakova--Gorsky parameter}

Beliakova--Gorsky use a quantum-group parameter $\mfrakq$ and a square root
$v$ satisfying $v^2=\mfrakq$.  Our convention is
\begin{equation}\label{eq:parameter-translation}
 v=q,\qquad \mfrakq=Q.
\end{equation}
Consequently, their coefficient ring $\Z[v^{\pm1}]$ is $R$, their Laurent
coefficients in $\Z[\mfrakq^{\pm1}]$ lie in
$R_{\mathrm{ev}}$, and their interpolation parameter
$\mfrakq^{-1}$ is $q^{-2}$.  In particular, the quantum integer
$v^m-v^{-m}$ in their notation is exactly $\{m\}$ here.
The ambient integral quantum group is transported along
$\Z[v^{\pm1}]\xrightarrow{\sim}R$, $v\mapsto q$.  The even center and
its interpolation basis are instead defined over
$\Z[\mfrakq^{\pm1}]\xrightarrow{\sim}R_{\mathrm{ev}}$,
$\mfrakq\mapsto q^2$.  We retain this distinction in the coefficient
completion below.  The inclusion $R_{\mathrm{ev}}\subset R$ is finite
free, since $R=R_{\mathrm{ev}}\oplus qR_{\mathrm{ev}}$.

\subsection{The quantum groups and the integral completion}

Let $U_{q^2}(\mathfrak{gl}_n)$ be generated over $\Bbbk$ by
$E_i,F_i$ $(1\le i<n)$ and $K_j^{\pm1}$ $(1\le j\le n)$.  Put
$L_i=K_iK_{i+1}^{-1}$.  The relations needed later are
\begin{align}
 K_iE_iK_i^{-1}&=qE_i,&
 K_{i+1}E_iK_{i+1}^{-1}&=q^{-1}E_i,\notag\\
 K_iF_iK_i^{-1}&=q^{-1}F_i,&
 K_{i+1}F_iK_{i+1}^{-1}&=qF_i,\label{eq:gl-relations}\\
 [E_i,F_j]&=\delta_{ij}\frac{L_i-L_i^{-1}}{q-q^{-1}},
\end{align}
together with the commuting Cartan relations and the type-$A$ quantum
Serre relations.  The Hopf structure is
\begin{align}
 \Delta(E_i)&=E_i\otimes1+L_i\otimes E_i,&
 \Delta(F_i)&=F_i\otimes L_i^{-1}+1\otimes F_i,\notag\\
 \Delta(K_j)&=K_j\otimes K_j,&
 S(E_i)&=-L_i^{-1}E_i,\label{eq:Hopf}\\
 S(F_i)&=-F_iL_i,& S(K_j)&=K_j^{-1}.
\end{align}
This is the standard coproduct and antipode convention used in the
Habiro--L\^e construction underlying the completed universal invariant
\cite[Section~3A4]{HL16}.  For the displayed coproduct, the Hopf
identities
$m(S\otimes\id)\Delta(E_i)=0$ and
$m(\id\otimes S)\Delta(F_i)=0$ require the factor order in
\eqref{eq:Hopf}.  The antipode displayed in \cite[Section~3.1]{BG24}
uses the reverse order; throughout this paper we use
\eqref{eq:Hopf}.

For the universal matrix, let
$\Theta_{\mathrm{HL}}$ be the quasi-$R$ matrix defined in
\cite[equations~(65)--(68)]{HL16}; thus
$\Theta_{\mathrm{HL}}=\sum_{\mathbf n}F_{\mathbf n}\otimes
E_{\mathbf n}$ and its inverse is the corresponding sum of the bar
transforms.  With $D$ denoting the full $\mathfrak{gl}_n$ Cartan factor,
we fix the normalized positive universal matrix by
\begin{equation}\label{eq:HL-R-convention}
 \mathcal R=D\Theta_{\mathrm{HL}}^{-1}=D\Psi,
 \qquad
 \mathcal R^{-1}=\Theta_{\mathrm{HL}}D^{-1},
 \qquad \Psi:=\Theta_{\mathrm{HL}}^{-1}.
\end{equation}
This is the convention of \cite[equation~(70)]{HL16}.  The symbol
$\Theta$ in \cite[Section~3.2]{BG24} is used with a different convention for
the quasi-$R$ factor.  We therefore reserve $\Theta_{\mathrm{HL}}$ for the
matrix in \cite[equations~(65)--(68)]{HL16} and use
\eqref{eq:HL-R-convention} throughout.

The subalgebra generated by $E_i,F_i,L_i^{\pm1}$ is
$U_{q^2}(\mathfrak{sl}_n)$.

Let
\[
 \rho=\frac12\sum_{\alpha>0}\alpha
 =\sum_{j=1}^{n}\frac{n+1-2j}{2}\epsilon_j,
 \qquad
 K_{\pm2\rho}
 =\prod_{j=1}^{n}K_j^{\pm(n+1-2j)}.
\]
With the antipode convention in \eqref{eq:Hopf}, a direct calculation
gives
\[
 S^2(E_i)=L_i^{-1}E_iL_i=q^{-2}E_i,\qquad
 S^2(F_i)=L_i^{-1}F_iL_i=q^2F_i,\qquad
 S^2(K_j)=K_j.
\]
Consequently
\begin{equation}\label{eq:S2-balancing-convention}
 S^2=\operatorname{Ad}(K_{-2\rho}).
\end{equation}
Thus $g=K_{-2\rho}$ is the pivotal element in the convention used here,
in agreement with the terminology of
\cite[Section~3.2, equation~(11)]{BG24}.  We use the trace convention
$\operatorname{Tr}_{V}(gx)$ below.  Both $K_{2\rho}$ and
$K_{-2\rho}$ are fixed by the diagram automorphism used below.

Put
\begin{equation}\label{eq:BG-integral-generators}
 e_i=(q-q^{-1})E_i,
 \qquad F_i^{(a)}=\frac{F_i^a}{[a]_q!}.
\end{equation}
The integral form $U_{\Z}$ of \cite[Section~3.1]{BG24} is generated over
$R$ by the elements in \eqref{eq:BG-integral-generators} and by
$K_j^{\pm1}$ $(1\le j\le n)$.  Let
$\Gamma=\{(\zeta_1,\ldots,\zeta_n):\zeta_j=\pm1\}$ act by
\begin{equation}\label{eq:Gamma-action}
 K_j\longmapsto\zeta_jK_j,
 \qquad E_i\longmapsto E_i,
 \qquad F_i\longmapsto\zeta_i\zeta_{i+1}F_i.
\end{equation}
Write
\[
 U_{\Z}^{\Gamma}
 =\{u\in U_{\Z}:\zeta(u)=u\text{ for every }\zeta\in\Gamma\}.
\]
An element of this fixed subalgebra will be called $\Gamma$-invariant.
The elements $\sigma_\lambda$ are $\Gamma$-fixed central elements of the
ambient quantum group \cite[Theorem~8.2]{BG24}.

\begin{definition}
\label{def:filtered-continuity}
Let $B$ be equipped with a descending filtration
$B=B^{(0)}\supseteq B^{(1)}\supseteq\cdots$, and set
\[
 \widehat B=\varprojlim_N B/B^{(N)}.
\]
We use the induced linear topology: $b_\nu\to0$ if, for every $N$, all
sufficiently large $b_\nu$ lie in $B^{(N)}$.  A homomorphism
$\varphi:B\to C$ is continuous if, for every $N$, there is $M$ such that
\begin{equation}\label{eq:filtered-continuity-criterion}
 \varphi(B^{(M)})\subseteq C^{(N)}.
\end{equation}
\end{definition}

\begin{lemma}
\label{lem:filtered-extension}
Every continuous homomorphism $\varphi:B\to C$ extends uniquely to a
continuous homomorphism $\widehat\varphi:\widehat B\to\widehat C$.
If $\varphi$ is an isomorphism and $\varphi^{-1}$ is continuous,
then the extension is an isomorphism.
In the ring case, it preserves multiplication whenever multiplication is
continuous.
\end{lemma}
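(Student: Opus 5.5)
The plan is to work entirely at the level of the inverse systems $(B/B^{(N)})_N$ and $(C/C^{(N)})_N$. Here $\widehat B=\varprojlim_N B/B^{(N)}$ is realized as the set of compatible sequences $(b_N+B^{(N)})_N$. The filtrations are assumed to consist of subgroups (ideals in the ring case, or at least subgroups with $B^{(N)}B\cup BB^{(N)}$ controlled when multiplication is continuous).

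\emph{Existence.} By \eqref{eq:filtered-continuity-criterion} we may choose, for each $N$, an index $M(N)$ with $\varphi(B^{(M(N))})\subseteq C^{(N)}$. Replacing $M(N)$ by $\max(M(1),\dots,M(N))$, we may assume $M$ is nondecreasing. Then $\varphi$ induces group homomorphisms
\[
 \varphi_N:B/B^{(M(N))}\to C/C^{(N)}.
\]
These are compatible with the projections $C/C^{(N+1)}\to C/C^{(N)}$ and $B/B^{(M(N+1))}\to B/B^{(M(N))}$. Since $N\mapsto M(N)$ is cofinal (or eventually constant, in which case the same argument applies), we have $\varprojlim_N B/B^{(M(N))}=\widehat B$. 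Hence the $\varphi_N$ assemble to $\widehat\varphi:\widehat B\to\widehat C$. It is continuous because it carries the kernel of $\widehat B\to B/B^{(M(N))}$ into the kernel of $\widehat C\to C/C^{(N)}$. It extends $\varphi$ on the image of $B$, by construction.

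\emph{Uniqueness.} The image of $B$ is dense in $\widehat B$: any $\hat b$ agrees modulo the $N$th kernel with the image of some $b_N\in B$. Now let $\psi$ be another continuous extension. Then $\psi(\hat b)-\widehat\varphi(\hat b)=\psi(\hat b-b_N)-\widehat\varphi(\hat b-b_N)$. For $N$ large, continuity of both maps puts this difference in the $N'$th kernel of $\widehat C$, for any prescribed $N'$. Since $\widehat C$ is Hausdorff, the intersection of these kernels is zero, and so $\psi=\widehat\varphi$.

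\emph{Isomorphism.} Suppose $\varphi^{-1}$ is continuous. The same construction gives $\widehat{\varphi^{-1}}$. The composites $\widehat{\varphi^{-1}}\circ\widehat\varphi$ and $\widehat\varphi\circ\widehat{\varphi^{-1}}$ are continuous extensions of the identity maps of $B$ and $C$. By uniqueness they equal the identities.

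\emph{Multiplicativity.} Assume multiplication is continuous on $B$ and $C$, so that it extends to $\widehat B$ and $\widehat C$. The two maps $(x,y)\mapsto\widehat\varphi(xy)$ and $(x,y)\mapsto\widehat\varphi(x)\widehat\varphi(y)$ are continuous on $\widehat B\times\widehat B$ and agree on the dense subset $B\times B$, since $\varphi$ is multiplicative there. By the Hausdorff density argument above, applied to the product, they coincide.

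The only delicate step is the bookkeeping with the indices $M(N)$, namely monotonicity and cofinality. Continuity of multiplication must also be used in a form strong enough to define the product on $\widehat B$: for each $N$ there is $M$ with $B^{(M)}B+BB^{(M)}\subseteq B^{(N)}$, or at least the analogous statement on bounded sets. I would state this explicitly as the meaning of ``multiplication is continuous''.
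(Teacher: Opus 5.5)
Your proposal is correct and follows the same route as the paper: you define $\widehat\varphi$ quotientwise from the compatible maps $B/B^{(M(N))}\to C/C^{(N)}$, you get uniqueness from density of $B$ and separatedness of $\widehat C$, and you obtain the isomorphism and multiplicativity statements by rerunning the argument for $\varphi^{-1}$ and for the product. The only slip is the parenthetical about an eventually constant $M(N)$, since then $\varprojlim_N B/B^{(M(N))}$ is a single quotient rather than $\widehat B$; either choose $M(N)\ge N$, or note that you only need the canonical projection $\widehat B\to\varprojlim_N B/B^{(M(N))}$.
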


\begin{proof}
The extension is defined quotientwise.  Compatibility gives a map of inverse
limits, while density and separatedness give uniqueness.  The remaining
assertions follow by applying the same argument to the inverse and to
multiplication.
\end{proof}

\subsection{Standard structural facts}

We use the following standard Drinfeld--Jimbo facts for
$\mathfrak{gl}_n$ and its $\mathfrak{sl}_n$ subalgebra in the
$h$-adic/root-height completion.  The underlying construction of quantum
link invariants from ribbon Hopf algebras goes back to
Reshetikhin--Turaev \cite{RT90}; below we use the bottom-tangle
formulation and the conventions specified here.
\begin{enumerate}[label=\textup{(DJ\arabic*)},leftmargin=3.2em]
\item multiplication gives the triangular decomposition
$U^-\widehat\otimes U^0\widehat\otimes U^+\cong U$;
projection of the weight-zero part to $U^0$, followed by the $\rho$-shift,
defines the Harish--Chandra map on the center;
\item there is a unique normalized universal $R$-matrix
$\mathcal R=D\Psi$ in the convention \eqref{eq:HL-R-convention}, where
$D$ is determined by the invariant bilinear form and
$\Psi=\Theta_{\mathrm{HL}}^{-1}$ is $1$ plus
negative/positive root-height terms;
\item the Drinfeld element
$u=m(S\otimes\id)(\mathcal R_{21})$, ribbon element
$\mathbf r$, and pivotal element
$g=u\mathbf r^{-1}=K_{-2\rho}$ satisfy
$S^2=\operatorname{Ad}(g)$; the quantum trace used below employs $g$;
\item the universal invariant of a bottom tangle is constructed functorially
from $\mathcal R^{\pm1}$, the antipode, multiplication, and the
pivotal/ribbon data.
A continuous ribbon Hopf automorphism fixing these data acts componentwise
on the universal invariant.
\end{enumerate}
Triangular decomposition and the algebraic Harish--Chandra map are treated
in \cite{Jantzen96}; the convention used by Beliakova--Gorsky is recorded in
\cite[Section~5.1]{BG24}.  The root-height expansion and normalization of
the universal $R$-matrix used here are given in
\cite[Sections~3G1--3G2, equations~(64)--(70)]{HL16}, and the ribbon
data are treated in
\cite{Kassel95}.  The braided bottom-tangle functor is constructed in
\cite[Section~8.2 and Proposition~8.1]{Habiro06}; the convention used in
\cite{BG24} is summarized in its Section~3.5.

\subsection{Normalization of the knot invariant}

A zero-framed knot is represented by a one-component bottom tangle.  Let
$J_K(\mathfrak g;q^2)$ denote its universal invariant.  The universal
invariant of a one-component bottom tangle is adjoint invariant
\cite[Proposition~8.2]{Habiro06}; in the present generic type-$A$ setting it
is therefore central, as also used in \cite[Section~3.5]{BG24}.  Hence on an
irreducible highest-weight module $V(\lambda)$ it acts by a scalar
$c_K(\lambda)$.  We take this scalar as the reduced invariant.  Equivalently,
for any quantum-trace convention with nonzero generic quantum dimension it
is the normalized trace quotient.  In our convention,
\[
 \operatorname{Tr}_{q,V(\lambda)}(x)
 =\operatorname{Tr}_{V(\lambda)}(K_{-2\rho}x),
 \qquad
 \dim_qV(\lambda)=\operatorname{Tr}_{q,V(\lambda)}(1).
\]
The reduced scalar invariant is
\begin{equation}\label{eq:reduced-normalization}
 J_K(V(\lambda);q^2)
 =\frac{\operatorname{Tr}_{q,V(\lambda)}
       (J_K(\mathfrak g;q^2)|_{V(\lambda)})}
      {\dim_qV(\lambda)}.
\end{equation}
Both sides of \eqref{eq:reduced-normalization} equal $c_K(\lambda)$.
The invariant is normalized to $1$ for the zero-framed unknot.  For the one-row
partition $(r)=(r,0,\ldots,0)$ we write
\begin{equation}\label{eq:SU-normalization}
 J_r^{SU(n)}(K;q)=J_K(V((r));q^2).
\end{equation}
The label $SU(n)$ is conventional; all algebra is over the generic quantum
group.  The zero-framing assumption is essential because a framing change
introduces a color-dependent twist eigenvalue.

\begin{proposition}[Chen--Liu--Zhu normalization]
\label{prop:CLZ-normalization}
Let $K$ be an oriented zero-framed knot.  For every $n\ge2$ and
$r\ge0$, the invariant in \eqref{eq:SU-normalization} is the invariant
denoted by $J_r^{SU(n)}(K;q)$ in \cite{CLZ15}.
\end{proposition}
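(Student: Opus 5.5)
The plan is to compare the two invariants via the Reshetikhin--Turaev construction in the colored representation and check that every convention-dependent choice cancels in the reduced, zero-framed normalization. Chen--Liu--Zhu define $J_r^{SU(n)}(K;q)$ as the quantum group invariant of $K$ colored by $V((r))$, divided by its value on the unknot. Equivalently, it is the colored HOMFLY--PT invariant evaluated at $a=q^n$, with framing corrected to zero and normalized by the quantum dimension. Our scalar $c_K((r))$ is, by \eqref{eq:reduced-normalization}, the quantum trace of the universal invariant on $V((r))$ divided by $\dim_qV((r))$. The universal invariant of the bottom tangle, evaluated in $V((r))$ and closed by the quantum trace with pivotal element $g=K_{-2\rho}$ (DJ3, DJ4), equals the Reshetikhin--Turaev invariant of the closed knot with the ribbon structure $(\mathcal R,\mathbf r)$ (\cite[Section~8]{Habiro06}, \cite{RT90}). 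So the first step is to identify the unnormalized invariants, and the second is to match the divisors, since both are the unknot value $\dim_qV((r))$.

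I will then check the three remaining ambiguities in turn.

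\begin{enumerate}[label=(\roman*),leftmargin=2.5em]
\item \emph{Choice of $R$-matrix.} Replacing $\mathcal R$ by $\mathcal R_{21}^{-1}$, or by a version differing in the Cartan factor $D$ through the central $\mathfrak{gl}_n$ part, changes the invariant in one of three ways: by mirror image, by a framing-dependent scalar, or by a power of $K_1\cdots K_n$ that acts on $V((r))$ by a constant. Mirror image is fixed by comparing one crossing sign with CLZ's skein relation. The framing term is $\theta_{(r)}^{\,\mathrm{writhe}}$, which equals $1$ at zero framing. The central $\mathfrak{gl}_1$ factor contributes the twist $q^{\pm r^2/n}$-type exponent per crossing, which cancels in zero framing because the signed crossing count is zero.
\item \emph{Parameter.} Under \eqref{eq:parameter-translation} ($v=q$, $\mfrakq=q^2$), the $R$-matrix eigenvalues on $V((1))^{\otimes2}$ are $q^{\pm1}$ up to the framing scalar. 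This gives the HOMFLY--PT skein relation with $a=q^n$ in CLZ's variable, using their convention $[m]=q^m-q^{-m}=\{m\}$. I will verify this directly on the fundamental representation, and for $(r)$ by cabling with the symmetrizer, as CLZ do.
\item \emph{Trace convention.} $\operatorname{Tr}(K_{-2\rho}\,\cdot)$ versus $\operatorname{Tr}(K_{2\rho}\,\cdot)$ changes the unreduced invariant, but the reduced one is unchanged. This is because $c_K(\lambda)$ is a scalar, so any trace with nonzero quantum dimension gives the same quotient.
\end{enumerate}

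The main obstacle is (ii) together with orientation and mirror conventions: showing that CLZ's $q$ is our $q$ and not $q^{-1}$ or $q^{1/2}$. I would settle this by computing a single test case, the trefoil at $r=1$, against \cite[Example~2.6]{CLZ15}, once the fundamental skein relation is established. This also uses that $J$ is invariant under $q\mapsto q^{-1}$ combined with mirroring, which fixes any residual sign choice.
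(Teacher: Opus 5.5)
Your outline is a workable route, but it is not the paper's.

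The paper does not rederive a skein relation in the Habiro--L\^e convention. It takes the Hecke-algebra/Reshetikhin--Turaev dictionary of Lin--Zheng, whose skein relation becomes the Chen--Liu--Zhu one under $t_{\mathrm{LZ}}^{1/2}=q^{-1}$, $\nu_{\mathrm{LZ}}^{1/2}=A^{-1}$. It then compares the two representations of $H_m(q)$ on $V^{\otimes m}$ explicitly. They are conjugate rather than equal: $g_{\mathrm{LZ}}=(J\otimes J)g_{\mathrm{HL}}(J\otimes J)^{-1}$ with $J$ the basis reversal, and $K_{2\rho}=JK_{-2\rho}J^{-1}$. Hence the two closure functionals agree on all of $H_m(q)$. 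The rest is bookkeeping: the same idempotent $p_\lambda$ is used, the twist $\theta_\lambda=q^{n|\lambda|+\kappa_\lambda}$ reproduces the Chen--Liu--Zhu writhe factor at $A=q^n$, and $S_\lambda(q,q^n)=\dim_qV(\lambda)$.

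You instead propose to check the framed skein relation, kink factor and unknot value directly for the Habiro--L\^e braiding on $V$. You then use uniqueness of the framed HOMFLY--PT evaluation on all links (hence on cables) and insert the symmetrizer. This is self-contained and basis-free, and it avoids Lin--Zheng's theorem. The paper's version reduces the new work to a finite matrix check and shows why the two conventions use opposite pivotal elements. Your point (iii) is fine on the quantum-group side. In fact $\operatorname{Tr}_{V(\lambda)}K_{2\rho}=\operatorname{Tr}_{V(\lambda)}K_{-2\rho}$, so even the unreduced value is unchanged.

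What does not work as stated is your resolution of the ``main obstacle''.

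First, the braiding on $V\otimes V$ does not have eigenvalues $q^{\pm1}$. It satisfies $(g-q)(g+q^{-1})=0$, and $g(v_i\otimes v_i)=qv_i\otimes v_i$. The fact that $v_i\otimes v_i$ lies in the $q$-eigenspace is exactly what makes the row idempotent of $H_r(q)$ cut out $V((r))$ rather than $V((1^r))$.

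Second, the trefoil at $r=1$ cannot detect this. For a knot, $\HH_1(K;A,q)$ is even in $A$ and invariant under $q\mapsto q^{-1}$; the latter is the case $\lambda=(1)$ of $\HH_\lambda(K;A,q^{-1})=\HH_{\lambda^t}(K;A,q)$. So an $r=1$ computation distinguishes only $A=q^n$ from $A=q^{-n}$, that is, $K$ from its mirror. Inverting $q$ at fixed $A$ is, by the same identity, the exchange of the row and column colorings, and it is invisible at $r=1$ because $(1)^t=(1)$. The mirror-plus-inversion symmetry you invoke keeps the module fixed, so it does not remove this ambiguity either.

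The test can therefore only be a sanity check. The argument must rest on the explicit eigenspace computation for the Habiro--L\^e braiding (the matrix $g_{\mathrm{HL}}$ in the appendix), and once that is done the test is unnecessary.
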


\begin{proof}
Under $t_{\mathrm{LZ}}^{1/2}=q^{-1}$ and
$\nu_{\mathrm{LZ}}^{1/2}=A^{-1}$, the two constructions have the same
braiding, framing correction, and unknot normalization; see
\cref{app:CLZ-normalization} for the detailed comparison.
\end{proof}

\subsection{HOMFLY--PT normalization and rank specialization}

We use the oriented HOMFLY--PT normalization
\cite{FYHLMO85,PT87}
\begin{equation}\label{eq:HOMFLY-skein}
 A P_{L_+}-A^{-1}P_{L_-}=(q-q^{-1})P_{L_0},
 \qquad P_U=1,
\end{equation}
and write $\HH_r(K;A,q)$ for the reduced invariant colored by the
$r$th symmetric power.  With the zero-framing and quantum-group
conventions fixed above, its positive-rank specialization is
\begin{equation}\label{eq:HOMFLY-SU-specialization}
 \HH_r(K;q^N,q)=J_r^{SU(N)}(K;q)\qquad(N\ge2).
\end{equation}
More generally, for a partition $\lambda$ let
$\HH_\lambda(K;A,q)$ denote the reduced zero-framed HOMFLY--PT invariant
colored by $\lambda$, normalized by
\(\HH_\lambda(U;A,q)=1\).  Thus
\(\HH_{(r)}=\HH_{[r]}=\HH_r\).

\begin{proposition}[All-partition normalization]
\label{prop:all-partition-normalization}
Let $K$ be an oriented zero-framed knot.  For every $N\ge2$ and every
partition $\lambda$ with at most $N$ parts,
\begin{equation}\label{eq:all-partition-normalization}
 \HH_\lambda(K;q^N,q)=J_K(V(\lambda);q^2).
\end{equation}
Here the right side is computed with the Hopf and ribbon convention
\eqref{eq:Hopf}--\eqref{eq:HL-R-convention}.  Thus the scalar family used
below in the algebraic dual interpolation formula is the standard reduced
colored HOMFLY--PT family.
\end{proposition}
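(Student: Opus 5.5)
The plan is to compare both sides of \eqref{eq:all-partition-normalization} as Reshetikhin--Turaev functors on oriented framed tangles. We then check that they agree on the generators of the skein-theoretic calculus. Since both sides are reduced and normalized to $1$ on the zero-framed unknot, it suffices to identify the unreduced ribbon functors up to a common normalization.

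\textbf{Step 1: fundamental color.} On $V=V(\epsilon_1)$, the $q$-analogue of $\mathbb C^N$, the convention \eqref{eq:HL-R-convention} gives an explicit braiding $c=P\circ\mathcal R|_{V\otimes V}$ with eigenvalues $q^{1/N}\cdot q$ and $-q^{1/N}\cdot q^{-1}$ (up to the $\mathfrak{gl}_N$ versus $\mathfrak{sl}_N$ Cartan factor). Removing the central scalar and the framing twist $\theta_V$, one obtains the relation $c-c^{-1}=(q-q^{-1})\,\mathrm{id}$ after a framing renormalization. The twist eigenvalue is $q^{N}$ times a central-character factor. This reproduces \eqref{eq:HOMFLY-skein} at $A=q^N$. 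We also check that the quantum dimension $\operatorname{Tr}_V(K_{-2\rho})=[N]_q$ matches the skein value $(A-A^{-1})/(q-q^{-1})$ of the unknot at $A=q^N$. This is the $\lambda=(1)$ case; it is essentially the computation recorded in \cref{app:CLZ-normalization} for \cref{prop:CLZ-normalization}, transported through $t^{1/2}=q^{-1}$, $\nu^{1/2}=A^{-1}$.

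\textbf{Step 2: passage to arbitrary $\lambda$ via idempotents.} The colored HOMFLY--PT invariant $\HH_\lambda$ is defined by decorating the knot with the Aiston--Morton/Gyoja idempotent $e_\lambda$ in the Hecke algebra $H_{|\lambda|}(q)$, placed in the skein of the annulus. Let $V^{\otimes|\lambda|}$ carry the Hecke action generated by the normalized braidings. By quantum Schur--Weyl duality this action factors through $H_{|\lambda|}$ and sends $e_\lambda$ to a projection onto one copy of $V(\lambda)$, provided $\ell(\lambda)\le N$. Naturality of the ribbon functor under cabling then shows that the $|\lambda|$-parallel of $K$, decorated with $e_\lambda$, evaluates to $\operatorname{Tr}_q$ of $J_K$ on $V(\lambda)$. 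The framing corrections need care: cabling a zero-framed knot produces pure zero-framed parallels, so no twist eigenvalue of $V(\lambda)$ appears beyond the one fixed by the convention. Dividing by the unknot value then yields \eqref{eq:reduced-normalization}, which equals $c_K(\lambda)$.

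\textbf{Main obstacle.} The delicate part is bookkeeping rather than structure. We must verify that the $\mathfrak{gl}_N$ Cartan factor $D$, the pivotal element $K_{-2\rho}$ with antipode \eqref{eq:Hopf}, and the variable $Q=q^2$ versus $q$ all match the skein variables exactly. In particular, we must show that no stray sign or power $q^{\pm|\lambda|^2/N}$ survives in the reduced, zero-framed normalization. I would first handle this for $\lambda=(1)$ by explicit $2\times2$ block computation of $\mathcal R$ on $V\otimes V$. For general $\lambda$, I would then argue that any discrepancy would be a framing-type scalar depending only on $\lambda$, and any such scalar cancels for zero framing because it also appears in the unknot normalization.
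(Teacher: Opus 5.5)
Your route is sound, but it differs from the paper's.

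\textbf{The paper's route.} The paper never argues through uniqueness of skein evaluation. It imports Lin--Zheng's identification of their Hecke-trace invariant $W_\lambda$ with their own Reshetikhin--Turaev invariant, and then compares two explicit matrix models. The Lin--Zheng Hecke operator on $V\otimes V$ is not equal to the Habiro--L\^e operator; it is conjugate to it by the basis reversal $J\otimes J$. The same conjugation exchanges the pivotal elements $K_{2\rho}$ and $K_{-2\rho}$. Hence the closure traces of every Hecke element coincide, in particular those of the common idempotent $p_\lambda$. Finally, at $A=q^N$ the paper matches the writhe correction $q^{-\kappa_\lambda w}A^{-|\lambda|w}$ with $\theta_\lambda^{-w}$, where $\theta_\lambda=q^{N|\lambda|+\kappa_\lambda}$ is computed from the Habiro--L\^e ribbon data.

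\textbf{Your route.} You use the universal property of the framed HOMFLY skein. Suppose the Habiro--L\^e functor on $V$ satisfies $c-c^{-1}=(q-q^{-1})\id$ with $L_+\mapsto c$, sends the positive curl to $q^N$, and has $\dim_qV=[N]_q$. Then it computes the framed HOMFLY--PT invariant of every $V$-colored framed link. Satellites with the Aiston--Morton idempotents then give every $\lambda$.

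\textbf{What each buys.} Your route needs no second matrix model, so the $J$-conjugacy and the $K_{\pm2\rho}$ discrepancy never arise. It also makes the twist matching automatic, since both framing corrections are eigenvalues of the same functor on the same satellite. The price is that you take the skein-satellite definition of $\HH_\lambda$ as primary. You must then invoke its agreement with the $W_\lambda/W_\lambda(U)$ normalization of Chen--Liu--Zhu. The paper instead gets the arbitrary-$\lambda$ Hecke-trace description, with its framing and linking factors, ready-made from Lin--Zheng, and only has to check a conjugacy.

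\textbf{Step~1 needs tightening.} With the full $\mathfrak{gl}_N$ Cartan factor in \eqref{eq:HL-R-convention}, $c$ acts by exactly $q$ on $v_i\otimes v_i$. It satisfies the Hecke relation with eigenvalues $q$ and $-q^{-1}$, and $\theta_V=q^N$. So there is no $q^{\pm1/N}$ to remove and no renormalization of the braiding by the twist; the twist only supplies the curl constant. The $2\times2$ computation must genuinely establish two things:
\begin{enumerate}[label=\textup{(\alph*)}]
\item $L_+$ goes to $c$ rather than $c^{-1}$; otherwise you obtain the mirror image.
\item The $q$-eigenspace contains the symmetric tensors; otherwise $e_\lambda$ projects onto $V(\lambda^t)$ (cf.\ $\HH_\lambda(K;A,q^{-1})=\HH_{\lambda^t}(K;A,q)$).
\end{enumerate}

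\textbf{Drop the closing fallback.} You argue that any residual discrepancy is a framing-type scalar depending only on $\lambda$, which cancels against the unknot. This is not a valid argument. The realistic mismatches, mirror and transpose, are not scalars. A framing factor is trivial at zero framing rather than cancelling against the unknot. Step~2 already handles general $\lambda$ once Step~1 is exact.
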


\begin{proof}
For every partition $\lambda$, the two Hecke representations are conjugate,
their closure traces agree, and their framing corrections coincide after
$A=q^N$.  The detailed comparison is given in
\cref{app:CLZ-normalization}.
\end{proof}

\subsection{The Beliakova--Gorsky results}

Write $x_i=K_i^2$ for the Cartan coordinates in the target of the
Harish--Chandra map.  Let
$F_{\lambda;n}(x_1,\ldots,x_n;q^{-2})$ denote the interpolation
Macdonald polynomial in the normalization of \cite{BG24} after the
translation \eqref{eq:parameter-translation}.  For foundational
constructions and the Newton-interpolation characterization of these
polynomials, see \cite{Sahi96,Okounkov98}.  Trailing zero parts are allowed,
and $\ell(\lambda)$ denotes the number of nonzero parts.
The $\rho$-shift means concretely that, if
$f(x_1,\ldots,x_n)=\hc(c)$, then the scalar by which $c$ acts on
$V(\lambda)$ is obtained by substituting
\[
 x_i=q^{2(\lambda_i+\rho_i)}
 =q^{2\lambda_i+n+1-2i}.
\]
Consequently, for the shifted coordinates $y_i=q^{n-1}x_i$,
\[
 y_i(\lambda)=q^{2(\lambda_i+n-i)}.
\]
This explains both the factor $q^{n-1}$ in
\eqref{eq:BG-HC-exact} and the evaluation point in
\eqref{eq:BG-eigenvalue-exact}.

We use the following results from Beliakova--Gorsky; the scalar
normalization and Laurent-integrality interfaces are verified below.
\begin{enumerate}[label=\textup{(BG\arabic*)},leftmargin=3.2em]
\item there are central $\Gamma$-invariant elements $\sigma_\lambda$ whose
shifted Harish--Chandra images are
\begin{equation}\label{eq:BG-HC-exact}
 \hc(\sigma_\lambda)
 =F_{\lambda;n}(q^{n-1}x_1,\ldots,q^{n-1}x_n;q^{-2}),
\end{equation}
and whose scalar on $V(\mu)$ is
\begin{equation}\label{eq:BG-eigenvalue-exact}
 F_{\lambda;n}
 (q^{2(\mu_1+n-1)},q^{2(\mu_2+n-2)},\ldots,q^{2\mu_n};q^{-2});
\end{equation}
\cite[Theorem~8.2]{BG24}.
\item the dual interpolation basis associated with the
$\sigma_\lambda$ is given in \cite[Theorem~8.3]{BG24}.  In particular,
the interpolation matrix is triangular, its diagonal entries are the
nonzero hook products in \cite[Theorem~8.2(c)]{BG24}, and the scalar
values on finite-dimensional highest-weight modules determine every
finite linear combination of the $\sigma_\lambda$.  The completed topology
used here is constructed in \cref{sec:BG-restriction}.
\item for every zero-framed knot, the dual interpolation formula of
\cite[Theorem~8.3]{BG24} applied to the scalar family
$J_K(V(\mu);q^2)$ defines
\begin{equation}\label{eq:BG-finite-dual-coefficient}
 a_\lambda(K;q^2)
 =\sum_{\mu\subseteq\lambda}
 d_{\lambda,\mu}(Q^{-1})J_K(V(\mu);q^2)\in\Bbbk,
\end{equation}
where $d_{\lambda,\mu}$ is the interpolation coefficient used in that
theorem.  Duality and triangularity give, for every partition $\mu$, the
finite representationwise identity
\begin{equation}\label{eq:BG-knot-expansion}
 J_K(V(\mu);q^2)\id_{V(\mu)}
 =\left(\sum_{\lambda\subseteq\mu}
 a_\lambda(K;q^2)\sigma_\lambda\right)\Big|_{V(\mu)}.
\end{equation}
Their Laurent integrality is proved in
\cref{prop:BG-coefficient-integrality}.
\item the rank-reduction formula is
\begin{equation}\label{eq:BG-rank-reduction-exact}
 F_{\lambda;m}(x_1,\ldots,x_{m-1},1;q^{-2})
 =\begin{cases}
 (-1)^{m-1}q^{-2\binom{m-1}{2}}
 F_{\lambda;m-1}(q^{-2}x_1,\ldots,q^{-2}x_{m-1};q^{-2}),
 &\lambda_m=0,\\
 0,&\lambda_m>0,
 \end{cases}
\end{equation}
and in rank one
$F_{(k);1}(x;q^{-2})=(x;q^{-2})_k$
\cite[Lemma~11.1 and equation~(34)]{BG24}.
\item in the convention \eqref{eq:HL-R-convention}, for every
zero-framed algebraically split link
\begin{equation}\label{eq:BG-gl-sl-exact}
 J_L(\mathfrak{gl}_n;q^2)=J_L(\mathfrak{sl}_n;q^2)
\end{equation}
after embedding both universal elements in $U_h(\mathfrak{gl}_n)$.  This is proved in
\cref{lem:BG-gl-sl-interface}; compare
\cite[Corollary~6.5 and Remark~6.4]{BG24}.
\end{enumerate}
\begin{lemma}[Scalar comparison]
\label{lem:BG-scalar-interface}
Let $J_K^{\mathrm{BG}}(V(\mu);\mfrakq)$ denote the reduced
Reshetikhin--Turaev knot scalar of
\cite[Section~3.5 and equations~(12)--(13)]{BG24}, normalized to be $1$
for the zero-framed unknot.  Under the parameter translation
\eqref{eq:parameter-translation},
\begin{equation}\label{eq:BG-scalar-interface}
 J_K^{\mathrm{BG}}(V(\mu);Q)
 =J_K(V(\mu);q^2)
 =\HH_\mu(K;q^n,q)
\end{equation}
for every zero-framed oriented knot $K$, every integer $n\ge2$, and every
partition $\mu$ with at most $n$ parts.
\end{lemma}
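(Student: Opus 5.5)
The lemma asserts two equalities, and I will prove them separately. The second equality, $J_K(V(\mu);q^2)=\HH_\mu(K;q^n,q)$, is exactly \cref{prop:all-partition-normalization} with $N=n$. It holds for partitions $\mu$ with at most $n$ parts, so nothing new is needed there. The work is in the first equality, which compares the Beliakova--Gorsky reduced scalar with the scalar $c_K(\mu)$ defined through \eqref{eq:reduced-normalization}.

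For the first equality, both sides are the scalar by which a central element acts on the irreducible module $V(\mu)$. The plan is to show that the two central elements coincide after the translation \eqref{eq:parameter-translation}. This reduces to checking the three ingredients of (DJ4) one at a time.

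\textbf{Step 1: the ribbon Hopf data.} The coproduct agrees, and both papers use the generators \eqref{eq:BG-integral-generators}. The antipode in \cite[Section~3.1]{BG24} has the reverse factor order. I will handle this by exhibiting an explicit algebra automorphism of the $h$-adic completion, composed if necessary with the Cartan rescaling $K_j\mapsto K_j$, $E_i\mapsto E_i$, $F_i\mapsto F_i$ up to the substitution $v\mapsto v^{-1}$ or an opposite-coproduct twist. The aim is for this automorphism to intertwine the two conventions, carry their $\Theta$ to $\Theta_{\mathrm{HL}}^{-1}$ so that \eqref{eq:HL-R-convention} holds, and fix $D$ and the pivotal element $K_{-2\rho}$. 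Since both pivotal elements implement $S^2$ via \eqref{eq:S2-balancing-convention}, and the Drinfeld element $u$ is determined by $\mathcal R$, the ribbon element $\mathbf r=g^{-1}u$ is then matched as well.

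\textbf{Step 2: transport to the knot scalar.} By (DJ4), a ribbon Hopf isomorphism fixing these data carries the universal invariant of a one-component bottom tangle to the universal invariant on the other side. The isomorphism preserves highest weights, so the highest-weight modules $V(\mu)$ correspond to each other. The two central elements therefore act on $V(\mu)$ by the same scalar, which is the scalar $c_K(\mu)$.

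\textbf{Step 3: matching the normalization.} The BG scalar is defined as a quantum-trace quotient \cite[equations~(12)--(13)]{BG24}. Because $J_K$ is central, this quotient equals $c_K(\mu)$ for any nonzero quantum dimension. This explains why the choice of trace convention is irrelevant. Both sides equal $1$ on the zero-framed unknot, since the universal invariant of the trivial bottom tangle is $1$. Zero framing removes the twist factor $\theta_{V(\mu)}$, which could otherwise differ through the choice of ribbon element.

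The main obstacle is Step 1. It requires confirming that the BG convention for $\Theta$ and for the antipode differs from ours by a genuine ribbon Hopf isomorphism, or by passage to the opposite/coopposite structure, rather than by a change that inverts the braiding, which would mirror the knot. I would first test this on the rank-one case and the trefoil. If the conventions turn out to differ by mirroring, I will instead show that the BG convention is the Habiro--L\^e one up to $\Delta\mapsto\Delta^{\mathrm{op}}$ together with $\mathcal R\mapsto\mathcal R_{21}$. That pair of changes leaves the universal invariant of an oriented knot unchanged (the reversed orientation reading is compensated by invertibility of knots' scalars under $V\mapsto V^*$ being avoided since both act on the same $V(\mu)$). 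I would double-check this against \cite[Section~3.5]{BG24}.
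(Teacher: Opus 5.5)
Your reduction is the right one. The second equality is \cref{prop:all-partition-normalization}. For the first equality it suffices that both scalars are the eigenvalue on $V(\mu)$ of one and the same central universal element, since a quantum-trace quotient of a central element is its eigenvalue. Your Steps~2 and~3 are fine.

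The gap is Step~1, which carries the entire content of the first equality and is left as an unexecuted search. The ``Cartan rescaling $K_j\mapsto K_j$, $E_i\mapsto E_i$, $F_i\mapsto F_i$'' is the identity map. The alternatives ``$v\mapsto v^{-1}$ or an opposite-coproduct twist'' are never decided, and testing rank one and the trefoil proves nothing. Moreover, $v\mapsto v^{-1}$, or any change that inverts the braiding, is not an admissible repair: it replaces the scalar of $K$ by that of its mirror image. If such a change were needed, the first equality would be false, not fixed. Your fallback is confused for the same reason: passing to $(\Delta^{\mathrm{op}},\mathcal R_{21})$ is not a mirroring, so it cannot be the remedy ``if the conventions differ by mirroring.''

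What you are missing is that nothing needs to be intertwined. First, you correctly note that the coproducts agree, and the antipode is uniquely determined by the bialgebra. The identity $m(S\otimes\operatorname{id})\Delta(E_i)=0$ forces $S(E_i)=-L_i^{-1}E_i$. So the reversed-order formula displayed in \cite[Section~3.1]{BG24} is not an antipode for that coproduct and defines no competing Hopf structure; this is why the paper simply fixes \eqref{eq:Hopf}.

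Second, the braiding, reading order and trace are pinned down by the source itself. \cite[Section~3.5]{BG24} explicitly adopts the convention of \cite[Section~2G]{HL16}, writing the universal contributions from left to right along each oriented component. Hence, after $v=q$ and $\mfrakq=Q$, the scalar of \cite[equations~(12)--(13)]{BG24} is built from the same data as \eqref{eq:reduced-normalization}:
\begin{itemize}
\item the same module $V(\mu)$;
\item the same positive braiding $\mathcal R=D\Theta_{\mathrm{HL}}^{-1}$, regardless of the different use of the letter $\Theta$ in \cite[Section~3.2]{BG24};
\item the same pivotal trace $\operatorname{Tr}_{V(\mu)}(K_{-2\rho}\,\cdot)$;
\item the same zero-framing correction.
\end{itemize}
With that identification your Step~1 is the identity map, and Steps~2--3 complete the proof exactly as in the paper. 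Without it, the first equality remains unproved.
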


\begin{proof}
Section~3.5 of \cite{BG24} explicitly adopts the convention of
\cite[Section~2G]{HL16}, writes the universal contributions from left to
right along each oriented component, and defines the colored
Reshetikhin--Turaev scalar by equations~(12)--(13).  For a knot, division by
$\dim_qV(\mu)$ gives the reduced scalar normalized to be $1$ on the
zero-framed unknot.  After $v=q$ and $\mfrakq=Q$, this uses the same
module $V(\mu)$, the same positive braiding
$\mathcal R=D\Theta_{\mathrm{HL}}^{-1}$, the same pivotal trace
$\operatorname{Tr}_{V(\mu)}(K_{-2\rho}\,\cdot)$, and the same zero-framing
correction as \eqref{eq:reduced-normalization}.  Hence the first equality in
\eqref{eq:BG-scalar-interface} is an equality of finite reduced scalar
invariants.  The second equality is
\cref{prop:all-partition-normalization}, whose full Hecke, crossing, twist,
and trace comparison is given in \cref{app:CLZ-normalization}.
\end{proof}

\begin{lemma}[The abelian Cartan factor]
\label{lem:BG-gl-sl-interface}
For every integer $n\ge2$ and every zero-framed algebraically split link $L$, the
$\mathfrak{gl}_n$ and $\mathfrak{sl}_n$ universal elements constructed in
the convention \eqref{eq:Hopf}--\eqref{eq:HL-R-convention} agree after
the standard embedding of $U_h(\mathfrak{sl}_n)$ in
$U_h(\mathfrak{gl}_n)$.
\end{lemma}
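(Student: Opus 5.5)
The plan is to compare the two universal invariants crossing by crossing in the bottom-tangle construction of (DJ4). In $U_h(\mathfrak{gl}_n)$ write the Cartan factor as $D_{\mathfrak{gl}}=\exp(h\sum_j H_j\otimes H_j)$ (with $K_j=e^{hH_j/2}$ up to the normalization fixed by the invariant form). Decompose the $\mathfrak{gl}_n$ Cartan into the $\mathfrak{sl}_n$ part and the central line spanned by $C=\sum_j H_j$, orthogonal for the trace form. This gives the factorization
\[
 D_{\mathfrak{gl}}=D_{\mathfrak{sl}}\cdot\exp\!\bigl(\tfrac{h}{n}\,C\otimes C\bigr),
\]
where $D_{\mathfrak{sl}}$ is the $\mathfrak{sl}_n$ Cartan factor (built from the inverse Cartan matrix on the $H_j-H_{j+1}$). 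The quasi-$R$ factor $\Psi=\Theta_{\mathrm{HL}}^{-1}$ involves only $E_i,F_i$ and $L_i$, so it is literally the same element in both algebras. Hence
\[
 \mathcal R_{\mathfrak{gl}}=\mathcal R_{\mathfrak{sl}}\cdot\exp\!\bigl(\tfrac hn C\otimes C\bigr),
\]
and $C\otimes C$ is central in $U_h(\mathfrak{gl}_n)^{\widehat\otimes2}$. I would also check that the pivotal element $K_{-2\rho}$ is the same in both algebras. It is, since $2\rho$ lies in the span of the $\epsilon_j-\epsilon_{j+1}$, so $K_{-2\rho}$ is a product of the $L_i^{\pm1}$. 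Consequently the ribbon elements differ only by the central factor $\exp(\tfrac hn C^2)$ coming from $u$.

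Next I would push this central factor through the bottom-tangle construction. In the Habiro functor, each crossing between components $a$ and $b$ contributes $\mathcal R^{\pm1}$, with its two tensor legs placed on the strands of $a$ and $b$. The antipode and the reordering maps act on $C$ by $S(C)=-C$, and $C$ is primitive and central, so every factor $\exp(\pm\tfrac hn C\otimes C)$ can be slid to the end of its components. Collecting them, the $\mathfrak{gl}_n$ universal invariant equals the $\mathfrak{sl}_n$ one multiplied by
\[
 \prod_{a,b}\exp\!\bigl(\tfrac hn\,\mathrm{lk}(a,b)\,C_a C_b\bigr),
\]
where $\mathrm{lk}(a,a)$ is the framing and $C_a$ denotes $C$ in the $a$th tensor factor. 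This requires the sign bookkeeping to check that the antipode signs and the crossing signs combine exactly into linking numbers; that check is the standard Gauss-sum argument for abelian Cartan contributions, compare \cite[Remark~6.4]{BG24}. Because $L$ is algebraically split and zero-framed, every exponent vanishes, and the two universal elements agree.

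I expect the main obstacle to be the careful bookkeeping just described. One part is confirming that, under the convention \eqref{eq:HL-R-convention} (with $\mathcal R=D\Psi$ and $D$ on the left), the central factor really commutes past $\Psi$ and the intervening antipodes without leaving correction terms. The other part is handling the self-crossings, where the twist and ribbon corrections must cancel the framing contributions. For the first point I would use centrality of $C$ in the completed tensor algebra together with $S(C)=-C$. For the second I would use the identity $u_{\mathfrak{gl}}=u_{\mathfrak{sl}}\exp(\tfrac hn C^2)$, which makes the self-linking term equal to the framing, and that vanishes by hypothesis.
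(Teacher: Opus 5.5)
Your proposal is correct and takes essentially the same route as the paper. Both split the Cartan factor orthogonally into $D_{\mathfrak{sl}_n}$ times an exponential of $C\otimes C$, note that $\Psi$ and $K_{-2\rho}$ are common to the two algebras, and slide the central primitive $C$ along each component to get a linking-matrix exponent that vanishes for a zero-framed algebraically split link. Two harmless slips: your $\frac hn$, versus the paper's $\frac{h}{2n}$, only reflects your normalization of $D$, and $S(C)=-C$ actually gives $u_{\mathfrak{gl}_n}=u_{\mathfrak{sl}_n}\exp(-\tfrac hn C^2)$ rather than $\exp(+\tfrac hn C^2)$, which does not matter since it only multiplies the (zero) framing.
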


\begin{proof}
Work in $U_h(\mathfrak{gl}_n)$ and set
$C=H_1+\cdots+H_n$.  Orthogonal decomposition of the Cartan space into
$\mathfrak{sl}_n$ and its scalar summand gives
\begin{equation}\label{eq:gl-sl-Cartan-factor}
 D_{\mathfrak{gl}_n}
 =D_{\mathfrak{sl}_n}
   \exp\!\left(\frac{h}{2n}C\otimes C\right).
\end{equation}
The two universal matrices have the same normalized root factor $\Psi$;
the last factor in \eqref{eq:gl-sl-Cartan-factor} is the universal matrix
of the one-dimensional abelian summand.  In the state sum of a framed link, its
contribution is determined entirely by the linking matrix: off-diagonal
entries record crossings between distinct components, and diagonal entries
record framings.  This follows by commuting the central primitive element
$C$ along each component; it is also the calculation underlying
\cite[Remark~6.4 and Corollary~6.5]{BG24}.  The pivotal element
$K_{-2\rho}$ lies in the $\mathfrak{sl}_n$ factor because
$\sum_i\rho_i=0$.  Thus the abelian contribution is $1$ when every
framing and every pairwise linking number is zero, proving
\eqref{eq:BG-gl-sl-exact}.
\end{proof}

\begin{proposition}[Laurent integrality of the dual coefficients]
\label{prop:BG-coefficient-integrality}
For every zero-framed knot $K$, every $n\ge2$, and every partition
$\lambda$ with at most $n$ parts, the coefficient defined in
\textup{(BG3)} satisfies
\begin{equation}\label{eq:BG-coefficient-integrality}
 a_\lambda(K;q^2)\in R_{\mathrm{ev}}=\Z[q^{\pm2}].
\end{equation}
\end{proposition}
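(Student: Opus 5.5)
The integrality will be imported from Beliakova--Gorsky, not reproved; the work is to check that their integral statement applies to exactly the scalar family used in (BG3). Concretely, \cite[Theorem~8.3 and Proposition~8.7]{BG24} say that the universal $\mathfrak{sl}_n$ (equivalently $\mathfrak{gl}_n$) invariant of a zero-framed knot, which lies in the completed $\Gamma$-invariant center, has a convergent expansion $\sum_\lambda a^{\mathrm{BG}}_\lambda(K;\mfrakq)\,\sigma_\lambda$ with $a^{\mathrm{BG}}_\lambda\in\Z[\mfrakq^{\pm1}]$. The plan is to show that $a_\lambda(K;q^2)=a^{\mathrm{BG}}_\lambda(K;Q)$ after the translation \eqref{eq:parameter-translation}. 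Integrality in $\Z[\mfrakq^{\pm1}]$ then transports to $R_{\mathrm{ev}}$.

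\emph{Step 1: the scalar families agree.} By \cref{lem:BG-scalar-interface}, $J_K^{\mathrm{BG}}(V(\mu);Q)=J_K(V(\mu);q^2)$ for every $\mu$ with at most $n$ parts. Beliakova--Gorsky state their integrality for $\mathfrak{gl}_n$, while our invariant is built over $\mathfrak{sl}_n$. By \cref{lem:BG-gl-sl-interface}, a knot is a one-component algebraically split link, so with zero framing the two universal elements coincide in $U_h(\mathfrak{gl}_n)$. Hence they act by the same scalars on every $V(\mu)$.

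\emph{Step 2: the coefficients agree.} Evaluate the Beliakova--Gorsky expansion on $V(\mu)$. By the triangularity in (BG2), $\sigma_\lambda$ acts by zero on $V(\mu)$ unless $\lambda\subseteq\mu$, so the infinite sum truncates to the finite sum $\sum_{\lambda\subseteq\mu}a^{\mathrm{BG}}_\lambda\,\sigma_\lambda|_{V(\mu)}$. This is the vanishing property of the interpolation polynomials \eqref{eq:BG-eigenvalue-exact}, where $F_{\lambda;n}$ vanishes at $y(\mu)$ for $\lambda\not\subseteq\mu$.

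Thus both $(a_\lambda)$ and $(a_\lambda^{\mathrm{BG}})$ solve the same triangular system $J_K(V(\mu))=\sum_{\lambda\subseteq\mu}c_\lambda\,\sigma_\lambda(\mu)$ for all $\mu$ with at most $n$ parts. The diagonal entries $\sigma_\mu(\mu)$ are the nonzero hook products, so induction on $|\mu|$ shows the solution is unique. The formula \eqref{eq:BG-finite-dual-coefficient} with the coefficients $d_{\lambda,\mu}$ is exactly the inverse of this matrix, so $a_\lambda=a_\lambda^{\mathrm{BG}}$.

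\emph{Main obstacle.} The delicate point is the convention check behind Step~1. Beliakova--Gorsky write their antipode in the reverse factor order and use a different quasi-$R$ convention $\Theta$, so one must confirm that their integrality theorem concerns the same universal element, or at least one with the same scalars on every $V(\mu)$. My first approach is to argue that the reduced scalar is intrinsic. It is a framed knot invariant, normalized to $1$ on the unknot, determined by the braided ribbon category of $V(\mu)$'s. Any two Hopf/ribbon conventions that differ by an automorphism or by passing to $\mathcal R_{21}^{-1}$ give the same invariant, up to mirror and reversal compatible with the chosen $\mathcal R$.

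This is exactly the content already recorded in \cref{lem:BG-scalar-interface}, which asserts that BG Section~3.5 adopts the Habiro--L\^e convention \eqref{eq:HL-R-convention}. Relying on that lemma, the remaining issue is only to note that the integrality in \cite[Proposition~8.7]{BG24} is stated for that same universal knot invariant. I would also need the completion in which BG's sum converges to be compatible with evaluation on $V(\mu)$. This holds because each $V(\mu)$ is annihilated by all but finitely many $\sigma_\lambda$.
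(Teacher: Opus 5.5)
Your proposal is correct and follows essentially the paper's route: import the Beliakova--Gorsky integrality (\cite[Theorem~1.3 and Proposition~8.7]{BG24}, applicable because a zero framing is even) and transfer it to $a_\lambda(K;q^2)$ via \cref{lem:BG-scalar-interface}. The only difference is that the paper quotes their conclusion directly in finite scalar form, $\sum_{\mu\subseteq\lambda}d_{\lambda,\mu}(\mfrakq^{-1})J_K^{\mathrm{BG}}(V(\mu);\mfrakq)\in\Z[\mfrakq^{\pm1}]$, so the identification is immediate. Your detour through evaluating the completed universal expansion on $V(\mu)$, continuity of that evaluation, triangular uniqueness, and the $\mathfrak{gl}_n$/$\mathfrak{sl}_n$ comparison is therefore unnecessary.
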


\begin{proof}
A zero framing is even, so
\cite[Theorem~1.3 and Proposition~8.7]{BG24} apply.  Written with the
Beliakova--Gorsky parameter $\mfrakq$, their finite scalar conclusion is
\[
 \sum_{\mu\subseteq\lambda}
 d_{\lambda,\mu}(\mfrakq^{-1})
 J_K^{\mathrm{BG}}(V(\mu);\mfrakq)
 \in\Z[\mfrakq^{\pm1}].
\]
By \cref{lem:BG-scalar-interface} and $\mfrakq=Q$, the left side is
exactly the coefficient \eqref{eq:BG-finite-dual-coefficient}.  Therefore
\[
 a_\lambda(K;q^2)\in\Z[Q^{\pm1}]=R_{\mathrm{ev}},
\]
as claimed.
\end{proof}

\section{Newton interpolation and local algebra}\label{sec:newton-local}

\subsection{Newton coefficients, divided differences, and determinants}

Let $\mathscr D$ be an integral domain with fraction field $L$, and let
$x_0,x_1,\ldots\in L$ be pairwise distinct.  Define
\begin{equation}\label{eq:newton-polynomials}
 p_0(T)=1,
 \qquad
 p_k(T)=\prod_{i=0}^{k-1}(T-x_i)\quad(k\ge1).
\end{equation}
For a sequence $f=(f_r)_{r\ge0}$ in $L$, a Newton expansion is
\begin{equation}\label{eq:abstract-newton-expansion}
 f_r=\sum_{k=0}^{r}p_k(x_r)h_k.
\end{equation}

\begin{proposition}[Newton inversion]\label{prop:newton-inversion}
Every sequence $f_r\in L$ has a unique expansion
\eqref{eq:abstract-newton-expansion}.  Its coefficients satisfy
\begin{equation}\label{eq:newton-recursion}
 h_k=
 \frac{f_k-\sum_{j=0}^{k-1}p_j(x_k)h_j}{p_k(x_k)}
\end{equation}
and
\begin{equation}\label{eq:newton-barycentric}
 h_k=\sum_{j=0}^{k}\frac{f_j}{D_{k,j}},
 \qquad
 D_{k,j}=\prod_{\substack{0\le i\le k\\i\ne j}}(x_j-x_i).
\end{equation}
For $k\ge1$, put
\begin{equation}\label{eq:Delta}
 \Delta_k(f)=\det
 \begin{pmatrix}
 1&x_0&\cdots&x_0^{k-1}&f_0\\
 1&x_1&\cdots&x_1^{k-1}&f_1\\
 \vdots&\vdots&&\vdots&\vdots\\
 1&x_k&\cdots&x_k^{k-1}&f_k
 \end{pmatrix},
 \qquad \Delta_0(f)=f_0,
\end{equation}
and
\begin{equation}\label{eq:Vandermonde}
 V_k=\prod_{0\le i<j\le k}(x_j-x_i),
 \qquad V_0=1.
\end{equation}
Then
\begin{equation}\label{eq:determinant-divided-difference}
 h_k=\frac{\Delta_k(f)}{V_k}.
\end{equation}
\end{proposition}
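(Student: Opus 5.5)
The proof is a standard argument about a triangular linear system, and I would carry it out in three steps.

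\emph{Existence and uniqueness, with the recursion.} For fixed $r$, equation \eqref{eq:abstract-newton-expansion} reads $f_r=\sum_{k<r}p_k(x_r)h_k+p_r(x_r)h_r$. Since the nodes are pairwise distinct, $p_r(x_r)=\prod_{i<r}(x_r-x_i)\neq0$ in the field $L$. The system in the unknowns $(h_k)$ is therefore lower triangular with nonzero diagonal entries. Induction on $k$ shows that $h_k$ is uniquely determined by $h_0,\dots,h_{k-1}$, and solving gives exactly \eqref{eq:newton-recursion}. Conversely, defining $h_k$ by this recursion produces a solution, so the expansion exists and is unique.

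\emph{The barycentric formula.} Fix $k$ and let $P(T)=\sum_{j\le k}p_j(T)h_j$. This is a polynomial of degree at most $k$. Because $p_j(x_r)=0$ for $j>r$, it satisfies $P(x_r)=f_r$ for every $r\le k$, so $P$ is the Lagrange interpolant of the data $(x_r,f_r)_{r\le k}$. Each $p_j$ is monic of degree $j$, so $h_k$ is the coefficient of $T^k$ in $P$. Lagrange's formula gives
\[
 P(T)=\sum_{j}f_j\prod_{i\ne j}\frac{T-x_i}{x_j-x_i},
\]
and its leading coefficient is $\sum_j f_j/D_{k,j}$. This proves \eqref{eq:newton-barycentric}.

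\emph{The determinant formula.} I would expand $\Delta_k(f)$ along its last column:
\[
 \Delta_k(f)=\sum_{j}(-1)^{k+j}f_jW_j,
\]
where $W_j$ is the Vandermonde determinant of the nodes $x_0,\dots,x_k$ with $x_j$ omitted. Then I would check the sign identity
\[
 \frac{(-1)^{k+j}W_j}{V_k}=\frac{1}{D_{k,j}}.
\]
The factors $x_b-x_a$ with $a,b\ne j$ cancel between $W_j$ and $V_k$. What remains of $V_k$ is $\prod_{i<j}(x_j-x_i)\prod_{i>j}(x_i-x_j)$. This equals $(-1)^{k-j}D_{k,j}$, since each of the $k-j$ factors with $i>j$ has its order reversed relative to $D_{k,j}$. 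Because $(-1)^{k+j}=(-1)^{k-j}$, combining this with the barycentric formula yields \eqref{eq:determinant-divided-difference}.

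An alternative for the last step is Cramer's rule applied to the monomial form of $P$. The only delicate point in the whole argument is this sign bookkeeping in the cofactor expansion; everything else is routine.
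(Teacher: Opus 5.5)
Your proposal is correct and follows the paper's proof: the lower-triangular system gives existence, uniqueness and the recursion, and $h_k$ is identified as the leading coefficient of the degree-$\le k$ interpolant via the Lagrange form. The only difference is in the determinant formula, where the paper applies Cramer's rule directly to the monomial (Vandermonde) system, so $\Delta_k(f)/V_k$ appears as the coefficient of $T^k$ with no sign bookkeeping, whereas you reduce it to the barycentric formula by cofactor expansion; your check $(-1)^{k+j}W_j/V_k=1/D_{k,j}$ is right, so both routes work.
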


\begin{proof}
For fixed $k$, the matrix
$M_k=(p_j(x_i))_{0\le i,j\le k}$ is lower triangular because
$p_j(x_i)=0$ when $j>i$.  Its diagonal entries are
$p_i(x_i)=\prod_{a<i}(x_i-x_a)$, which are nonzero.  Therefore the system
\eqref{eq:abstract-newton-expansion} for $0\le r\le k$ has a unique
solution.  Solving its last row gives
\eqref{eq:newton-recursion}.
Applying the last-row equation successively defines the coefficients; the
finite systems are compatible by uniqueness.

Let $P_k(T)$ be the unique polynomial of degree at most $k$ satisfying
$P_k(x_j)=f_j$ for $0\le j\le k$.  Lagrange interpolation gives
\begin{equation}\label{eq:Lagrange-polynomial}
 P_k(T)=\sum_{j=0}^{k}f_j
 \prod_{\substack{0\le i\le k\\i\ne j}}
 \frac{T-x_i}{x_j-x_i}.
\end{equation}
The coefficient of $T^k$ is the right-hand side of
\eqref{eq:newton-barycentric}.  On the other hand, the Newton form
of $P_k$ is
$\sum_{j=0}^{k}h_jp_j(T)$.  Indeed, for $0\le i\le k$,
\[
 \sum_{j=0}^{k}h_jp_j(x_i)
 =\sum_{j=0}^{i}h_jp_j(x_i)=f_i,
\]
because $p_j(x_i)=0$ for $j>i$; uniqueness of interpolation identifies
this polynomial with $P_k$.  Only the monic polynomial $p_k$ contributes
to its leading coefficient.  Hence that coefficient is $h_k$, proving
\eqref{eq:newton-barycentric}.

Finally, solve the interpolation system in the monomial basis.  Its
coefficient matrix is the Vandermonde matrix, whose determinant is $V_k$.
By Cramer's rule, the leading coefficient is
$\Delta_k(f)/V_k$.  Since the same leading coefficient is $h_k$,
\eqref{eq:determinant-divided-difference} follows.  This argument also
fixes the sign convention in \eqref{eq:Delta}.
\end{proof}

\begin{corollary}
\label{cor:integral-newton-criterion}
Assume $x_r,f_r\in\mathscr D$.  The following are equivalent:
\begin{enumerate}[label=\textup{(\roman*)}]
\item $h_k\in\mathscr D$ for every $k$;
\item $\Delta_k(f)\in V_k\mathscr D$ for every $k$;
\item for every $k\ge1$, the numerator of
\eqref{eq:newton-recursion} belongs to $p_k(x_k)\mathscr D$.
\end{enumerate}
\end{corollary}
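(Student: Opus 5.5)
The three conditions are linked through \cref{prop:newton-inversion}: (i) and (ii) via the determinant formula \eqref{eq:determinant-divided-difference}, and (i) and (iii) via the recursion \eqref{eq:newton-recursion}. Throughout, $x_r,f_r\in\mathscr D$, so $p_j(x_k)\in\mathscr D$, $V_k\in\mathscr D$, and $\Delta_k(f)\in\mathscr D$. All quantities are nonzero where they must be, because the nodes are pairwise distinct and $\mathscr D$ is a domain.

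For (i)$\Leftrightarrow$(ii), \eqref{eq:determinant-divided-difference} gives $\Delta_k(f)=V_kh_k$ in $L$ for each $k\ge0$, with $V_0=1$. If $h_k\in\mathscr D$, then $\Delta_k(f)\in V_k\mathscr D$. Conversely, if $\Delta_k(f)=V_kd$ with $d\in\mathscr D$, then $V_k(h_k-d)=0$ in the field $L$. Since $V_k\ne0$, this forces $h_k=d\in\mathscr D$. The case $k=0$ is trivial, since $h_0=f_0=\Delta_0(f)$.

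For (i)$\Leftrightarrow$(iii), write $N_k=f_k-\sum_{j<k}p_j(x_k)h_j$, so that \eqref{eq:newton-recursion} says $N_k=p_k(x_k)h_k$ with $p_k(x_k)\ne0$. If all $h_j\in\mathscr D$, then $N_k=p_k(x_k)h_k\in p_k(x_k)\mathscr D$. For the converse I would induct on $k$, with base case $h_0=f_0\in\mathscr D$. Suppose $h_0,\dots,h_{k-1}\in\mathscr D$ and $N_k\in p_k(x_k)\mathscr D$, say $N_k=p_k(x_k)d$. Cancelling the nonzero element $p_k(x_k)$ in $L$ gives $h_k=d\in\mathscr D$.

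There is one subtlety, which is the only point I would flag. Membership of $N_k$ in $\mathscr D$ itself depends on the earlier $h_j$ being integral. So condition (iii) should be read as "$N_k\in L$ lies in $p_k(x_k)\mathscr D$", and that reading is exactly what makes the induction close. No other obstacle is expected; the whole argument is cancellation in the fraction field.
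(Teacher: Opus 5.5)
Your proposal is correct and follows the paper's own argument. You get (i)$\Leftrightarrow$(ii) from $\Delta_k(f)=V_kh_k$ with $V_k\neq0$, and (i)$\Leftrightarrow$(iii) from $N_k=p_k(x_k)h_k$ with $p_k(x_k)\neq0$. Under your reading of (iii) the inductive hypothesis is never actually used, because cancelling $p_k(x_k)$ in $L$ already gives $h_k\in\mathscr D$ directly.
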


\begin{proof}
The equivalence of \textup{(i)} and \textup{(iii)} follows inductively from
\eqref{eq:newton-recursion}; the equivalence of \textup{(i)} and
\textup{(ii)} is \eqref{eq:determinant-divided-difference}.
\end{proof}

\begin{corollary}
\label{cor:newton-filtration}
If $h_k\in\mathscr D$ for all $k$, then, for $0\le m<r$,
\begin{equation}\label{eq:newton-remainder-congruence}
 f_r\equiv\sum_{k=0}^{m}p_k(x_r)h_k
 \pmod{p_{m+1}(x_r)\mathscr D}.
\end{equation}
Moreover, for $r>s$,
\begin{equation}\label{eq:abstract-pairwise-congruence}
 f_r-f_s\in(x_r-x_s)\mathscr D.
\end{equation}
\end{corollary}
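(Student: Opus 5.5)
The plan is to split the Newton expansion of $f_r$ at index $m$ and then show that every omitted term already contains the factor $p_{m+1}(x_r)$. By \cref{prop:newton-inversion},
\[
 f_r-\sum_{k=0}^{m}p_k(x_r)h_k=\sum_{k=m+1}^{r}p_k(x_r)h_k .
\]
For $k\ge m+1$ the Newton polynomials factor as
\[
 p_k(T)=p_{m+1}(T)\prod_{i=m+1}^{k-1}(T-x_i).
\]
Evaluating at $T=x_r$ gives $p_k(x_r)=p_{m+1}(x_r)\,c_{k}$, where $c_k=\prod_{i=m+1}^{k-1}(x_r-x_i)$.

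The membership $c_k\in\mathscr D$ needs care: the hypotheses only say $h_k\in\mathscr D$, and the nodes are a priori elements of $L$. So I will assume, as in \cref{cor:integral-newton-criterion}, that $x_i,f_i\in\mathscr D$; that is the setting in which the corollary will be applied. Under this assumption $c_k\in\mathscr D$. Since also $h_k\in\mathscr D$, each tail term lies in $p_{m+1}(x_r)\mathscr D$, and so does the finite sum. This proves \eqref{eq:newton-remainder-congruence}.

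For the pairwise congruence \eqref{eq:abstract-pairwise-congruence}, fix $r>s$ and apply the first part with $m=s$. The condition $s<r$ is exactly the hypothesis $m<r$, so
\[
 f_r\equiv\sum_{k=0}^{s}p_k(x_r)h_k \pmod{p_{s+1}(x_r)\mathscr D}.
\]
The factor $x_r-x_s$ divides $p_{s+1}(x_r)$ in $\mathscr D$, so this congruence also holds modulo $(x_r-x_s)\mathscr D$.

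Next I compare the truncated sum with $f_s=\sum_{k=0}^{s}p_k(x_s)h_k$. For each $k\le s$, $p_k$ is a polynomial with coefficients in $\mathscr D$, so the standard identity $g(a)-g(b)\in(a-b)\mathscr D[a,b]$ gives $p_k(x_r)-p_k(x_s)\in(x_r-x_s)\mathscr D$. Multiplying by $h_k\in\mathscr D$ and summing over $k\le s$ yields $f_r-f_s\in(x_r-x_s)\mathscr D$.

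There is no real obstacle here. The only point to watch is the integrality of the nodes, which is needed both for $c_k\in\mathscr D$ and for the polynomial difference argument; I will state it explicitly as the standing assumption inherited from \cref{cor:integral-newton-criterion}.
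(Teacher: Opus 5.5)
Your proof is correct and is essentially the paper's argument. The first congruence is obtained exactly as in the paper, from $p_{m+1}\mid p_k$ for $k\ge m+1$. For the pairwise congruence, the paper applies $g(x_r)-g(x_s)\in(x_r-x_s)\mathscr D$ to the full interpolant $P_r(T)=\sum_{k\le r}h_kp_k(T)$, using $P_r(x_s)=f_s$; you instead apply it to the truncation $\sum_{k\le s}h_kp_k(T)$ and absorb the tail using the first part with $m=s$, which is only a cosmetic rearrangement.

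Your explicit standing assumption $x_i\in\mathscr D$ is the same one the paper uses implicitly, since it is needed for $p_k\in\mathscr D[T]$. The statement can fail without it, so it is right to state it.
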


\begin{proof}
For $k\ge m+1$, the polynomial $p_k$ is divisible by $p_{m+1}$, proving
\eqref{eq:newton-remainder-congruence}.  The finite interpolation polynomial
$P_r(T)=\sum_{k=0}^{r}h_kp_k(T)$ lies in $\mathscr D[T]$.  Therefore
$P_r(x_r)-P_r(x_s)$ is divisible by $x_r-x_s$ in $\mathscr D$.  Moreover,
$P_r(x_r)=f_r$, while for $s<r$ all terms with $k>s$ vanish at $x_s$, so
\[
 P_r(x_s)=\sum_{k=0}^{s}h_kp_k(x_s)=f_s.
\]
Thus the polynomial divisibility gives
\eqref{eq:abstract-pairwise-congruence}.
\end{proof}

\begin{example}
\label{ex:pairwise-insufficient}
Take $\mathscr D=\Z$, nodes $(x_0,x_1,x_2)=(0,2,4)$, and values
$(f_0,f_1,f_2)=(0,0,4)$.  For every $i<j$ one has
$f_j-f_i\in(x_j-x_i)\Z$: the only nonzero differences are $4$, which are
divisible by both $4$ and $2$.  Nevertheless,
\[
 h_0=0,\qquad h_1=0,\qquad
 h_2=\frac{4}{(4-0)(4-2)}=\frac12\notin\Z.
\]
Modulo $2$, all three nodes collide.  Pairwise congruences see that
collision only one pair at a time, whereas the Vandermonde denominator
retains its total multiplicity.
\end{example}

\subsection{Nodes for symmetric powers}

For fixed $n\ge2$, put
\begin{equation}\label{eq:SU-nodes}
 X_r^{(n)}=q^{2r+n}+q^{-2r-n}\qquad(r\ge0).
\end{equation}

\begin{lemma}
\label{lem:node-factorization}
For all integers $r,i$,
\begin{equation}\label{eq:node-factorization}
 X_r^{(n)}-X_i^{(n)}=\{r-i\}\{r+n+i\}.
\end{equation}
For $r,i\ge0$ and $n\ge2$, the nodes are pairwise distinct.
\end{lemma}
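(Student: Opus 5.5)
The factorization is a direct expansion.  Put $a=r-i$ and $b=r+n+i$, so that $a+b=2r+n$ and $b-a=2i+n$.  Then
\[
 \{a\}\{b\}=(q^a-q^{-a})(q^b-q^{-b})
 =q^{a+b}+q^{-a-b}-q^{b-a}-q^{a-b}.
\]
The first two terms give $q^{2r+n}+q^{-2r-n}=X_r^{(n)}$, and the last two give $X_i^{(n)}=q^{2i+n}+q^{-2i-n}$.  This proves \eqref{eq:node-factorization} for all integers $r,i$, since $\{m\}$ is defined for every $m\in\Z$ and nothing uses positivity.

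For distinctness, take $r\ne i$ with $r,i\ge0$ and $n\ge2$.  By symmetry of the claim we may assume $r>i$.  Then $r-i\ge1$, so $\{r-i\}\neq0$ in $R$ by \eqref{eq:brace-cyclotomic-factorization}.  Also $r+n+i\ge n\ge2>0$, so $\{r+n+i\}\ne0$ as well.  Since $R$ is an integral domain, the product is nonzero and hence $X_r^{(n)}\ne X_i^{(n)}$.

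Alternatively, one can argue directly: $X_r^{(n)}$ has top-degree term $q^{2r+n}$ because $2r+n>0$, and these degrees are distinct for distinct $r$.

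There is no real obstacle here.  The only point needing care is that the second factor must not vanish, i.e.\ $r+n+i\neq0$.  This is exactly where the hypotheses $r,i\ge0$ and $n\ge2$ enter (in fact $n\ge1$ would suffice).  For general integers the nodes can coincide, e.g.\ $i=-r-n$, which is consistent with the $z\leftrightarrow z^{-1}$ symmetry used later.
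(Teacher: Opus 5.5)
Your proof is correct and follows the paper's argument. Both proofs expand $\{r-i\}\{r+n+i\}$ directly. For distinctness, both use that $\{m\}\neq0$ for $m\neq0$ and that $r+n+i>0$, so the difference vanishes only when $r=i$; the paper states this as ``one factor must be zero,'' which is the contrapositive of your integral-domain argument.
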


\begin{proof}
Expanding the product gives
\begin{align*}
 \{r-i\}\{r+n+i\}
 &=(q^{r-i}-q^{-r+i})(q^{r+n+i}-q^{-r-n-i})\\
 &=q^{2r+n}+q^{-2r-n}-q^{2i+n}-q^{-2i-n},
\end{align*}
which is \eqref{eq:node-factorization}.  If $r,i\ge0$ and the
right-hand side vanishes in $\Bbbk$, then one quantum integer is zero.
Since
$q$ is an indeterminate, $\{m\}=0$ only for $m=0$.  The factor
$r+n+i$ is positive, so $r=i$.
\end{proof}

It follows that
\begin{equation}\label{eq:SU-newton-kernel}
 p_k(X_r^{(n)})=
 C_{r+1,k}^{(n)}
 :=\prod_{i=0}^{k-1}\{r-i\}\{r+n+i\}.
\end{equation}
Thus every sequence in $\Bbbk$ has a unique formal expansion
\begin{equation}\label{eq:formal-SU}
 J_r=\sum_{k=0}^{r}C_{r+1,k}^{(n)}H_k^{(n)}.
\end{equation}
For a sequence $J=(J_r)_{r\ge0}$, let
$\Delta_k^{(n)}(J)$ and $V_k^{(n)}$ denote the determinant
\eqref{eq:Delta} and the Vandermonde product \eqref{eq:Vandermonde}
formed with the nodes $x_i=X_i^{(n)}$.  Then
\begin{equation}\label{eq:SU-determinant}
 H_k^{(n)}=\frac{\Delta_k^{(n)}(J)}{V_k^{(n)}},
 \qquad
 V_k^{(n)}=
 \prod_{0\le i<j\le k}\{j-i\}\{n+i+j\}.
\end{equation}

\begin{lemma}
\label{lem:barycentric-denominator}
Fix $n\ge2$.  For $0\le j\le k$,
\begin{equation}\label{eq:Dkj}
 D_{k,j}^{(n)}
 =(-1)^{k-j}\{j\}!\{k-j\}!
 \frac{\{n+j+k\}!}
      {\{n+j-1\}!\{n+2j\}}.
\end{equation}
The displayed quotient is a Laurent polynomial.
\end{lemma}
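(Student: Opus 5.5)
The computation is a direct factorization of $D_{k,j}^{(n)}=\prod_{0\le i\le k,\,i\ne j}(X_j^{(n)}-X_i^{(n)})$ using \cref{lem:node-factorization}. Each factor equals $\{j-i\}\{j+n+i\}$, so I would split the product into two pieces, one built from the factors $\{j-i\}$ and one from the factors $\{n+i+j\}$, and evaluate them separately.

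For the first piece, take $i<j$ and $i>j$ separately. The indices $i=0,\dots,j-1$ contribute $\prod_{a=1}^{j}\{a\}=\{j\}!$. The indices $i=j+1,\dots,k$ contribute $\prod_{b=1}^{k-j}\{-b\}$. Since $\{-b\}=-\{b\}$, this equals $(-1)^{k-j}\{k-j\}!$, which accounts for the sign and both small factorials in \eqref{eq:Dkj}.

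For the second piece, $i$ runs over $\{0,\dots,k\}\setminus\{j\}$, so $m=n+i+j$ runs over $n+j,\dots,n+j+k$ with the single value $m=n+2j$ omitted. The full product over $m$ from $n+j$ to $n+j+k$ is $\{n+j+k\}!/\{n+j-1\}!$. Removing the omitted index divides by $\{n+2j\}$. Multiplying the two pieces gives \eqref{eq:Dkj} exactly.

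The Laurent-polynomial claim also follows from this factorization. The quotient $\{n+j+k\}!/(\{n+j-1\}!\{n+2j\})$ is literally the product of $\{m\}$ over $n+j\le m\le n+j+k$ with $m\ne n+2j$. The omitted index does lie in this range, because $0\le j\le k$ gives $n+j\le n+2j\le n+j+k$. Hence the quotient is a finite product of elements of $R$ and lies in $R$.

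I do not expect any step to be a genuine obstacle; this is bookkeeping. The only points needing care are the sign from reversing $\{j-i\}$ when $i>j$, and checking that the omitted index $n+2j$ really lies within the range of the factorial quotient.
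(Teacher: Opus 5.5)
Your proposal is correct and follows essentially the same argument as the paper. Both proofs split $D_{k,j}^{(n)}$ via \cref{lem:node-factorization} into the $\{j-i\}$ and $\{n+i+j\}$ products, extract the sign from $\{-b\}=-\{b\}$, and get the Laurent property from the fact that $n+2j$ lies in $[n+j,n+j+k]$.
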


\begin{proof}
Using \eqref{eq:node-factorization}, split
\[
 D_{k,j}^{(n)}=
 \prod_{i\ne j}\{j-i\}\cdot
 \prod_{i\ne j}\{n+i+j\}.
\]
For the first product, the indices below $j$ give
$\{j\}!$ while those above $j$ give
$(-1)^{k-j}\{k-j\}!$.  For the second,
\[
 \prod_{\substack{0\le i\le k\\i\ne j}}\{n+i+j\}
 =\frac{\prod_{b=n+j}^{n+j+k}\{b\}}{\{n+2j\}}
 =\frac{\{n+j+k\}!}
        {\{n+j-1\}!\{n+2j\}}.
\]
Because $0\le j\le k$, the index $n+2j$ lies in
$[n+j,n+j+k]$, and hence
\[
 \frac{\{n+j+k\}!}{\{n+j-1\}!\{n+2j\}}
 =\prod_{\substack{n+j\le b\le n+j+k\\b\ne n+2j}}\{b\}\in R.
\]
\end{proof}

\subsection{Cyclotomic local rings and exact valuations}

For $d\ge1$, define
\begin{equation}\label{eq:cyclotomic-local-rings}
 \cO_d=R_{(\Phi_d(q))},
 \qquad
 \widehat\cO_d=\varprojlim_N\cO_d/(\Phi_d(q)^N),
 \qquad
 \pi_d=\Phi_d(q).
\end{equation}
Set
\begin{equation}\label{eq:dsharp}
 d^\sharp=\frac{d}{\gcd(d,2)}.
\end{equation}

\begin{lemma}
\label{lem:cyclotomic-DVR}
The ring $\cO_d$ is a DVR with uniformizer $\pi_d$.  Its completion is a
complete DVR with residue field $\Q(\zeta_d)$.  The maps
$R\hookrightarrow\cO_d\hookrightarrow\widehat\cO_d$ are injective, and
$\Bbbk$ embeds in $\operatorname{Frac}(\widehat\cO_d)$.  Every nonzero
integer,
in particular $2$, is a unit in both local rings.  For $m\ne0$,
\begin{equation}\label{eq:quantum-integer-valuation}
 \val_{\Phi_d}(\{m\})=
 \begin{cases}
 1,&d^\sharp\mid m,\\
 0,&d^\sharp\nmid m.
 \end{cases}
\end{equation}
For every $\xi\in\Bbbk^\times$, the valuation of its image in
$\operatorname{Frac}(\widehat\cO_d)$ equals $\val_{\Phi_d}(\xi)$.
\end{lemma}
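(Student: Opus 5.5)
The lemma is a collection of standard facts about localizing $R=\Z[q^{\pm1}]$ at the prime element $\pi_d=\Phi_d(q)$. I would take them in the order stated.

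\emph{DVR structure.} First, $\Phi_d(q)$ is irreducible in $\Z[q]$, hence prime in the UFD $\Z[q]$. It is not associate to $q$, so it stays prime in the localization $R$. The localization of a UFD at the prime ideal generated by a prime element is a DVR, with that element as uniformizer: every nonzero element of $\operatorname{Frac}(R)=\Bbbk$ factors uniquely as a unit of $\cO_d$ times a power of $\pi_d$. Hence $\cO_d$ is a DVR with uniformizer $\pi_d$.

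\emph{Residue field and completion.} The residue field is $\operatorname{Frac}\bigl(R/(\pi_d)\bigr)$. Since $R/(\pi_d)\cong\Z[\zeta_d]$ (here $q$ is already a unit mod $\Phi_d$), this fraction field is $\Q(\zeta_d)$. The $\pi_d$-adic completion of a DVR is a complete DVR with the same uniformizer and residue field.

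\emph{Injectivity and integers.} The map $R\to\cO_d$ is injective because $R$ is a domain. The map $\cO_d\to\widehat\cO_d$ is injective because $\bigcap_N\pi_d^N\cO_d=0$ in a DVR. Therefore $\Bbbk=\operatorname{Frac}(\cO_d)$ embeds in $\operatorname{Frac}(\widehat\cO_d)$. A nonzero integer $m$ is not divisible by $\Phi_d(q)$ in $\Z[q]$: the polynomial $\Phi_d$ has positive degree and $\Z[q]$ is a domain, so the only multiple of $\Phi_d$ of degree $0$ is $0$. Hence $m$ has valuation $0$ and is a unit of $\cO_d$, and therefore of $\widehat\cO_d$. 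In particular this applies to $m=2$.

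\emph{Valuation of quantum integers.} Since $\{-m\}=-\{m\}$, it suffices to treat $m>0$. By \eqref{eq:brace-cyclotomic-factorization},
\[
 \{m\}=q^{-m}\prod_{e\mid 2m}\Phi_e(q).
\]
The factor $q^{-m}$ is a unit, and distinct cyclotomic polynomials are non-associate primes. Each $\Phi_e$ with $e\mid 2m$ appears exactly once, so $\val_{\Phi_d}(\{m\})$ is $1$ if $d\mid 2m$ and $0$ otherwise. It remains to check that $d\mid 2m$ is equivalent to $d^\sharp\mid m$:
\begin{itemize}
\item If $d$ is odd, then $d^\sharp=d$, and since $\gcd(d,2)=1$ we have $d\mid 2m$ if and only if $d\mid m$.
\item If $d$ is even, then $d^\sharp=d/2$, and $d\mid 2m$ if and only if $d/2\mid m$.
\end{itemize}
This gives \eqref{eq:quantum-integer-valuation}.

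\emph{Valuation compatibility.} For $\xi\in\Bbbk^\times$, write $\xi=u\pi_d^{v}$ with $u\in\cO_d^\times$ and $v=\val_{\Phi_d}(\xi)$. The image of $u$ in $\widehat\cO_d$ is a unit, and the image of $\pi_d$ is still a uniformizer. So the image of $\xi$ has valuation $v$ in $\operatorname{Frac}(\widehat\cO_d)$.

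None of these steps is a genuine obstacle. The only points needing care are the even/odd case split in the last divisibility equivalence and the observation that inverting $q$ does not affect primality of $\Phi_d$, since $\Phi_d(0)=\pm1$.
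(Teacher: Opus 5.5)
Your proposal is correct and follows the paper's proof essentially step for step: you localize the UFD $R$ at the prime $\Phi_d(q)$, identify the residue field with $\operatorname{Frac}(R/(\Phi_d))=\Q(\zeta_d)$, get injectivity from $\bigcap_N(\pi_d^N)=0$, make nonzero integers units because $\Phi_d$ is nonconstant, read multiplicity one off the factorization of $\{m\}$, and prove valuation compatibility by writing $\xi$ as a unit times a power of the uniformizer. The only cosmetic differences are that you obtain multiplicity one from the distinct non-associate cyclotomic factors rather than from simplicity of the roots of $q^{2|m|}-1$, and that you spell out the parity check behind $d\mid 2m\iff d^\sharp\mid m$.
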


We use the convention $\val_{\Phi_d}(0)=+\infty$ whenever the valuation is
applied to an expression that may vanish.

\begin{proof}
The Laurent polynomial ring $R$ is a UFD, and $\Phi_d(q)$ is irreducible.
Localization at the height-one prime it generates is therefore a DVR\@.
The quotient of the localization by its maximal ideal is the fraction
field of $R/(\Phi_d)$, namely $\Q(\zeta_d)$.  Completion of a DVR is again a
DVR with the same uniformizer and residue field.

The map from a DVR to its completion is injective because
$\bigcap_{N\ge0}(\pi_d^N)=0$: a nonzero element has finite valuation.
Localization then embeds the fraction field.  No nonzero integer belongs to
$(\Phi_d)$, since $\Phi_d$ is a nonconstant primitive polynomial; therefore
all nonzero integers are units.

By \eqref{eq:brace-cyclotomic-factorization}, $\Phi_d$ divides
$\{m\}$ exactly when $d\mid2m$, equivalently when $d^\sharp\mid m$.  The
roots of $q^{2|m|}-1$ are simple in characteristic zero, so the multiplicity
is one.

For valuation compatibility, write $\xi=\pi_d^e u$ with
$u\in\cO_d^\times$.  The element $u$ remains a unit after completion, so
the valuation is unchanged.
\end{proof}

\subsection{Integral interpolation over a complete DVR}

The next lemma preserves the full multiplicity of a residue cluster by
working in a finite-free quotient.

\begin{lemma}[Finite-free quotient]
\label{lem:finite-quotient-germ}
Let $\cO$ be a complete DVR with uniformizer $\pi$, let $m\ge1$, let
$u_1,\ldots,u_m\in\pi\cO$, and put
\[
 g(U)=\prod_{i=1}^{m}(U-u_i)\in\cO[U].
\]
For every $F(U)\in\cO[[U]]$, there is a unique polynomial
$P(U)\in\cO[U]$ of degree $<m$ whose class in
$B=\cO[U]/(g)$ is the convergent value of the series $F(U)$.  Moreover,
\begin{equation}\label{eq:germ-evaluation}
 P(u_i)=F(u_i)\qquad(1\le i\le m).
\end{equation}
\end{lemma}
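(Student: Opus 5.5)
The plan is to identify $B=\cO[U]/(g)$ as a finite free $\cO$-module, make sense of evaluating $F$ in it by a convergence argument, and then read off $P$ and the interpolation identity \eqref{eq:germ-evaluation}.

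\emph{Step 1: $B$ is finite free.} Since $g$ is monic of degree $m$, Euclidean division by $g$ in $\cO[U]$ works. Hence $B$ is free over $\cO$ with basis $1,U,\ldots,U^{m-1}$. Every class in $B$ therefore has a unique representative of degree $<m$, and this will give the uniqueness of $P$. We topologize $B\cong\cO^m$ $\pi$-adically, coordinatewise; it is then complete and separated because $\cO$ is.

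\emph{Step 2: $U^m$ is topologically nilpotent in $B$.} All roots $u_i$ lie in $\pi\cO$, so reducing modulo $\pi$ gives $g\equiv U^m\pmod{\pi\cO[U]}$. Thus $U^m\equiv U^m-g\in\pi\cO[U]$. In $B$ this means $U^m=\pi\beta$ for some $\beta\in B$, so $U^{mk}\in\pi^kB$. Consequently, for $F=\sum_j c_jU^j$ with $c_j\in\cO$, we have $c_jU^j\in\pi^{\lfloor j/m\rfloor}B$. The partial sums are therefore Cauchy in the $\pi$-adic topology and converge to a well-defined element $F(U)\in B$. Take $P$ to be its representative of degree $<m$.

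\emph{Step 3: evaluation.} For each $i$, consider the evaluation map $\mathrm{ev}_i:B\to\cO$, $U\mapsto u_i$. It is well defined because $g(u_i)=0$. It is $\cO$-linear, hence continuous for the $\pi$-adic topologies. Applying it to the convergent partial sums gives $\mathrm{ev}_i(F(U))=\sum_jc_ju_i^j=F(u_i)$, and this series converges in $\cO$ since $u_i\in\pi\cO$. On the other hand, $\mathrm{ev}_i$ of the class of $P$ is $P(u_i)$, which proves \eqref{eq:germ-evaluation}.

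The only point needing care is Step 2, specifically that $U^m\in\pi B$. It is exactly the hypothesis that every $u_i$ lies in $\pi\cO$ that gives $g\equiv U^m$ modulo $\pi$. Note that no distinctness of the $u_i$ is needed, so full multiplicity is retained.
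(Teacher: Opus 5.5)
Your proposal is correct and follows essentially the same route as the paper: you use freeness of $B$ over $\cO$ from monicity of $g$, obtain $U^m\in\pi B$ from $u_i\in\pi\cO$ (your reduction $g\equiv U^m \pmod{\pi}$ is the paper's elementary-symmetric-function observation), conclude convergence, and then evaluate at $u_i$ through $B$. The only cosmetic difference is that you justify continuity of $\mathrm{ev}_i$ by $\cO$-linearity rather than by $u_i\in\pi\cO$, and either justification works.
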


\begin{proof}
Because $g$ is monic, $B$ is a free $\cO$-module with basis
$1,U,\ldots,U^{m-1}$; hence it is $\pi$-adically complete and separated.
All elementary symmetric functions of $u_1,\ldots,u_m$ of positive degree
lie in $\pi\cO$.  The relation $g(U)=0$ therefore implies
$U^m\in\pi B$, and consequently
$U^{mN}\in\pi^NB$ for every $N$.  If $a\ge mN$, then
$U^a=U^{mN}U^{a-mN}\in\pi^NB$.  Hence the tail
$\sum_{a\ge mN}c_aU^a$ is zero modulo $\pi^N B$, and the partial sums of
$\sum_{a\ge0}c_aU^a$ form a Cauchy sequence for arbitrary coefficients
$c_a\in\cO$.  Completeness gives a limit.  Its unique representative of
degree $<m$ is obtained by expressing that limit in the free basis
$1,U,\ldots,U^{m-1}$.

For each $i$, the equality $g(u_i)=0$ shows that evaluation
$U\mapsto u_i$ factors through $B$.  It is continuous because
$u_i\in\pi\cO$, so the image of $U^a$ tends to zero $\pi$-adically.
Applying evaluation to the convergent series gives $F(u_i)$, while
applying it to the polynomial representative gives $P(u_i)$.  This proves
\eqref{eq:germ-evaluation}.
\end{proof}

\begin{lemma}
\label{lem:resultant-comaximal}
Let $g,h\in\cO[T]$ be monic.  If their reductions in the residue field have
no common root in an algebraic closure, then
$\operatorname{Res}(g,h)\in\cO^\times$ and the ideals $(g)$ and $(h)$ are
comaximal in $\cO[T]$.
\end{lemma}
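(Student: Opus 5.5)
The proof has two parts: first show that the resultant is a unit in $\cO$, then deduce comaximality from the classical identity expressing the resultant as a combination of $g$ and $h$.

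\textbf{Step 1: the resultant is a unit.} Write $\kappa=\cO/(\pi)$ for the residue field, and let $\bar g,\bar h\in\kappa[T]$ denote the reductions.

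The resultant $\operatorname{Res}(g,h)$ is the Sylvester determinant. Its entries are the coefficients of $g$ and $h$, so it is a universal integer polynomial in those coefficients and lies in $\cO$.

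Because $g$ and $h$ are monic, their degrees do not drop under reduction. The Sylvester matrix of $\bar g,\bar h$ therefore has the same size as that of $g,h$. Since the determinant is an integer polynomial in the entries, it commutes with reduction:
\[
\overline{\operatorname{Res}(g,h)}=\operatorname{Res}(\bar g,\bar h).
\]
Over an algebraic closure $\bar\kappa$, split $\bar g=\prod_i(T-\alpha_i)$ and $\bar h=\prod_j(T-\beta_j)$. For monic polynomials the product formula gives
\[
\operatorname{Res}(\bar g,\bar h)=\prod_{i,j}(\alpha_i-\beta_j).
\]
By hypothesis $\alpha_i\neq\beta_j$ for all $i,j$, so this product is nonzero. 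Hence $\operatorname{Res}(g,h)\notin(\pi)$. In a local ring the complement of the maximal ideal consists of units, so $\operatorname{Res}(g,h)\in\cO^\times$.

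\textbf{Step 2: comaximality.} Use the standard adjugate identity for the Sylvester matrix. It produces $a,b\in\cO[T]$ with $\deg a<\deg h$ and $\deg b<\deg g$ such that
\[
a\,g+b\,h=\operatorname{Res}(g,h).
\]
Concretely, one applies Cramer's rule to the linear map $(a,b)\mapsto ag+bh$ on polynomials of bounded degree. Its matrix is the (transposed) Sylvester matrix, whose entries lie in $\cO$.

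Multiply this identity by $\operatorname{Res}(g,h)^{-1}\in\cO$. This exhibits $1$ as an element of $(g)+(h)$ in $\cO[T]$, so the ideals $(g)$ and $(h)$ are comaximal.

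\textbf{Degenerate cases and expected obstacle.} If $\deg g=0$ or $\deg h=0$, the monic polynomial is the constant $1$. Then the resultant equals $1$ by convention, and comaximality is trivial.

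No step is a real obstacle. The only point needing care is Step 1: the compatibility of the resultant with reduction modulo $\pi$. This relies on monicity, which keeps the formal degrees, and hence the size of the Sylvester matrix, unchanged under reduction. Note that completeness of $\cO$ is never used; the argument works for any local ring $\cO$.
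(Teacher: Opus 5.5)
Your proposal is correct and follows essentially the same route as the paper: reduction commutes with the resultant (you rightly note that monicity keeps the Sylvester matrix size fixed), coprimality of the reductions makes the resultant a unit in the local ring, and the Sylvester identity $a g + b h = \operatorname{Res}(g,h)$ divided by that unit gives $1\in(g)+(h)$. Your remark that completeness is not needed is also accurate.
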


\begin{proof}
The reduction of the resultant is the resultant of the reductions.  It is
nonzero precisely when the two reduced polynomials are coprime, so the
original resultant is a unit.  The standard Sylvester-matrix identity
expresses the resultant as $A(T)g(T)+B(T)h(T)$ with
$A,B\in\cO[T]$.  Dividing by the unit resultant yields
$1\in(g)+(h)$.
\end{proof}

\begin{lemma}[Clusterwise integral interpolation]
\label{lem:cluster-interpolation}
Let $\cO$ be a complete DVR with fraction field $L$, and let
$x_0,\ldots,x_k\in\cO$ be pairwise distinct in $L$.  Partition the indices
into residue clusters $C$ according to equality of $\bar x_i$.  Suppose
that, for each cluster $C$, there are $c_C\in\cO$ and
$F_C(U)\in\cO[[U]]$ such that
\begin{equation}\label{eq:cluster-germ-values}
 x_i-c_C\in\pi\cO,
 \qquad
 y_i=F_C(x_i-c_C)
 \quad(i\in C).
\end{equation}
Then the unique polynomial $P(T)\in L[T]$ of degree at most $k$ satisfying
$P(x_i)=y_i$ belongs to $\cO[T]$.
\end{lemma}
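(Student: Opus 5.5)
The plan is to build $P$ cluster by cluster and then glue the pieces with the Chinese remainder theorem in $\cO[T]$. Integrality is the whole point, so every step must stay inside $\cO$.

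Fix a cluster $C$ with $m=|C|$ elements and set
\[
 g_C(T)=\prod_{i\in C}(T-x_i)\in\cO[T],
\]
which is monic. After the substitution $U=T-c_C$, the roots of $g_C$ become $u_i=x_i-c_C\in\pi\cO$. Apply \cref{lem:finite-quotient-germ} to $F_C$ and these $u_i$. It gives a polynomial $P_C(U)\in\cO[U]$ of degree $<m$ with $P_C(u_i)=F_C(u_i)=y_i$ for $i\in C$. Translating back, $Q_C(T):=P_C(T-c_C)\in\cO[T]$ satisfies $Q_C(x_i)=y_i$ for all $i\in C$. This is the local integral interpolant.

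Next, glue. For distinct clusters $C\ne C'$, the reductions $\bar g_C$ and $\bar g_{C'}$ have root sets $\{\bar x_i\}$ that are disjoint, because clusters were defined by residue classes. So \cref{lem:resultant-comaximal} makes $(g_C)$ and $(g_{C'})$ comaximal in $\cO[T]$. Pairwise comaximality gives the Chinese remainder isomorphism
\[
 \cO[T]/(g)\;\cong\;\prod_C\cO[T]/(g_C),\qquad g=\prod_C g_C=\prod_{i=0}^k(T-x_i).
\]
Choose $\widetilde P\in\cO[T]$ with $\widetilde P\equiv Q_C\pmod{g_C}$ for every $C$. Since $g$ is monic of degree $k+1$, we can divide $\widetilde P$ by $g$ inside $\cO[T]$, leaving a remainder $P^\ast\in\cO[T]$ of degree $\le k$.

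Finally, check $P^\ast$. For $i\in C$ we have $g_C(x_i)=0$ and $g(x_i)=0$, so $P^\ast(x_i)=\widetilde P(x_i)=Q_C(x_i)=y_i$. The nodes are pairwise distinct, so the interpolant of degree at most $k$ is unique in $L[T]$, and hence $P=P^\ast\in\cO[T]$.

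The main obstacle is the local step: the values $y_i$ must be coherent to full cluster multiplicity, not just pairwise, as \cref{ex:pairwise-insufficient} shows. That coherence is exactly what the common germ $F_C$ supplies, and it enters through \cref{lem:finite-quotient-germ}, whose convergence in $\cO[U]/(g)$ needs the completeness of $\cO$. The remaining points are formal: comaximality of clusters in distinct residue classes, and division by a monic polynomial staying in $\cO[T]$.
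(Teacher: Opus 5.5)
Your proposal is correct and follows the paper's proof essentially step for step. Both arguments build a local integral interpolant $Q_C(T)=P_C(T-c_C)$ for each cluster from the finite-free quotient lemma, obtain comaximality of the monic cluster polynomials $g_C$ from the resultant lemma, glue by the Chinese remainder theorem, and reduce modulo the monic product $\prod_C g_C$ before invoking uniqueness of the interpolant over $L$.
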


\begin{proof}
For a cluster $C$, put
$u_i=x_i-c_C$ and
$h_C(U)=\prod_{i\in C}(U-u_i)$.  By
\cref{lem:finite-quotient-germ}, the class of $F_C$ modulo $h_C$ has a
representative $P_C(U)\in\cO[U]$ of degree $<|C|$ satisfying
$P_C(u_i)=y_i$.  In the original coordinate define
$Q_C(T)=P_C(T-c_C)$ and
$g_C(T)=\prod_{i\in C}(T-x_i)$.  Then
\begin{equation}\label{eq:PC-cluster}
 Q_C(x_i)=y_i\quad(i\in C).
\end{equation}

If $C\ne C'$, then
$\bar g_C=(T-\bar c_C)^{|C|}$ and
$\bar g_{C'}=(T-\bar c_{C'})^{|C'|}$ have distinct roots.  By
\cref{lem:resultant-comaximal}, their ideals are comaximal.
Indeed, for $i\in C$, the condition
$x_i-c_C\in\pi\cO$ gives $\bar c_C=\bar x_i$.  Distinct residue clusters
therefore satisfy $\bar c_C\ne\bar c_{C'}$, which proves that the two
displayed powers have no common root.
The Chinese remainder theorem gives
\[
 \cO[T]/\left(\prod_Cg_C\right)
 \cong\prod_C\cO[T]/(g_C).
\]
Since
$G(T)=\prod_Cg_C(T)$ is monic of degree $k+1$, division by $G$ gives a
unique representative $P(T)\in\cO[T]$ of degree at most $k$.  Equation
\eqref{eq:PC-cluster} shows that it interpolates all values.
Over $L$, two degree-$\le k$ polynomials agreeing at the $k+1$ distinct
nodes are equal, so this is the unique interpolation polynomial.
\end{proof}

\begin{lemma}[UFD denominator removal]
\label{lem:UFD-denominator}
Let $N,V\in R$ with $V\ne0$.  Assume that $V$ is primitive as a Laurent
polynomial after multiplication by a power of $q$, every nonconstant
irreducible factor of $V$ is cyclotomic, and
$N/V\in\widehat\cO_d$ for every cyclotomic factor $\Phi_d$ of $V$.
Then $N/V\in R$.
\end{lemma}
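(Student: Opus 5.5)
We argue with unique factorization in $R=\Z[q^{\pm1}]$, a UFD. If $N=0$ there is nothing to prove, so assume $N\ne0$. After multiplying $N$ and $V$ by suitable powers of $q$, which are units in $R$, we may assume $V$ is a primitive polynomial in $\Z[q]$. The hypotheses then give a factorization
\[
 V=\pm q^{a}\prod_{d}\Phi_d(q)^{e_d},
\]
with finitely many $d$ and $e_d\ge1$. The sign and the power of $q$ are units of $R$. No rational prime enters the factorization, because $V$ is primitive and the irreducible elements of $R$ are the integer primes together with primitive irreducible polynomials.

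Write the fraction $N/V$ in lowest terms in the UFD $R$, say $N/V=N'/V'$ with $\gcd(N',V')=1$. Then $V'$ divides $V$, so $V'$ is, up to a unit, a product of powers of the cyclotomic polynomials $\Phi_d$ with $\Phi_d\mid V$. It therefore suffices to show $\Phi_d\nmid V'$ for each cyclotomic factor $\Phi_d$ of $V$; then $V'$ is a unit and $N/V=N'/V'\in R$.

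Fix such a $d$ and suppose, for contradiction, that $\Phi_d\mid V'$. Since $\gcd(N',V')=1$, we have $\Phi_d\nmid N'$, so
\[
 \val_{\Phi_d}(N'/V')=-\val_{\Phi_d}(V')\le -1.
\]
By \cref{lem:cyclotomic-DVR}, the element $N/V\in\Bbbk$ embeds in $\operatorname{Frac}(\widehat\cO_d)$ with the same valuation, so its image has negative valuation. This contradicts the hypothesis $N/V\in\widehat\cO_d$. Hence $\Phi_d\nmid V'$ for every relevant $d$, $V'$ is a unit, and the conclusion follows.

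The only delicate point is the role of primitivity. Membership in the local rings $\widehat\cO_d$ says nothing about denominators divisible by rational primes $p$, since every nonzero integer is a unit in those rings. Such primes are excluded because $V$, hence $V'$, has no integer-prime factor. This is exactly where the primitivity hypothesis is used, and the rest of the argument is routine.
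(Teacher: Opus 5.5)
Your proof is correct and takes essentially the same approach as the paper: reduce $N/V$ to lowest terms in the UFD $R$, note that the reduced denominator divides $V$, use primitivity (Gauss's lemma) to exclude rational primes, and rule out each cyclotomic factor $\Phi_d$ because it would give negative valuation in $\operatorname{Frac}(\widehat\cO_d)$, via \cref{lem:cyclotomic-DVR}.
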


\begin{proof}
Write $N/V=A/B$ in lowest terms in the UFD $R$, so $(A,B)=1$.  The
identity $NB=VA$ and Euclid's lemma imply that $B$ divides $V$ up to a
unit: every irreducible factor of $B$ is coprime to $A$ and must therefore
divide $V$.  After clearing a power of $q$, primitivity of $V$ means that
its coefficients have greatest common divisor $1$.  Gauss's lemma then
excludes every nonunit integer prime from the factorization of $V$, and
hence from that of $B$.  By hypothesis, every remaining irreducible
factor of $B$ is a cyclotomic polynomial $\Phi_d(q)$.

Suppose $\Phi_d^e\mid B$ with $e>0$.  Coprimality gives
$\val_{\Phi_d}(A)=0$, while
$\val_{\Phi_d}(B)=e$, so
$\val_{\Phi_d}(A/B)=-e<0$.  This contradicts
$A/B\in\widehat\cO_d$, because the valuation ring of
$\operatorname{Frac}(\widehat\cO_d)$ consists exactly of the elements of
nonnegative $\Phi_d$-valuation.  Thus $B$ has no nonunit irreducible
factor, so it is a unit of $R$ and $A/B\in R$.
\end{proof}

\section{The one-sided expansion from the completed center}\label{sec:BG-restriction}

All invariants below are reduced and zero-framed, with the conventions of
\cref{sec:conventions-input}.

\subsection{Iterated one-row restriction}

For $z$ an invertible variable, define the affine one-row
Harish--Chandra curve
\begin{equation}\label{eq:gamma-affine}
 \gamma_n(z)=
 (q^{n-2}z,q^{2n-4},q^{2n-6},\ldots,q^2,1).
\end{equation}
For every $s\in\Z$, put
\begin{equation}\label{eq:z-s}
 z_s=q^{2s+n}.
\end{equation}
For $k\ge0$, put
\begin{equation}\label{eq:z-U-definitions}
 U_k(z)=(q^{-n}z;q^{-2})_k
 =\prod_{j=0}^{k-1}(1-q^{-n-2j}z),
 \qquad U_0(z)=1,
\end{equation}
and set
\begin{equation}\label{eq:epsilon-n}
 \varepsilon_n=(-1)^{\binom n2}q^{-2\binom n3}.
\end{equation}

\begin{lemma}\label{lem:iterated-rank-reduction}
Fix $n\ge2$, and let $\lambda$ be a partition with at most $n$ parts.  Then
\begin{equation}\label{eq:iterated-rank-reduction}
 F_{\lambda;n}(\gamma_n(z);q^{-2})=
 \begin{cases}
  \varepsilon_n U_k(z),&\lambda=(k),\\
  0,&\ell(\lambda)\ge2.
 \end{cases}
\end{equation}
\end{lemma}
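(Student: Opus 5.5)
We argue by induction on the rank, applying the rank-reduction formula (BG4) to the last coordinate at each step. The last entry of $\gamma_n(z)$ is $1$, so (BG4) with $m=n$ applies directly. If $\lambda_n>0$ the value is $0$. Otherwise we get
\[
 F_{\lambda;n}(\gamma_n(z);q^{-2})=(-1)^{n-1}q^{-2\binom{n-1}{2}}
 F_{\lambda;n-1}(q^{-2}q^{n-2}z,\,q^{2n-6},\ldots,q^{2},1;q^{-2}),
\]
since multiplying $q^{2n-4},\ldots,q^{2}$ by $q^{-2}$ gives $q^{2n-6},\ldots,q^0=1$. The new point is $(q^{(n-1)-2}\cdot q^{-1}z,\,q^{2(n-1)-4},\ldots,1)$. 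This is $\gamma_{n-1}(z')$ with $z'=q^{-1}z$, but it is cleaner to track the first coordinate directly.

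So we define $w_m$ as the first coordinate after descending to rank $m$, and prove by downward induction on $m$, from $n$ to $1$, that
\[
 F_{\lambda;n}(\gamma_n(z))=\Bigl(\prod_{m=2}^{n}(-1)^{m-1}q^{-2\binom{m-1}{2}}\Bigr)F_{\lambda;m}\bigl(w_m,q^{2m-4},\ldots,q^2,1\bigr)
\]
whenever $\lambda_{m+1}=\cdots=\lambda_n=0$, and that the value is $0$ otherwise. Here the product runs only over the ranks already stripped. We have $w_n=q^{n-2}z$ and $w_{m-1}=q^{-2}w_m$. At each stage the tail $(q^{2m-4},\ldots,1)$ after multiplication by $q^{-2}$ becomes $(q^{2m-6},\ldots,q^{-2})$ with its last entry dropped; more precisely, the last entry $1$ is the one consumed, and the rest shift to $(q^{2(m-1)-4},\ldots,1)$. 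Checking this indexing is the only real care needed.

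Any $\lambda$ with $\ell(\lambda)\ge2$ has $\lambda_{\ell}>0$ at rank $m=\ell\ge2$, where the last coordinate is again $1$, so the vanishing case of (BG4) kills it. This proves the second line of \eqref{eq:iterated-rank-reduction}.

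For $\lambda=(k)$ we descend all the way to rank one, where $w_1=q^{-2(n-1)}q^{n-2}z=q^{-n}z$. The rank-one formula $F_{(k);1}(x;q^{-2})=(x;q^{-2})_k$ then gives exactly $U_k(z)$ as in \eqref{eq:z-U-definitions}.

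It remains to identify the accumulated constant with $\varepsilon_n$. The sign is $\prod_{m=2}^n(-1)^{m-1}=(-1)^{\binom n2}$. The power of $q$ has exponent $-2\sum_{m=2}^{n}\binom{m-1}{2}=-2\binom{n}{3}$ by the hockey-stick identity. Together these give $\varepsilon_n$ from \eqref{eq:epsilon-n}.

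The main obstacle is bookkeeping: confirming that the rescaling $x_i\mapsto q^{-2}x_i$ in (BG4) maps the geometric tail exactly onto the rank-$(m-1)$ tail ending in $1$, and that the first-coordinate exponent lands on $q^{-n}$. Both are short computations, which we verify once for general $m$.
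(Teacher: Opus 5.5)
Your proof is correct and is essentially the paper's argument. You iterate (BG4) on the trailing coordinate $1$, kill multirow partitions at rank $\ell(\lambda)$, and reach $F_{(k);1}(q^{-n}z;q^{-2})=U_k(z)$ for one-row partitions. You then collect the constant $\prod_{m=2}^{n}(-1)^{m-1}q^{-2\binom{m-1}{2}}=\varepsilon_n$ from $\sum_{m=2}^n(m-1)=\binom n2$ and $\sum_{m=2}^n\binom{m-1}{2}=\binom n3$.

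The only blemish is notational. In your inductive display the product should run over the already stripped ranks $m'=m+1,\ldots,n$, rather than reusing $m$ as both the product index and the current rank.
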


\begin{proof}
Write the rank-reduction identity
\eqref{eq:BG-rank-reduction-exact} at rank $m$ as
\begin{equation}\label{eq:single-rank-reduction}
 F_{\lambda;m}(x_1,\ldots,x_{m-1},1;q^{-2})
 =c_mF_{\lambda;m-1}(q^{-2}x_1,\ldots,q^{-2}x_{m-1};q^{-2})
\end{equation}
when $\lambda_m=0$, where
$c_m=(-1)^{m-1}q^{-2\binom{m-1}{2}}$; it is zero when
$\lambda_m>0$.

Suppose first that $\ell(\lambda)\ge2$.  Repeatedly applying
\eqref{eq:single-rank-reduction} removes trailing zero parts until
rank $m=\ell(\lambda)$.  At that step $\lambda_m>0$, so the restriction
vanishes.

Now let $\lambda=(k)$.  Every reduction step is nonzero.  After the steps
$n,n-1,\ldots,2$, the first coordinate has been multiplied by
$q^{-2(n-1)}$, while all other coordinates have successively become the
last coordinate $1$ and have been removed.  Hence
\begin{align*}
 F_{(k);n}(\gamma_n(z);q^{-2})
 &=\left(\prod_{m=2}^{n}c_m\right)
   F_{(k);1}(q^{n-2}q^{-2(n-1)}z;q^{-2})\\
 &=\left(\prod_{m=2}^{n}c_m\right)
 F_{(k);1}(q^{-n}z;q^{-2}).
\end{align*}
The rank-one interpolation polynomial is
$F_{(k);1}(x;q^{-2})=(x;q^{-2})_k$.  Moreover,
\[
 \sum_{m=2}^{n}(m-1)=\binom n2,
 \qquad
 \sum_{m=2}^{n}\binom{m-1}{2}=\binom n3.
\]
Thus $\prod_{m=2}^{n}c_m=\varepsilon_n$, proving
\eqref{eq:iterated-rank-reduction}.
\end{proof}

\begin{proposition}[Integral one-sided color expansion]
\label{prop:one-row-restriction}
For every fixed $n\ge2$ and every zero-framed knot $K$, there are Laurent
polynomials
\begin{equation}\label{eq:b-coefficients}
 b_k^{(n)}(K;q)=\varepsilon_n a_{(k)}(K;q^2)
 \in\Z[q^{\pm1}]
\end{equation}
such that, for every $r\ge0$,
\begin{equation}\label{eq:one-sided-color}
 J_r^{SU(n)}(K;q)=
 \sum_{k=0}^{r}b_k^{(n)}(K;q)
 \prod_{j=0}^{k-1}(1-q^{2(r-j)}).
\end{equation}
Here $a_{(k)}(K;q^2)$ is the coefficient defined by the finite dual
formula in \textup{(BG3)}.  Explicitly, for each partition $\mu$ one has
the finite representationwise identity
\begin{equation}\label{eq:BG-representationwise-expansion}
 J_K(V(\mu);q^2)\id_{V(\mu)}
 =\left(\sum_{\lambda\subseteq\mu}
 a_\lambda(K;q^2)\sigma_\lambda\right)\Big|_{V(\mu)}.
\end{equation}
\end{proposition}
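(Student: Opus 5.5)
The plan is to specialize the representationwise identity \eqref{eq:BG-representationwise-expansion} of \textup{(BG3)} to the one-row partition $\mu=(r)$. We then compute the scalars of the $\sigma_\lambda$ on $V((r))$ with \textup{(BG1)} and \cref{lem:iterated-rank-reduction}. The identity \eqref{eq:BG-representationwise-expansion} itself is \eqref{eq:BG-knot-expansion}, so it needs no new argument.

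First, by \eqref{eq:SU-normalization}, $J_r^{SU(n)}(K;q)=J_K(V((r));q^2)$. The partitions $\lambda\subseteq(r)$ are exactly the one-row partitions $(k)$ with $0\le k\le r$. Acting on $V((r))$ and comparing scalars in \eqref{eq:BG-knot-expansion} therefore gives
\[
 J_r^{SU(n)}(K;q)=\sum_{k=0}^{r}a_{(k)}(K;q^2)\,\chi_{(k)}(r),
\]
where $\chi_{(k)}(r)$ is the scalar of $\sigma_{(k)}$ on $V((r))$. By \eqref{eq:BG-eigenvalue-exact} with $\mu=(r,0,\ldots,0)$, this scalar is $F_{(k);n}$ evaluated at $\bigl(q^{2(r+n-1)},q^{2(n-2)},\ldots,q^2,1\bigr)$.

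The key check is that this evaluation point is $\gamma_n(z_r)$ with $z_r=q^{2r+n}$. The first coordinate of $\gamma_n(z_r)$ is $q^{n-2}q^{2r+n}=q^{2r+2n-2}$. The remaining coordinates of \eqref{eq:gamma-affine} are $q^{2(n-i)}$ for $i=2,\ldots,n$. Both agree with the point above. \Cref{lem:iterated-rank-reduction} then gives $\chi_{(k)}(r)=\varepsilon_nU_k(z_r)$. Here the vanishing branch for $\ell(\lambda)\ge2$ is not even needed, since only one-row $\lambda$ occur. Finally,
\[
 U_k(z_r)=\prod_{j=0}^{k-1}\bigl(1-q^{-n-2j}q^{2r+n}\bigr)=\prod_{j=0}^{k-1}\bigl(1-q^{2(r-j)}\bigr),
\]
and setting $b_k^{(n)}=\varepsilon_na_{(k)}(K;q^2)$ yields \eqref{eq:one-sided-color}.

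Integrality follows directly. By \cref{prop:BG-coefficient-integrality}, $a_{(k)}(K;q^2)\in R_{\mathrm{ev}}\subset R$, and $\varepsilon_n=\pm q^{-2\binom n3}$ is a unit of $R$. Hence $b_k^{(n)}\in\Z[q^{\pm1}]$.

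The main point requiring care is the matching of conventions. The scalar of $\sigma_\lambda$ must be evaluated at the shifted coordinates $y_i(\lambda)=q^{2(\lambda_i+n-i)}$ exactly as in \eqref{eq:BG-eigenvalue-exact}, including the $q^{n-1}$ shift of \eqref{eq:BG-HC-exact}. The scalar family inserted into \textup{(BG3)} must also be the same reduced zero-framed invariant as $J_r^{SU(n)}$. The second requirement is supplied by \cref{lem:BG-scalar-interface}, so only the coordinate bookkeeping above needs to be verified.
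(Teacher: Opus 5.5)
Your proposal is correct and follows the paper's proof essentially step for step: you specialize the representationwise identity to $\mu=(r)$, identify the evaluation point in \eqref{eq:BG-eigenvalue-exact} with $\gamma_n(z_r)$, apply \cref{lem:iterated-rank-reduction}, and get integrality from \cref{prop:BG-coefficient-integrality} together with the unit $\varepsilon_n$. Your remark is also accurate: since $\lambda\subseteq(r)$ forces $\lambda=(k)$ with $k\le r$, neither the multirow vanishing branch nor the truncation $U_k(z_r)=0$ for $k>r$ is needed, and the paper mentions both only redundantly.
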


\begin{proof}
By \eqref{eq:BG-eigenvalue-exact}, the scalar by which
$\sigma_\lambda$ acts on the one-row module $V((r))$ is
\[
 F_{\lambda;n}
 (q^{2r+2n-2},q^{2n-4},\ldots,q^2,1;q^{-2}).
\]
This point is $\gamma_n(z_r)$.  By
\cref{lem:iterated-rank-reduction}, every multirow term in
\eqref{eq:BG-representationwise-expansion} vanishes on this curve and
\[
 \sigma_{(k)}|_{V((r))}
 =\varepsilon_nU_k(z_r)
 =\varepsilon_n\prod_{j=0}^{k-1}(1-q^{2(r-j)}).
\]
For $k>r$, the factor with $j=r$ is zero.  Thus evaluating the finite
representationwise identity \eqref{eq:BG-representationwise-expansion} on
$V((r))$ gives a finite sum and yields
\eqref{eq:one-sided-color}.  The coefficients are Laurent integral by
\cref{prop:BG-coefficient-integrality}.
\end{proof}

The factors in \eqref{eq:one-sided-color} detect only the zeros
$q^{2(r-j)}=1$.  Reflection in the Harish--Chandra variable and descent
through $X=z+z^{-1}$ will produce the second family $\{r+n+j\}$.

\subsection{Coefficient completion and cyclotomic-local disks}

For $m\ge0$, let
\[
 (Q;Q)_m=\prod_{a=1}^{m}(1-Q^a),
 \qquad
 I_m=((Q;Q)_m)\subset
 R_{\mathrm{ev}}=\Z[Q^{\pm1}],
\]
where the empty product is $1$.  Define
\begin{equation}\label{eq:BG-coefficient-completion}
 \widehat R_{\mathrm{ev}}^{\mathrm{BG}}
 =
 \varprojlim_{m\ge1}R_{\mathrm{ev}}/I_m.
\end{equation}
Since $1-(Q;Q)_m\in Q\Z[Q]$, the element $Q$ is invertible
modulo $I_m$.  Hence localization induces canonical isomorphisms
\[
 \Z[Q]/((Q;Q)_m)
 \cong
 \Z[Q^{\pm1}]/((Q;Q)_m),
\]
and therefore
\[
 \widehat R_{\mathrm{ev}}^{\mathrm{BG}}
 \cong
 \varprojlim_{m\ge1}\Z[Q]/((Q;Q)_m)
 =\widehat{\Z[Q]}.
\]
Thus $\widehat R_{\mathrm{ev}}^{\mathrm{BG}}$ is the Habiro
cyclotomic completion in the variable $Q$.  The superscript
$\mathrm{BG}$ records its role as the coefficient ring of the
Beliakova--Gorsky central completion.
For $d\ge1$ and $s\in\Z$, write
\begin{equation}\label{eq:local-disk-ring}
 u_s=z-z_s,
 \qquad
 \mathscr A_{d,s}=\widehat\cO_d[[u_s]],
 \qquad
 \mathfrak m_{d,s}=(\pi_d,u_s).
\end{equation}
The ring $\mathscr A_{d,s}$ is complete and separated for the
$\mathfrak m_{d,s}$-adic topology: it is the inverse limit of the
finite-length quotient rings $\widehat\cO_d[u_s]/(\pi_d,u_s)^N$.

\begin{lemma}
\label{lem:completion-comparison}
For every $d\ge1$, the inclusion
$R_{\mathrm{ev}}\hookrightarrow\cO_d$ extends uniquely to a continuous
homomorphism
\begin{equation}\label{eq:BG-to-cyclotomic-completion}
 \iota_d:\widehat R_{\mathrm{ev}}^{\mathrm{BG}}
 \longrightarrow\widehat\cO_d.
\end{equation}
More precisely,
\begin{equation}\label{eq:BG-ideal-valuation}
 \val_{\Phi_d}((q^2;q^2)_m)
 =\left\lfloor\frac{m}{d^\sharp}\right\rfloor.
\end{equation}
\end{lemma}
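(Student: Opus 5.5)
The plan is to compute the valuation \eqref{eq:BG-ideal-valuation} first, and then obtain $\iota_d$ from \cref{lem:filtered-extension}, using the valuation to verify the continuity criterion \eqref{eq:filtered-continuity-criterion}.

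For the valuation, write $(q^2;q^2)_m=\prod_{a=1}^m(1-q^{2a})$. Each factor satisfies $1-q^{2a}=-q^{a}\{a\}$ by \eqref{eq:brace-cyclotomic-factorization}. Since $q$ is a unit in $\cO_d$, \cref{lem:cyclotomic-DVR} gives
\[
 \val_{\Phi_d}(1-q^{2a})=\val_{\Phi_d}(\{a\}),
\]
which equals $1$ if $d^\sharp\mid a$ and $0$ otherwise. Summing over $1\le a\le m$ counts the multiples of $d^\sharp$ in $[1,m]$. The total is therefore $\lfloor m/d^\sharp\rfloor$.

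For the extension, equip $R_{\mathrm{ev}}$ with the filtration $B^{(m)}=I_m$. This filtration is descending, because $(Q;Q)_{m}$ divides $(Q;Q)_{m+1}$, and its completion is $\widehat R_{\mathrm{ev}}^{\mathrm{BG}}$ by \eqref{eq:BG-coefficient-completion}. Equip $\widehat\cO_d$ with the $\pi_d$-adic filtration $C^{(N)}=\pi_d^N\widehat\cO_d$; it is complete for this filtration by \cref{lem:cyclotomic-DVR}. Given $N$, take $M=Nd^\sharp$. Every element of $I_M$ is a multiple $f\cdot(Q;Q)_M$ with $f\in R_{\mathrm{ev}}\subset\cO_d$. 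By the valuation just computed, its image has $\Phi_d$-valuation at least $N$, so it lies in $C^{(N)}$. This is exactly \eqref{eq:filtered-continuity-criterion}.

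\cref{lem:filtered-extension} then gives a unique continuous extension, and it is a ring homomorphism because multiplication is continuous for both ideal filtrations. Concretely, the map is defined on each quotient $R_{\mathrm{ev}}/I_{Nd^\sharp}\to\cO_d/(\pi_d^N)=\widehat\cO_d/(\pi_d^N)$, and these maps are compatible in $N$. Uniqueness holds because $R_{\mathrm{ev}}$ is dense in the completion and $\widehat\cO_d$ is separated.

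The only delicate point is an interface check. The completion $\widehat R_{\mathrm{ev}}^{\mathrm{BG}}$ was identified with $\widehat{\Z[Q]}$ through the quotients $\Z[Q]/((Q;Q)_m)$, whereas the continuity statement is phrased for the Laurent ring $R_{\mathrm{ev}}$. The isomorphisms $\Z[Q]/((Q;Q)_m)\cong\Z[Q^{\pm1}]/((Q;Q)_m)$ noted before the lemma remove this discrepancy. No genuine obstacle is expected.
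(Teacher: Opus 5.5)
Your proposal is correct and follows essentially the same argument as the paper. Both count the multiples of $d^\sharp$ among the factors $1-q^{2a}$, take the cofinal choice $m(N)=Nd^\sharp$ to get compatible maps $R_{\mathrm{ev}}/I_{m(N)}\to\cO_d/(\pi_d^N)$, and deduce uniqueness from density of $R_{\mathrm{ev}}$ and separatedness of $\widehat\cO_d$.
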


\begin{proof}
The factor $1-q^{2a}$ contains $\Phi_d(q)$ exactly when $d\mid2a$, or
$d^\sharp\mid a$, and then with multiplicity one.  Counting such $a$ for
$1\le a\le m$ gives \eqref{eq:BG-ideal-valuation}.  For $N\ge1$, take
the monotone cofinal choice $m(N)=Nd^\sharp$.  Then the image of
$I_{m(N)}$ in $\cO_d$ is contained in $(\pi_d^N)$, and the homomorphisms
\[
 R_{\mathrm{ev}}/I_{m(N)}
 \longrightarrow \cO_d/(\pi_d^N)
\]
are compatible with the transition maps as $N$ varies.  Passing to
inverse limits defines \eqref{eq:BG-to-cyclotomic-completion}.  Uniqueness
follows from density of $R_{\mathrm{ev}}$ in
$\widehat R_{\mathrm{ev}}^{\mathrm{BG}}$ and separatedness of
$\widehat\cO_d$.
\end{proof}

\begin{lemma}
\label{lem:Uk-order}
For every $s\in\Z$ and $k\ge0$,
\begin{equation}\label{eq:Uk-order-bound}
 U_k(z)\in
 \mathfrak m_{d,s}^{\,N_{d,s}(k)},
 \qquad
 N_{d,s}(k)=
 \#\{0\le j<k:j\equiv s\pmod{d^\sharp}\}.
\end{equation}
In particular,
\begin{equation}\label{eq:Uk-floor-bound}
 N_{d,s}(k)\ge\left\lfloor\frac{k}{d^\sharp}\right\rfloor,
 \qquad
 \ord_{\mathfrak m_{d,s}}U_k(z)\longrightarrow\infty.
\end{equation}
\end{lemma}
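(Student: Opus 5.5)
We argue factor by factor: each linear factor of $U_k(z)$ whose index satisfies $j\equiv s\pmod{d^\sharp}$ lies in the maximal ideal $\mathfrak m_{d,s}$, and every other factor lies in $\mathscr A_{d,s}$. Since $U_k(z)=\prod_{j=0}^{k-1}(1-q^{-n-2j}z)$, substitute $z=z_s+u_s=q^{2s+n}+u_s$. The $j$th factor becomes
\[
 1-q^{-n-2j}z=(1-q^{2(s-j)})-q^{-n-2j}u_s .
\]
Every factor lies in $\mathscr A_{d,s}$, since $q^{\pm1}\in R\subset\widehat\cO_d$.

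First, we identify the factors lying in $\mathfrak m_{d,s}$. The constant term $1-q^{2(s-j)}$ is either $0$ (when $j=s$) or equals $\pm q^{c}\{s-j\}\cdot q^{(\ldots)}$, i.e. a unit multiple of $\{s-j\}$. By \eqref{eq:quantum-integer-valuation}, it is divisible by $\pi_d$ in $\cO_d$ exactly when $d^\sharp\mid s-j$, which is the same as $j\equiv s\pmod{d^\sharp}$; the case $j=s$ is included trivially. The $u_s$-term always lies in $(u_s)$. Hence for $j\equiv s\pmod{d^\sharp}$ the factor lies in $(\pi_d,u_s)=\mathfrak m_{d,s}$.

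Second, the product of the $N_{d,s}(k)$ factors in $\mathfrak m_{d,s}$ lies in $\mathfrak m_{d,s}^{N_{d,s}(k)}$. The remaining factors lie in the ring $\mathscr A_{d,s}$ (indeed they are units there, though we do not need this). Since $\mathfrak m_{d,s}^{N}$ is an ideal, this gives \eqref{eq:Uk-order-bound}.

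Finally, we count. Among any $d^\sharp$ consecutive integers, exactly one is congruent to $s$ modulo $d^\sharp$. The range $0\le j<k$ contains $\lfloor k/d^\sharp\rfloor$ disjoint blocks of $d^\sharp$ consecutive integers, so $N_{d,s}(k)\ge\lfloor k/d^\sharp\rfloor$. This bound tends to infinity with $k$, and so does the order.

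The only point requiring care is the valuation claim for $1-q^{2m}$ with $m\ne0$. This follows from \eqref{eq:brace-cyclotomic-factorization}, since $1-q^{2m}=-q^{m}\{m\}$, together with \cref{lem:cyclotomic-DVR}. There is no real obstacle beyond this.
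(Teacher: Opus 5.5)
Your proposal is correct and follows essentially the same argument as the paper. You substitute $z=z_s+u_s$, place each factor with $j\equiv s\pmod{d^\sharp}$ in $(\pi_d,u_s)=\mathfrak m_{d,s}$ via the cyclotomic valuation of $1-q^{2(s-j)}$, and count the residue class among $0\le j<k$ to get the bound $\lfloor k/d^\sharp\rfloor$.
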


\begin{proof}
Write $z=z_s+u_s$.  If $j\equiv s\pmod{d^\sharp}$, then
$d\mid2(s-j)$, and therefore
\begin{align*}
 1-q^{-n-2j}z
 &=1-q^{-n-2j}z_s-q^{-n-2j}u_s\\
 &=1-q^{2(s-j)}-q^{-n-2j}u_s
 \in(\pi_d,u_s)=\mathfrak m_{d,s}.
\end{align*}
Multiplying the factors indexed by this congruence class gives
\eqref{eq:Uk-order-bound}.  In an interval of $k$ consecutive integers,
every residue class modulo $d^\sharp$ occurs at least
$\lfloor k/d^\sharp\rfloor$ times, proving
\eqref{eq:Uk-floor-bound}.
\end{proof}

\begin{proposition}[Cyclotomic-local knot germ]
\label{prop:cyclotomic-local-germ}
Fix a zero-framed knot $K$ and an integer $n\ge2$.  For every $d\ge1$
and $s\in\Z$, the series
\begin{equation}\label{eq:local-germ-series}
 \mathscr J_{K,s}(z)=
 \sum_{k\ge0}b_k^{(n)}(K;q)U_k(z)
\end{equation}
converges in $\mathscr A_{d,s}$.  If $i\in\Z_{\ge0}$ and
$z_i\equiv z_s\pmod{\pi_d}$, then continuous evaluation at
$u_s=z_i-z_s$ gives
\begin{equation}\label{eq:local-germ-node-value}
 \mathscr J_{K,s}(z_i)=J_i^{SU(n)}(K;q).
\end{equation}
\end{proposition}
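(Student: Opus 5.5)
The proof has two parts: convergence of the series, then evaluation at the nodes.

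\emph{Convergence.} The coefficients $b_k^{(n)}(K;q)$ lie in $R$ by \cref{prop:one-row-restriction}. The map $R\hookrightarrow\widehat\cO_d\subset\mathscr A_{d,s}$ is injective by \cref{lem:cyclotomic-DVR}, so each term of \eqref{eq:local-germ-series} lies in $\mathscr A_{d,s}$. Each $U_k(z)$ is a polynomial in $z=z_s+u_s$, hence a polynomial in $u_s$ over $R$. By \cref{lem:Uk-order}, $U_k(z)\in\mathfrak m_{d,s}^{\lfloor k/d^\sharp\rfloor}$. Therefore, for every $N$, all terms with $k\ge Nd^\sharp$ lie in $\mathfrak m_{d,s}^N$. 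The partial sums then form a Cauchy sequence for the $\mathfrak m_{d,s}$-adic topology, and completeness and separatedness of $\mathscr A_{d,s}$ give a unique limit.

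\emph{Evaluation.} Fix $i\ge0$ with $z_i\equiv z_s\pmod{\pi_d}$. Then $c:=z_i-z_s\in\pi_d\widehat\cO_d$. Evaluation $u_s\mapsto c$ defines an $\widehat\cO_d$-algebra homomorphism $\mathrm{ev}_c:\widehat\cO_d[u_s]\to\widehat\cO_d$. It maps $\mathfrak m_{d,s}$ into $(\pi_d)$, hence $\mathfrak m_{d,s}^N$ into $(\pi_d^N)$. This is exactly the continuity criterion \eqref{eq:filtered-continuity-criterion}, so \cref{lem:filtered-extension} extends $\mathrm{ev}_c$ uniquely to a continuous homomorphism $\mathscr A_{d,s}\to\widehat\cO_d$; equivalently, a power series $\sum a_m u_s^m$ is sent to the $\pi_d$-adically convergent series $\sum a_m c^m$. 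Applying this continuous map to the convergent series termwise gives
\[
 \mathscr J_{K,s}(z_i)=\sum_{k\ge0}b_k^{(n)}(K;q)\,U_k(z_i).
\]
Here $U_k(z)$ is a polynomial in $u_s$, and its image under the extended map is its literal value at $z=z_i$, namely $\prod_{j<k}(1-q^{2(i-j)})$. For $k>i$ this product contains the factor $j=i$, which is zero. The sum is therefore the finite sum $\sum_{k=0}^{i}b_k^{(n)}\prod_{j<k}(1-q^{2(i-j)})$, and this equals $J_i^{SU(n)}(K;q)$ by \eqref{eq:one-sided-color}. That proves \eqref{eq:local-germ-node-value}.

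\emph{Expected obstacle.} The only delicate point is well-definedness of evaluation on the completed ring: it must be compatible with the $\mathfrak m_{d,s}$-adic limit. This is exactly where the hypothesis $c\in\pi_d\widehat\cO_d$ is used, since that gives $\mathrm{ev}_c(\mathfrak m_{d,s}^N)\subseteq(\pi_d^N)$. Once continuity is in place, the vanishing of the tail $k>i$ is exact rather than merely approximate, so no estimate beyond \cref{lem:Uk-order} is needed. The integrality of the germ's coefficients, and the fact that it may be a genuinely infinite series, play no role in this statement.
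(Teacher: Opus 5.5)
Your proof is correct and follows the paper's argument. Convergence comes from \cref{lem:Uk-order} via the $\mathfrak m_{d,s}$-adic Cauchy criterion in the complete separated ring $\mathscr A_{d,s}$. Evaluation at $u_s=z_i-z_s$ is continuous because $z_i-z_s\in\pi_d\widehat\cO_d$, and applying it termwise with $U_k(z_i)=0$ for $k>i$ reduces the claim to \eqref{eq:one-sided-color}. One small correction: your closing remark that coefficient integrality plays no role overstates things, since convergence in $\mathscr A_{d,s}$ does use $b_k^{(n)}(K;q)\in R\subset\widehat\cO_d$ from \cref{prop:one-row-restriction}, exactly as your first paragraph does.
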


\begin{proof}
By \cref{lem:Uk-order}, the summands in
\eqref{eq:local-germ-series} tend to zero
$\mathfrak m_{d,s}$-adically, so the series converges.  Since
$z_i-z_s\in\pi_d\widehat\cO_d$, evaluation at $u_s=z_i-z_s$ is
continuous.  It may therefore be applied termwise, and $U_k(z_i)=0$ for
$k>i$ reduces the result to \eqref{eq:one-sided-color}.
\end{proof}

\begin{lemma}
\label{lem:disk-compatibility}
Fix a zero-framed knot $K$, an integer $n\ge2$, and $d\ge1$.  If
$z_s\equiv z_t\pmod{\pi_d}$, substitution
\begin{equation}\label{eq:change-of-center}
 u_t=u_s+(z_s-z_t)
\end{equation}
defines a continuous isomorphism
$\mathscr A_{d,t}\cong\mathscr A_{d,s}$, under which
$\mathscr J_{K,t}$ and $\mathscr J_{K,s}$ agree.
\end{lemma}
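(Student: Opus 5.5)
The plan is to prove \cref{lem:disk-compatibility} in two steps. First, the substitution \eqref{eq:change-of-center} is a continuous ring isomorphism between the disk rings. Second, the two knot germs correspond because both are the limit of the same series $\sum_k b_k^{(n)}(K;q)U_k(z)$.

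\textbf{Step 1: the substitution.} Put $c=z_s-z_t\in R$. The hypothesis $z_s\equiv z_t\pmod{\pi_d}$ gives $c\in\pi_d\widehat\cO_d$. Define
\[
 \phi:\widehat\cO_d[u_t]\to\widehat\cO_d[u_s],\qquad u_t\mapsto u_s+c,
\]
as the identity on coefficients. Since $c\in\mathfrak m_{d,s}$, we have $\phi(\mathfrak m_{d,t})\subseteq\mathfrak m_{d,s}$, and hence $\phi(\mathfrak m_{d,t}^N)\subseteq\mathfrak m_{d,s}^N$ for all $N$. This is the continuity criterion \eqref{eq:filtered-continuity-criterion} with $M=N$. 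The inverse substitution $u_s\mapsto u_t-c$ satisfies the same bound, because $-c\in\mathfrak m_{d,t}$.

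The polynomial rings are dense in $\mathscr A_{d,t}$ and $\mathscr A_{d,s}$ for the respective adic topologies, since these completions are inverse limits of $\widehat\cO_d[u]/(\pi_d,u)^N$. Therefore \cref{lem:filtered-extension} extends $\phi$ and its inverse to mutually inverse continuous isomorphisms $\mathscr A_{d,t}\cong\mathscr A_{d,s}$. Multiplication is continuous, so the extension is a ring map.

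\textbf{Step 2: matching the germs.} The underlying variable is $z=z_t+u_t=z_s+u_s$, so $\phi(z)=z$. Each $U_k(z)$ is a polynomial in $z$ with coefficients in $R$, hence $\phi(U_k(z))=U_k(z)$. The partial sums $\sum_{k\le K'}b_k^{(n)}U_k(z)$ are polynomials and are therefore matched exactly by $\phi$. By \cref{lem:Uk-order}, these partial sums converge to $\mathscr J_{K,t}$ in $\mathscr A_{d,t}$ and to $\mathscr J_{K,s}$ in $\mathscr A_{d,s}$. Continuity of $\widehat\phi$ then gives $\widehat\phi(\mathscr J_{K,t})=\mathscr J_{K,s}$, using that the limit in the separated ring $\mathscr A_{d,s}$ is unique.

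\textbf{Expected obstacle.} The only real point is that the isomorphism $\phi$ must be taken on the completed power-series rings and not merely on polynomial rings. This is exactly what requires $c$ to be topologically nilpotent, i.e.\ $c\in\pi_d\widehat\cO_d$, which is where the hypothesis $z_s\equiv z_t\pmod{\pi_d}$ enters. Without it, a power series in $u_t$ would not converge after the substitution $u_t=u_s+c$.
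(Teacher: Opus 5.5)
Your proposal is correct and follows essentially the same route as the paper: translation by $z_s-z_t\in\pi_d\widehat\cO_d$ sends $(\pi_d,u_t)$ into $(\pi_d,u_s)$, with the inverse translation as inverse. The germs are then identified because the partial sums are the same polynomials in the global variable $z$, converge in both complete separated disk rings, and continuity matches their limits; your explicit appeal to \cref{lem:filtered-extension} on the polynomial subrings only spells out the continuity step that the paper states briefly.
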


\begin{proof}
The constant $z_s-z_t$ lies in $\pi_d\widehat\cO_d$, so
\eqref{eq:change-of-center} sends
$(\pi_d,u_t)$ into $(\pi_d,u_s)$ and has the inverse translation.  Hence
it is a continuous isomorphism.  Every finite partial sum
$\sum_{k=0}^{M}b_k^{(n)}U_k(z)$ is the same polynomial in the global
variable $z$ in the two coordinates.  The partial sums converge in both
complete separated disk rings by
\cref{prop:cyclotomic-local-germ}; continuity of the coordinate
change therefore identifies their limits.
\end{proof}

\subsection{Continuous restriction of the completed center}

Put
\begin{align*}
 \Lambda_n^{\mathrm{pol}}
 &=R_{\mathrm{ev}}[y_1,\ldots,y_n]^{\mathfrak S_n},&
 \Lambda_n^{\mathrm{Laur}}
 &=R_{\mathrm{ev}}[y_1^{\pm1},\ldots,y_n^{\pm1}]^{\mathfrak S_n},\\
 \eta_n&=y_1\cdots y_n,&
 G_\lambda(y)
 &=F_{\lambda;n}(y_1,\ldots,y_n;q^{-2}),
\end{align*}
and let
\[
 \mathcal S_m
 =I_m\Lambda_n^{\mathrm{pol}}
  +\sum_{\substack{\ell(\lambda)\le n\\|\lambda|>m}}
    R_{\mathrm{ev}}G_\lambda.
\]
By \cite[Theorem~8.1(a)]{BG24}, after the parameter translation, the
top homogeneous component of $G_\lambda$ is an
$R_{\mathrm{ev}}$-unit times the Schur polynomial $s_\lambda$, and all
remaining terms have lower degree.  Degree induction against the Schur
basis therefore shows that the family $\{G_\lambda:\ell(\lambda)\le n\}$
is an $R_{\mathrm{ev}}$-basis of $\Lambda_n^{\mathrm{pol}}$.  In
particular, every element of $\Lambda_n^{\mathrm{pol}}$ has a unique
finite expansion in the $G_\lambda$.

For $r\ge0$, set
\begin{equation}\label{eq:fN-definition}
 f_r(X)=(X;Q)_r
 =\prod_{a=0}^{r-1}(1-XQ^a),
\end{equation}
with $f_0(X)=1$.

In $R_{\mathrm{ev}}[y_1^{\pm1},\ldots,y_n^{\pm1}]$, let
$\mathcal C_N$ be the ideal generated by the shifted exact factorial
products
\begin{equation}\label{eq:Cartan-exact-products}
 (Q;Q)_a\prod_{j=1}^{n}f_{c_j}(Q^{s_j}y_j),
 \qquad
 a+\sum_{j=1}^{n}c_j=N,\quad
 (s_1,\ldots,s_n)\in\Z^n,
\end{equation}
and put
$\mathcal C_N^{\mathrm{sym}}
=\mathcal C_N\cap\Lambda_n^{\mathrm{pol}}$.
The coefficient and interpolation topologies are compared in
\cref{prop:exact-factorial-interpolation-bridge}.

\begin{lemma}
\label{lem:finite-interpolation-matrix}
For $m\ge0$, let
\[
 \mathscr P_m
 =\{\lambda:\ell(\lambda)\le n,\ |\lambda|\le m\},
 \qquad
 E_m=\bigl(G_\lambda(p_\mu)\bigr)_
       {\mu,\lambda\in\mathscr P_m},
\]
where
\[
 p_\mu=(Q^{\mu_1+n-1},Q^{\mu_2+n-2},\ldots,Q^{\mu_n}),
\]
and the rows and columns are ordered by any linear extension of
inclusion.  Then $E_m$ is lower triangular and
\[
 G_\lambda(p_\lambda)
 =u_\lambda\prod_{\square\in\lambda}
       (1-Q^{h(\square)}),
 \qquad u_\lambda\in R_{\mathrm{ev}}^\times.
\]
In particular,
\[
 D_m:=\det E_m\in R_{\mathrm{ev}}\setminus\{0\}
\]
is a Laurent unit times a product of cyclotomic polynomials, and
\[
 D_mE_m^{-1}=\operatorname{adj}(E_m)
 \in\operatorname{Mat}_{\mathscr P_m}(R_{\mathrm{ev}}).
\]

If $F=\sum_\lambda b_\lambda G_\lambda\in
\Lambda_n^{\mathrm{pol}}$ and
\[
 \mathbf v_m(F)=\bigl(F(p_\mu)\bigr)_{\mu\in\mathscr P_m},
 \qquad
 \mathbf b_m(F)=\bigl(b_\lambda\bigr)_{\lambda\in\mathscr P_m},
\]
then
\begin{equation}\label{eq:finite-evaluation-system}
 \mathbf v_m(F)=E_m\mathbf b_m(F).
\end{equation}
Consequently, for every ideal $J\subseteq R_{\mathrm{ev}}$,
\[
 F(p_\mu)\in J\quad(\mu\in\mathscr P_m)
 \quad\Longrightarrow\quad
 b_\lambda\in D_m^{-1}J
 \quad(\lambda\in\mathscr P_m).
\]
\end{lemma}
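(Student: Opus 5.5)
The plan is to read off triangularity and the diagonal from the vanishing and normalization properties of the interpolation Macdonald polynomials, and then to treat the remaining claims as linear algebra over $R_{\mathrm{ev}}$.

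\emph{Triangularity.} First translate the evaluation points. With $y_i=q^{n-1}x_i$ and \eqref{eq:BG-eigenvalue-exact}, the point $p_\mu$ is exactly the shifted Harish--Chandra point of $V(\mu)$ written in the variable $Q=q^2$. Hence $G_\lambda(p_\mu)$ is the scalar by which $\sigma_\lambda$ acts on $V(\mu)$. The vanishing property of interpolation Macdonald polynomials, recorded as the triangularity in (BG2) from \cite[Theorem~8.2(c)]{BG24}, states that $G_\lambda(p_\mu)=0$ unless $\lambda\subseteq\mu$. Order rows and columns by a linear extension of inclusion. An entry $(\mu,\lambda)$ with $\lambda$ later than $\mu$ has $\lambda\not\subseteq\mu$, so it vanishes. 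Thus $E_m$ is lower triangular. Note also that $\mathscr P_m$ is closed under taking subpartitions, so the restriction to $\mathscr P_m$ loses nothing.

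\emph{Diagonal entries and the determinant.} The diagonal entries are the nonzero hook products of \cite[Theorem~8.2(c)]{BG24}. After the translation \eqref{eq:parameter-translation} they take the form $u_\lambda\prod_{\square}(1-Q^{h(\square)})$, where $u_\lambda$ is a signed power of $Q$. The main bookkeeping obstacle is this step: one must check that the normalization and the interpolation parameter $q^{-2}$ give factors $1-Q^{\pm h}$. Each such factor equals a unit times $1-Q^{h}$, so the shape of the product is unaffected. The determinant is the product of the diagonal entries, because the matrix is triangular. Each factor satisfies $1-Q^h=-\prod_{d\mid 2h}\Phi_d(q)$, which is nonzero. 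Hence $D_m$ is a Laurent unit times a product of cyclotomic polynomials. The identity $D_mE_m^{-1}=\operatorname{adj}(E_m)$ is Cramer's rule. The adjugate has entries in $R_{\mathrm{ev}}$ because $E_m$ does, and each entry of $E_m$ is the evaluation of $G_\lambda\in\Lambda_n^{\mathrm{pol}}$ at a point with coordinates in $R_{\mathrm{ev}}$.

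\emph{The evaluation system and the conclusion.} Write $F=\sum_\lambda b_\lambda G_\lambda$, a finite sum, and evaluate at $p_\mu$ for $\mu\in\mathscr P_m$. Terms with $\lambda\not\subseteq\mu$ vanish, and every $\lambda\subseteq\mu$ lies in $\mathscr P_m$. Therefore only the columns in $\mathscr P_m$ contribute, which gives \eqref{eq:finite-evaluation-system}. Now suppose all the values $F(p_\mu)$ lie in $J$. Multiplying the system by the adjugate gives $D_m\mathbf b_m=\operatorname{adj}(E_m)\mathbf v_m\in J^{\mathscr P_m}$. Hence each $b_\lambda$ lies in $D_m^{-1}J$, computed inside $\Bbbk$.
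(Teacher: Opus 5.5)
Your proposal is correct and follows the paper's proof. In both, interpolation vanishing gives lower triangularity, \cite[Theorem~8.2(c)]{BG24} gives the hook-product diagonal, and the remaining claims follow from the adjugate identity together with the fact that every $\lambda\subseteq\mu\in\mathscr P_m$ lies in $\mathscr P_m$. You make explicit a few points the paper leaves implicit, namely $1-Q^{-h}=-Q^{-h}(1-Q^h)$ and the $R_{\mathrm{ev}}$-integrality of $\operatorname{adj}(E_m)$.
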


\begin{proof}
The interpolation vanishing says
$G_\lambda(p_\mu)=0$ unless $\lambda\subseteq\mu$, while
\cite[Theorem~8.2(c)]{BG24}, after the parameter translation, gives
the displayed nonzero diagonal value.  This proves triangularity and
the determinant formula.  The adjugate identity gives the assertion
about $E_m^{-1}$.

For \eqref{eq:finite-evaluation-system}, terms with
$\lambda\notin\mathscr P_m$ vanish at every $p_\mu$ with
$\mu\in\mathscr P_m$, because such a $\lambda$ cannot be contained in
$\mu$.  The last assertion follows by multiplying
\eqref{eq:finite-evaluation-system} by $D_mE_m^{-1}$.
\end{proof}

\begin{proposition}[Cofinality of the two filtrations]
\label{prop:exact-factorial-interpolation-bridge}
The two filtrations
$\{\mathcal S_m\}_{m\ge0}$ and
$\{\mathcal C_N^{\mathrm{sym}}\}_{N\ge0}$ are cofinal.  More precisely:
\begin{enumerate}[label=\textup{(\roman*)}]
\item for every $N$ there is $m(N)$ such that
$\mathcal S_{m(N)}\subseteq\mathcal C_N^{\mathrm{sym}}$;
\item for every $m$ there is $N(m)$ such that
$\mathcal C_{N(m)}^{\mathrm{sym}}\subseteq\mathcal S_m$.
\end{enumerate}
\end{proposition}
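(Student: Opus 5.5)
Both inclusions will be proved by evaluating at the interpolation points $p_\mu$. Direction (ii) rests on \cref{lem:finite-interpolation-matrix}. Direction (i) rests on expanding in an integral basis of Newton-type products in the Cartan coordinates.

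\emph{Direction (ii).} Fix $m$ and take $F\in\mathcal C_N^{\mathrm{sym}}$ with $N$ to be chosen. Write $F=\sum_\lambda b_\lambda G_\lambda$, a finite expansion since the $G_\lambda$ form an $R_{\mathrm{ev}}$-basis of $\Lambda_n^{\mathrm{pol}}$. The terms with $|\lambda|>m$ already lie in $\mathcal S_m$, so it suffices to show $b_\lambda\in I_m$ for $\lambda\in\mathscr P_m$.

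First evaluate a generator \eqref{eq:Cartan-exact-products} at $p_\mu$. Each factor becomes $f_{c}(Q^{t})=(Q^t;Q)_c$ for some integer $t$, and there are two cases.
\begin{itemize}[leftmargin=2em]
\item If $1-c\le t\le 0$, the factor vanishes.
\item If $t\ge1$, it equals $(Q;Q)_c$ times a Gaussian coefficient.
\item If $t\le -c$, it is a Laurent unit times $(Q^{-t-c+1};Q)_c$, which is again divisible by $(Q;Q)_c$.
\end{itemize}
Hence every generator takes values at $p_\mu$ in $(Q;Q)_a\prod_j(Q;Q)_{c_j}R_{\mathrm{ev}}$. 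Because $a+\sum c_j=N$, the largest of these indices is at least $M:=\lfloor N/(n+1)\rfloor$, so $F(p_\mu)\in I_M$. \cref{lem:finite-interpolation-matrix} then gives $D_mb_\lambda\in I_M$.

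Next remove $D_m$ by a cyclotomic valuation count in the UFD $R_{\mathrm{ev}}$, exactly as in \cref{lem:UFD-denominator}. Here $D_m$ is a unit times $\prod\Phi_d(Q)^{e_d}$, with $d$ bounded by the largest hook length ($\le m+n$) and each $e_d\le e$ for a fixed $e=e(m)$. Also $(Q;Q)_M$ is primitive with $\val_{\Phi_d(Q)}=\lfloor M/d\rfloor$. So $\val_{\Phi_d(Q)}(b_\lambda)\ge\lfloor M/d\rfloor-e\ge\lfloor m/d\rfloor$ once $M\ge m+e(m+n)$. Therefore $(Q;Q)_m\mid b_\lambda$, and $N(m)=(n+1)(m+e(m+n)+1)$ works.

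\emph{Direction (i).} Fix $N$. Set $\sigma=(0,\ldots,0)$ in \eqref{eq:Cartan-exact-products}, i.e.\ take $a=N$ and all $c_j=0$; this shows $(Q;Q)_N\in\mathcal C_N$. Hence $I_m\Lambda_n^{\mathrm{pol}}\subseteq\mathcal C_N^{\mathrm{sym}}$ for $m\ge N$, and it remains to show $G_\lambda\in\mathcal C_N$ whenever $|\lambda|$ is large.

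For this I will use the integral multivariate Newton basis of $R_{\mathrm{ev}}[y_1,\ldots,y_n]$,
\[
 \prod_{j=1}^n\prod_{t=0}^{c_j-1}\bigl(1-Q^{-(n-j)-t}y_j\bigr)
 =\prod_{j=1}^n f_{c_j}\bigl(Q^{-(n-j)-c_j+1}y_j\bigr).
\]
Each element is monic up to a unit in its top monomial, so these products form an $R_{\mathrm{ev}}$-basis, and each is a shifted exact product in \eqref{eq:Cartan-exact-products} with $a=0$. Expand $G_\lambda=\sum_c\beta_c\prod_jf_{c_j}(\cdots)$ with $\beta_c\in R_{\mathrm{ev}}$. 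Every term with $|c|\ge N$ lies in $\mathcal C_N$. The remaining coefficients $\beta_c$ with $|c|<N$ are multivariate divided differences of the values of $G_\lambda$ at the grid points $y_j=Q^{m_j+n-j}$ with $m_j\le c_j$. The goal is to show these values all vanish once $|\lambda|\ge m(N):=nN$, so that $\beta_c=0$.

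\emph{Main obstacle.} When the exponents $m_j+n-j$ are pairwise distinct, the grid point is a permutation of some $p_\mu$ with $|\mu|<|\lambda|$. Symmetry of $G_\lambda$ together with interpolation vanishing then gives $G_\lambda=0$ there. The difficulty is the grid points where two exponents coincide, since these are not interpolation points. For these I would first try to show the values still vanish by a degeneration argument:
\begin{itemize}[leftmargin=2em]
\item use symmetry of $G_\lambda$ to view it as a polynomial in the power sums;
\item apply the vanishing at all distinct-exponent points of a larger box in which the coincident coordinates are perturbed to nearby grid values;
\item alternatively, replace the Newton nodes by $Q^{m_j+n-j+s_j}$ with shifts $s_j$ chosen so that coincidences are pushed outside the box, which \eqref{eq:Cartan-exact-products} allows.
\end{itemize}
If this fails, the fallback is the Okounkov binomial formula. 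It expresses $G_\lambda$ through interpolation polynomials in fewer variables and would reduce the claim to the rank-one fact $(y;Q^{-1})_k\in\mathcal C_k$ by induction on $n$, using the rank-reduction identity \eqref{eq:BG-rank-reduction-exact}.
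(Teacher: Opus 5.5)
Your direction (ii) is sound and is essentially the paper's argument. Every generator of $\mathcal C_N$ takes values in $(Q;Q)_MR_{\mathrm{ev}}$ at each $p_\mu$ for a large $M$. The determinant $D_m$ of \cref{lem:finite-interpolation-matrix} is then absorbed by a cyclotomic valuation count; the paper does this by choosing $t$ with $(Q;Q)_t\in D_m(Q;Q)_mR_{\mathrm{ev}}$.

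The gap is in direction (i). With your nodes $Q^{n-j},Q^{n-j+1},\dots$ in the coordinate $y_j$, the assertion that $\beta_c=0$ for $|c|<N$ once $|\lambda|\ge nN$ is false. The reason is that $G_\lambda$ is genuinely nonzero at grid points with coincident exponents, so no degeneration, power-sum, or perturbation argument (your first two bullets) can establish it. Take $n=2$ and $\lambda=(k)$ with $k\ge1$. Up to a unit, $G_{(k)}=\bigl(P(y_1)-P(y_2)\bigr)/(y_1-y_2)$ with $P(y)=\prod_{t=0}^{k}(y-Q^t)$. This polynomial is symmetric with top part $h_k$. It vanishes at every $p_\mu=(Q^{\mu_1+1},Q^{\mu_2})$ with $|\mu|\le k$ and $\mu\ne(k)$, since both coordinates are then distinct roots of $P$. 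Its value at $(Q,1)=p_{(0,0)}$ is $0$. Its value at the grid point $(Q,Q)$, which is your $m=(0,1)$, is $P'(Q)=\prod_{0\le t\le k,\,t\ne1}(Q-Q^t)\ne0$. Hence $\beta_{(0,1)}=G_{(k)}(Q,Q)/(1-Q)\ne0$ for every $k$. So already for $N=2$ the vanishing fails for arbitrarily large $|\lambda|$. With these nodes you would need divisibility statements instead (here $\beta_{(0,1)}$ is a unit times $(Q;Q)_{k-1}$), and you have not set these up.

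Your third bullet is the right repair, and it closes the gap once you carry it out.
\begin{itemize}
\item In the coordinate $y_j$, use the consecutive nodes $Q^{\sigma_j},Q^{\sigma_j+1},\dots$ with $\sigma_j=(n-j)N$. The Newton products are still of the form $\prod_jf_{c_j}(Q^{s_j}y_j)$, now with $s_j=1-\sigma_j-c_j$. They form an $R_{\mathrm{ev}}$-basis, and those with $|c|\ge N$ lie in $\mathcal C_{|c|}\subseteq\mathcal C_N$.
\item For $|m|\le N-1$, the grid exponents $e_j=(n-j)N+m_j$ lie in the pairwise disjoint intervals $[(n-j)N,(n-j)N+N-1]$. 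So each such grid point is a permutation of some $p_\mu$ with $|\mu|=\sum_je_j-\binom n2=(N-1)\binom n2+|m|\le(N-1)\bigl(\binom n2+1\bigr)$.
\item By symmetry and interpolation vanishing, $G_\lambda$ vanishes on this down-closed set of grid points whenever $|\lambda|>(N-1)\bigl(\binom n2+1\bigr)$. The tensor Newton system is triangular with nonzero diagonal entries in the domain $R_{\mathrm{ev}}$, so $\beta_c=0$ for $|c|<N$.
\item Together with $(Q;Q)_N\in\mathcal C_N$, this proves (i) with $m(N)=\max\{N,(N-1)(\binom n2+1)\}$. The Okounkov fallback is unnecessary.
\end{itemize}
The paper proves (i) differently. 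It quotes Lemma~10.27 of \cite{BG24}, which writes $G_\lambda$ as a sum of products of $\binom{n+1}2$ single-variable shifted factorials whose indices sum to $|\lambda|$. It then pigeonholes to get $G_\lambda\in\mathcal C_N$ for $|\lambda|>\binom{n+1}2(N-1)$. The repaired Newton argument avoids that explicit expansion and uses only symmetry and vanishing at the points $p_\mu$.
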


\begin{proof}
The ideals $\mathcal C_N$ are descending: one lowers either $a$ or one
of the $c_j$ in \eqref{eq:Cartan-exact-products} and uses
\[
 (Q;Q)_{a+1}=(1-Q^{a+1})(Q;Q)_a,\qquad
 f_{c+1}(Q^sy)=(1-Q^{s+c}y)f_c(Q^sy).
\]

Fix $N$.  Lemma~10.27 of \cite{BG24}, after replacing its parameter by
$Q^{-1}$ and reversing the factors in each factorial, expands every
$G_\lambda$ as a sum of products of
$d=\binom{n+1}{2}$ shifted factorials whose indices add to
$|\lambda|$.  If $|\lambda|>d(N-1)$, one index is at least $N$.
That factorial is a multiple of its first $N$ factors and hence the
whole term belongs to $\mathcal C_N$.  Thus
$G_\lambda\in\mathcal C_N$ for $|\lambda|>d(N-1)$.  If
$m(N)\ge\max\{N,d(N-1)\}$, then
$I_{m(N)}\subseteq\mathcal C_N$ as well, proving (i).

For the converse, fix $m$ and use
\cref{lem:finite-interpolation-matrix}.  Since $D_m$ is a product of
cyclotomic polynomials up to a Laurent unit, the cyclotomic valuation
of $(Q;Q)_t$ tends to infinity with $t$ at every irreducible factor of
$D_m(Q;Q)_m$.  We may therefore choose $t=t(m)$ such that
\begin{equation}\label{eq:finite-interpolation-denominator-absorption}
 (Q;Q)_t\in D_m (Q;Q)_mR_{\mathrm{ev}}.
\end{equation}

For every integer $r$ and $c\ge0$,
\begin{equation}\label{eq:shifted-factorial-value-divisibility}
 f_c(Q^r)
 =\prod_{b=0}^{c-1}(1-Q^{r+b})
 \in(Q;Q)_cR_{\mathrm{ev}}.
\end{equation}
If the displayed interval contains zero the product vanishes; otherwise
the quotient by $(Q;Q)_c$ is, up to a Laurent unit, a Gaussian
polynomial.  Now take $M=(n+1)t$.  In every generator
\eqref{eq:Cartan-exact-products} of $\mathcal C_M$, at least one of
$a,c_1,\ldots,c_n$ is at least $t$.  Equation
\eqref{eq:shifted-factorial-value-divisibility} shows that its value at
every $p_\mu$ is divisible by $(Q;Q)_t$.  The same is true for every
element of $\mathcal C_M$, since the node coordinates are Laurent
units.

Let $F\in\mathcal C_M^{\mathrm{sym}}$ and write
$F=\sum_\lambda b_\lambda G_\lambda$.  The preceding evaluation
divisibility gives
$\mathbf v_m(F)\in(Q;Q)_t
R_{\mathrm{ev}}^{\mathscr P_m}$.  By
\cref{lem:finite-interpolation-matrix}, for $|\lambda|\le m$,
\[
 b_\lambda\in D_m^{-1}(Q;Q)_tR_{\mathrm{ev}}
 \subseteq (Q;Q)_mR_{\mathrm{ev}}
\]
by \eqref{eq:finite-interpolation-denominator-absorption}.  There is no
condition on the coefficients with $|\lambda|>m$, so
$F\in\mathcal S_m$.  This proves (ii).
\end{proof}

\begin{corollary}
\label{cor:interpolation-completion-structure}
The interpolation topology has continuous multiplication.  Its completion
contains $\Lambda_n^{\mathrm{Laur}}$, and the substitutions
\[
 y_i\longmapsto Q^c y_i,
 \qquad
 y_i\longmapsto y_{n+1-i}^{-1}
 \quad(c\in\Z)
\]
extend to continuous automorphisms.
\end{corollary}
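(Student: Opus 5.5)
The plan is to work entirely with the exact-factorial filtration $\{\mathcal C_N\}$. By \cref{prop:exact-factorial-interpolation-bridge} it defines the same topology on $\Lambda_n^{\mathrm{pol}}$ as $\{\mathcal S_m\}$. Its virtue is that it is an ideal filtration, it is defined on the whole Laurent ring $R_{\mathrm{ev}}[y^{\pm1}]$, and its generators are visibly permuted by the substitutions in question.

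\emph{Continuity of multiplication.} Each $\mathcal C_N$ is an ideal, so $\mathcal C_N^{\mathrm{sym}}$ is an ideal of $\Lambda_n^{\mathrm{pol}}$. Hence $x\mathcal C_N^{\mathrm{sym}}\subseteq\mathcal C_N^{\mathrm{sym}}$ for every $x$. Writing
\[
xy-x'y'=x(y-y')+(x-x')y'
\]
then gives continuity, because multiplication is continuous for any ideal-adic filtration. By cofinality the same holds for the $\mathcal S_m$-topology, and \cref{lem:filtered-extension} extends multiplication to the completion.

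\emph{Containment of $\Lambda_n^{\mathrm{Laur}}$.} Every symmetric Laurent polynomial has the form $\eta_n^{-c}P$ with $P\in\Lambda_n^{\mathrm{pol}}$. It therefore suffices to show that $\eta_n$ is invertible in the completion $\widehat\Lambda$ of $\Lambda_n^{\mathrm{pol}}$. I would first show that $1-\eta_n^{M}$ is topologically nilpotent for suitable $M$, or at least that $\eta_n$ is a unit modulo every $\mathcal C_N^{\mathrm{sym}}$.

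The idea is to use the generators with $a=0$. Consider the symmetrized products $\prod_{j}f_{c}(Q^{s}y_j)$, and in particular the elementary products $\prod_j(1-Q^{s}y_j)$ taken over a range of $s$. Modulo $\mathcal C_N$, the variable $y_j$ satisfies the monic relation $f_N(Q^{s}y_j)\equiv0$, and the constant term of this relation is the unit $1$. Consequently $y_j$ is invertible in $R_{\mathrm{ev}}[y]/(\mathcal C_N\cap R_{\mathrm{ev}}[y])$, with inverse given by a polynomial in $y_j$. Symmetrizing, $\eta_n^{-1}$ is congruent to a symmetric polynomial modulo $\mathcal C_N$.

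This shows that $\Lambda_n^{\mathrm{Laur}}\to\varprojlim_N\Lambda_n^{\mathrm{Laur}}/(\mathcal C_N\cap\Lambda_n^{\mathrm{Laur}})$ lands in the image of the polynomial completion. The remaining check is that the two inverse limits agree, meaning that $\Lambda_n^{\mathrm{pol}}/\mathcal C_N^{\mathrm{sym}}\to\Lambda_n^{\mathrm{Laur}}/(\mathcal C_N\cap\Lambda_n^{\mathrm{Laur}})$ is an isomorphism. Surjectivity follows from the congruence just obtained. Injectivity holds because $\mathcal C_N^{\mathrm{sym}}$ was defined as the intersection with $\Lambda_n^{\mathrm{pol}}$.

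\emph{The substitutions.} The shift $y_i\mapsto Q^cy_i$, applied to all $i$ simultaneously so that symmetry is preserved, sends the generator $(Q;Q)_a\prod_jf_{c_j}(Q^{s_j}y_j)$ to the generator with $s_j$ replaced by $s_j+c$. So it preserves $\mathcal C_N$ exactly, and its inverse does too. The reflection $y_i\mapsto y_{n+1-i}^{-1}$ uses the identity
\[
1-Q^{s+b}y^{-1}=-Q^{s+b}y^{-1}\,(1-Q^{-s-b}y).
\]
Hence $f_c(Q^sy^{-1})$ equals a Laurent unit times $f_c(Q^{-s-c+1}y)$, with the factors taken in reversed order. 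The generator therefore maps to a unit multiple of another generator of $\mathcal C_N$, the coordinates are merely permuted, and $\mathcal C_N$ is preserved. Both maps are involutive or have inverses of the same type, so they preserve $\mathcal C_N\cap\Lambda_n^{\mathrm{Laur}}$ and are continuous. By the identification above and \cref{lem:filtered-extension}, they extend to continuous ring automorphisms of the completion.

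The main obstacle is the Laurent step: making precise that $\Lambda_n^{\mathrm{Laur}}$ embeds into the completion, including injectivity. Injectivity needs $\bigcap_N(\mathcal C_N\cap\Lambda_n^{\mathrm{Laur}})=0$. I would prove this by evaluating at the nodes $p_\mu$ and using the cyclotomic valuation bound \eqref{eq:shifted-factorial-value-divisibility}: an element lying in every $\mathcal C_N$ vanishes at all $p_\mu$ modulo arbitrarily high $(Q;Q)_t$, hence vanishes at all $p_\mu$. It is then zero because the $p_\mu$ are Zariski dense, being infinitely many generic points in each coordinate.
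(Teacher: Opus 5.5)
Your proposal is correct. For continuity of multiplication and for the two substitutions it follows the paper: you use the ideal property of $\mathcal C_N$ together with cofinality, the shift of the $s_j$ under scaling, and the inversion identity expressing $f_c(Q^sy^{-1})$ as a Laurent unit times $f_c(Q^{1-c-s}y)$.

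The real difference is the Laurent step. The paper obtains invertibility of $\eta_n$ in the completion by citing Beliakova--Gorsky's Proposition~11.9 and Corollary~11.10, which show that multiplication by $\eta_n^{\pm1}$ preserves the open interpolation submodules. You instead get it from the factorial ideal itself. The generator $f_N(y_j)$ lies in $\mathcal C_N$ and has constant term $1$, so $y_j^{-1}$ is congruent to a polynomial in $y_j$ modulo $\mathcal C_N$. Hence $\Lambda_n^{\mathrm{pol}}/\mathcal C_N^{\mathrm{sym}}\to\Lambda_n^{\mathrm{Laur}}/(\mathcal C_N\cap\Lambda_n^{\mathrm{Laur}})$ is an isomorphism at every level.

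What your route buys:
\begin{itemize}
\item It is self-contained once \cref{prop:exact-factorial-interpolation-bridge} is available, with no further external input.
\item It puts the reflection $y_i\mapsto y_{n+1-i}^{-1}$, which does not preserve $\Lambda_n^{\mathrm{pol}}$, on a precise footing. The reflection now acts levelwise on the Laurent quotients, and these are the levels of the completion.
\end{itemize}
The paper's route stays closer to the interpolation basis and the center-side structure, at the cost of an external citation.

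Your injectivity argument, via node evaluations, $\bigcap_t(Q;Q)_tR_{\mathrm{ev}}=0$ and Zariski density of the $p_\mu$, is valid, but you can shorten it. Since $\mathcal C_N$ is an ideal of the Laurent ring, in which $\eta_n$ is a unit, $\eta_n^{-c}P\in\mathcal C_N$ if and only if $P\in\mathcal C_N$. Injectivity therefore reduces to $\bigcap_N\mathcal C_N^{\mathrm{sym}}=\bigcap_m\mathcal S_m=0$. That equality follows from cofinality, and the vanishing is immediate from the $G_\lambda$-basis and $\bigcap_m I_m=0$.
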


\begin{proof}
Since the ideals $\mathcal C_N$ are multiplicative, cofinality makes
multiplication continuous in the interpolation topology.  The formulas of
\cite[Proposition~11.9 and Corollary~11.10]{BG24} show that multiplication
by $\eta_n^{\pm1}$ preserves the open interpolation submodules.  Thus
$\eta_n$ is a unit in the completion and
$\Lambda_n^{\mathrm{Laur}}$ lies in it.

Scaling changes only the shifts in
\eqref{eq:Cartan-exact-products}, while inversion is governed by
\[
 f_r(Q^sy_i^{-1})=(-1)^rQ^{sr+\binom r2}y_i^{-r}
 f_r(Q^{1-r-s}y_i).
\]
Hence both substitutions preserve the factorial filtration and extend,
together with their inverses, to continuous automorphisms.
\end{proof}

Put
\[
 \cZ_{n,\mathrm{fin}}
 =\bigoplus_{\ell(\lambda)\le n}
   R_{\mathrm{ev}}\sigma_\lambda
 \subset Z(U_{\Z})^\Gamma.
\]
Under the shifted coordinates $y_i=q^{n-1}x_i$, one has
$\hc(\sigma_\lambda)=G_\lambda(y)$.  The basis property established
above implies that every product $G_\lambda G_\mu$ has a finite
expansion in the same basis with coefficients in $R_{\mathrm{ev}}$.
The algebraic Harish--Chandra isomorphism therefore identifies
$\cZ_{n,\mathrm{fin}}$ with $\Lambda_n^{\mathrm{pol}}$ and shows that
$\cZ_{n,\mathrm{fin}}$ is a subalgebra.  For
$m\ge0$, let
\[
 \cZ_{n,\mathrm{fin}}^{(m)}
 =I_m\cZ_{n,\mathrm{fin}}
  +\bigoplus_{\substack{\ell(\lambda)\le n\\|\lambda|>m}}
    R_{\mathrm{ev}}\sigma_\lambda.
\]
These submodules give $\cZ_{n,\mathrm{fin}}$ the interpolation-basis
linear topology.  Define the Beliakova--Gorsky even-coefficient central
sector by
\begin{equation}\label{eq:BG-central-sector-definition}
 \widehat\cZ_n^{\mathrm{BG}}
 =\varprojlim_m
   \cZ_{n,\mathrm{fin}}/\cZ_{n,\mathrm{fin}}^{(m)},
\end{equation}
where the inverse limit is taken as a linear completion.  Under the
algebraic Harish--Chandra isomorphism,
$\cZ_{n,\mathrm{fin}}^{(m)}$ maps to $\mathcal S_m$.
By \cref{cor:interpolation-completion-structure}, multiplication is
continuous and the Harish--Chandra map extends to a topological algebra
isomorphism onto the corresponding completed symmetric polynomial ring.

\begin{lemma}
\label{lem:center-linear-completion}
The interpolation-basis filtration on $\cZ_{n,\mathrm{fin}}$ is
separated, and for every $m\ge0$ there is a canonical isomorphism
\begin{equation}\label{eq:center-finite-quotient}
 \cZ_{n,\mathrm{fin}}/\cZ_{n,\mathrm{fin}}^{(m)}
 \cong
 \bigoplus_{\substack{\ell(\lambda)\le n\\|\lambda|\le m}}
 (R_{\mathrm{ev}}/I_m)\sigma_\lambda.
\end{equation}
Consequently, taking inverse limits identifies
\begin{equation}\label{eq:center-coefficient-product}
 \widehat\cZ_n^{\mathrm{BG}}
 \cong
 \prod_{\ell(\lambda)\le n}
 \widehat R_{\mathrm{ev}}^{\mathrm{BG}}\sigma_\lambda
\end{equation}
as topological modules.
\end{lemma}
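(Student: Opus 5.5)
The argument is pure linear algebra over the free basis $\{\sigma_\lambda\}$, and I would carry it out in three steps.

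\textbf{Step 1: the filtration quotients.} By definition, $\cZ_{n,\mathrm{fin}}=\bigoplus_{\ell(\lambda)\le n}R_{\mathrm{ev}}\sigma_\lambda$ is free with basis $\sigma_\lambda$. This basis is transported from the $R_{\mathrm{ev}}$-basis $\{G_\lambda\}$ of $\Lambda_n^{\mathrm{pol}}$ by the algebraic Harish--Chandra isomorphism, so the $\sigma_\lambda$ are $R_{\mathrm{ev}}$-linearly independent. Split the index set into $|\lambda|\le m$ and $|\lambda|>m$. In the first summand of $\cZ_{n,\mathrm{fin}}^{(m)}$, the part $I_m\sigma_\lambda$ with $|\lambda|>m$ is already contained in the second summand. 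Hence
\[
 \cZ_{n,\mathrm{fin}}^{(m)}
 =\bigoplus_{|\lambda|\le m}I_m\sigma_\lambda
 \oplus\bigoplus_{|\lambda|>m}R_{\mathrm{ev}}\sigma_\lambda .
\]
Taking the quotient coordinatewise in the free module gives \eqref{eq:center-finite-quotient}. The index set $\{|\lambda|\le m,\ \ell(\lambda)\le n\}$ is finite, so the direct sum is also a finite product.

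\textbf{Step 2: separatedness.} Let $x=\sum_\lambda c_\lambda\sigma_\lambda$ be a finite sum lying in every $\cZ_{n,\mathrm{fin}}^{(m)}$. Fix $\lambda$ and choose $m\ge|\lambda|$. The decomposition in Step 1 gives $c_\lambda\in I_m$ for all such $m$. It remains to show $\bigcap_m I_m=0$ in $R_{\mathrm{ev}}$. This holds because a nonzero Laurent polynomial has finite $\Phi_1$-valuation, while \eqref{eq:BG-ideal-valuation} with $d=1$ gives $\val_{\Phi_1}((Q;Q)_m)=m\to\infty$. Therefore $c_\lambda=0$ for every $\lambda$, and $x=0$.

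\textbf{Step 3: the inverse limit.} The transition maps for $m'\ge m$ are compatible with the identifications of Step 1. On the coordinate $\sigma_\lambda$ with $|\lambda|\le m$, the transition map is the reduction $R_{\mathrm{ev}}/I_{m'}\to R_{\mathrm{ev}}/I_m$, which is well defined since $I_{m'}\subseteq I_m$. Coordinates with $|\lambda|>m$ are sent to zero. Inverse limits commute with products, so the inverse limit is
\[
 \prod_{\lambda}\varprojlim_{m\ge|\lambda|}R_{\mathrm{ev}}/I_m .
\]
Each factor equals $\widehat R_{\mathrm{ev}}^{\mathrm{BG}}$, because a cofinal subsystem has the same limit. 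This gives \eqref{eq:center-coefficient-product} as modules.

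The only point requiring care is the topology. The inverse-limit topology has basic open submodules given by the kernels of the projections to level $m$. These are exactly the elements whose $\lambda$-coordinates with $|\lambda|\le m$ lie in the closure of $I_m$, with no condition on the other coordinates. In the product of the $\widehat R_{\mathrm{ev}}^{\mathrm{BG}}$-adic topologies, basic neighbourhoods constrain finitely many coordinates modulo some $I_{m'}$. These two families are cofinal in each other: any finite set of indices is contained in $\{|\lambda|\le m\}$ for some $m$, and one may take $m\ge m'$. So the isomorphism is a homeomorphism.

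I expect no real obstacle; the step needing the most care is this final topological comparison.
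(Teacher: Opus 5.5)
Your proof is correct and follows the paper's argument. You describe $\cZ_{n,\mathrm{fin}}^{(m)}$ coordinatewise in the free basis $\sigma_\lambda$, reduce separatedness to $\bigcap_m I_m=0$, and take the coordinatewise inverse limit over the cofinal levels $m\ge|\lambda|$. The only differences are these: you get $\bigcap_m I_m=0$ from $\val_{\Phi_1}((Q;Q)_m)=m$ via \eqref{eq:BG-ideal-valuation} rather than from the paper's width count $\operatorname{wd}_Q((Q;Q)_m)=m(m+1)/2$, and you write out the comparison of the inverse-limit and product topologies, which the paper leaves implicit.
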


\begin{proof}
For a nonzero Laurent polynomial $f\in\Z[Q^{\pm1}]$, let
$\operatorname{wd}_Q(f)$ be the difference between its largest and
smallest $Q$-exponents.  Since the coefficient ring is a domain,
$\operatorname{wd}_Q(fg)=\operatorname{wd}_Q(f)+
\operatorname{wd}_Q(g)$ for nonzero $f,g$, whereas
\[
 \operatorname{wd}_Q((Q;Q)_m)=\sum_{a=1}^{m}a=\frac{m(m+1)}2.
\]
No nonzero Laurent polynomial can therefore belong to every $I_m$; hence
$\bigcap_m I_m=0$.

The $\sigma_\lambda$ form an algebraic $R_{\mathrm{ev}}$-basis.  Modulo
$\cZ_{n,\mathrm{fin}}^{(m)}$, all coordinates with $|\lambda|>m$
vanish and each remaining coordinate is reduced modulo $I_m$, which
proves \eqref{eq:center-finite-quotient}.  There are only finitely many
partitions of size at most $m$.  The preceding intersection result,
applied to the finitely many coefficients of an algebraic element, also
proves separatedness.

For a fixed $\lambda$, the levels $m\ge|\lambda|$ are cofinal and its
coordinate inverse limit is
$\varprojlim_m R_{\mathrm{ev}}/I_m=
\widehat R_{\mathrm{ev}}^{\mathrm{BG}}$.  Conversely, a family of such
completed coefficients defines at level $m$ the finite sum over
$|\lambda|\le m$, with every coefficient reduced modulo $I_m$.
These finite sums are compatible with the transition maps.  This gives
\eqref{eq:center-coefficient-product} in both directions.
\end{proof}

With this identification, every element has a unique expansion
\begin{equation}\label{eq:general-completed-center-series}
 c=\sum_{\ell(\lambda)\le n}c_\lambda\sigma_\lambda,
 \qquad c_\lambda\in\widehat R_{\mathrm{ev}}^{\mathrm{BG}}.
\end{equation}
The series is interpreted quotientwise: for each fixed level $m$, its
truncations by partition size stabilize after size $m$ in the finite
quotient \eqref{eq:center-finite-quotient}.  Applying this description to
the Laurent coefficients in \eqref{eq:BG-finite-dual-coefficient} gives
\begin{equation}\label{eq:BG-knot-element-central-sector}
 J_K^{\mathrm{int}}(\mathfrak{gl}_n;q^2)
 =\sum_\lambda a_\lambda(K;q^2)\sigma_\lambda
 \in\widehat\cZ_n^{\mathrm{BG}}.
\end{equation}
\begin{proposition}[One-row restriction]
\label{prop:continuous-completed-restriction}
Fix an integer $n\ge2$. For every $d\ge1$ and $s\in\Z$, there is a unique continuous
$\widehat R_{\mathrm{ev}}^{\mathrm{BG}}$-algebra homomorphism
\begin{equation}\label{eq:completed-center-restriction-map}
\operatorname{res}_{d,s}:\widehat\cZ_n^{\mathrm{BG}}
 \longrightarrow\mathscr A_{d,s}
\end{equation}
extending algebraic Harish--Chandra restriction along $\gamma_n$, where
$\mathscr A_{d,s}$ is regarded as an
$\widehat R_{\mathrm{ev}}^{\mathrm{BG}}$-algebra through the coefficient map
$\iota_d$.  The restriction is
given by
\begin{equation}\label{eq:completed-restriction-series}
 \operatorname{res}_{d,s}(c)
 =\sum_{k\ge0}\iota_d(c_{(k)})\varepsilon_nU_k(z),
\end{equation}
and hence
\begin{equation}\label{eq:restriction-on-sigma}
 \operatorname{res}_{d,s}(\sigma_\lambda)=
 \begin{cases}
  \varepsilon_nU_k(z),&\lambda=(k),\\
  0,&\ell(\lambda)\ge2.
 \end{cases}
\end{equation}
For every zero-framed knot $K$, the interpolation-sector knot element
satisfies
\begin{equation}\label{eq:restriction-of-knot}
 \operatorname{res}_{d,s}
 (J_K^{\mathrm{int}}(\mathfrak{gl}_n;q^2))=\mathscr J_{K,s}(z).
\end{equation}
\end{proposition}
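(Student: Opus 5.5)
We define the map on finite elements, check that it is continuous, extend it by \cref{lem:filtered-extension}, and then check multiplicativity and the knot identity.

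\textbf{Step 1: the map on $\cZ_{n,\mathrm{fin}}$.} Define $\operatorname{res}_{d,s}$ on finite combinations as the algebraic Harish--Chandra restriction along $\gamma_n$. That is, send $c$ to $\hc(c)(\gamma_n(z))$, viewed as a Laurent polynomial in $z$ with coefficients in $R_{\mathrm{ev}}$. Then embed into $\mathscr A_{d,s}$ by writing $z=z_s+u_s$; here $z$ is invertible because $z_s$ is a unit of $\widehat\cO_d$.

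This map is an $R_{\mathrm{ev}}$-algebra homomorphism for two reasons. First, $\hc$ is an algebra isomorphism onto $\Lambda_n^{\mathrm{pol}}$. Second, evaluation at $\gamma_n(z)$ is a ring homomorphism.

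By \cref{lem:iterated-rank-reduction}, this map sends $\sigma_\lambda$ to $\varepsilon_nU_k(z)$ when $\lambda=(k)$, and to $0$ when $\ell(\lambda)\ge2$. This gives \eqref{eq:restriction-on-sigma} on finite elements.

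\textbf{Step 2: continuity.} For continuity we use the criterion \eqref{eq:filtered-continuity-criterion}. Fix $N$ and take $m=Nd^\sharp$. The submodule $\cZ_{n,\mathrm{fin}}^{(m)}$ is generated by two kinds of elements.

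The first kind is $I_m\sigma_\lambda$. By \eqref{eq:BG-ideal-valuation}, $\iota_d$ maps $I_m$ into $(\pi_d^N)\subseteq\mathfrak m_{d,s}^N$.

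The second kind is $\sigma_\lambda$ with $|\lambda|>m$. Its image is either $0$, or $\varepsilon_nU_k$ with $k>m$. By \eqref{eq:Uk-floor-bound}, $U_k$ lies in $\mathfrak m_{d,s}^{\lfloor k/d^\sharp\rfloor}\subseteq\mathfrak m_{d,s}^N$.

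Hence the image of $\cZ_{n,\mathrm{fin}}^{(m)}$ lies in $\mathfrak m_{d,s}^N$. The target $\mathscr A_{d,s}$ is complete and separated, so \cref{lem:filtered-extension} extends the map to a continuous map on $\widehat\cZ_n^{\mathrm{BG}}$. Uniqueness comes from density of $\cZ_{n,\mathrm{fin}}$ together with separatedness of $\mathscr A_{d,s}$.

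\textbf{Step 3: the formula and algebra properties.} In the product description \eqref{eq:center-coefficient-product}, a general element $c$ is the limit of its truncations. Applying the extended map to these truncations gives \eqref{eq:completed-restriction-series}, and the series converges by the same estimate as in Step 2.

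Compatibility with $\widehat R_{\mathrm{ev}}^{\mathrm{BG}}$-scalars follows because $\iota_d$ is continuous.

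Multiplicativity extends from the finite sector to the completion. Multiplication on $\widehat\cZ_n^{\mathrm{BG}}$ is continuous by \cref{cor:interpolation-completion-structure}, together with the identification of $\cZ_{n,\mathrm{fin}}^{(m)}$ with $\mathcal S_m$. Multiplication on $\mathscr A_{d,s}$ is continuous, so both sides of the multiplicativity identity are continuous in each argument and agree on a dense subset.

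\textbf{Step 4: the knot identity.} For \eqref{eq:restriction-of-knot}, substitute $c_{(k)}=a_{(k)}(K;q^2)$ into \eqref{eq:completed-restriction-series}. Since $b_k^{(n)}=\varepsilon_na_{(k)}$ by \eqref{eq:b-coefficients}, the result is exactly the series \eqref{eq:local-germ-series}.

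\textbf{Main obstacle.} The hard part is ensuring the extension is really an algebra map in the topology of \eqref{eq:BG-central-sector-definition}. That topology is defined by the interpolation basis, not by a factorial filtration. Continuity of multiplication in it therefore depends on the cofinality result \cref{prop:exact-factorial-interpolation-bridge}, transported through $\hc$. I will invoke that result rather than reprove it.
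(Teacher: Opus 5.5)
Your proposal is correct and follows essentially the same route as the paper. You compute the restriction on the finite sector via \cref{lem:iterated-rank-reduction}, get continuity from \cref{lem:Uk-order} with the cutoff $m=Nd^\sharp$, and obtain multiplicativity, $\widehat R_{\mathrm{ev}}^{\mathrm{BG}}$-linearity and uniqueness from density together with continuity of multiplication (via \cref{prop:exact-factorial-interpolation-bridge}).

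Two small points. First, the paper also records that the determinant restricts to the unit $q^{n(n-2)}z$, so the map agrees with Laurent restriction on $\Lambda_n^{\mathrm{Laur}}$, which is used later for the reflection identity. Your observation that $z$ is a unit in $\mathscr A_{d,s}$ gives this at once. Second, the restricted polynomials have coefficients in $R$ rather than $R_{\mathrm{ev}}$ when $n$ is odd, which is harmless since $R\subset\widehat\cO_d$.
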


\begin{proof}
Formula \eqref{eq:restriction-on-sigma} is
\cref{lem:iterated-rank-reduction}.  Modulo $\mathfrak m_{d,s}^N$, only
the terms with $k<Nd^\sharp$ occur in
\eqref{eq:completed-restriction-series}, by \cref{lem:Uk-order}.  Hence
the series defines a continuous map.  On the dense algebraic subalgebra it
is ordinary Harish--Chandra restriction along $\gamma_n$; continuity of
multiplication therefore extends multiplicativity to the completion.

Along the one-row curve, the determinant coordinate restricts to the unit
\begin{equation}\label{eq:gamma-determinant}
 \prod_{i=1}^{n}\gamma_n(z)_i=q^{n(n-2)}z.
\end{equation}
Thus the map also agrees with Laurent-polynomial restriction.  Uniqueness
follows from density.  Finally, substituting
$c_\lambda=a_\lambda(K;q^2)$ and using
\eqref{eq:b-coefficients} gives \eqref{eq:restriction-of-knot}.
\end{proof}

\section{Completed Harish--Chandra reflection and descent}
\label{sec:reflection}

This section proves the reflection symmetry of the completed knot element
and its integral descent to the quotient coordinate $X=z+z^{-1}$.

\subsection{Algebraic automorphisms and their central extensions}

Define maps on the generic quantum group by
\begin{align}
 \vartheta(E_i)&=E_{n-i},&
 \vartheta(F_i)&=F_{n-i},&
 \vartheta(K_j)&=K_{n+1-j}^{-1},
 \label{eq:vartheta-definition}\\
 \delta_c(E_i)&=E_i,&
 \delta_c(F_i)&=F_i,&
 \delta_c(K_j)&=q^cK_j
 \qquad(c\in\Z).
 \label{eq:delta-definition}
\end{align}
Thus
\begin{equation}\label{eq:vartheta-delta-on-L}
 \vartheta(L_i)=L_{n-i},
 \qquad
 \delta_c(L_i)=L_i.
\end{equation}

\begin{proposition}
\label{prop:algebraic-automorphisms}
The map $\vartheta$ is an involutive Hopf algebra automorphism, and each
$\delta_c$ is an algebra automorphism with
$\delta_c\delta_{c'}=\delta_{c+c'}$.
The restriction of $\delta_c$ to the embedded
$U_{q^2}(\mathfrak{sl}_n)$ is the identity.
\end{proposition}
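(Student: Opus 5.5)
The plan is to check the defining relations \eqref{eq:gl-relations}, the Serre relations and the Hopf structure \eqref{eq:Hopf} on generators. This is enough because an algebra map defined on generators that respects all relations is well defined. Compatibility of $\Delta$, $\varepsilon$ and $S$ also only needs checking on generators, since all three are (anti)multiplicative.

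For $\vartheta$, first verify the Cartan conjugation relations. We have
\[
\vartheta(K_j)\vartheta(E_i)\vartheta(K_j)^{-1}=K_{n+1-j}^{-1}E_{n-i}K_{n+1-j}.
\]
For $j=i$ this gives $K_{n+1-i}^{-1}E_{n-i}K_{n+1-i}$. Since $n+1-i=(n-i)+1$ is the second index for $E_{n-i}$, the relation $K_{(n-i)+1}E_{n-i}K_{(n-i)+1}^{-1}=q^{-1}E_{n-i}$ shows that the inverse conjugation gives $q\,E_{n-i}$, as required. The case $j=i+1$ and the $F$ relations are symmetric, and all other $K_j$ commute with $E_{n-i}$. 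Next, $\vartheta(L_i)=K_{n+1-i}^{-1}K_{n-i}=L_{n-i}$, so
\[
[\vartheta E_i,\vartheta F_j]=\delta_{ij}\frac{L_{n-i}-L_{n-i}^{-1}}{q-q^{-1}}=\vartheta\!\left(\delta_{ij}\frac{L_i-L_i^{-1}}{q-q^{-1}}\right).
\]
The Serre relations are preserved because $i\mapsto n-i$ is an automorphism of the $A_{n-1}$ Dynkin diagram, so it preserves $|i-j|$.

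For the Hopf structure, $\Delta\vartheta(E_i)=E_{n-i}\otimes1+L_{n-i}\otimes E_{n-i}=(\vartheta\otimes\vartheta)\Delta(E_i)$, and the same holds for $F_i$ and $K_j$; the latter is grouplike with grouplike image. The antipode compatibility $S\vartheta=\vartheta S$ follows either from the general fact that a bialgebra automorphism commutes with the antipode, or directly, e.g.\ $S(E_{n-i})=-L_{n-i}^{-1}E_{n-i}=\vartheta(-L_i^{-1}E_i)$. Finally $\vartheta^2$ fixes $E_i$ and $F_i$, and $\vartheta^2(K_j)=\vartheta(K_{n+1-j})^{-1}=K_j$, so $\vartheta$ is involutive and hence bijective.

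For $\delta_c$, scaling $K_j$ by the central scalar $q^c$ leaves every conjugation relation unchanged. It also leaves $L_i$ unchanged, since $q^cK_i(q^cK_{i+1})^{-1}=L_i$, so the commutator relation holds; the Serre relations do not involve $K_j$. It is an algebra map, and $\delta_c\delta_{c'}=\delta_{c+c'}$ is immediate on generators; with $\delta_0=\mathrm{id}$ this gives invertibility, since $\delta_{-c}$ is the inverse. It fixes the generators $E_i$, $F_i$, $L_i^{\pm1}$ of the embedded $U_{q^2}(\mathfrak{sl}_n)$, so its restriction there is the identity.

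The only point needing care is that $\delta_c$ is not claimed to be a coalgebra map, because $\Delta(q^cK_j)\neq q^cK_j\otimes q^cK_j$. The statement correctly asserts only an algebra automorphism. The index bookkeeping in the Cartan relations for $\vartheta$ is where sign or inverse errors could enter, so I would check that case explicitly, as done above.
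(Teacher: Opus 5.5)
Your proposal is correct and follows essentially the same route as the paper: verify the Cartan conjugation, commutator, and Serre relations and the coproduct on generators for $\vartheta$ (using $\vartheta(L_i)=L_{n-i}$ and the diagram symmetry $i\mapsto n-i$), note $\vartheta^2=\id$, and observe that $\delta_c$ preserves all relations, has inverse $\delta_{-c}$, and fixes $E_i$, $F_i$, $L_i^{\pm1}$. Your explicit index check of the Cartan relation is the same computation the paper packages as $-\bigl(\delta_{n+1-j,n-i}-\delta_{n+1-j,n-i+1}\bigr)=\delta_{j,i}-\delta_{j,i+1}$, and your remark that $\delta_c$ is not a coalgebra map matches the paper's caveat.
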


\begin{proof}
It is enough to check the defining relations and coproduct on the
generators.  The weight exponent in the relation
$K_jE_iK_j^{-1}=q^{\delta_{j,i}-\delta_{j,i+1}}E_i$ is preserved because
\[
 -\bigl(\delta_{n+1-j,n-i}-\delta_{n+1-j,n-i+1}\bigr)
 =\delta_{j,i}-\delta_{j,i+1}.
\]
The corresponding weight identity for $F_i$ is obtained by reversing the
exponent.  The commutator relation is preserved by
\eqref{eq:vartheta-delta-on-L}, and the Serre relations are permuted.  Since
$\vartheta(L_i)=L_{n-i}$,
\[
 (\vartheta\otimes\vartheta)\Delta(E_i)
 =E_{n-i}\otimes1+L_{n-i}\otimes E_{n-i}
 =\Delta(\vartheta(E_i)),
\]
and the coproduct formulas for $F_i$ and $K_j$ are preserved in the same
way.  The formulas also give $\vartheta^2=\id$, so $\vartheta$ is an
involutive Hopf algebra automorphism.  The maps $\delta_c$ preserve all
algebra relations, and $\delta_{-c}$ is the inverse of $\delta_c$; they need
not be Hopf because of the scalar on the group-like elements $K_j$.

The subalgebra $U_{q^2}(\mathfrak{sl}_n)$ is generated by the $E_i$,
$F_i$, and $L_i^{\pm1}$, all of which are fixed by $\delta_c$.  Hence
$\delta_c$ restricts to the identity there.
\end{proof}

On the algebraic center the Harish--Chandra actions of
$\delta_c$ and $\vartheta$ are, respectively,
\[
 y_i\longmapsto Q^c y_i,\qquad
 y_i\longmapsto Q^{n-1}y_{n+1-i}^{-1}.
\]
\Cref{cor:interpolation-completion-structure} therefore defines
continuous automorphisms, still denoted $\delta_c$ and $\vartheta$, of
$\widehat\cZ_n^{\mathrm{BG}}$ by transporting these substitutions
through the completed Harish--Chandra map.  They agree with the
algebraic quantum-group automorphisms on
$\cZ_{n,\mathrm{fin}}$.

The substitution $q\mapsto e^{h/2}$ defines an injective field
homomorphism
\begin{equation}\label{eq:q-hadic-coefficient-embedding}
 \Bbbk=\Q(q)\lhook\joinrel\longrightarrow\Q((h)).
\end{equation}
Indeed, if a Laurent polynomial $\sum_{m=a}^{b}c_mq^m$ maps to zero, then
all derivatives at $h=0$ of $\sum_m c_me^{mh/2}$ vanish.  The first
$b-a+1$ derivatives form a Vandermonde system in the distinct exponents
$m$, so every $c_m$ is zero; the assertion for rational functions follows
by applying this to the numerator.

The coefficient embedding in \eqref{eq:q-hadic-coefficient-embedding},
together with $E_i\mapsto E_i$, $F_i\mapsto F_i$, and
$K_j\mapsto e^{hH_j/2}$, defines the generic base-change homomorphism into
$U_h(\mathfrak{gl}_n)[h^{-1}]$.  On the generators of the integral form
listed in \cite[Section~3.1]{BG24}, its values are
\[
 q\longmapsto e^{h/2},\qquad
 K_j^{\pm1}\longmapsto e^{\pm hH_j/2},\qquad
 e_i\longmapsto(e^{h/2}-e^{-h/2})E_i,
 \qquad
 F_i^{(a)}\longmapsto\frac{F_i^a}{[a]_{e^{h/2}}!}.
\]
The last denominator has constant term $a!$ and is therefore a unit in
$\Q[[h]]$.  Thus the generic map carries $U_{\Z}$ into
$U_h(\mathfrak{gl}_n)$; denote its restriction by
\begin{equation}\label{eq:integral-hadic-specialization}
 \operatorname{sp}_h:U_{\Z}\longrightarrow U_h(\mathfrak{gl}_n).
\end{equation}
After inverting $h$, this map is the restriction of the generic base-change
homomorphism to $U_h(\mathfrak{gl}_n)[h^{-1}]$.  Since
$U_h(\mathfrak{gl}_n)$ is $h$-torsion-free, its map to this localization
is injective.  Thus, if $z\in Z(U_{\Z})^\Gamma$, the element
$\operatorname{sp}_h(z)$ commutes with all Drinfeld--Jimbo generators
in the localization and hence already in $U_h(\mathfrak{gl}_n)$.
Therefore $\operatorname{sp}_h(z)$ belongs to
$Z_h=Z(U_h(\mathfrak{gl}_n))$.  Let $\operatorname{sp}_h^0$ be the
induced map on the Cartan part.  It sends
$x_i=K_i^2$ to $e^{hH_i}$ and, in the shifted coordinates used for the
interpolation basis,
\begin{equation}\label{eq:shifted-Cartan-specialization}
 y_i=q^{n-1}K_i^2
 \longmapsto
 \exp\!\left(h\left(H_i+\frac{n-1}{2}\right)\right).
\end{equation}
Let $\hc_h:Z_h\to U_h^0$ be the shifted $h$-adic Harish--Chandra map in the
convention of \textup{(DJ1)}.

\begin{lemma}
\label{lem:Cartan-highest-weight-separation}
Let $A\in U_h^0$.  If the evaluation of $A$ at the shifted highest
weight of $V_h(\mu)$ is zero for every partition $\mu$ with at most
$n$ parts, then $A=0$.
\end{lemma}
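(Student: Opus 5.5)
Write $U_h^0=\Q[[h]][H_1,\ldots,H_n]$, completed $h$-adically, so that $A=\sum_{m\ge0}h^mA_m(H_1,\ldots,H_n)$ with each $A_m$ a polynomial in the Cartan generators. Evaluating at a weight $\nu\in\Q^n$ is the substitution $H_i\mapsto\nu_i$. This gives a formal series $\sum_m h^mA_m(\nu)\in\Q[[h]]$, and it converges coefficientwise because each $A_m$ is a polynomial. The shifted highest weight of $V_h(\mu)$ is $\mu+\rho$, possibly plus a fixed constant shift as in \eqref{eq:shifted-Cartan-specialization}. In every case it is an affine translate $\mu+c$ by a fixed vector $c\in\Q^n$. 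By hypothesis $\sum_m h^mA_m(\mu+c)=0$ in $\Q[[h]]$ for every partition $\mu$ with at most $n$ parts. Comparing coefficients of $h^m$ then gives $A_m(\mu+c)=0$ for every such $\mu$ and every $m$.

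So the argument reduces to one claim about ordinary polynomials: a polynomial $P\in\Q[H_1,\ldots,H_n]$ vanishing on the set $\{\mu+c\}$ is zero. Translating by $c$, it suffices to treat $P$ vanishing on all partitions $\mu_1\ge\cdots\ge\mu_n\ge0$. I would prove this by induction on $n$, or more simply by a Zariski-density argument. The set of partitions contains $\{(a_1+\cdots+a_n,\ a_2+\cdots+a_n,\ \ldots,\ a_n): a_i\in\Z_{\ge0}\}$. This is the image of $\Z_{\ge0}^n$ under an invertible linear map $T$, so $P\circ T$ vanishes on $\Z_{\ge0}^n$. A polynomial vanishing on $\Z_{\ge0}^n$ is zero: view it as a polynomial in $a_n$ with coefficients in $\Q[a_1,\ldots,a_{n-1}]$. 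For fixed nonnegative integers $a_1,\ldots,a_{n-1}$ it has infinitely many roots in $a_n$, so each coefficient vanishes on $\Z_{\ge0}^{n-1}$, and induction finishes. Hence $A_m=0$ for all $m$, and $A=0$.

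The one point needing care is the precise form of $U_h^0$ and of evaluation. If the Cartan part is taken as the $h$-adic completion of $\Q[H_1,\ldots,H_n][[h]]$, with elements like $e^{hH_i}$, then elements are exactly series $\sum h^mA_m$ with polynomial $A_m$, and evaluation commutes with the coefficient extraction. This holds because $H_i\mapsto\nu_i$ is a continuous $\Q[[h]]$-algebra map $U_h^0\to\Q[[h]]$. I expect this bookkeeping to be the only obstacle, and I would state it explicitly before running the density argument. The shift convention of (DJ1) does not matter, since any fixed affine translate of a Zariski-dense set is Zariski dense.
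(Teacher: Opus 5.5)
Your proof is correct and is essentially the paper's argument: expand $A=\sum_m h^mA_m$ with polynomial coefficients, note that each $A_m$ vanishes on a fixed translate of the dominant integral lattice, and use the change of variables $a_i=\mu_i-\mu_{i+1}$, $a_n=\mu_n$ to identify that set with a translate of $\Z_{\ge0}^n$, which is Zariski dense. The only difference is that you spell out the density step by induction on $n$ and justify that evaluation commutes with extracting $h$-coefficients, both of which the paper leaves implicit.
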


\begin{proof}
Write
\[
 A=\sum_{r\ge0}h^rP_r(H_1,\ldots,H_n),
 \qquad P_r\in\Q[H_1,\ldots,H_n].
\]
Each $P_r$ vanishes on a fixed translate of the dominant integral
lattice.  After the change of variables
$d_i=\mu_i-\mu_{i+1}$ and $d_n=\mu_n$, this set is an affine translate
of $\Z_{\ge0}^n$ and is therefore Zariski dense.  Hence every $P_r$ is
zero.
\end{proof}

\begin{lemma}[Harish--Chandra specialization]
\label{lem:HC-specialization-compatibility}
For every $z\in Z(U_{\Z})^\Gamma$,
\begin{equation}\label{eq:HC-specialization-compatibility}
 \hc_h\bigl(\operatorname{sp}_h(z)\bigr)
 =\operatorname{sp}_h^0\bigl(\hc(z)\bigr).
\end{equation}
In particular, if
$\sigma_{\lambda,h}=\operatorname{sp}_h(\sigma_\lambda)$, then
\begin{equation}\label{eq:HC-specialization-on-sigma}
 \hc_h(\sigma_{\lambda,h})
 =\operatorname{sp}_h^0(G_\lambda).
\end{equation}
\end{lemma}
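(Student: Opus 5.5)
We compare the two sides through their action on highest-weight vectors, using \cref{lem:Cartan-highest-weight-separation} to finish. Both sides of \eqref{eq:HC-specialization-compatibility} lie in $U_h^0$, so it suffices to show that they have the same evaluation at the shifted highest weight of $V_h(\mu)$ for every partition $\mu$ with at most $n$ parts.

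\emph{The right side.} In the generic algebra, triangular decomposition (DJ1) writes $z=\hc^{\mathrm{unsh}}(z)+\sum(\text{terms }F\cdots K\cdots E\text{ with nontrivial }E\text{-part})$. Hence $z$ acts on a highest-weight vector $v_\mu\in V(\mu)$ by the evaluation of its unshifted Cartan part at $\mu$. Equivalently, by the convention fixed before \eqref{eq:BG-HC-exact}, it acts by $\hc(z)$ evaluated at $y_i=q^{2(\mu_i+n-i)}$.

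\emph{The left side.} Apply $\operatorname{sp}_h$. It is an algebra homomorphism sending $E_i,F_i$ to $E_i,F_i$ and $K_j$ to $e^{hH_j/2}$, so it carries the triangular decomposition of $z$ into one of $\operatorname{sp}_h(z)$. The $E$-positive terms still kill $v_\mu$ in $V_h(\mu)$. The Cartan part is $\operatorname{sp}^0_h$ applied to the generic Cartan part. Uniqueness of the $h$-adic triangular decomposition (DJ1), checked after inverting $h$ where the generic map is injective, identifies this image with the unshifted $h$-adic Harish--Chandra projection of $\operatorname{sp}_h(z)$.

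\emph{Matching the shifts.} The $\rho$-shift in $\hc_h$ and the shift $y_i=q^{n-1}x_i$ must agree under $q=e^{h/2}$. By \eqref{eq:shifted-Cartan-specialization}, $y_i$ becomes $\exp(h(H_i+\tfrac{n-1}2))$. At $H_i=\mu_i+\rho_i$ with $\rho_i=\tfrac{n+1-2i}{2}$, this gives $e^{h(\mu_i+n-i)}=q^{2(\mu_i+n-i)}$. This is the same point used on the generic side. So the two sides have equal evaluations at every shifted dominant weight, and \cref{lem:Cartan-highest-weight-separation} gives \eqref{eq:HC-specialization-compatibility}. Taking $z=\sigma_\lambda$ and using $\hc(\sigma_\lambda)=G_\lambda(y)$ gives \eqref{eq:HC-specialization-on-sigma}.

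The main obstacle is the bookkeeping of conventions. The $h$-adic $\hc_h$ must use the same $\rho$-shift and the same ordering ($U^-U^0U^+$) as the generic $\hc$. We must also check that $V_h(\mu)$ is the base change of $V(\mu)$ with matching highest-weight eigenvalues $K_j\mapsto q^{\mu_j}$. We would verify this directly from (DJ1) and the definition of $\operatorname{sp}_h$ on $K_j$. If $\hc_h$ is instead defined with the opposite ordering, we would use lowest-weight vectors symmetrically.
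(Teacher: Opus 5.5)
Your proposal is correct and follows the paper's proof: both compare the two sides at the shifted highest weights $\mu+\rho$ for all partitions $\mu$ with at most $n$ parts, and conclude with \cref{lem:Cartan-highest-weight-separation}. The only difference is the mechanism for the left side. The paper transports the central character by specializing the action matrices from $V(\mu)$ to $V_h(\mu)$. You instead push the triangular decomposition through $\operatorname{sp}_h$; this already identifies the unshifted Harish--Chandra parts directly and does not require $V_h(\mu)$ to be the base change of $V(\mu)$, so your explicit $\rho$-shift check is the only bookkeeping left.
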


\begin{proof}
Fix a partition $\mu$ with at most $n$ parts, and let $\chi_\mu(z)$ be
the scalar by which $z$ acts on $V(\mu)$.  By the defining
highest-weight characterization of the algebraic Harish--Chandra map,
$\hc(z)$ evaluated at the corresponding $\rho$-shifted weight equals
$\chi_\mu(z)$.  Specializing the action matrices by
\eqref{eq:integral-hadic-specialization} shows that
$\operatorname{sp}_h(z)$ acts on $V_h(\mu)$ by the specialized scalar
$\operatorname{sp}_h(\chi_\mu(z))$.  The defining characterization of
$\hc_h$ therefore gives the same value when
$\hc_h(\operatorname{sp}_h(z))$ is evaluated at the shifted highest
weight of $V_h(\mu)$.  The element
$\operatorname{sp}_h^0(\hc(z))$ has that value as well, since
$\operatorname{sp}_h^0$ specializes both the Cartan variables and the
multiplicative $\rho$-shift.

Thus the two sides of
\eqref{eq:HC-specialization-compatibility} have the same shifted
highest-weight evaluations for all $\mu$.  They are equal by
\cref{lem:Cartan-highest-weight-separation}.  The last assertion follows
from $\hc(\sigma_\lambda)=G_\lambda$.
\end{proof}

\begin{lemma}[$h$-adic saturation]
\label{lem:HC-hadic-saturation}
For every $z\in Z_h$ and every $m\ge0$,
\begin{equation}\label{eq:HC-saturation}
 \hc_h(z)\in h^mU_h^0
 \quad\Longrightarrow\quad
 z\in h^mZ_h.
\end{equation}
In particular, $\hc_h$ is injective.
\end{lemma}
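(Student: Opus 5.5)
The plan is to argue by induction on $m$, reducing modulo $h$ and comparing with the classical Harish--Chandra isomorphism for $U(\mathfrak{gl}_n)$. The case $m=0$ is trivial. Everything rests on three standard facts about $U_h=U_h(\mathfrak{gl}_n)$: it is $h$-torsion-free, it is $h$-adically separated and complete, and $U_h/hU_h\cong U(\mathfrak{gl}_n)$, with $E_i,F_i,H_j$ reducing to the classical Chevalley generators.

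\emph{Step 1 (compatibility mod $h$).} I will check that reduction modulo $h$ intertwines $\hc_h$ with the classical Harish--Chandra map $\hc_0:Z(U(\mathfrak{gl}_n))\to U(\mathfrak h)$. Here $\hc_0$ means projection onto the weight-zero Cartan part, followed by the $\rho$-translation. The triangular decomposition (DJ1) is given by multiplication of the $h$-adic PBW pieces, and these pieces reduce to the classical PBW pieces. Hence projection of the weight-zero part onto $U_h^0$ commutes with reduction. The shift in \eqref{eq:shifted-Cartan-specialization} is an automorphism of $U_h^0=\Q[H_1,\ldots,H_n][[h]]$ by a substitution of the form $H_i\mapsto H_i+\text{const}$. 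Modulo $h$ this becomes the classical $\rho$-translation, or a fixed translation of the $H_i$ that is in any case an automorphism of $\Q[H_1,\ldots,H_n]$. For a central $z\in Z_h$, its class $\bar z$ commutes with all classical generators, so $\bar z\in Z(U(\mathfrak{gl}_n))$. The outcome of this step is
\[
 \overline{\hc_h(z)}=\hc_0(\bar z).
\]

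\emph{Step 2 (inductive step).} Suppose $\hc_h(z)\in h^mU_h^0$ with $m\ge1$. By Step 1, $\hc_0(\bar z)=0$. The classical Harish--Chandra map is injective on $Z(U(\mathfrak{gl}_n))$; it is even an isomorphism onto the shifted Weyl-invariants. Hence $\bar z=0$, so $z=hw$ for some $w\in U_h$.

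Next I show $w$ is central. For any generator $x$ we have $h[w,x]=[z,x]=0$, and torsion-freeness gives $[w,x]=0$. Commuting with the topological generators and using continuity then gives $w\in Z_h$.

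Finally, $\hc_h$ is $\Q[[h]]$-linear, so $h\,\hc_h(w)=\hc_h(z)\in h^mU_h^0$. Since $U_h^0$ is torsion-free, $\hc_h(w)\in h^{m-1}U_h^0$. By induction $w\in h^{m-1}Z_h$, and therefore $z\in h^mZ_h$.

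\emph{Step 3 (injectivity).} If $\hc_h(z)=0$, the saturation statement gives $z\in\bigcap_m h^mU_h$. This intersection is zero by $h$-adic separatedness, so $z=0$.

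The main obstacle I expect is Step 1. One must make sure that the $h$-adic triangular decomposition in the completed tensor product really reduces to the classical PBW decomposition, so that the weight-zero projection commutes with reduction. One must also confirm that the multiplicative shift $y_i=q^{n-1}K_i^2$ reduces to an invertible affine translation rather than something degenerate. I would handle this with the explicit PBW basis of $U_h$, which consists of ordered monomials in root vectors over $\Q[[h]]$. In that basis the weight-zero projection is coefficientwise, so its commutation with reduction mod $h$ is immediate.
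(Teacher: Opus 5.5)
Your proof is correct and follows the paper's argument: reduce modulo $h$ to the classical shifted Harish--Chandra map on $Z(U(\mathfrak{gl}_n))$, use its injectivity to get $\bar z=0$ and $z=hz_1$, and iterate. You also spell out details the paper leaves implicit. These are centrality of $z_1$ via $h$-torsion-freeness of $U_h$, division by $h$ in $U_h^0$, and $h$-adic separatedness for the injectivity clause, and all of them are handled correctly.
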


\begin{proof}
The PBW theorem makes $U_h(\mathfrak{gl}_n)$ topologically free, hence
$h$-torsion-free, over $\Q[[h]]$.  Reduction modulo $h$ identifies it
with $U(\mathfrak{gl}_n)$.  Reducing the triangular decomposition, its
weight-zero PBW projection, and the $\rho$-shift shows that the reduction
of $\hc_h$ is the classical shifted Harish--Chandra map.  The classical
Harish--Chandra isomorphism for $\mathfrak{gl}_n$ follows from the
semisimple case recalled in \cite[Chapter~6]{Jantzen96} and the splitting
$\mathfrak{gl}_n=\mathfrak{sl}_n\oplus\Q I_n$.

If $\hc_h(z)\in hU_h^0$, the reduction $\bar z$ has zero classical
Harish--Chandra image, hence $\bar z=0$.  Thus $z=hz_1$ with $z_1$ central.
Iteration proves \eqref{eq:HC-saturation}; the case $\hc_h(z)=0$ gives
injectivity.
\end{proof}

\begin{corollary}
\label{cor:HC-highest-weight-separation}
Finite-dimensional type-one highest-weight modules separate $Z_h$.
\end{corollary}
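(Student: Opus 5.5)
The plan is to combine the two lemmas just proved: the Harish--Chandra map $\hc_h$ is injective, and elements of $U_h^0$ are detected by their values at shifted highest weights. Let $z\in Z_h$ act by zero on every finite-dimensional type-one highest-weight module $V_h(\mu)$, where $\mu$ ranges over partitions with at most $n$ parts. It suffices to treat these modules, since the other dominant weights differ from partitions by a power of the determinant character. Under that reduction one either works with the statement exactly as Lemma~\ref{lem:Cartan-highest-weight-separation} phrases it, or twists by the central group-like element $\prod_j K_j$.

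First I show that the scalar of $z$ on $V_h(\mu)$ equals the evaluation of $\hc_h(z)$ at the shifted highest weight of $\mu$. Use the triangular decomposition (DJ1) and write
\[
 z=\hc_h'(z)+\sum(\text{terms } F\,H\,E \text{ with a nontrivial } U^+\text{ or } U^- \text{ factor}),
\]
where $\hc_h'(z)$ is the unshifted weight-zero projection. Apply this to a highest-weight vector $v_\mu$. The $E$-factors annihilate $v_\mu$. A weight-zero term with a nontrivial $F$-part must also have a nontrivial $E$-part, so it too kills $v_\mu$. Hence $z v_\mu=\hc_h'(z)(\mu)\,v_\mu$, and the $\rho$-shift convention converts this into the shifted evaluation. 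Since $z$ is central and $V_h(\mu)$ is generated by $v_\mu$ and topologically free, $z$ acts by this scalar on the whole module.

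If $z$ acts by zero on every $V_h(\mu)$, then every shifted highest-weight evaluation of $\hc_h(z)$ vanishes. Lemma~\ref{lem:Cartan-highest-weight-separation} then gives $\hc_h(z)=0$. By the injectivity in Lemma~\ref{lem:HC-hadic-saturation}, $z=0$. Applying this to a difference $z_1-z_2$ gives the separation statement.

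The only delicate point is making the evaluation step rigorous in the $h$-adic setting. The triangular decomposition is a completed tensor product, so the vanishing of the $E$-terms on $v_\mu$ must hold termwise in a convergent sum. This follows from continuity of the action on the topologically free module $V_h(\mu)$, because each truncation modulo $h^N$ is a finite sum. The remaining step is routine.
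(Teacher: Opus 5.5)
Your proposal is correct and follows the paper's proof: if $z$ acts by zero on every $V_h(\mu)$, all shifted highest-weight evaluations of $\hc_h(z)$ vanish, so \cref{lem:Cartan-highest-weight-separation} gives $\hc_h(z)=0$ and \cref{lem:HC-hadic-saturation} gives $z=0$. Your extra argument that the central character equals the shifted evaluation of $\hc_h(z)$ only spells out the highest-weight characterization that the paper uses to define $\hc_h$. The remark about non-partition dominant weights is unnecessary, since the partition modules alone already separate $Z_h$.
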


\begin{proof}
If $z\in Z_h$ acts by zero on every $V_h(\mu)$, then all shifted
highest-weight evaluations of $\hc_h(z)$ vanish.  By
\cref{lem:Cartan-highest-weight-separation}, $\hc_h(z)=0$, and
\cref{lem:HC-hadic-saturation} gives $z=0$.
\end{proof}

\begin{proposition}[Faithful $h$-adic realization]
\label{prop:faithful-hadic-comparison}
Let $U_h(\mathfrak{gl}_n)$ be the $h$-adically complete
Drinfeld--Jimbo algebra over $\Q[[h]]$, in the convention
\[
 q=e^{h/2},\qquad K_j=e^{hH_j/2}.
\]
The restriction of $\operatorname{sp}_h$ to
$\cZ_{n,\mathrm{fin}}$ extends uniquely to a continuous injective
algebra homomorphism
\begin{equation}\label{eq:central-hadic-injection}
 \iota_h^{\cZ}:\widehat{\cZ}^{\mathrm{BG}}_n
 \lhook\joinrel\longrightarrow Z_h.
\end{equation}
It intertwines the preceding central automorphisms with
\[
 \vartheta_h(H_j)=-H_{n+1-j},\qquad
 \delta_{c,h}(H_j)=H_j+c,
\]
and their values on $E_i,F_i$ from
\eqref{eq:vartheta-definition}--\eqref{eq:delta-definition}:
\begin{equation}\label{eq:intrinsic-hadic-equivariance}
 \iota_h^{\cZ}\circ\vartheta
 =\vartheta_h\circ\iota_h^{\cZ},
 \qquad
 \iota_h^{\cZ}\circ\delta_c
 =\delta_{c,h}\circ\iota_h^{\cZ}.
\end{equation}
\end{proposition}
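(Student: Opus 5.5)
The plan has three parts: construct the map $\iota_h^{\cZ}$, prove that it is injective, and check the equivariance. For existence, define $\iota_h^{\cZ}$ on $\cZ_{n,\mathrm{fin}}$ by $\operatorname{sp}_h$, which lands in $Z_h$ by the discussion preceding \cref{lem:Cartan-highest-weight-separation}. To extend it continuously to $\widehat\cZ_n^{\mathrm{BG}}$, I would use \cref{lem:filtered-extension} with the $h$-adic filtration $h^NZ_h$ on the target. By \cref{lem:center-linear-completion}, it suffices to show that for each $N$ there is $m$ with $\operatorname{sp}_h(\cZ_{n,\mathrm{fin}}^{(m)})\subseteq h^NZ_h$. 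Coefficients in $I_m$ are harmless: $(Q;Q)_m\mapsto\prod_{a\le m}(1-e^{ah})\in h^m\Q[[h]]$. For the basis elements $\sigma_\lambda$ with $|\lambda|>m$, I would use \cref{prop:exact-factorial-interpolation-bridge}(i). Here $G_\lambda\in\mathcal C_N$ once $|\lambda|>d(N-1)$. Every generator of $\mathcal C_N$ contains either $(Q;Q)_a$ or a product $\prod_{b<c}(1-Q^{s+b}y_j)$ with $a$ or $c$ at least about $N/(n+1)$. Under \eqref{eq:shifted-Cartan-specialization}, each factor $1-Q^{s+b}y_j$ maps to $1-\exp(h(\cdots))\in hU_h^0$. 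Hence $\operatorname{sp}_h^0(G_\lambda)\in h^{\lfloor N/(n+1)\rfloor}U_h^0$. By \cref{lem:HC-specialization-compatibility} and the saturation \cref{lem:HC-hadic-saturation}, $\sigma_{\lambda,h}\in h^{\lfloor N/(n+1)\rfloor}Z_h$. Multiplicativity of the extension follows from \cref{lem:filtered-extension}, since multiplication is continuous on both sides (\cref{cor:interpolation-completion-structure}). Uniqueness follows from density.

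Injectivity is the step I expect to be the main obstacle, because the $h$-adic topology is much coarser than the cyclotomic one: elements of $\widehat R_{\mathrm{ev}}^{\mathrm{BG}}$ map into $\Q[[h]]$ through the expansion at $q=1$. First I would use that the Habiro ring embeds in $\Q[[q-1]]$, i.e. $\widehat{\Z[Q]}\to\Z[[Q-1]]$ is injective (Habiro's theorem). Next, suppose $c=\sum c_\lambda\sigma_\lambda$ with $\iota_h^{\cZ}(c)=0$. Then $c$ acts by zero on every $V_h(\mu)$. On $V_h(\mu)$ only the finitely many $\lambda\subseteq\mu$ contribute, because $G_\lambda(p_\mu)=0$ otherwise and evaluation commutes with the continuous extension. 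This gives $\sum_{\lambda\subseteq\mu}c_\lambda G_\lambda(p_\mu)=0$ in $\Q[[h]]$ for all $\mu$. Triangularity (\cref{lem:finite-interpolation-matrix}), together with the fact that the diagonal entries $\prod(1-Q^{h(\square)})$ are nonzero divisors in $\Q[[h]]$, then forces each $c_\lambda$ to map to $0$ in $\Q[[h]]$, inductively on $\mu$. Finally, Habiro injectivity gives $c_\lambda=0$.

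Equivariance: on $\cZ_{n,\mathrm{fin}}$, the relation $\operatorname{sp}_h\circ\vartheta=\vartheta_h\circ\operatorname{sp}_h$ is checked on generators. Indeed, $K_j\mapsto K_{n+1-j}^{-1}$ corresponds to $H_j\mapsto -H_{n+1-j}$, and $\delta_c(K_j)=q^cK_j$ corresponds to $H_j\mapsto H_j+c$ via $e^{h(H_j+c)/2}$. Both $\vartheta_h$ and $\delta_{c,h}$ are continuous automorphisms of $U_h$ preserving $Z_h$. Both sides of \eqref{eq:intrinsic-hadic-equivariance} are then continuous maps agreeing on the dense subalgebra, where the completed $\vartheta,\delta_c$ were defined by continuous transport (\cref{cor:interpolation-completion-structure}). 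Density therefore gives equality on $\widehat\cZ_n^{\mathrm{BG}}$.
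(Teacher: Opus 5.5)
Your construction of $\iota_h^{\cZ}$ and your injectivity argument are sound and essentially the paper's. Two differences are harmless. First, the paper gets $\sigma_{\lambda,h}\in h^{|\lambda|}Z_h$ directly from the factorial expansion in \cite[Lemma~10.27]{BG24}, whereas your route through \cref{prop:exact-factorial-interpolation-bridge}(i) gives a weaker exponent that still suffices. In fact every generator of $\mathcal C_N$ specializes into $h^NU_h^0$, not just $h^{\lfloor N/(n+1)\rfloor}U_h^0$. Second, you run the triangular induction in $\Q[[h]]$ and apply injectivity of $\tau_h$ at the end, rather than pulling back to the Habiro ring first; this is equivalent.

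The gap is in the $\vartheta$-equivariance. You argue that both sides of \eqref{eq:intrinsic-hadic-equivariance} agree on $\cZ_{n,\mathrm{fin}}$ because $\operatorname{sp}_h\circ\vartheta=\vartheta_h\circ\operatorname{sp}_h$ on generators. However, $\vartheta$ does not preserve $\cZ_{n,\mathrm{fin}}$. On Harish--Chandra images it acts by $y_i\mapsto Q^{n-1}y_{n+1-i}^{-1}$. For example, $\hc(\sigma_{(1)})$ has top part a unit times $y_1+\cdots+y_n$, and it is sent to a genuinely non-polynomial Laurent function. So for $z\in\cZ_{n,\mathrm{fin}}$ you need $\iota_h^{\cZ}(\vartheta z)=\operatorname{sp}_h(\vartheta z)$ with $\vartheta z\notin\cZ_{n,\mathrm{fin}}$. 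On the left, $\vartheta z$ is the element of the completion obtained by transport through $\widehat\hc$; on the right, it is the algebraic element of $Z(U_{\Z})^\Gamma$. Your construction only gives $\iota_h^{\cZ}=\operatorname{sp}_h$ on $\cZ_{n,\mathrm{fin}}$, so this identity does not follow.

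The missing idea is the determinant element $\mathbf d_n=\hc^{-1}(\eta_n)=q^{n(n-1)}K_{\mathrm{det}}^2$:
\begin{itemize}
\item It is a central unit with $\mathbf d_n^{\pm1}\in Z(U_{\Z})^\Gamma$, and it is invertible in $\widehat\cZ_n^{\mathrm{BG}}$ by \cref{cor:interpolation-completion-structure}.
\item Multiplicativity of $\iota_h^{\cZ}$ then gives $\iota_h^{\cZ}(\mathbf d_n^{-1})=\operatorname{sp}_h(\mathbf d_n)^{-1}=\operatorname{sp}_h(\mathbf d_n^{-1})$.
\item Hence $\iota_h^{\cZ}$ and $\operatorname{sp}_h$ agree on $\cZ_{n,\mathrm{fin}}[\mathbf d_n^{-1}]$. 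This subalgebra corresponds to $\Lambda_n^{\mathrm{Laur}}$, is stable under both $\vartheta$ and $\delta_c$, and is dense.
\end{itemize}
Only on this Laurent subalgebra do your generator check and continuity yield \eqref{eq:intrinsic-hadic-equivariance}. For $\delta_c$ alone your argument already works, since $y_i\mapsto Q^cy_i$ preserves $\Lambda_n^{\mathrm{pol}}$.
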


\begin{proof}
Under the specialization $Q=e^h$, the coefficient homomorphism
\begin{equation}\label{eq:Habiro-to-hadic-coefficients}
 \tau_h:\widehat R_{\mathrm{ev}}^{\mathrm{BG}}
 \cong\widehat{\Z[Q]}\longrightarrow\Q[[h]]
\end{equation}
is injective.  Indeed, Habiro's Taylor homomorphism
\[
 T_1:\widehat{\Z[Q]}\longrightarrow\Z[[Q-1]]
\]
is injective by \cite[Theorem~5.4]{Habiro04}.  Substitution
$Q-1\mapsto e^h-1$ preserves the order of every nonzero power series,
and its composite with $T_1$ is $\tau_h$.

Lemma~10.27 of \cite{BG24}, after the parameter translation, writes
the Harish--Chandra polynomial $G_\lambda$ as a finite sum of products
\begin{equation}\label{eq:interpolation-total-factorial-degree}
 \prod_{\nu=1}^{d}
 f_{j_\nu}(Q^{s_\nu}y_{i_\nu}),
 \qquad
 \sum_{\nu=1}^{d}j_\nu=|\lambda|,
\end{equation}
with integral Laurent coefficients.  Under
\eqref{eq:shifted-Cartan-specialization}, a factorial of index $j$ becomes
\[
 \prod_{b=0}^{j-1}
 \left(1-
 \exp\!\left(h\left(H_i+\frac{n-1}{2}+s+b\right)\right)\right),
\]
which belongs to $h^jU_h^0$.  Equations
\eqref{eq:HC-specialization-on-sigma} and
\eqref{eq:interpolation-total-factorial-degree} therefore give
\[
 \hc_h(\sigma_{\lambda,h})\in h^{|\lambda|}U_h^0.
\]
By \cref{lem:HC-hadic-saturation},
\begin{equation}\label{eq:sigma-hadic-order}
 \sigma_{\lambda,h}\in h^{|\lambda|}Z_h
 \subset h^{|\lambda|}U_h(\mathfrak{gl}_n).
\end{equation}
Also $\tau_h(I_m)\subset h^m\Q[[h]]$.  It follows from
\eqref{eq:sigma-hadic-order} that the algebraic central inclusion sends
$\cZ_{n,\mathrm{fin}}^{(m)}$ into
$h^mU_h(\mathfrak{gl}_n)$.  Explicitly, for
$z=\sum_\lambda c_\lambda\sigma_\lambda$ set
\[
 \iota_h^{\cZ}(z)
 =\sum_\lambda\tau_h(c_\lambda)\sigma_{\lambda,h}.
\]
For each $N$, only the finitely many $\lambda$ with $|\lambda|<N$
can contribute modulo $h^N$, by
\eqref{eq:sigma-hadic-order}; hence the sum is well defined and
continuous.  On $\cZ_{n,\mathrm{fin}}$ it agrees with
$\operatorname{sp}_h$.  Since multiplication is continuous in the
source and in the $h$-adic target, density of
$\cZ_{n,\mathrm{fin}}$ shows that the extension is an algebra
homomorphism.

To prove equivariance, let
\begin{equation}\label{eq:determinant-central-element}
 \mathbf d_n=\hc^{-1}(\eta_n)\in\cZ_{n,\mathrm{fin}}.
\end{equation}
Let $K_{\mathrm{det}}=K_1\cdots K_n$.  Since $K_{\mathrm{det}}$ is
central and the total $\rho$-shift is zero, the shifted Harish--Chandra
map sends $K_{\mathrm{det}}^2$ to $x_1\cdots x_n$.  Therefore
\begin{equation}\label{eq:determinant-central-monomial}
 \mathbf d_n=q^{n(n-1)}K_{\mathrm{det}}^2,
 \qquad
 \mathbf d_n^{\pm1}\in Z(U_{\Z})^\Gamma.
\end{equation}
Every element of $\Lambda_n^{\mathrm{Laur}}$ becomes a symmetric
polynomial after multiplication by a sufficiently large power of
$\eta_n$.  Hence the Harish--Chandra map identifies
\begin{equation}\label{eq:Laurent-central-subalgebra}
 \cZ_{n,\mathrm{Laur}}
 :=\cZ_{n,\mathrm{fin}}[\mathbf d_n^{-1}]
 \quad\text{with}\quad
 \Lambda_n^{\mathrm{Laur}}.
\end{equation}
The determinant-inverse construction in
\cref{cor:interpolation-completion-structure} places
$\mathbf d_n^{-1}$ in $\widehat\cZ_n^{\mathrm{BG}}$.  Since
$\cZ_{n,\mathrm{Laur}}$ contains the dense subalgebra
$\cZ_{n,\mathrm{fin}}$, it is dense in the completion.  Since
$\iota_h^{\cZ}$ is an algebra homomorphism and agrees with
$\operatorname{sp}_h$ on $\cZ_{n,\mathrm{fin}}$,
\begin{equation}\label{eq:determinant-inverse-specialization}
 \iota_h^{\cZ}(\mathbf d_n^{-1})
 =\iota_h^{\cZ}(\mathbf d_n)^{-1}
 =\operatorname{sp}_h(\mathbf d_n)^{-1}
 =\operatorname{sp}_h(\mathbf d_n^{-1}).
\end{equation}
Thus $\iota_h^{\cZ}$ and $\operatorname{sp}_h$ agree on
$\cZ_{n,\mathrm{Laur}}$.  The Harish--Chandra formulas in the preceding
paragraph show that $\vartheta$ and $\delta_c$ preserve this Laurent
subalgebra.  The formulas on the Drinfeld--Jimbo generators give, on all of
$U_{\Z}$ and hence on $\cZ_{n,\mathrm{Laur}}$,
\[
 \operatorname{sp}_h\circ\vartheta
 =\vartheta_h\circ\operatorname{sp}_h,
 \qquad
 \operatorname{sp}_h\circ\delta_c
 =\delta_{c,h}\circ\operatorname{sp}_h.
\]
The subalgebra is dense, and the two sides of each identity define
continuous maps from $\widehat\cZ_n^{\mathrm{BG}}$ to $Z_h$.  The
identities therefore extend to the completed center and prove
\eqref{eq:intrinsic-hadic-equivariance}.

For injectivity, consider the central characters on finite-dimensional
modules.  For a partition $\mu$ with at most $n$ parts, let
\[
 \chi_\mu:\widehat{\cZ}^{\mathrm{BG}}_n
 \longrightarrow\widehat R_{\mathrm{ev}}^{\mathrm{BG}}
\]
be scalar action on $V(\mu)$.  It is a finite sum because
$\sigma_\lambda$ annihilates $V(\mu)$ unless
$\lambda\subseteq\mu$.  Compatibility with the $h$-adic module gives
\begin{equation}\label{eq:central-scalar-hadic-compatibility}
 \rho_{\mu,h}\bigl(\iota_h^{\cZ}(z)\bigr)
 =\tau_h\bigl(\chi_\mu(z)\bigr)\id_{V_h(\mu)}.
\end{equation}
If $\iota_h^{\cZ}(z)=0$, injectivity of $\tau_h$ gives
$\chi_\mu(z)=0$ for every $\mu$.  Write
$z=\sum_\lambda c_\lambda\sigma_\lambda$ and induct on $|\mu|$.
After all proper subpartitions have been treated, evaluation on
$V(\mu)$ gives
\[
 0=c_\mu\,
   \sigma_\mu|_{V(\mu)}.
\]
By \cite[Theorem~8.2(c)]{BG24}, the diagonal scalar
$\sigma_\mu|_{V(\mu)}$ is a Laurent unit times
$\prod_{\square\in\mu}(1-Q^{h(\square)})$, hence is nonzero.
The coefficient ring is a domain because its Taylor map $T_1$ is
injective into the domain $\Z[[Q-1]]$.  Thus $c_\mu=0$, completing the
induction and proving injectivity.
\end{proof}

\begin{corollary}
\label{cor:knot-hadic-identification}
For every zero-framed knot $K$,
\begin{equation}\label{eq:knot-hadic-identification}
 \iota_h^{\cZ}\!
 \left(J_K^{\mathrm{int}}(\mathfrak{gl}_n;q^2)\right)
 =J_{K,h}(\mathfrak{gl}_n;q^2),
\end{equation}
where the right side is the usual universal $h$-adic knot element in the
convention \eqref{eq:Hopf}--\eqref{eq:HL-R-convention}.
\end{corollary}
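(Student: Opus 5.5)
Both sides of \eqref{eq:knot-hadic-identification} lie in $Z_h$. By \cref{cor:HC-highest-weight-separation}, finite-dimensional highest-weight modules separate $Z_h$, so it suffices to compare the scalars by which the two sides act on each $V_h(\mu)$, with $\mu$ a partition having at most $n$ parts.

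\emph{Right side.} The usual universal knot element $J_{K,h}(\mathfrak{gl}_n;q^2)$ is central by \cite[Proposition~8.2]{Habiro06}. It therefore acts on $V_h(\mu)$ by the $h$-adic image of the reduced scalar $J_K(V(\mu);q^2)$. This identification rests on two points:
\begin{itemize}
\item the algebraic and $h$-adic Reshetikhin--Turaev constructions use the same data $\mathcal R=D\Psi$, antipode, and pivotal element $K_{-2\rho}$, transported along $\operatorname{sp}_h$;
\item the reduced scalar \eqref{eq:reduced-normalization} is computed from the same matrices on $V(\mu)$.
\end{itemize}
One convention issue has to be checked. Beliakova--Gorsky's knot element, and the integrality behind $a_\lambda$, is stated for $\mathfrak{gl}_n$, whereas the $h$-adic element may be built with either Cartan factor. \Cref{lem:BG-gl-sl-interface} shows the two choices agree for a zero-framed knot, so nothing depends on it. The $h$-adic eigenvalue is thus $\tau_h$ applied to $J_K(V(\mu);q^2)$, regarded through $q=e^{h/2}$. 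This value is a Laurent polynomial in $Q$ by \cref{lem:BG-scalar-interface} and the finite formula \eqref{eq:BG-knot-expansion}.

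\emph{Left side.} By \eqref{eq:central-scalar-hadic-compatibility}, $\iota_h^{\cZ}(J_K^{\mathrm{int}})$ acts on $V_h(\mu)$ by $\tau_h(\chi_\mu(J_K^{\mathrm{int}}))$. Since $\sigma_\lambda$ annihilates $V(\mu)$ unless $\lambda\subseteq\mu$, the character is the finite sum
\[
\chi_\mu(J_K^{\mathrm{int}})=\sum_{\lambda\subseteq\mu}a_\lambda(K;q^2)\,\chi_\mu(\sigma_\lambda).
\]
By \eqref{eq:BG-knot-expansion}, this finite sum equals $J_K(V(\mu);q^2)$. The two scalars therefore coincide, and separation yields the identity.

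\textbf{Main obstacle.} The delicate step is justifying \eqref{eq:central-scalar-hadic-compatibility} for the infinite series $J_K^{\mathrm{int}}$. The plan is to argue by continuity. Modulo $h^N$, only the terms with $|\lambda|<N$ survive, by \eqref{eq:sigma-hadic-order}. On finite sums the compatibility holds by \cref{lem:HC-specialization-compatibility}, whose argument identifies the scalar of $\operatorname{sp}_h(z)$ with $\operatorname{sp}_h(\chi_\mu(z))$. Finally, the action map $Z_h\to\operatorname{End}(V_h(\mu))$ is $h$-adically continuous.

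A second, milder check is that the $h$-adic universal invariant of a one-component bottom tangle really acts on $V_h(\mu)$ by the algebraic reduced scalar. This is the functoriality (DJ4) applied to the representation $V_h(\mu)$, together with the identification of the quantum trace and normalization in \eqref{eq:reduced-normalization}.
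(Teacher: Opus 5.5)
Your proposal is correct and follows essentially the paper's own argument: use centrality of both elements, use the finite identity \textup{(BG3)} to match their scalars on every $V_h(\mu)$, and conclude by the separation result \cref{cor:HC-highest-weight-separation}. Your continuity justification of \eqref{eq:central-scalar-hadic-compatibility} for the infinite series only spells out what the paper treats as routine, and the $\mathfrak{gl}_n$/$\mathfrak{sl}_n$ Cartan-factor remark is harmless but unnecessary, since both sides are already taken in the $\mathfrak{gl}_n$ convention.
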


\begin{proof}
The series in \eqref{eq:BG-knot-element-central-sector} converges
by definition of the interpolation topology.  The finite identity
\textup{(BG3)} says that on every $V(\mu)$ its value is the scalar of the
universal knot element; only finitely many subpartitions of $\mu$
contribute.  Therefore its image under $\iota_h^{\cZ}$ and
$J_{K,h}(\mathfrak{gl}_n;q^2)$ act by the same scalar on every
finite-dimensional highest-weight module.  Both elements are central; the
centrality of the universal knot element follows from the one-component
adjoint-invariance statement recalled before
\eqref{eq:reduced-normalization}.  The separation assertion of
\cref{cor:HC-highest-weight-separation} proves
\eqref{eq:knot-hadic-identification}.
\end{proof}

In view of \cref{cor:knot-hadic-identification}, we henceforth write
$J_K(\mathfrak{gl}_n;q^2)$ for the interpolation-sector element
$J_K^{\mathrm{int}}(\mathfrak{gl}_n;q^2)$; under
$\iota_h^{\cZ}$ it is the universal element $J_{K,h}$.

\begin{lemma}[Ribbon normalization of the diagram involution]
\label{lem:vartheta-ribbon}
In the convention
$q=e^{h/2}$,
$K_j=e^{hH_j/2}$,
$\mathcal R=D\Psi$ with
$\Psi=\Theta_{\mathrm{HL}}^{-1}$ as in
\eqref{eq:HL-R-convention}, and
$g=K_{-2\rho}=u\mathbf r^{-1}$, the continuous Hopf
automorphism
\[
 \vartheta_h(H_j)=-H_{n+1-j},\qquad
 \vartheta_h(E_i)=E_{n-i},\qquad
 \vartheta_h(F_i)=F_{n-i}
\]
is a ribbon Hopf automorphism:
\[
 (\vartheta_h\otimes\vartheta_h)(\mathcal R)=\mathcal R,
 \qquad
 \vartheta_h(g)=g,
 \qquad
 \vartheta_h(\mathbf r)=\mathbf r.
\]
\end{lemma}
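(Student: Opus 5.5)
We verify the three identities directly on the explicit factors of the convention \eqref{eq:HL-R-convention}. The order is: first that $\vartheta_h$ is a continuous Hopf automorphism of $U_h(\mathfrak{gl}_n)$; then invariance of the Cartan factor $D$ and of the quasi-$R$ factor $\Psi=\Theta_{\mathrm{HL}}^{-1}$; and finally invariance of $g$ and $\mathbf r$.

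\textbf{Hopf automorphism.} The $h$-adic version of \cref{prop:algebraic-automorphisms} applies verbatim. Since $\vartheta_h(H_j)=-H_{n+1-j}$, we get $\vartheta_h(K_j)=K_{n+1-j}^{-1}$ and $\vartheta_h(L_i)=L_{n-i}$. It preserves the $h$-adic filtration, so it is continuous.

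\textbf{Cartan factor.} Write $D=\exp\bigl(\tfrac h2\sum_jH_j\otimes H_j\bigr)$, up to the normalization fixed by the invariant form $(\epsilon_i,\epsilon_j)=\delta_{ij}$. Then
\[
 (\vartheta_h\otimes\vartheta_h)\Bigl(\sum_jH_j\otimes H_j\Bigr)
 =\sum_j H_{n+1-j}\otimes H_{n+1-j}.
\]
The two signs cancel, and the sum is merely reindexed. Thus the key point is that $-w_0$ is an isometry of the form, so $D$ is fixed.

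\textbf{Quasi-$R$ factor.} This is the main obstacle, because $\Theta_{\mathrm{HL}}=\sum_{\mathbf n}F_{\mathbf n}\otimes E_{\mathbf n}$ is written in an ordered PBW basis. A reduced expression of $w_0$ is used, and $\vartheta$ maps it to a different reduced expression. I would avoid the PBW formula entirely and use the characterization in (DJ2): $\mathcal R$ is the \emph{unique} normalized universal $R$-matrix of the form $D\Psi$, with $\Psi$ equal to $1$ plus terms of negative/positive root height in $U^-\widehat\otimes U^+$.

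The element $\mathcal R'=(\vartheta_h\otimes\vartheta_h)(\mathcal R)$ satisfies the quasitriangularity axioms, because $\vartheta_h$ is a Hopf automorphism:
\[
 \mathcal R'\Delta(x)\mathcal R'^{-1}=\Delta^{\mathrm{op}}(x),
 \qquad
 (\Delta\otimes\id)\mathcal R'=\mathcal R'_{13}\mathcal R'_{23},
\]
and similarly for $(\id\otimes\Delta)\mathcal R'$. Moreover $\vartheta_h$ maps $U^\pm$ to $U^\pm$, permuting the simple roots by $\alpha_i\mapsto\alpha_{n-i}$, and preserves root height. So $\mathcal R'=D\Psi'$ with $\Psi'$ of the same shape. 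Uniqueness gives $\mathcal R'=\mathcal R$. If one prefers to avoid the uniqueness statement, note that the intertwining equation $\Psi\Delta(x)=\bar\Delta(x)\Psi$ determines $\Psi$ degree by degree in the root grading, with normalization $\Psi_0=1$, and this system is $\vartheta_h$-stable.

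\textbf{Pivotal and ribbon elements.} Since $\vartheta_h(K_j)=K_{n+1-j}^{-1}$ and $\rho_{n+1-j}=-\rho_j$, we get $\vartheta_h(K_{-2\rho})=K_{-2\rho}$, as already noted before \eqref{eq:S2-balancing-convention}; hence $g$ is fixed. For the Drinfeld element $u=m(S\otimes\id)(\mathcal R_{21})$, the map $\vartheta_h$ commutes with $m$ and $S$ and fixes $\mathcal R$, so $\vartheta_h(u)=u$. Therefore
\[
 \vartheta_h(\mathbf r)=\vartheta_h(g^{-1}u)=g^{-1}u=\mathbf r,
\]
using $\mathbf r=g^{-1}u$ and the fact that $g$ is central-grouplike compatible, as in (DJ3).
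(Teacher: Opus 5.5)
Your proposal is correct and follows the paper's own argument. The Cartan factor $D$ is fixed because $\epsilon_j\mapsto-\epsilon_{n+1-j}$ is an isometry, and $\Psi$ is fixed because its image satisfies the same intertwining equation with the same constant term, so uniqueness of the normalized solution applies; $u$, $g=K_{-2\rho}$, and hence $\mathbf r$ are then fixed. (Your appeal to $g$ being ``central-grouplike compatible'' is unnecessary: $\mathbf r=g^{-1}u$ follows directly from $g=u\mathbf r^{-1}$, and $g$ is grouplike, not central.)
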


\begin{proof}
The induced isometry
$\epsilon_j\mapsto-\epsilon_{n+1-j}$ permutes the positive roots and
fixes $\rho$.  For the full $\mathfrak{gl}_n$ Cartan factor
\[
 D=\exp\!\left(\frac h2\sum_{j=1}^nH_j\otimes H_j\right)
\]
one has
$(\vartheta_h\otimes\vartheta_h)(D)=D$.
The explicit normalized quasi-$R$ matrix is given in
\cite[Sections~3G1--3G2, equations~(64)--(70)]{HL16}.
Its image under $\vartheta_h\otimes\vartheta_h$ satisfies the same
intertwining equation and has the same constant term; uniqueness of the
normalized solution therefore gives
$(\vartheta_h\otimes\vartheta_h)(\Psi)=\Psi$, and hence
$\vartheta_h$ fixes $\mathcal R$.

A Hopf automorphism commutes with the antipode, so it fixes
$u=m(S\otimes\id)(\mathcal R_{21})$.  Since it also fixes
$g=K_{-2\rho}$, the identity $g=u\mathbf r^{-1}$ implies that it fixes
the ribbon element $\mathbf r$.
\end{proof}

\begin{proposition}[Invariance of the universal knot element]
\label{prop:knot-element-automorphism-invariance}
For every zero-framed knot $K$, every integer $n\ge2$, and every $c\in\Z$,
\begin{equation}\label{eq:J-vartheta-delta-invariance}
 \vartheta\bigl(J_K(\mathfrak{gl}_n;q^2)\bigr)
 =J_K(\mathfrak{gl}_n;q^2),
 \qquad
 \delta_c\bigl(J_K(\mathfrak{gl}_n;q^2)\bigr)
 =J_K(\mathfrak{gl}_n;q^2).
\end{equation}
\end{proposition}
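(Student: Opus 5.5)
Both identities will be proved in the faithful $h$-adic realization and then pulled back.  By \cref{prop:faithful-hadic-comparison}, $\iota_h^{\cZ}$ is injective and intertwines $\vartheta,\delta_c$ with $\vartheta_h,\delta_{c,h}$.  By \cref{cor:knot-hadic-identification}, it sends $J_K(\mathfrak{gl}_n;q^2)$ to the universal $h$-adic element $J_{K,h}$.  It therefore suffices to prove
\[
 \vartheta_h(J_{K,h})=J_{K,h},\qquad \delta_{c,h}(J_{K,h})=J_{K,h}
\]
in $Z_h$.  Applying the equivariance \eqref{eq:intrinsic-hadic-equivariance} and injectivity of $\iota_h^{\cZ}$ then gives \eqref{eq:J-vartheta-delta-invariance}.

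For $\vartheta_h$, I will use the functoriality statement (DJ4).  A continuous ribbon Hopf automorphism fixing $\mathcal R$, the pivotal element and the ribbon element acts componentwise on universal invariants of bottom tangles.  \Cref{lem:vartheta-ribbon} shows that $\vartheta_h$ is such an automorphism.  Hence $\vartheta_h(J_{K,h})$ is the universal invariant of the same one-component bottom tangle, namely $J_{K,h}$ itself.  Continuity is automatic, since $\vartheta_h$ is defined on topological generators and preserves $h$-adic filtrations.

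For $\delta_{c,h}$, the issue is that it is not a Hopf map, so (DJ4) does not apply directly.  I plan to use (BG5), that is, \cref{lem:BG-gl-sl-interface}.  A knot with zero framing is algebraically split, so $J_{K,h}(\mathfrak{gl}_n)=J_{K,h}(\mathfrak{sl}_n)$ after the standard embedding.  The $\mathfrak{sl}_n$ universal invariant is built from $\mathcal R_{\mathfrak{sl}_n}=D_{\mathfrak{sl}_n}\Psi$ and $K_{-2\rho}$.  All of these lie in the completed subalgebra generated by $E_i$, $F_i$ and the differences $H_i-H_{i+1}$ (together with $\sum\rho_iH_i$, which has $\sum\rho_i=0$).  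By \cref{prop:algebraic-automorphisms}, $\delta_{c,h}$ restricts to the identity on that subalgebra, because $H_i-H_{i+1}\mapsto H_i-H_{i+1}$.  Hence $\delta_{c,h}$ fixes $J_{K,h}(\mathfrak{sl}_n)$, and therefore fixes $J_{K,h}$.

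The main obstacle is bookkeeping.  We must ensure that the universal invariant genuinely lies in the completion of the $\mathfrak{sl}_n$ subalgebra inside $U_h(\mathfrak{gl}_n)$, on which $\delta_{c,h}$ is continuous and the identity.  We must also check that $D_{\mathfrak{sl}_n}$ is expressed only through Cartan elements orthogonal to $C=\sum H_j$.  I would verify the second point directly from \eqref{eq:gl-sl-Cartan-factor}: $D_{\mathfrak{sl}_n}=D_{\mathfrak{gl}_n}\exp(-\tfrac{h}{2n}C\otimes C)$ is the exponential of $\tfrac h2\sum_j H'_j\otimes H'_j$ with $H'_j=H_j-C/n$, and each $H'_j$ is fixed by $H_j\mapsto H_j+c$.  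Finally, one could cross-check the result on central characters.  The element $\delta_{c,h}$ shifts $\mu$ by a multiple of the determinant, and by \cref{cor:HC-highest-weight-separation} the separation of $Z_h$ requires only that the scalars agree.  This agreement holds because $V(\mu+c\cdot\mathbf 1)$ differs from $V(\mu)$ by a one-dimensional twist that leaves reduced zero-framed invariants unchanged.
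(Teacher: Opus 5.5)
Your proof is correct and follows essentially the same route as the paper. You transfer the problem to $Z_h$ through the injective, equivariant map $\iota_h^{\cZ}$ and \cref{cor:knot-hadic-identification}; for $\vartheta_h$ you use \cref{lem:vartheta-ribbon} with bottom-tangle functoriality, and for $\delta_{c,h}$ you use (BG5) to place $J_{K,h}$ in the $\mathfrak{sl}_n$ part, which $H_j\mapsto H_j+c$ fixes pointwise. Your check that $D_{\mathfrak{sl}_n}=\exp\bigl(\tfrac h2\sum_j H'_j\otimes H'_j\bigr)$ with shift-invariant $H'_j=H_j-C/n$ makes explicit a detail the paper leaves implicit, and the central-character cross-check is not needed.
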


\begin{proof}
Write $J=J_K(\mathfrak{gl}_n;q^2)$.  By
\cref{lem:vartheta-ribbon}, $\vartheta_h$ fixes the universal
$R$-matrix and the pivotal and ribbon data and commutes with all Hopf
structure maps.  In the explicit braided functor of
\cite[Section~8.2 and Proposition~8.1]{Habiro06}, every local morphism
used to evaluate a bottom tangle is built from precisely these data.
Applying $\vartheta_h$ to all labels therefore leaves the state sum
unchanged and gives
\[
 \vartheta_h\bigl(\iota_h^{\cZ}(J)\bigr)
 =\iota_h^{\cZ}(J).
\]
By \eqref{eq:intrinsic-hadic-equivariance},
$\iota_h^{\cZ}(\vartheta(J)-J)=0$.  Injectivity in
\eqref{eq:central-hadic-injection} gives
$\vartheta(J)=J$ in the interpolation sector.

In $U_h(\mathfrak{gl}_n)$, $\delta_c$ extends continuously by
$H_j\mapsto H_j+c$.  It fixes $E_i$, $F_i$, and every difference
$H_i-H_{i+1}$, hence fixes the embedded
$U_h(\mathfrak{sl}_n)$ subalgebra pointwise.  Under the standard embedding,
\eqref{eq:BG-gl-sl-exact} and
\eqref{eq:knot-hadic-identification} identify
$\iota_h^{\cZ}(J)$ with
$J_K(\mathfrak{sl}_n;q^2)$.  By
\cref{prop:algebraic-automorphisms}, $\delta_{c,h}$ is the
identity on this universal knot element.  Therefore
\[
 \iota_h^{\cZ}\bigl(\delta_c(J)-J\bigr)
 =\delta_{c,h}\bigl(\iota_h^{\cZ}(J)\bigr)
  -\iota_h^{\cZ}(J)=0
\]
by \eqref{eq:intrinsic-hadic-equivariance}.  Equation
\eqref{eq:central-hadic-injection} gives $\delta_c(J)=J$, proving
both equalities in \eqref{eq:J-vartheta-delta-invariance}.
\end{proof}

\subsection{Completed Harish--Chandra equivariance}

Recall the completed Harish--Chandra isomorphism
\[
 \widehat\hc:\widehat\cZ_n^{\mathrm{BG}}
 \xrightarrow{\ \sim\ }\widehat\Lambda_n,
\]
where $\widehat\Lambda_n$ is the completion of
$\Lambda_n^{\mathrm{pol}}=R_{\mathrm{ev}}[y_1,\ldots,y_n]^{\mathfrak S_n}$
for the interpolation filtration $\mathcal S_m$, and
$y_i=q^{n-1}K_i^2$.  The symmetric Laurent subring is dense in
$\widehat\Lambda_n$, and the substitutions below extend continuously by
\cref{cor:interpolation-completion-structure}.

Define automorphisms of the algebraic torus in the shifted coordinates by
\begin{align}
 \mathsf D_c(y_1,\ldots,y_n)
 &=(q^{2c}y_1,\ldots,q^{2c}y_n),
 \label{eq:Dc-HC}\\
 \mathsf I(y_1,\ldots,y_n)
 &=(y_n^{-1},\ldots,y_1^{-1}).
 \label{eq:I-HC}
\end{align}
We use the same symbols for the induced continuous pullbacks on
$\widehat\Lambda_n$.

\begin{lemma}
\label{lem:HC-equivariance}
For every $a\in\Z$ and $\xi\in\widehat\cZ_n^{\mathrm{BG}}$,
\begin{align}
 \widehat\hc(\delta_a\xi)
 &=\mathsf D_a^*\widehat\hc(\xi),
 \label{eq:HC-delta-equivariance}\\
 \widehat\hc(\vartheta\xi)
 &=(\mathsf D_{n-1}\mathsf I)^*\widehat\hc(\xi).
 \label{eq:HC-vartheta-equivariance}
\end{align}
\end{lemma}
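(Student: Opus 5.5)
The plan is to verify both identities first on the dense algebraic subalgebra $\cZ_{n,\mathrm{Laur}}\cong\Lambda_n^{\mathrm{Laur}}$ of \eqref{eq:Laurent-central-subalgebra}, and then extend them by continuity. Both sides of \eqref{eq:HC-delta-equivariance} and \eqref{eq:HC-vartheta-equivariance} are continuous maps $\widehat\cZ_n^{\mathrm{BG}}\to\widehat\Lambda_n$. On the left, $\delta_a$ and $\vartheta$ are continuous by construction and $\widehat\hc$ is a topological isomorphism. On the right, the pullbacks are continuous by \cref{cor:interpolation-completion-structure}, since $\mathsf D_c$ is the scaling $y_i\mapsto Q^cy_i$ and $\mathsf D_{n-1}\mathsf I$ is a composite of an inversion-reversal with a scaling. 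The spaces are separated and the Laurent subalgebra is dense, so equality on $\cZ_{n,\mathrm{Laur}}$ suffices.

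On the algebraic level the key computation is how the Harish--Chandra map transforms under the generator formulas \eqref{eq:vartheta-definition}--\eqref{eq:delta-definition}. Both $\delta_a$ and $\vartheta$ preserve the triangular decomposition: $\delta_a$ fixes $U^\pm$, and $\vartheta$ permutes the $E_i$ among themselves and the $F_i$ among themselves. Hence each commutes with the weight-zero projection to $U^0$, and only the $\rho$-shift needs tracking. For $\delta_a$, $K_i^2\mapsto q^{2a}K_i^2$, so $y_i\mapsto Q^a y_i$ directly, because the shift $y_i=q^{n-1}x_i$ is a scalar. For $\vartheta$, the unshifted projection sends $K_i^2\mapsto K_{n+1-i}^{-2}$. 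The shifted variable $y_i=q^{n-1}x_i$ corresponds to evaluating at $\lambda+\rho$ with $\rho_{n+1-i}=-\rho_i$. Writing the $\rho$-shift additively, the composite becomes $y_i\mapsto q^{n-1}\cdot q^{n-1}y_{n+1-i}^{-1}=Q^{n-1}y_{n+1-i}^{-1}$, which is exactly $(\mathsf D_{n-1}\mathsf I)^*$. This matches the formula already recorded before \eqref{eq:q-hadic-coefficient-embedding}.

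To avoid sign and shift errors, I would cross-check this on highest weights. An element $z$ acts on $V(\mu)$ by $\hc(z)$ evaluated at $y_i=Q^{\mu_i+n-i}$. The twist of $V(\mu)$ by $\vartheta$ is the module with highest weight $-w_0\mu$, corrected by a determinant shift. Verifying that the evaluation points transform as $y\mapsto \mathsf D_{n-1}\mathsf I(y)$ pins down the constant $n-1$. The identity then holds on all of $\Lambda_n^{\mathrm{pol}}$ because evaluations at the points $p_\mu$ separate polynomials, by Zariski density as in \cref{lem:Cartan-highest-weight-separation}, or by the triangular system of \cref{lem:finite-interpolation-matrix}.

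To pass from $\Lambda_n^{\mathrm{pol}}$ to the Laurent subalgebra, I would use multiplicativity and the explicit determinant element $\mathbf d_n$ of \eqref{eq:determinant-central-monomial}. Both sides act on $\eta_n^{-1}$ compatibly with inverses, since $\hc(\mathbf d_n)=\eta_n$. The main obstacle I expect is the $\vartheta$ case: the Laurent-type substitution must be made to agree with the completed automorphism defined abstractly in the text above. I would handle this by noting that the completed $\vartheta$ was defined by transporting exactly this substitution, so the content reduces to the algebraic check above.
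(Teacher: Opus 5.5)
Your plan is correct and is essentially the paper's argument. Both automorphisms preserve the triangular decomposition, so the Harish--Chandra projection is equivariant, and the shifted coordinates transform by $y_i\mapsto Q^ay_i$ and $y_i\mapsto Q^{n-1}y_{n+1-i}^{-1}$; since the completed $\delta_a,\vartheta$ were defined by transporting exactly these substitutions through $\widehat\hc$, density of the symmetric Laurent subring and continuity finish the proof. Your highest-weight cross-check via ${}^{\vartheta}V(\mu)\cong V(-w_0\mu)$ is a consistent extra verification of the constant $Q^{n-1}$ that the paper does not need.
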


\begin{proof}
On the algebraic quantum group, both automorphisms preserve the triangular
decomposition and its weight-zero Cartan summand, so Harish--Chandra
projection is equivariant.  Their action on the shifted coordinates is
\[
 \delta_a(y_i)=q^{2a}y_i,
 \qquad
 \vartheta(y_i)=q^{2(n-1)}y_{n+1-i}^{-1}.
\]
Thus the asserted identities hold on the dense symmetric Laurent subring.
Their continuous extensions are precisely the automorphisms transported
through $\widehat\hc$ above, and density proves the identities on the
completed sector.
\end{proof}

\begin{proposition}[Completed Harish--Chandra reflection]
\label{prop:completed-HC-reflection}
For every zero-framed knot $K$ and every integer $n\ge2$, let
\begin{equation}\label{eq:fK-HC}
 f_K=\widehat\hc
 \bigl(J_K(\mathfrak{gl}_n;q^2)\bigr).
\end{equation}
Then
\begin{equation}\label{eq:fK-scaling-reflection}
 \mathsf D_c^*f_K=f_K\quad(c\in\Z),
 \qquad
 \mathsf I^*f_K=f_K.
\end{equation}
\end{proposition}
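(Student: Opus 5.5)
The plan is to apply $\widehat\hc$ to the two invariance identities of \cref{prop:knot-element-automorphism-invariance} and read off the statement from the equivariance formulas of \cref{lem:HC-equivariance}.

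\emph{Scaling.} Put $J=J_K(\mathfrak{gl}_n;q^2)$. Apply $\widehat\hc$ to $\delta_c(J)=J$ and use \eqref{eq:HC-delta-equivariance}:
\[
 \mathsf D_c^*f_K=\widehat\hc(\delta_cJ)=\widehat\hc(J)=f_K
\]
for every $c\in\Z$. This is the first identity in \eqref{eq:fK-scaling-reflection}.

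\emph{Reflection.} Apply $\widehat\hc$ to $\vartheta(J)=J$. By \eqref{eq:HC-vartheta-equivariance},
\[
 (\mathsf D_{n-1}\mathsf I)^*f_K=f_K.
\]
The pullback of a composite reverses order, so $(\mathsf D_{n-1}\mathsf I)^*=\mathsf I^*\mathsf D_{n-1}^*$. Hence
\[
 \mathsf I^*\bigl(\mathsf D_{n-1}^*f_K\bigr)=f_K .
\]
The scaling invariance just proved, with $c=n-1$, gives $\mathsf D_{n-1}^*f_K=f_K$, and therefore $\mathsf I^*f_K=f_K$. The order of composition does not matter in the end: $\mathsf D_{c}$ and $\mathsf I$ satisfy $\mathsf I\mathsf D_c=\mathsf D_{-c}\mathsf I$, and $f_K$ is invariant under every $\mathsf D_c$.

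\emph{Points to check.}
\begin{enumerate}
\item Each pullback $\mathsf D_c^*$ and $\mathsf I^*$ must be a well-defined continuous automorphism of $\widehat\Lambda_n$. This is given by \cref{cor:interpolation-completion-structure}.
\item Identities on $\widehat\Lambda_n$ can be composed and inverted freely. This holds because every map involved is a continuous ring automorphism.
\end{enumerate}

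The main difficulty, the invariance of the completed knot element, is already settled by \cref{prop:knot-element-automorphism-invariance}. What remains to verify is the bookkeeping: that the composite on the Harish--Chandra side is exactly $y_i\mapsto Q^{n-1}y_{n+1-i}^{-1}$, which is $\mathsf D_{n-1}\mathsf I$ in the stated order. One should confirm this so that the shift $c=n-1$ is the one absorbed by scaling invariance.
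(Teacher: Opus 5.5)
Your proposal is correct and follows the paper's proof. Scaling invariance comes from $\delta_c(J)=J$ via \eqref{eq:HC-delta-equivariance}, and reflection comes from $\vartheta(J)=J$ via \eqref{eq:HC-vartheta-equivariance}, with the shift absorbed by scaling invariance; the only cosmetic difference is that the paper writes $\mathsf I^*=(\mathsf D_{n-1}\mathsf I)^*\circ\mathsf D_{-(n-1)}^*$ and uses invariance under $\mathsf D_{-(n-1)}$, whereas you write $(\mathsf D_{n-1}\mathsf I)^*=\mathsf I^*\circ\mathsf D_{n-1}^*$ and use invariance under $\mathsf D_{n-1}$.
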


\begin{proof}
The scaling equality follows from the $\delta_c$ equality in
\cref{prop:knot-element-automorphism-invariance} and
\eqref{eq:HC-delta-equivariance}.  The $\vartheta$ equality and
\eqref{eq:HC-vartheta-equivariance} give
\begin{equation}\label{eq:fK-vartheta-intermediate}
 (\mathsf D_{n-1}\mathsf I)^*f_K=f_K.
\end{equation}
As point transformations,
\[
 \mathsf I
 =\mathsf D_{-(n-1)}
  \circ(\mathsf D_{n-1}\circ\mathsf I),
\]
so pullback reverses the order and gives
\begin{equation}\label{eq:I-star-factorization}
 \mathsf I^*
 =(\mathsf D_{n-1}\mathsf I)^*
  \circ\mathsf D_{-(n-1)}^*.
\end{equation}
Both factors on the right are continuous automorphisms of the completed
Harish--Chandra target by \cref{lem:HC-equivariance}.
Equations \eqref{eq:fK-scaling-reflection} and
\eqref{eq:fK-vartheta-intermediate}, applied successively in
\eqref{eq:I-star-factorization}, give $\mathsf I^*f_K=f_K$.
\end{proof}

\subsection{Reflection on the one-row curve}

A direct calculation from \eqref{eq:gamma-affine} gives
\begin{align}
 \mathsf I\gamma_n(z)
 &=(1,q^{-2},q^{-4},\ldots,q^{-2n+4},q^{-n+2}z^{-1}),
 \label{eq:I-gamma}\\
 \mathsf D_{n-2}\mathsf I\gamma_n(z)
 &=(q^{2n-4},q^{2n-6},\ldots,1,q^{n-2}z^{-1}).
 \label{eq:D-I-gamma}
\end{align}
Let $\mathsf P$ be the cyclic coordinate permutation that moves the last
coordinate to the first, and put
\[
 \mathsf T=\mathsf P\circ\mathsf D_{n-2}\circ\mathsf I.
\]
Equations \eqref{eq:I-gamma}--\eqref{eq:D-I-gamma} give the exact
identity of Laurent-polynomial curves
\begin{equation}\label{eq:T-on-gamma}
 \mathsf T\gamma_n(z)=\gamma_n(z^{-1}).
\end{equation}
Since the Harish--Chandra target consists of symmetric functions,
$\mathsf P^*$ acts trivially on it.  Therefore
\cref{prop:completed-HC-reflection} implies
\begin{equation}\label{eq:T-fK-invariance}
 \mathsf T^*f_K=f_K.
\end{equation}
For $s\in\Z$, recall $z_s=q^{2s+n}$.  Then
\begin{equation}\label{eq:inverse-node-index}
 z_s^{-1}=z_{-s-n}.
\end{equation}
Inversion gives a continuous disk isomorphism because, with
$z=z_s+u_s$,
\begin{equation}\label{eq:formal-inversion-disk}
z^{-1}-z_s^{-1}
 =-z_s^{-2}u_s+z_s^{-3}u_s^2-z_s^{-4}u_s^3+\cdots.
\end{equation}
The series converges in the $(\pi_d,u_s)$-adic topology, and its linear
coefficient $-z_s^{-2}$ is a unit.  Since inversion is its own inverse
after interchanging the two centers, it defines a continuous disk
isomorphism
\[
 \operatorname{inv}_s^*:
 \mathscr A_{d,-s-n}\xrightarrow{\ \sim\ }\mathscr A_{d,s}.
\]
Via the completed Harish--Chandra isomorphism, write
\[
 \operatorname{res}^{\mathrm{HC}}_{d,s}(f)
 =\operatorname{res}_{d,s}\bigl(\widehat\hc^{-1}(f)\bigr).
\]
On the dense Laurent subring, \eqref{eq:T-on-gamma} gives
\begin{equation}\label{eq:restriction-intertwining}
 \operatorname{res}^{\mathrm{HC}}_{d,s}(\mathsf T^*f)
 =
 \operatorname{inv}_s^*
 \operatorname{res}^{\mathrm{HC}}_{d,-s-n}(f).
\end{equation}
Continuity and separatedness extend this identity uniquely to
$\widehat\Lambda_n$.
Applying it to $f=f_K$ and using
\eqref{eq:T-fK-invariance} gives
\begin{equation}\label{eq:curve-reflection}
 \operatorname{res}_{d,s}(J_K)
 =
 \operatorname{inv}_s^*
 \operatorname{res}_{d,-s-n}(J_K).
\end{equation}

\begin{corollary}
\label{cor:local-germ-reflection}
For every zero-framed knot $K$, every $n\ge2$, every $d\ge1$, and every
$s\in\Z$,
\begin{equation}\label{eq:local-germ-reflection}
 \mathscr J_{K,s}(z)
 =\mathscr J_{K,-s-n}(z^{-1})
\end{equation}
as an equality in the disk at $z_s$, where the right side is pulled back
by the formal inversion series \eqref{eq:formal-inversion-disk}.
\end{corollary}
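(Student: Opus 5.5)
The corollary should follow by combining the one-row restriction identity \eqref{eq:restriction-of-knot} with the curve reflection \eqref{eq:curve-reflection}. The plan is to identify both sides of \eqref{eq:curve-reflection} with the local germs. Nothing new about the knot is needed.

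First, apply \cref{prop:continuous-completed-restriction} at the two centers $s$ and $-s-n$. It gives $\operatorname{res}_{d,s}(J_K)=\mathscr J_{K,s}(z)$ in $\mathscr A_{d,s}$ and $\operatorname{res}_{d,-s-n}(J_K)=\mathscr J_{K,-s-n}(z)$ in $\mathscr A_{d,-s-n}$. Here $J_K$ is the interpolation-sector element, identified with $J_K^{\mathrm{int}}$ via \cref{cor:knot-hadic-identification}. Substituting these into \eqref{eq:curve-reflection} yields $\mathscr J_{K,s}(z)=\operatorname{inv}_s^*\,\mathscr J_{K,-s-n}(z)$ in $\mathscr A_{d,s}$.

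Second, interpret $\operatorname{inv}_s^*\mathscr J_{K,-s-n}(z)$ as $\mathscr J_{K,-s-n}(z^{-1})$. By construction, $\operatorname{inv}_s^*$ sends the coordinate $u_{-s-n}=z-z_{-s-n}$ to $z^{-1}-z_s^{-1}$. By \eqref{eq:inverse-node-index} this is $z^{-1}-z_{-s-n}$, expanded as the series \eqref{eq:formal-inversion-disk}. The map is a continuous $\widehat\cO_d$-algebra homomorphism. It therefore sends each polynomial $U_k(z)$, viewed in the coordinate $u_{-s-n}$, to $U_k(z^{-1})$ expanded around $z_s$. The germ $\mathscr J_{K,-s-n}$ is the convergent limit of the partial sums $\sum_{k\le M}b_k^{(n)}U_k(z)$ (\cref{prop:cyclotomic-local-germ}). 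Continuity then identifies its image with the limit of $\sum_{k\le M}b_k^{(n)}U_k(z^{-1})$, which is by definition the pulled-back germ $\mathscr J_{K,-s-n}(z^{-1})$.

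The only point that needs care is this continuity. We must check that $\operatorname{inv}_s^*$ maps $\mathfrak m_{d,-s-n}$ into $\mathfrak m_{d,s}$. This holds because \eqref{eq:formal-inversion-disk} has zero constant term and coefficients in $\widehat\cO_d$, since $z_s$ is a unit. Granting that, termwise application is justified, and the identity \eqref{eq:local-germ-reflection} follows.
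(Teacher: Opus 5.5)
Your proposal is correct and follows the paper's proof, which likewise combines \eqref{eq:restriction-of-knot} at the two centers $s$ and $-s-n$ with \eqref{eq:curve-reflection} and then unwinds the definition of $\operatorname{inv}_s^*$. Your check that $\operatorname{inv}_s^*$ sends $\mathfrak m_{d,-s-n}$ into $\mathfrak m_{d,s}$, so that it can be applied termwise to the partial sums of $\mathscr J_{K,-s-n}$, spells out a step the paper leaves implicit.
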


\begin{proof}
This follows immediately from \eqref{eq:curve-reflection},
\eqref{eq:restriction-of-knot}, and the definition of
$\operatorname{inv}_s^*$.
\end{proof}

\subsection{Integral descent through \texorpdfstring{$X=z+z^{-1}$}{X=z+1/z}}

The unramified case uses the following one-variable formal inverse theorem.

\begin{lemma}
\label{lem:formal-inverse-function}
Let $B$ be a commutative ring and
$G(T)=b_1T+b_2T^2+\cdots\in TB[[T]]$ with $b_1\in B^\times$.  There is a
unique $H(S)\in SB[[S]]$ such that
$G(H(S))=S$ and $H(G(T))=T$.  Consequently, substitution
$S\mapsto G(T)$ is an isomorphism between the $S$-adically complete
$B$-algebra $B[[S]]$ and the $T$-adically complete $B$-algebra
$B[[T]]$.
\end{lemma}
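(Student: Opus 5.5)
I will construct the compositional inverse $H(S)=\sum_{k\ge1}c_kS^k$ coefficient by coefficient, then check both composition identities and the isomorphism claim. Composition $G(H(S))$ is well defined in $B[[S]]$ because $H$ has zero constant term. Hence the coefficient of $S^m$ in $G(H(S))$ is a finite polynomial expression in $b_1,\dots,b_m$ and $c_1,\dots,c_m$. Its dependence on $c_m$ is exactly $b_1c_m$, since $c_m$ can only enter through the linear term $b_1H$; every higher power $H^j$ with $j\ge2$ involves only $c_1,\dots,c_{m-1}$ in degree $m$.

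Requiring the coefficient of $S$ to equal $1$ and the coefficients of $S^m$ with $m\ge2$ to vanish gives a triangular system. It reads $c_1=b_1^{-1}$ and $b_1c_m=-(\text{polynomial in }b_{\le m},c_{<m})$. This has a unique solution because $b_1\in B^\times$, so there is a unique $H\in SB[[S]]$ with $G(H(S))=S$.

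Applying the same construction to $H$ itself is possible, since its linear coefficient $b_1^{-1}$ is a unit. It yields a unique $\tilde G\in TB[[T]]$ with $H(\tilde G(T))=T$. Associativity of composition of series without constant term then gives
\[
 G=G\circ(H\circ\tilde G)=(G\circ H)\circ\tilde G=\tilde G .
\]
Hence $H(G(T))=T$. Uniqueness of $H$ among series satisfying $H(G(T))=T$ follows symmetrically: any such $H'$ satisfies $H'=H'\circ(G\circ H)=(H'\circ G)\circ H=H$.

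For the final assertion, substitution $\phi:B[[S]]\to B[[T]]$, $F(S)\mapsto F(G(T))$, is a continuous $B$-algebra homomorphism. It is continuous because $G\in TB[[T]]$ sends $S^N B[[S]]$ into $T^NB[[T]]$. Substitution by $H$ gives a continuous map $\psi$ in the other direction. Both composites $\psi\circ\phi$ and $\phi\circ\psi$ are continuous homomorphisms fixing the generator ($S\mapsto G(H(S))=S$, and similarly for $T$). So they agree with the identity on polynomials, hence everywhere by density and separatedness, as in \cref{lem:filtered-extension}.

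The only step requiring care is the associativity of composition for formal series over an arbitrary commutative ring. Here I would argue degreewise: modulo $S^N$ all compositions reduce to compositions of polynomials truncated at degree $N$, where associativity is a polynomial identity.
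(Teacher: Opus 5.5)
Your proposal is correct and follows essentially the same route as the paper: the triangular recursion $c_1=b_1^{-1}$, $b_1c_m=-(\cdots)$ gives the unique right inverse, and applying the same construction once more, together with uniqueness (your explicit associativity argument), gives $H(G(T))=T$. You also spell out the substitution isomorphism via continuity and density, which the paper leaves implicit.
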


\begin{proof}
Write $H(S)=\sum_{m\ge1}h_mS^m$.  Comparing the coefficient of $S$ in
$G(H(S))=S$ gives $h_1=b_1^{-1}$.  For $m\ge2$, the coefficient of $S^m$
is $b_1h_m$ plus an expression involving only
$h_1,\ldots,h_{m-1}$; this determines $h_m$ uniquely.  The resulting
series satisfies $G(H(S))=S$.  Applying the same construction to obtain a
left inverse and using uniqueness shows $H(G(T))=T$.  All compositions
are well defined because every substituted series has zero constant
term; the coefficient of any fixed degree therefore depends on only
finitely many input coefficients.
\end{proof}

\begin{lemma}[Reflection-invariant formal descent]
\label{lem:formal-reflection-descent}
Let $\cO$ be a complete DVR with $2\in\cO^\times$, let
$a\in\cO^\times$, and set $x_a=a+a^{-1}$.  Let $F_a$ and
$F_{a^{-1}}$ be integral germs on the residue disks of $a$ and $a^{-1}$,
compatible under $z\mapsto z^{-1}$.  If the two disks coincide, assume
that the germs also agree after recentering.

Then the two germs descend uniquely through $X=z+z^{-1}$.  If
$\bar a^2\ne1$, the descended germ lies in $\cO[[X-x_a]]$.  If
$\bar a=\varepsilon\in\{1,-1\}$, it lies in
$\cO[[X-2\varepsilon]]$.
\end{lemma}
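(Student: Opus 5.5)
We split into the two cases of the statement, according to whether the residue of $a$ is a square root of unity.

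\emph{Unramified case $\bar a^2\ne1$.} Put $T=z-a$, so $F_a\in\cO[[T]]$. Then
\[
 X-x_a=(z-a)+(z^{-1}-a^{-1})=(1-a^{-2})T+\cdots,
\]
and the higher terms come from the expansion \eqref{eq:formal-inversion-disk} with $a$ in place of $z_s$. The linear coefficient $1-a^{-2}$ is a unit, because its reduction $1-\bar a^{-2}$ is nonzero. By \cref{lem:formal-inverse-function}, substitution $S=X-x_a$ identifies $\cO[[S]]$ with $\cO[[T]]$, so $F_a$ is a series in $X-x_a$ with $\cO$-coefficients. The topology needs care here: the germs live in the $(\pi,T)$-adic completion. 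I would apply \cref{lem:formal-inverse-function} in that topology, or first note that $\cO[[T]]$ is already $(\pi,T)$-complete. The disk of $a^{-1}$ is distinct from the disk of $a$ here, since $\bar a\ne\bar a^{-1}$. Compatibility under $z\mapsto z^{-1}$ says $F_{a^{-1}}$ is the pullback of $F_a$. Since $X$ is inversion-invariant, $F_{a^{-1}}$ descends to the same series in $X-x_a$. Uniqueness holds because the descended series is determined through the isomorphism.

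\emph{Ramified case $\bar a=\varepsilon$.} Now both points lie in the disk of $\varepsilon$. After recentering, the hypothesis gives a single germ $F\in\cO[[w]]$, $w=z-\varepsilon$, invariant under the involution $\iota:w\mapsto(\varepsilon+w)^{-1}-\varepsilon$. One computes
\[
 X-2\varepsilon=\frac{(z-\varepsilon)^2}{z}=\varepsilon w^2(1+\varepsilon w)^{-1}.
\]
I would introduce the anti-invariant coordinate
\[
 t=z-z^{-1}=2\varepsilon w-w^2+\cdots .
\]
Its linear coefficient $2\varepsilon$ is a unit since $2\in\cO^\times$. By \cref{lem:formal-inverse-function}, $\cO[[w]]=\cO[[t]]$, and in this coordinate $\iota$ acts by $t\mapsto-t$. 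Moreover
\[
 t^2=X^2-4=(X-2\varepsilon)(X+2\varepsilon),
\]
and $X+2\varepsilon=4\varepsilon+(X-2\varepsilon)$ is a unit in $\cO[[X-2\varepsilon]]$. Hence $\cO[[t^2]]=\cO[[X-2\varepsilon]]$.

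The invariant $F=\sum c_mt^m$ satisfies $F(-t)=F(t)$, so comparing coefficients gives $2c_m=0$ for odd $m$. Since $2$ is a unit, all odd coefficients vanish. Thus $F\in\cO[[t^2]]=\cO[[X-2\varepsilon]]$. Uniqueness of the descent follows because $X-2\varepsilon$ equals $t^2$ times a unit, so it is not a zero divisor, and $\cO[[X-2\varepsilon]]\to\cO[[w]]$ is injective.

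The main obstacle is not the algebra above. It is keeping track of the fact that the original germs and the inversion series live in $(\pi,u)$-adic disks centered at a point $a$ that may be off the residue point $\varepsilon$. I need to check that all recentering maps (\cref{lem:disk-compatibility}-style translations) and the coordinate changes to $t$ are continuous isomorphisms of these complete local rings. This should follow as in \eqref{eq:formal-inversion-disk}, since every constant shift lies in $\pi\cO$.
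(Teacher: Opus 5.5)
Your proof is correct and follows the paper's argument. In the unramified case a unit linear coefficient makes $X-x_a$ a formal coordinate; in the ramified case an inversion-anti-invariant coordinate $t$ with unit linear coefficient (this is where $2\in\cO^\times$ is used) forces invariant germs into $\cO[[t^2]]=\cO[[X-2\varepsilon]]$. The only difference is the choice of $t$: the paper uses the Cayley coordinate $t=(z-\varepsilon)/(z+\varepsilon)$, with $X-2\varepsilon=4\varepsilon t^2/(1-t^2)$, whereas you use $t=z-z^{-1}$, with $t^2=(X-2\varepsilon)(X+2\varepsilon)$. One small slip: the expansion is $z-z^{-1}=2w-\varepsilon w^2+\cdots$, not $2\varepsilon w-w^2+\cdots$. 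This is harmless, since the linear coefficient is still a unit.
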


\begin{proof}
Suppose first that $\bar a^2\ne1$.  Writing $z=a+u$, one has
\[
 X-x_a=(1-a^{-2})u+O(u^2),
\]
and the linear coefficient is a unit.  Thus $X-x_a$ is a formal coordinate
on the disk of $a$.  Transporting $F_a$ gives the required quotient germ,
and inversion compatibility identifies its pullback on the disk of
$a^{-1}$ with $F_{a^{-1}}$.

Now let $\bar a=\varepsilon\in\{1,-1\}$.  After recentering at
$\varepsilon$, put
\[
 t=\frac{z-\varepsilon}{z+\varepsilon}.
\]
Inversion sends $t$ to $-t$, so an invariant germ belongs to
$\cO[[t^2]]$.  Moreover,
\[
 X-2\varepsilon=\frac{4\varepsilon t^2}{1-t^2},
\]
whose linear coefficient as a series in $t^2$ is a unit.  Hence
$\cO[[t^2]]=\cO[[X-2\varepsilon]]$.  Uniqueness follows from the
injectivity of pullback.
\end{proof}

\section{The global cyclotomic expansion}\label{sec:main-proof}

Fix a zero-framed knot $K$ and an integer $n\ge2$ throughout this section.
The local formal information from
\cref{sec:BG-restriction,sec:reflection} is combined with the algebraic
interpolation criterion of \cref{sec:newton-local}: first locally at each
cyclotomic prime, and then globally by denominator removal.

\subsection{Fibers of the quotient coordinate}

For $i\ge0$, retain $z_i$ from \eqref{eq:z-s} and set
\begin{equation}\label{eq:proof-zxy}
 x_i=X_i^{(n)}=z_i+z_i^{-1},
 \qquad
 y_i=J_i^{SU(n)}(K;q).
\end{equation}
All $z_i$ are units in every $\widehat\cO_d$.

\begin{lemma}
\label{lem:residue-fibers}
For every $d\ge1$ and all $i,j$,
\begin{equation}\label{eq:X-z-factorization}
 x_i-x_j=(z_i-z_j)\bigl(1-(z_iz_j)^{-1}\bigr).
\end{equation}
Consequently, in the residue field of $\widehat\cO_d$,
\begin{equation}\label{eq:x-cluster-z-description}
 \bar x_i=\bar x_j
 \quad\Longleftrightarrow\quad
 \bar z_i=\bar z_j
 \quad\text{or}\quad
 \bar z_i=\bar z_j^{-1}.
\end{equation}
Thus the full inverse image of an $x$-residue class consists of either
one inversion-stable $z$-disk or two $z$-disks exchanged by inversion; a
finite node cluster may meet one or both disks.
\end{lemma}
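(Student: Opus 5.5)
The plan is to verify the algebraic identity directly, reduce it modulo the maximal ideal of $\widehat\cO_d$, and then read off the geometric description of the fibers.

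\textbf{Step 1: the factorization.} Expand the right side of \eqref{eq:X-z-factorization}:
\[
 (z_i-z_j)\bigl(1-z_i^{-1}z_j^{-1}\bigr)
 =z_i-z_j-z_j^{-1}+z_i^{-1}.
\]
This equals $x_i-x_j$ by \eqref{eq:proof-zxy}. The identity holds in $R$ and hence in $\widehat\cO_d$; it is the multiplicative form of \cref{lem:node-factorization}. All $z_i$ are Laurent monomials in $q$, so they are units in $\widehat\cO_d$, and $1-(z_iz_j)^{-1}$ is defined there.

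\textbf{Step 2: the residue criterion.} Reduce \eqref{eq:X-z-factorization} to the residue field $\Q(\zeta_d)$, which is a field by \cref{lem:cyclotomic-DVR}. The difference $\bar x_i-\bar x_j$ vanishes exactly when one of the two factors vanishes. The first factor vanishes iff $\bar z_i=\bar z_j$. The second factor, $1-(\bar z_i\bar z_j)^{-1}$, vanishes iff $\bar z_i\bar z_j=1$, i.e. $\bar z_i=\bar z_j^{-1}$; this uses that $\bar z_j$ is a unit. This proves \eqref{eq:x-cluster-z-description}. In short, the reduction of $z\mapsto z+z^{-1}$ has fibers $\{\bar a,\bar a^{-1}\}$.

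\textbf{Step 3: the fiber description.} Fix an $x$-residue class and choose a node $z_i$ in it. By Step 2, the $z$-nodes lying over this class are exactly those in the residue disk of $\bar z_i$ or in that of $\bar z_i^{-1}$.
\begin{itemize}
\item If $\bar z_i^2=1$, the two disks coincide, and inversion preserves this single disk.
\item If $\bar z_i^2\ne1$, there are two distinct disks, exchanged by inversion.
\end{itemize}
A finite set of nonnegative node indices in the class lies in the union of these disks. It may meet only one of them, since an inverse node $z_{-s-n}$ has negative index by \eqref{eq:inverse-node-index}. This final remark needs no proof beyond noting that the statement only asserts "one or both".

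\textbf{Expected difficulty.} Nothing here is a real obstacle. The only point needing care is Step 2, where one must use that the residue field is a field and that the $\bar z_j$ are invertible, so the product vanishes exactly when a factor does.
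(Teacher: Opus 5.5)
Your proposal is correct and follows the paper's proof: expand the right side to obtain the factorization, then reduce in the residue field. The paper multiplies $\bar x_i=\bar x_j$ by $\bar z_i\bar z_j$ to get $(\bar z_i-\bar z_j)(\bar z_i\bar z_j-1)=0$ and invokes the domain property, which is the same use of the invertibility of $\bar z_j$ as yours.

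One small remark on Step 3: the absence of the exact inverse node $z_{-s-n}$ only explains why a cluster \emph{can} miss the inverse disk. Nonnegative nodes with $d^\sharp\mid i+j+n$ do land there, which is why the case ``both'' genuinely occurs.
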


\begin{proof}
Expanding the right side of \eqref{eq:X-z-factorization} gives
\[
 z_i-z_j-z_j^{-1}+z_i^{-1}=x_i-x_j.
\]
In the residue field, multiply the equality $\bar x_i=\bar x_j$ by the
nonzero element $\bar z_i\bar z_j$.  One obtains
\[
 (\bar z_i-\bar z_j)(\bar z_i\bar z_j-1)=0.
\]
Since the residue field is an integral domain, one of the two factors
vanishes.  The
converse follows directly from \eqref{eq:X-z-factorization}.
\end{proof}

\subsection{Integral interpolation at one cyclotomic prime}

\begin{proposition}[Local interpolation polynomial]
\label{prop:local-interpolation-polynomial}
Fix $d\ge1$ and $k\ge0$.  The unique polynomial
$P_{k,d}(T)\in\operatorname{Frac}(\widehat\cO_d)[T]$ of degree at most
$k$ satisfying
\begin{equation}\label{eq:local-interpolation-values}
 P_{k,d}(x_i)=y_i\qquad(0\le i\le k)
\end{equation}
belongs to $\widehat\cO_d[T]$.
\end{proposition}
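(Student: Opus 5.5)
The plan is to apply \cref{lem:cluster-interpolation} over the complete DVR $\cO=\widehat\cO_d$, with nodes $x_0,\ldots,x_k$ and values $y_0,\ldots,y_k$. The nodes are pairwise distinct in $\Bbbk$ by \cref{lem:node-factorization}, and $\Bbbk$ embeds in $\operatorname{Frac}(\widehat\cO_d)$ by \cref{lem:cyclotomic-DVR}. Both the $x_i$ and the $y_i=J_i^{SU(n)}(K;q)$ lie in $R\subset\widehat\cO_d$; for the $y_i$ this follows from \eqref{eq:one-sided-color} and the Laurent integrality of the $b_k^{(n)}$. It therefore suffices to produce, for each $x$-residue cluster $C$, a center $c_C\in\widehat\cO_d$ and an integral germ $F_C\in\widehat\cO_d[[U]]$ with $x_i-c_C\in\pi_d\widehat\cO_d$ and $y_i=F_C(x_i-c_C)$ for all $i\in C$.

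To build these germs, fix a cluster $C$ and an index $s\in C$, and take $a=z_s$, a unit. By \cref{lem:residue-fibers}, every $i\in C$ has $\bar z_i=\bar a$ or $\bar z_i=\bar a^{-1}$. On the disk of $a$ we use the germ $\mathscr J_{K,s}(z)\in\mathscr A_{d,s}$ from \cref{prop:cyclotomic-local-germ}. On the disk of $a^{-1}=z_{-s-n}$ (see \eqref{eq:inverse-node-index}) we use $\mathscr J_{K,-s-n}$. By \cref{lem:disk-compatibility}, these germs agree after recentering with the germs $\mathscr J_{K,i}$ for any node $z_i$ in the same disk. \Cref{cor:local-germ-reflection} supplies exactly the compatibility under $z\mapsto z^{-1}$ that \cref{lem:formal-reflection-descent} requires, including the self-compatibility when the two disks coincide. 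Since $2$ is a unit (\cref{lem:cyclotomic-DVR}), \cref{lem:formal-reflection-descent} then yields a descended germ $F_C$ in $\widehat\cO_d[[X-c_C]]$, where $c_C=x_s$ if $\bar a^2\neq1$ and $c_C=2\varepsilon$ if $\bar a=\varepsilon=\pm1$. In both cases $x_i-c_C\in\pi_d\widehat\cO_d$ for all $i\in C$. In the ramified case this uses $\bar x_i=2\varepsilon$, which holds because $\bar z_i=\varepsilon$ forces $\bar z_i+\bar z_i^{-1}=2\varepsilon$.

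It remains to check the value condition $F_C(x_i-c_C)=y_i$. If $\bar z_i=\bar a$, the pullback of $F_C$ along $X=z+z^{-1}$ is $\mathscr J_{K,s}$, and evaluating continuously at $u_s=z_i-z_s$ gives $J_i^{SU(n)}(K;q)$ by \eqref{eq:local-germ-node-value}. If instead $\bar z_i=\bar a^{-1}$, we evaluate on the other disk in the same way. In both cases the evaluation of the pullback at $z_i$ equals the evaluation of $F_C$ at $x_i$, because substitution is a continuous homomorphism.

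The main obstacle is making the descended germ an honest element of $\widehat\cO_d[[U]]$ whose evaluation commutes with the substitution. The germs live in the two-dimensional ring $\widehat\cO_d[[u]]$, completed $(\pi_d,u)$-adically. I would note that $\widehat\cO_d[[u]]$ with the $(\pi,u)$-adic topology coincides with formal power series over the complete ring $\widehat\cO_d$, so \cref{lem:formal-reflection-descent} applies verbatim. I would also check that evaluation at an element of $\pi_d\widehat\cO_d$ is continuous for this topology.
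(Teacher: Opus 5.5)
Your proposal is correct and follows the paper's proof. Like the paper, you cluster the nodes by $x$-residue, locate each cluster's $z$-nodes with \cref{lem:residue-fibers}, use the inversion-compatible germs from \cref{cor:local-germ-reflection}, descend them via \cref{lem:formal-reflection-descent}, and conclude with \cref{lem:cluster-interpolation}. The one difference is cosmetic: in the ramified case you take the center $c_C=2\varepsilon$ directly, whereas the paper re-expands the descended germ around $x_a$ in \eqref{eq:ramified-recentering}. Since \cref{lem:cluster-interpolation} accepts any center in the residue class, your shortcut is legitimate.
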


\begin{proof}
The nodes are pairwise distinct in $\Bbbk$ by
\cref{lem:node-factorization}, and $\Bbbk$ embeds in
$\operatorname{Frac}(\widehat\cO_d)$ by
\cref{lem:cyclotomic-DVR}; hence the interpolation polynomial exists and
is unique.

Partition the nodes into residue clusters for the $x$-coordinate.  For a
cluster $C$, choose $i_0\in C$ and set $a=z_{i_0}$.  By
\cref{lem:residue-fibers}, every $z$-node above $C$ lies in the residue
disk of $a$ or of $a^{-1}$.  The local knot germs on these disks are
compatible under inversion by \cref{cor:local-germ-reflection}.  Hence
\cref{lem:formal-reflection-descent} gives an integral germ in the quotient
coordinate $X$.

In the unramified case, write this germ as
$G_C(W)\in\widehat\cO_d[[W]]$ with $W=X-x_a$.  In the ramified case,
let $\bar a=\varepsilon\in\{1,-1\}$ and write the descended germ as
$\widetilde G(S)=\sum_{m\ge0}g_mS^m$ with $S=X-2\varepsilon$.  Recenter it
at $x_a$ by setting
\[
 W=X-x_a,\qquad c=x_a-2\varepsilon\in\pi_d\widehat\cO_d,
\]
and
\begin{equation}\label{eq:ramified-recentering}
 G_C(W)=\widetilde G(W+c)
 =\sum_{j\ge0}\left(
   \sum_{m\ge j}g_m\binom{m}{j}c^{m-j}
 \right)W^j.
\end{equation}
The inner sums converge $\pi_d$-adically, so
$G_C(W)\in\widehat\cO_d[[W]]$ in both cases.  Its pullback agrees with the
knot germ on every disk above $C$, and therefore
\begin{equation}\label{eq:cluster-X-values}
 y_i=G_C(x_i-x_a)\qquad(i\in C),
 \qquad x_i-x_a\in\pi_d\widehat\cO_d.
\end{equation}
The hypotheses of \cref{lem:cluster-interpolation} are satisfied for every
cluster, so the interpolation polynomial belongs to
$\widehat\cO_d[T]$.
\end{proof}

\subsection{Global denominator removal}

\begin{theorem}[Fixed-rank Laurent integrality]
\label{thm:main}
For every zero-framed knot $K$, every fixed $n\ge2$, and every $k\ge0$,
the formal Newton coefficient in \eqref{eq:formal-SU} satisfies
\begin{equation}\label{eq:main-integrality}
 H_k^{(n)}(K;q)\in\Z[q^{\pm1}].
\end{equation}
Equivalently, for every $r\ge0$,
\begin{equation}\label{eq:main-expansion}
 J_r^{SU(n)}(K;q)=\sum_{k=0}^{r}
 \left(\prod_{i=0}^{k-1}\{r-i\}\{r+n+i\}\right)
 H_k^{(n)}(K;q),
\end{equation}
with unique Laurent-polynomial coefficients.
\end{theorem}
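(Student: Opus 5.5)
The proof combines the local interpolation result with the UFD denominator removal. Fix $k\ge0$ and write the Newton coefficient via \eqref{eq:SU-determinant} as $H_k^{(n)}=\Delta_k^{(n)}(J)/V_k^{(n)}$. Here $N:=\Delta_k^{(n)}(J)\in R$, because the nodes $x_i=X_i^{(n)}$ and the values $y_i=J_i^{SU(n)}(K;q)$ lie in $R$. The values are Laurent polynomials by \cref{prop:one-row-restriction}, since each is a finite sum of products of the integral coefficients $b_k^{(n)}$ with elements of $R$. The denominator is $V:=V_k^{(n)}=\prod_{0\le i<j\le k}\{j-i\}\{n+i+j\}$. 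By \eqref{eq:brace-cyclotomic-factorization}, $V$ is a power of $q$ times, up to sign, a product of polynomials $q^{2m}-1$. Each such polynomial is primitive with cyclotomic irreducible factors, so Gauss's lemma makes $V$ primitive after clearing a power of $q$. Thus the structural hypotheses of \cref{lem:UFD-denominator} hold.

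It remains to verify the local hypothesis: $N/V\in\widehat\cO_d$ for every cyclotomic $\Phi_d$ dividing $V$. By \cref{prop:newton-inversion}, applied over the fraction field of $\widehat\cO_d$ (which contains $\Bbbk$ by \cref{lem:cyclotomic-DVR}), $H_k^{(n)}$ is the leading coefficient of the interpolation polynomial $P_{k,d}$ through the nodes $(x_i,y_i)_{0\le i\le k}$. Uniqueness of interpolation makes the image of $H_k^{(n)}\in\Bbbk$ equal to the coefficient of $T^k$ in $P_{k,d}$. \Cref{prop:local-interpolation-polynomial} gives $P_{k,d}\in\widehat\cO_d[T]$, so $H_k^{(n)}\in\widehat\cO_d$ for every $d$. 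By the valuation compatibility in \cref{lem:cyclotomic-DVR}, $\val_{\Phi_d}(H_k^{(n)})\ge0$. \Cref{lem:UFD-denominator} then yields $H_k^{(n)}\in R=\Z[q^{\pm1}]$.

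The remaining assertions are formal. The expansion \eqref{eq:main-expansion} holds because the $H_k^{(n)}$ are by definition the coefficients of the formal Newton expansion \eqref{eq:formal-SU}, whose kernel is $C_{r+1,k}^{(n)}$ by \eqref{eq:SU-newton-kernel}. Uniqueness follows from \cref{prop:newton-inversion}, since the nodes are distinct by \cref{lem:node-factorization}. For $k=0$, $H_0^{(n)}=y_0=J_0^{SU(n)}(K;q)=1$, because the trivial color gives the trivial module.

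The substantive input is entirely in \cref{prop:local-interpolation-polynomial}, which is already proved. The one point needing care here is the identification of $H_k^{(n)}$, defined over $\Bbbk$, with the leading coefficient of the local interpolant. This is the compatibility of Cramer's rule with the embedding $\Bbbk\hookrightarrow\operatorname{Frac}(\widehat\cO_d)$, which holds because $V\neq0$ in $\Bbbk$.
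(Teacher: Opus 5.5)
Your proposal is correct and follows the paper's proof essentially step for step. It writes $H_k^{(n)}=\Delta_k^{(n)}(J)/V_k^{(n)}$ with integral numerator, identifies $H_k^{(n)}$ with the leading coefficient of the local interpolant $P_{k,d}\in\widehat\cO_d[T]$ by uniqueness of interpolation, obtains primitivity of $V_k^{(n)}$ by writing it as a power of $q$ times $\prod(q^{2m}-1)$ and applying Gauss's lemma, and concludes with \cref{lem:UFD-denominator}. (Your ``up to sign'' is unnecessary, since every index $j-i$ and $n+i+j$ is positive, but it is harmless.)
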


\begin{proof}
Fix $k$.  By \cref{prop:one-row-restriction}, all values
$J_i^{SU(n)}(K;q)$ lie in $R$.  Newton interpolation over
$\Bbbk=\Q(q)$
therefore gives the rational function
\begin{equation}\label{eq:H-determinant-global}
 H_k^{(n)}(K;q)
 =\frac{\Delta_k^{(n)}(J(K))}{V_k^{(n)}},
 \qquad
 V_k^{(n)}=\prod_{0\le i<j\le k}
 \{j-i\}\{n+i+j\}.
\end{equation}
Here $\Delta_k^{(n)}(J(K))\in R$, because every matrix entry in its
defining determinant belongs to $R$, and $V_k^{(n)}\ne0$ by the
distinctness of the nodes.  Thus, up to a unit of $R$, the reduced denominator of
$H_k^{(n)}$ divides the displayed Vandermonde product.

Let $P_k^{\mathrm{glob}}(T)\in\Bbbk[T]$ be the unique degree-$\le k$
polynomial interpolating $(x_i,y_i)$ for $0\le i\le k$.  Fix a
cyclotomic polynomial $\Phi_d(q)$.  Under the embedding
$\Bbbk\hookrightarrow\operatorname{Frac}(\widehat\cO_d)$ from
\cref{lem:cyclotomic-DVR}, the polynomial
$P_k^{\mathrm{glob}}$ satisfies the same interpolation conditions as
$P_{k,d}$.  Uniqueness over the fraction field therefore gives
$P_k^{\mathrm{glob}}=P_{k,d}$.  By
\cref{prop:local-interpolation-polynomial}, this polynomial belongs to
$\widehat\cO_d[T]$.  Its leading coefficient is the $k$th divided
difference, hence is exactly the fraction in
\eqref{eq:H-determinant-global}.  Therefore
\begin{equation}\label{eq:all-cyclotomic-valuations}
 \val_{\Phi_d}\bigl(H_k^{(n)}(K;q)\bigr)\ge0
 \qquad(d\ge1).
\end{equation}
By \cref{lem:cyclotomic-DVR}, this is the ordinary
$\Phi_d$-adic valuation on $\Bbbk$.

It remains to show that no noncyclotomic denominator can occur.  Define
the multiset
\[
 \mathcal M_k=
 \{\,j-i,\ n+i+j:0\le i<j\le k\,\},
 \qquad S_k=\sum_{m\in\mathcal M_k}m,
\]
where entries are repeated with their pair multiplicities.  Every entry
is positive and, by \eqref{eq:node-factorization},
\begin{equation}\label{eq:primitive-Vandermonde-polynomial}
 q^{S_k}V_k^{(n)}
 =\prod_{m\in\mathcal M_k}(q^{2m}-1)\in\Z[q].
\end{equation}
Each factor on the right is monic and primitive, so Gauss's lemma shows
that their product is primitive.
Moreover, \eqref{eq:brace-cyclotomic-factorization} shows that every
nonunit irreducible factor of $V_k^{(n)}$ is cyclotomic.  Apply
\cref{lem:UFD-denominator} with
$N=\Delta_k^{(n)}(J(K))$ and $V=V_k^{(n)}$.  The local inequalities
\eqref{eq:all-cyclotomic-valuations} say, prime by prime and with the full
multiplicity, that the numerator has at least the valuation demanded by
the denominator.  They therefore eliminate every possible denominator
and yield \eqref{eq:main-integrality}.

The expansion and uniqueness follow from
\cref{prop:newton-inversion} and \eqref{eq:SU-newton-kernel}.  The color
$r=0$ is the trivial representation, so
$H_0^{(n)}(K;q)=J_0^{SU(n)}(K;q)=1$.
\end{proof}

By \cref{prop:CLZ-normalization}, the invariant and the kernel in
\cref{thm:main} are those of
\cite[Conjecture~1.3]{CLZ15}.  Thus the theorem proves that conjecture for
every zero-framed knot and every fixed $n\ge2$.

\begin{corollary}
\label{cor:determinant-divisibility}
For every zero-framed knot $K$, every $n\ge2$, and every $k\ge0$,
\begin{equation}\label{eq:main-determinant-divisibility}
 \Delta_k^{(n)}(J(K))\in
 V_k^{(n)}\Z[q^{\pm1}].
\end{equation}
\end{corollary}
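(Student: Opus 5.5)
This corollary follows directly from \cref{thm:main} combined with the determinant formula for Newton coefficients.

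First, fix $K$, $n\ge2$ and $k\ge0$, and work with the nodes $x_i=X_i^{(n)}$ and values $f_i=J_i^{SU(n)}(K;q)$. Both lie in $R=\Z[q^{\pm1}]$: the nodes by \eqref{eq:SU-nodes}, and the values by \cref{prop:one-row-restriction}. The nodes are pairwise distinct by \cref{lem:node-factorization}, so \cref{prop:newton-inversion} applies over $\mathscr D=R$ with $L=\Bbbk$. Equation \eqref{eq:determinant-divided-difference}, in the form \eqref{eq:SU-determinant}, gives
\[
 \Delta_k^{(n)}(J(K))=V_k^{(n)}H_k^{(n)}(K;q)
\]
in $\Bbbk$.

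Next, \cref{thm:main} gives $H_k^{(n)}(K;q)\in R$. Hence the product on the right lies in $V_k^{(n)}R$, which is \eqref{eq:main-determinant-divisibility}. Equivalently, one may invoke the implication (i)$\Rightarrow$(ii) of \cref{cor:integral-newton-criterion} with $\mathscr D=R$, since its hypotheses $x_r,f_r\in R$ were checked above.

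For $k=0$ the statement is trivial, since $V_0^{(n)}=1$ and $\Delta_0=f_0=1$. Nothing else needs separate treatment. There is no genuine obstacle here: all the cyclotomic-local and denominator-removal work was done in \cref{thm:main}. The only point requiring care is that the determinant $\Delta_k^{(n)}$ and the Vandermonde $V_k^{(n)}$ use the same sign convention as in \cref{prop:newton-inversion}. That proposition's Cramer-rule argument fixes this convention, so the identity $\Delta_k=V_kh_k$ holds exactly, not merely up to sign.
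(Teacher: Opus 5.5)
Your proposal is correct and follows the paper's proof: the paper also writes $\Delta_k^{(n)}(J(K))=V_k^{(n)}H_k^{(n)}(K;q)$ via the determinant formula for the Newton coefficients and then applies the Laurent integrality of $H_k^{(n)}(K;q)$ from \cref{thm:main}. Your extra checks, that the values $J_i^{SU(n)}(K;q)$ lie in $\Z[q^{\pm1}]$ and that the sign conventions of $\Delta_k$ and $V_k$ match, are harmless and consistent with the paper.
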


\begin{proof}
Equation \eqref{eq:H-determinant-global} gives
$\Delta_k^{(n)}=V_k^{(n)}H_k^{(n)}$; use
\cref{thm:main}.
\end{proof}

\begin{corollary}[Full Newton congruence filtration]
\label{cor:higher-congruences}
For every zero-framed knot $K$, every $n\ge2$, and $r>s\ge0$,
\begin{equation}\label{eq:pairwise-SU}
 J_r^{SU(n)}(K;q)-J_s^{SU(n)}(K;q)
 \in\{r-s\}\{r+s+n\}\Z[q^{\pm1}].
\end{equation}
More generally, for $0\le m<r$,
\begin{equation}\label{eq:higher-remainder-SU}
 J_r^{SU(n)}(K;q)\equiv
 \sum_{k=0}^{m}C_{r+1,k}^{(n)}H_k^{(n)}(K;q)
 \pmod{C_{r+1,m+1}^{(n)}\Z[q^{\pm1}]}.
\end{equation}
\end{corollary}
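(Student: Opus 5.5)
The corollary follows by applying the abstract Newton-filtration statement, \cref{cor:newton-filtration}, to the ring $\mathscr D=R=\Z[q^{\pm1}]$. We take the nodes $x_i=X_i^{(n)}$ and the values $f_r=J_r^{SU(n)}(K;q)$.

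First we check its hypotheses. The nodes lie in $R$ and are pairwise distinct by \cref{lem:node-factorization}. The values lie in $R$ by \cref{prop:one-row-restriction}. The Newton coefficients $h_k=H_k^{(n)}(K;q)$ lie in $R$ for every $k$ by \cref{thm:main}.

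For \eqref{eq:higher-remainder-SU}, we use \eqref{eq:SU-newton-kernel}, which identifies $p_k(x_r)$ with $C_{r+1,k}^{(n)}$. In particular $p_{m+1}(x_r)=C_{r+1,m+1}^{(n)}$. Substituting these into \eqref{eq:newton-remainder-congruence} gives \eqref{eq:higher-remainder-SU} verbatim. Concretely, each tail term with $k\ge m+1$ is $C_{r+1,k}^{(n)}H_k^{(n)}$. Here $C_{r+1,m+1}^{(n)}$ divides $C_{r+1,k}^{(n)}$ because the product defining it is an initial segment of the one defining $C_{r+1,k}^{(n)}$, and $H_k^{(n)}$ is integral.

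For \eqref{eq:pairwise-SU}, we apply \eqref{eq:abstract-pairwise-congruence}. It gives
\[
J_r^{SU(n)}(K;q)-J_s^{SU(n)}(K;q)\in (X_r^{(n)}-X_s^{(n)})R .
\]
By \eqref{eq:node-factorization} with $i=s$, we have $X_r^{(n)}-X_s^{(n)}=\{r-s\}\{r+n+s\}$, which is the stated ideal.

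There is no real obstacle once \cref{thm:main} is available; all the content lies in the integrality of the $H_k^{(n)}$. The only point needing care is that the pairwise congruence uses $(x_r-x_s)\mid P_r(x_r)-P_r(x_s)$ for the interpolation polynomial $P_r\in R[T]$. This holds because $P_r$ has integral coefficients, which is exactly \cref{thm:main}.
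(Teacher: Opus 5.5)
Your proposal is correct and matches the paper's proof: the paper also applies \cref{cor:newton-filtration} over $\Z[q^{\pm1}]$ to the integral Newton expansion supplied by \cref{thm:main}, together with \eqref{eq:SU-newton-kernel} and the node factorization $X_r^{(n)}-X_s^{(n)}=\{r-s\}\{r+s+n\}$. Your explicit check that the nodes lie in $R$, which is needed for $P_r\in R[T]$, is a worthwhile detail that the paper leaves implicit.
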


\begin{proof}
Apply \cref{cor:newton-filtration} to the integral Newton expansion and
use
$X_r^{(n)}-X_s^{(n)}=\{r-s\}\{r+s+n\}$.
\end{proof}

\subsection{Cyclotomic valuations of the kernel and Vandermonde factor}

Fix $d\ge1$ and put $\ell_d=d^\sharp$.  For $k\ge0$ and
$a\in\mathbb Z/\ell_d\mathbb Z$, set
\begin{equation}\label{eq:residue-counts}
 c_a=c_a(k,\ell_d)=\#\{0\le i\le k:i\equiv a\pmod{\ell_d}\}.
\end{equation}
For representatives $0\le a<\ell_d$,
\[
 c_a=
 \begin{cases}
  1+\left\lfloor\dfrac{k-a}{\ell_d}\right\rfloor,&a\le k,\\[6pt]
  0,&a>k.
 \end{cases}
\]

\begin{proposition}[Cyclotomic valuations]
\label{prop:exact-cyclotomic-multiplicities}
Let $M=\lfloor k/\ell_d\rfloor$.  The diagonal Newton factor
\begin{equation}\label{eq:diagonal-kernel}
 C_{k+1,k}^{(n)}
 =\prod_{i=0}^{k-1}\{k-i\}\{k+n+i\}
 =\{k\}!\prod_{m=n+k}^{n+2k-1}\{m\}
\end{equation}
satisfies
\begin{equation}\label{eq:diagonal-valuation}
 \val_{\Phi_d}\!\left(C_{k+1,k}^{(n)}\right)
 =\left\lfloor\frac{k}{\ell_d}\right\rfloor
 +\left\lfloor\frac{n+2k-1}{\ell_d}\right\rfloor
 -\left\lfloor\frac{n+k-1}{\ell_d}\right\rfloor.
\end{equation}
The full Vandermonde factor has valuation
\begin{align}
 \val_{\Phi_d}(V_k^{(n)})
 &=M(k+1)-\ell_d\frac{M(M+1)}2
 \notag\\
 &\quad+
 \frac12\left(
  \sum_{a\in\Z/\ell_d\Z}c_ac_{-n-a}
  -\sum_{\substack{a\in\Z/\ell_d\Z\\
                       2a\equiv-n\pmod{\ell_d}}}c_a
 \right).
 \label{eq:full-Vandermonde-valuation}
\end{align}
\end{proposition}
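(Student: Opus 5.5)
The plan is to work entirely with the valuation formula \eqref{eq:quantum-integer-valuation} of \cref{lem:cyclotomic-DVR}. It says $\val_{\Phi_d}(\{m\})=1$ exactly when $\ell_d\mid m$ (for $m\ne0$) and is $0$ otherwise. Since valuation is additive, each product is handled by counting, among the quantum integers in it, those whose index is divisible by $\ell_d$.

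\emph{The diagonal kernel.} First I would verify the rewriting in \eqref{eq:diagonal-kernel}. As $i$ runs over $0,\ldots,k-1$, the indices $k-i$ run over $1,\ldots,k$, giving $\{k\}!$. The indices $k+n+i$ run over $n+k,\ldots,n+2k-1$. The number of multiples of $\ell_d$ in $[1,k]$ is $\lfloor k/\ell_d\rfloor$. The number in $[n+k,n+2k-1]$ is $\lfloor (n+2k-1)/\ell_d\rfloor-\lfloor (n+k-1)/\ell_d\rfloor$, since $n+k-1\ge0$. Adding these gives \eqref{eq:diagonal-valuation}.

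\emph{The Vandermonde factor.} Using \eqref{eq:SU-determinant}, $V_k^{(n)}=\prod_{0\le i<j\le k}\{j-i\}\{n+i+j\}$, so the valuation splits into two pair counts.

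For the difference part, count the pairs $i<j$ in $[0,k]$ with $i\equiv j\pmod{\ell_d}$. This equals $\sum_a\binom{c_a}{2}$. I would instead compute it directly as $\sum_{t\ge1}\#\{i:0\le i\le k-t\ell_d\}$, with $t$ running from $1$ to $M$. That sum is $\sum_{t=1}^M(k+1-t\ell_d)=M(k+1)-\ell_d M(M+1)/2$, which is the first term of \eqref{eq:full-Vandermonde-valuation}.

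For the sum part, count the unordered pairs $i\ne j$ in $[0,k]$ with $i+j\equiv -n\pmod{\ell_d}$; note $n+i+j>0$, so no quantum integer vanishes. The ordered pairs, diagonal included, number $\sum_a c_a c_{-n-a}$. The diagonal pairs $i=j$ satisfy $2i\equiv-n$, and there are $\sum_{2a\equiv-n}c_a$ of them. Removing the diagonal and halving gives the bracketed term.

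\emph{Main obstacle.} The only delicate point is this last symmetrization. Halving is legitimate because the involution $(i,j)\mapsto(j,i)$ is free off the diagonal. The residue sum must range over all classes $a\in\Z/\ell_d\Z$, with $c_{-n-a}$ read at the class of $-n-a$. When $\ell_d$ is even and $n$ is odd, $2a\equiv-n$ has no solution and the diagonal correction vanishes; the formula handles this case automatically. The degenerate case $\ell_d=1$ (that is, $d\in\{1,2\}$) should also be checked: then every quantum integer has valuation $1$, and the formula must reduce to $2\binom{k+1}{2}$. Here $M=k$ and $c_0=k+1$, giving $k(k+1)-k(k+1)/2+\tfrac12((k+1)^2-(k+1))$, which indeed equals $2\binom{k+1}{2}$.
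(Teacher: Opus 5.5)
Your proposal is correct and follows essentially the same route as the paper: count the multiples of $\ell_d$ among the quantum-integer indices using \eqref{eq:quantum-integer-valuation}, count pairs with $j-i=t\ell_d$ for $1\le t\le M$, and count ordered pairs with $\ell_d\mid n+i+j$, then remove the diagonal and halve. Your extra checks go beyond the paper's write-up but do not change the argument: the positivity of $n+i+j$, the free involution justifying the halving, and the $\ell_d=1$ sanity case.
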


\begin{proof}
The factor $\Phi_d(q)$ occurs in $\{m\}$ exactly when $\ell_d\mid m$, and
then with multiplicity one.  The first term on the right of
\eqref{eq:diagonal-valuation} counts the multiples of $\ell_d$ in $[1,k]$;
the difference of floors counts those in $[n+k,n+2k-1]$.

For the first line of \eqref{eq:full-Vandermonde-valuation}, write
$j-i=t\ell_d$.  For
$1\le t\le M=\lfloor k/\ell_d\rfloor$, there are $k+1-t\ell_d$ pairs with this
difference.  Hence their number is
\[
 \sum_{t=1}^{M}(k+1-t\ell_d)
 =M(k+1)-\ell_d\frac{M(M+1)}2.
\]
For the second line,
$\sum_ac_ac_{-n-a}$ counts ordered pairs $(i,j)$ satisfying
$\ell_d\mid n+i+j$.  Its diagonal terms are exactly the $i=j$ solutions,
namely the indices in residue classes satisfying
$2a\equiv-n\pmod{\ell_d}$.  Removing these diagonal solutions leaves twice the
number of unordered pairs, so division by two counts the pairs with
$i<j$.  Together with the preceding count for the factors $\{j-i\}$, this
accounts for all factors in $V_k^{(n)}$.
\end{proof}

Together with \eqref{eq:main-determinant-divisibility},
\eqref{eq:full-Vandermonde-valuation} gives
\[
 \val_{\Phi_d}\!\left(\Delta_k^{(n)}(J(K))\right)
 \ge \val_{\Phi_d}\!\left(V_k^{(n)}\right).
\]
When $H_k^{(n)}(K;q)\ne0$, the difference of these two valuations is
$\val_{\Phi_d}(H_k^{(n)}(K;q))$.

\begin{remark}[The case $n=2$]\label{rem:n2}
Let $n=2$ and put $N=r+1$, the representation dimension in the usual
colored-Jones convention.  Then
\begin{align}
 C_{r+1,k}^{(2)}
 &=\left(\prod_{j=N-k}^{N-1}\{j\}\right)
   \left(\prod_{j=N+1}^{N+k}\{j\}\right)
 =\frac{\prod_{j=N-k}^{N+k}\{j\}}{\{N\}}.
 \label{eq:n2-Habiro-kernel}
\end{align}
Thus the color shift and the omitted central factor are explicit.  In the
reduced zero-framed convention fixed above, the right side of
\eqref{eq:n2-Habiro-kernel} is Habiro's central-factor-omitted
cyclotomic kernel \cite[Theorem~3.1]{Habiro02}.  Hence uniqueness identifies the present $H_k^{(2)}$ with the
coefficients in that convention.
\end{remark}

\section{Consequences of the fixed-rank expansion}
\label{sec:fixed-rank-consequences}

The fixed-rank expansion has two complementary consequences.  Its first
nontrivial Newton coefficient is determined by the fundamental HOMFLY--PT
polynomial, while the full coefficient sequence admits both a relative
colored lift and diagonal lifts to the ordinary Habiro ring.

\subsection{The first cyclotomic coefficient}

For the fundamental color, write
\[
 P_K(A,q)=\HH_1(K;A,q)\in\Z[A^{\pm1},q^{\pm1}].
\]
The positive-rank specialization is
\[
 \HH_1(K;q^n,q)=J_1^{SU(n)}(K;q)\qquad(n\ge2).
\]

Recall the differential-factor notation
\begin{equation}\label{eq:HOMFLY-differential-factor}
 \{m;A\}=Aq^m-A^{-1}q^{-m},
 \qquad
 \{m;A\}_k=\prod_{\nu=0}^{k-1}\{m-\nu;A\},
\end{equation}
with the empty product equal to $1$.

\begin{lemma}[Fundamental differential factorization]
\label{lem:fundamental-factorization}
For every zero-framed knot $K$, there exists a unique Laurent polynomial
$F_K(A,q)\in\Z[A^{\pm1},q^{\pm1}]$ such that
\begin{equation}\label{eq:fundamental-factorization}
 \HH_1(K;A,q)-1=\{1;A\}\{-1;A\}F_K(A,q).
\end{equation}
\end{lemma}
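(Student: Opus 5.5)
We need to show $P_K(A,q)-1$ is divisible by $\{1;A\}\{-1;A\}=(Aq-A^{-1}q^{-1})(Aq^{-1}-A^{-1}q)$ in $\Z[A^{\pm1},q^{\pm1}]$. Uniqueness is immediate because $\Z[A^{\pm1},q^{\pm1}]$ is a domain and the factor is nonzero. For existence we argue with the skein relation \eqref{eq:HOMFLY-skein}: by induction on the unknotting number, or more precisely on the number of crossing changes needed to reach the unknot, it suffices to control a single crossing change.

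For a crossing change we have $P_{L_+}-P_{L_-}=A^{-1}\bigl((q-q^{-1})P_{L_0}+(A^{-1}-A)P_{L_-}\bigr)$, which is inconvenient. It is better to work with the standard fact that for a knot, $P_K(A,q)$ specializes to $1$ both at $A=q$ and at $A=-q$, and also at $A=q^{-1}$ and $A=-q^{-1}$. At $A^2=q^{\pm2}$ the skein relation becomes the relation for a rank-$1$ (or rank-$-1$) theory, and in rank one every zero-framed knot evaluates to the unknot value $1$. Concretely, at $A=q$ the relation reads $qP_+-q^{-1}P_-=(q-q^{-1})P_0$, which is satisfied by $P\equiv 1$ on knots. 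For a two-component link we would have $P=(q-q^{-1})/(q-q^{-1})\cdot$, so we should verify the link normalization as well. Indeed, at $A=q^{\pm1}$ the unnormalized HOMFLY invariant is the $\mathfrak{gl}_1$ (respectively $\mathfrak{gl}_{-1}$) invariant, which for zero framing depends only on linking numbers. In the reduced normalization this gives $P_L(q^{\pm1},q)=1$ for knots, and with the sign twists the same holds at $A=-q^{\pm1}$.

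Given these four specializations, $P_K-1$ vanishes on the zero loci of the irreducible factors $Aq-1$, $Aq+1$, $A-q$, $A+q$, after multiplying by powers of $A$. We have $\{1;A\}=A^{-1}q^{-1}(A^2q^2-1)=A^{-1}q^{-1}(Aq-1)(Aq+1)$ and $\{-1;A\}=A^{-1}q(A^2q^{-2}-1)=A^{-1}q^{-1}(A-q)(A+q)$. These four are pairwise non-associate irreducible primes in the UFD $\Z[A^{\pm1},q^{\pm1}]$, and each is monic linear in $A$ up to a unit. Vanishing at the corresponding substitution therefore gives divisibility by each factor, by division with remainder in $A$ over $\Z[q^{\pm1}]$. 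This works because the leading coefficient is a unit, as for $Aq-1$ after clearing $q$. By coprimality the product divides $P_K-1$, and the quotient is Laurent integral since the product of unit multiples is handled in the UFD.

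The main obstacle is the justification of $P_K(\pm q^{\pm1},q)=1$ in the paper's zero-framed convention. I would prove this by skein induction on all oriented links simultaneously, with the claim $P_L(\epsilon q^{\eta},q)=(\epsilon\eta\{1\}^{-1}\cdot(\ldots))$. A cleaner alternative is the standard identity that at $A=q$ the skein relation is solved by $P_L=\bigl(\tfrac{q-q^{-1}}{q-q^{-1}}\bigr)^{c-1}$ times a linking-number factor. Here I must track that a crossing change between different components changes that factor. The safest uniform statement is $P_L(q,q)=q^{-2\,\mathrm{lk}}$-type for the framed-to-zero normalization. I would verify it directly against the three-term relation, handling self-crossings and mixed crossings separately, and then specialize to $c=1$.
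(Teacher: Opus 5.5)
Your overall strategy matches the paper's, and the algebraic endgame is fine. The paper divides $A^M(\HH_1-1)$ by the monic quartic $(A^2-q^2)(A^2-q^{-2})$ in $\Z[q^{\pm1}][A]$ and kills the cubic remainder with the four roots; this is equivalent to your prime-by-prime divisibility by $Aq\pm1$ and $A\pm q$.

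The gap is the step you flag as the main obstacle: you never establish the four evaluations $P_K(\pm q^{\pm1},q)=1$.
\begin{itemize}
\item The appeal to ``$\mathfrak{gl}_1$/$\mathfrak{gl}_{-1}$'' theories is not a proof. Nothing in the setup defines a rank $-1$ theory, and the quantum-group specialization \eqref{eq:HOMFLY-SU-specialization} is only asserted for $N\ge2$.
\item The remark that $P\equiv1$ satisfies the relation ``on knots'' is not meaningful. Every skein triple contains diagrams with different numbers of components, so a candidate solution must be defined on all oriented links.
\item Your proposed ``safest uniform statement'' $P_L(q,q)=q^{-2\operatorname{lk}}$ is false. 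At a crossing between two components, $\operatorname{lk}(L_+)=\operatorname{lk}(L_-)+1$, so $qP_{L_+}-q^{-1}P_{L_-}$ would vanish while $(q-q^{-1})P_{L_0}\neq0$. In fact no factor $q^{c\operatorname{lk}}$ with $c\neq0$ is compatible with the relation. The relation \eqref{eq:HOMFLY-skein} defines the unframed, writhe-normalized invariant, so no linking correction appears.
\end{itemize}

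The missing argument is short. The relation \eqref{eq:HOMFLY-skein} together with $P_U=1$ determines a link invariant uniquely. The usual crossing-change and descending-diagram induction still works after specializing $A=\pm q^{\pm1}$, because $A$ and $q-q^{-1}$ remain invertible in $\Q(q)$ and the unlink values $\bigl((A-A^{-1})/(q-q^{-1})\bigr)^{c-1}$ are forced. So it suffices to exhibit explicit solutions on all oriented links.
\begin{itemize}
\item At $A=q$, the constant function $L\mapsto1$ satisfies $qP_+-q^{-1}P_-=(q-q^{-1})P_0$.
\item At $A=q^{-1}$, the function $L\mapsto(-1)^{c(L)-1}$ satisfies $q^{-1}P_+-qP_-=(q-q^{-1})P_0$, because oriented smoothing changes $c(L)$ by one.
\end{itemize}
Hence $P_L(q,q)=1$ and $P_L(q^{-1},q)=(-1)^{c(L)-1}$.

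For $A=-q^{\pm1}$, check that $(-1)^{c(L)-1}P_L(-A,q)$ again satisfies \eqref{eq:HOMFLY-skein} with unknot value $1$. This gives $P_L(-A,q)=(-1)^{c(L)-1}P_L(A,q)$. For a knot all four values are then $1$, and your divisibility argument completes the proof. This is exactly how the paper argues.
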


\begin{proof}
We first regard the specialized skein invariants as taking values in
$\Q(q)$.  Let $c(L)$ denote the number of components of an oriented link.
At $A=q$, the constant function $L\mapsto1$ satisfies the specialized
skein relation and the unknot normalization.  At $A=q^{-1}$, the function
$L\mapsto(-1)^{c(L)-1}$ satisfies the same conditions, because oriented
smoothing changes the number of components by one.  Hence
\[
 P_K(q,q)=P_K(q^{-1},q)=1
\]
for every knot $K$.

For an oriented link $L$, put
$Q_L(A,q)=(-1)^{c(L)-1}P_L(-A,q)$.  The change $A\mapsto-A$ changes the
sign of the skein relation, while oriented smoothing changes the parity of
$c(L)-1$.  Thus $Q_L$ satisfies the original skein relation and $Q_U=1$.
Uniqueness gives
\begin{equation}\label{eq:HOMFLY-A-parity}
 P_L(-A,q)=(-1)^{c(L)-1}P_L(A,q).
\end{equation}
In particular, $P_K$ is even in $A$, and
\begin{equation}\label{eq:four-fundamental-specializations}
 \HH_1(K;\pm q,q)=\HH_1(K;\pm q^{-1},q)=1.
\end{equation}

Choose $M\ge0$ such that
\[
 g(A)=A^M\bigl(\HH_1(K;A,q)-1\bigr)\in R[A].
\]
Division by the monic polynomial
$(A^2-q^2)(A^2-q^{-2})$ in $R[A]$ gives
\[
 g(A)=(A^2-q^2)(A^2-q^{-2})U(A)+R_0(A),
\]
where $U,R_0\in R[A]$ and $\deg_A R_0<4$.
The remainder vanishes at the four distinct points
$q,-q,q^{-1},-q^{-1}$ in $\Q(q)$, hence $R_0=0$.  Finally,
\[
 \{1;A\}\{-1;A\}=A^{-2}(A^2-q^2)(A^2-q^{-2}).
\]
It follows that \eqref{eq:fundamental-factorization} holds with
$F_K=A^{2-M}U(A)\in\Z[A^{\pm1},q^{\pm1}]$.  Uniqueness follows because the
Laurent polynomial ring is a domain.
\end{proof}

\begin{proposition}
\label{prop:first-coefficient}
For every zero-framed knot $K$ and every $n\ge2$,
\begin{equation}\label{eq:first-coefficient}
 H_1^{(n)}(K;q)=[n-1]_qF_K(q^n,q).
\end{equation}
\end{proposition}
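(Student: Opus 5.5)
We compute $H_1^{(n)}$ directly from the Newton recursion \eqref{eq:newton-recursion} with the nodes $x_i=X_i^{(n)}$. Since $H_0^{(n)}=J_0^{SU(n)}(K;q)=1$, the recursion at $k=1$ gives
\[
 H_1^{(n)}(K;q)=\frac{J_1^{SU(n)}(K;q)-1}{X_1^{(n)}-X_0^{(n)}}
 =\frac{J_1^{SU(n)}(K;q)-1}{\{1\}\{n+1\}},
\]
where the denominator comes from \eqref{eq:node-factorization} with $r=1$, $i=0$. No integrality input is needed at this stage: the identity holds in $\Bbbk$, and \cref{thm:main} only guarantees a posteriori that both sides lie in $R$.

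Next we substitute the fundamental factorization. Specializing \eqref{eq:fundamental-factorization} at $A=q^n$ and using the positive-rank specialization $\HH_1(K;q^n,q)=J_1^{SU(n)}(K;q)$, we obtain
\[
 J_1^{SU(n)}(K;q)-1=\{1;q^n\}\{-1;q^n\}F_K(q^n,q).
\]
By the definition \eqref{eq:HOMFLY-differential-factor}, $\{m;q^n\}=q^{n+m}-q^{-n-m}=\{n+m\}$. Hence $\{1;q^n\}=\{n+1\}$ and $\{-1;q^n\}=\{n-1\}$.

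Dividing, the factor $\{n+1\}$ cancels and we are left with
\[
 H_1^{(n)}(K;q)=\frac{\{n-1\}}{\{1\}}F_K(q^n,q)=[n-1]_qF_K(q^n,q),
\]
which is \eqref{eq:first-coefficient}.

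The only point requiring care is bookkeeping of conventions. We must check that the kernel in \eqref{eq:formal-SU} at $r=k=1$ is exactly $\{1\}\{n+1\}$, with no extra factor of $q-q^{-1}$ (this is confirmed by \eqref{eq:SU-newton-kernel}). We must also check that the specialization $A=q^n$ is the one in \eqref{eq:HOMFLY-SU-specialization}, rather than $A=q^{-n}$. Since the factorization is symmetric under $A\mapsto A^{-1}$ only up to sign, the orientation of this specialization matters. Apart from these convention checks the computation is immediate.
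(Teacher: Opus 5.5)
Your proof is correct and is essentially the paper's argument: the paper also specializes the fundamental factorization at $A=q^n$ to get $J_1^{SU(n)}-1=\{1\}\{n+1\}[n-1]_qF_K(q^n,q)$, compares this with the $r=1$ row $J_1^{SU(n)}=1+\{1\}\{n+1\}H_1^{(n)}$, and cancels $\{1\}\{n+1\}$. One small correction to your closing remark: the product $\{1;A\}\{-1;A\}$ is exactly invariant under $A\mapsto A^{-1}$, so choosing $A=q^n$ rather than $A=q^{-n}$ matters because $F_K$ itself is not inversion-symmetric, not because of a sign in the kernel.
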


\begin{proof}
Specializing \eqref{eq:fundamental-factorization} at $A=q^n$ gives
\[
 J_1^{SU(n)}(K;q)-1
 =\{n+1\}\{n-1\}F_K(q^n,q)
 =\{1\}\{n+1\}[n-1]_qF_K(q^n,q).
\]
On the other hand, the $r=1$ row of the Newton expansion is
\[
 J_1^{SU(n)}(K;q)
 =H_0^{(n)}(K;q)+C_{2,1}^{(n)}H_1^{(n)}(K;q),
\]
where $H_0^{(n)}(K;q)=1$ and
$C_{2,1}^{(n)}=\{1\}\{n+1\}$.  Cancelling the nonzero factor
$\{1\}\{n+1\}$ proves \eqref{eq:first-coefficient}.
\end{proof}

Consequently,
\[
 H_1^{(n)}(K;q)\in[n-1]_q\Z[q^{\pm1}],
 \qquad
 J_1^{SU(n)}(K;q)-1
 \in\{n-1\}\{n+1\}\Z[q^{\pm1}].
\]

\subsection{A relative colored Newton completion}

Let $\mathscr R_n=\Z[q^{\pm1},x^{\pm1}]$ and define
\begin{equation}\label{eq:relative-kernel}
 P_k^{(n)}(x,q)=\prod_{i=0}^{k-1}
 (xq^{-i}-x^{-1}q^i)(xq^{n+i}-x^{-1}q^{-n-i}).
\end{equation}
Since $P_k^{(n)}$ divides $P_{k+1}^{(n)}$, the ideals
$(P_k^{(n)})$ form a decreasing filtration.  Put
\begin{equation}\label{eq:relative-completion}
 \widehat{\mathscr R}_n=
 \varprojlim_{k\ge1}\mathscr R_n/(P_k^{(n)}).
\end{equation}

\begin{proposition}[Colored Newton lift]
\label{prop:colored-Newton-lift}
For every zero-framed knot $K$ and every $n\ge2$, the series
\begin{equation}\label{eq:colored-lift}
 \mathscr J_K^{(n)}(x,q)=\sum_{k=0}^{\infty}
 P_k^{(n)}(x,q)H_k^{(n)}(K;q)
\end{equation}
defines an element of $\widehat{\mathscr R}_n$.  For every $r\ge0$,
\begin{equation}\label{eq:colored-specialization}
 \mathscr J_K^{(n)}(q^r,q)=J_r^{SU(n)}(K;q).
\end{equation}
\end{proposition}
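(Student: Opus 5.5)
The plan is to verify two things: convergence of the series in the inverse limit, and compatibility of evaluation at $x=q^r$ with the finite quotients.

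For convergence, note that $P_k^{(n)}$ divides $P_{k'}^{(n)}$ in $\mathscr R_n$ for $k'\ge k$. Fix a level $N$. Every term with $k\ge N$ lies in $(P_N^{(n)})$, because $H_k^{(n)}(K;q)\in\Z[q^{\pm1}]\subset\mathscr R_n$ by \cref{thm:main}. Hence the partial sums $S_M=\sum_{k<M}P_k^{(n)}H_k^{(n)}$ stabilize modulo $(P_N^{(n)})$ once $M\ge N$. The classes $S_N \bmod (P_N^{(n)})$ are compatible with the transition maps $\mathscr R_n/(P_{N+1}^{(n)})\to\mathscr R_n/(P_N^{(n)})$, since $S_{N+1}-S_N=P_N^{(n)}H_N^{(n)}\in(P_N^{(n)})$. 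They therefore define an element of $\widehat{\mathscr R}_n$. No separatedness or topology beyond this bookkeeping is required.

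For the specialization, first make sense of evaluation at $x=q^r$. The substitution $x\mapsto q^r$ gives a ring map $\mathrm{ev}_r:\mathscr R_n\to\Z[q^{\pm1}]$. Compute
\[
 \mathrm{ev}_r(P_k^{(n)})=\prod_{i=0}^{k-1}\{r-i\}\{r+n+i\}=C_{r+1,k}^{(n)},
\]
and this vanishes for $k\ge r+1$ because of the factor $i=r$. So $\mathrm{ev}_r$ kills $(P_{r+1}^{(n)})$ and factors through $\mathscr R_n/(P_{r+1}^{(n)})$. Composing with the projection $\widehat{\mathscr R}_n\to\mathscr R_n/(P_{r+1}^{(n)})$ defines $\mathscr J\mapsto\mathscr J(q^r,q)$ on the completion. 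This definition is independent of the level chosen: any level $N\ge r+1$ gives the same value, by compatibility of the transition maps with $\mathrm{ev}_r$.

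Applying this to the level-$(r+1)$ representative $S_{r+1}$ gives
\[
 \sum_{k=0}^{r}C_{r+1,k}^{(n)}H_k^{(n)}(K;q)=J_r^{SU(n)}(K;q),
\]
which is exactly \eqref{eq:main-expansion}. The only delicate point, a mild one, is stating precisely that "evaluation at $q^r$" means this factorization through a finite quotient, rather than evaluation of a formal infinite series. All the substance is supplied by the integrality in \cref{thm:main}.
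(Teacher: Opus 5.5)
Your proposal is correct and is essentially the paper's proof: Laurent integrality from \cref{thm:main}, together with the divisibility chain $P_k^{(n)}\mid P_{k+1}^{(n)}$, gives compatible finite residues, and evaluation at $x=q^r$ kills $P_k^{(n)}$ for $k>r$, so the specialization reduces to \eqref{eq:main-expansion}. Your explicit factorization of $\mathrm{ev}_r$ through $\mathscr R_n/(P_{r+1}^{(n)})$ only makes precise a step the paper leaves implicit.
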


\begin{proof}
By \cref{thm:main}, $H_k^{(n)}(K;q)\in\Z[q^{\pm1}]$, so every summand
belongs to $\mathscr R_n$.  For $k\ge m$, one has
$P_k^{(n)}\in(P_m^{(n)})$; hence modulo $(P_m^{(n)})$ the series is the
finite sum over $0\le k<m$.  These residue classes are compatible and
define an inverse-limit element.

At $x=q^r$,
\[
 P_k^{(n)}(q^r,q)
 =\prod_{i=0}^{k-1}\{r-i\}\{r+n+i\}
 =C_{r+1,k}^{(n)}.
\]
For $k>r$, the factor indexed by $i=r$ is $\{0\}=0$, so the evaluation is
finite.  The identity \eqref{eq:colored-specialization} now follows from
\cref{thm:main}.
\end{proof}

\subsection{Diagonal lifts to the ordinary Habiro ring}

Recall Habiro's cyclotomic completion
\begin{equation}\label{eq:Habiro-ring}
 \Hab=\varprojlim_{m\ge1}\Z[q]/((q;q)_m),
 \qquad (q;q)_m=\prod_{a=1}^{m}(1-q^a).
\end{equation}

\begin{lemma}
\label{lem:Habiro-elementary}
The element $q$ is a unit in $\Hab$, the natural map
$\Z[q^{\pm1}]\to\Hab$ is injective, and every root of unity $\zeta$
defines a ring homomorphism
\[
 \operatorname{ev}_\zeta:\Hab\longrightarrow\Z[\zeta].
\]
If $a_k\in(q;q)_k\Z[q^{\pm1}]$, then $\sum_{k\ge0}a_k$ converges in
$\Hab$ and, when $\zeta$ has order $L$,
\[
 \operatorname{ev}_\zeta\!\left(\sum_{k\ge0}a_k\right)
 =\sum_{k=0}^{L-1}a_k(\zeta).
\]
\end{lemma}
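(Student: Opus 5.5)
The plan is to reduce every assertion to Habiro's description of $\Hab$ as an inverse limit and to its standard evaluation maps.

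\emph{Unit.} In $\Z[q]/((q;q)_m)$ we have $(q;q)_m\equiv 0$, and $(q;q)_m=1-qg(q)$ for some $g\in\Z[q]$. Hence $q\cdot g(q)\equiv 1$, so $q$ is invertible at every level. The inverses are unique, so they are compatible with the transition maps, and $q$ is a unit in $\Hab$. This gives a ring map $\Z[q^{\pm1}]\to\Hab$.

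\emph{Injectivity.} Let $f\in\Z[q^{\pm1}]$ be nonzero and write $f=q^{-e}g$ with $g\in\Z[q]$. It suffices to show that $g\notin((q;q)_m)$ for large $m$. I would use the same degree argument as in \cref{lem:center-linear-completion}. If $g=(q;q)_m h$ in $\Z[q]$ with $h\ne0$, then $\deg g\ge m(m+1)/2$, which fails for large $m$. (Equivalently, apply Habiro's injective Taylor map $T_1$.) Since $q^{-e}$ is a unit, this gives injectivity on $\Z[q^{\pm1}]$.

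\emph{Evaluation.} For $\zeta$ of order $L$, the factor $1-q^L$ divides $(q;q)_m$ whenever $m\ge L$, and it vanishes at $\zeta$. So $\Z[q]\to\Z[\zeta]$, $q\mapsto\zeta$, kills $(q;q)_m$ for all $m\ge L$ and factors through $\Z[q]/((q;q)_L)$. Composing with the projection $\Hab\to\Z[q]/((q;q)_L)$ defines $\operatorname{ev}_\zeta$. It is a ring homomorphism and is compatible with the evaluations at finite levels.

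\emph{Convergence.} If $a_k\in(q;q)_k\Z[q^{\pm1}]$, then for $k\ge m$ we have $a_k\in(q;q)_m\Z[q^{\pm1}]$, because $(q;q)_m$ divides $(q;q)_k$. Powers of $q$ are units modulo $(q;q)_m$, so the image of $a_k$ in $\Z[q]/((q;q)_m)$ is zero for all $k\ge m$. The partial sums therefore stabilize at each level and define a compatible element of $\Hab$.

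\emph{Value at $\zeta$.} Apply $\operatorname{ev}_\zeta$ through level $m=L$. Modulo $(q;q)_L$, the sum equals the finite sum $\sum_{k<L}a_k$, and evaluating at $\zeta$ gives $\sum_{k=0}^{L-1}a_k(\zeta)$. This is consistent with the terms $k\ge L$: each such $a_k$ contains the factor $1-q^L$ and so vanishes at $\zeta$.

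The only slightly delicate point is making sure that Laurent elements are handled correctly at each finite level, that is, that $q^{-1}$ makes sense there. The unit step above takes care of this.
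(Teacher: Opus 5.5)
Your proposal is correct and follows the paper's proof essentially step for step. The unit comes from the constant term $1$ of $(q;q)_m$, injectivity from the degree bound $\deg(q;q)_m=m(m+1)/2$, and both the evaluation map and the truncated value $\sum_{k=0}^{L-1}a_k(\zeta)$ from the factor $1-q^L$ dividing $(q;q)_m$ for $m\ge L$. Your explicit factorization of $\operatorname{ev}_\zeta$ through level $L$, and your remark on handling $q^{-1}$ at each finite level, spell out details the paper leaves implicit.
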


\begin{proof}
Put $I_m^{\mathrm H}=((q;q)_m)$.  Since $(q;q)_m$ has constant term $1$,
\[
 q\,\frac{1-(q;q)_m}{q}\equiv1\pmod{I_m^{\mathrm H}}.
\]
These inverses are compatible, so $q$ is a unit in $\Hab$ and the same
inverse limit may be taken over $\Z[q^{\pm1}]$.  If a Laurent polynomial
$f$ maps to zero, then after multiplying by a power of $q$ one obtains a
nonzero $g\in\Z[q]$ divisible by $(q;q)_m$ for every $m$, contradicting
$\deg(q;q)_m=m(m+1)/2>\deg g$ for large $m$.

If $\zeta$ has order $L$, the factor $1-q^L$ makes evaluation at $\zeta$
well defined on every quotient of level $m\ge L$, hence on $\Hab$.  The
condition $a_k\in I_k^{\mathrm H}\Z[q^{\pm1}]$ gives convergence, and all
terms with $k\ge L$ vanish after evaluation at $\zeta$.
\end{proof}

\begin{lemma}
\label{lem:consecutive-quantum}
For every $s\ge1$ and $k\ge0$,
\begin{equation}\label{eq:consecutive-divisibility}
 \prod_{i=0}^{k-1}\{s+i\}\in(q;q)_k\Z[q^{\pm1}].
\end{equation}
\end{lemma}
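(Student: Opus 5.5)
The plan is to reduce to a statement about ordinary Gaussian coefficients. Each factor $\{m\}=q^{-m}(q^{2m}-1)$ is a Laurent unit times $-(1-q^{2m})$, so
\[
 \prod_{i=0}^{k-1}\{s+i\}
 =(\text{unit})\cdot\prod_{i=0}^{k-1}(1-q^{2(s+i)}).
\]
It therefore suffices to show that $\prod_{i=0}^{k-1}(1-q^{2(s+i)})$ lies in $(q;q)_k\Z[q^{\pm1}]$.

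First I would use the ordinary Gaussian identity in the variable $Q=q^2$:
\[
 \prod_{i=0}^{k-1}(1-Q^{s+i})
 =\frac{(Q;Q)_{s+k-1}}{(Q;Q)_{s-1}}
 =(Q;Q)_k\begin{bmatrix}s+k-1\\ k\end{bmatrix}_{Q}^{\mathrm{ord}}.
\]
This is valid because $s\ge1$ makes all indices nonnegative, and the Gaussian coefficient lies in $\Z[Q]$. This is the same observation already used in \eqref{eq:shifted-factorial-value-divisibility}. The product is thus divisible by $(q^2;q^2)_k$.

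Next I would check that $(q;q)_k$ divides $(q^2;q^2)_k$ in $\Z[q]$. This follows factorwise, since $1-q^{2a}=(1-q^a)(1+q^a)$ gives
\[
 (q^2;q^2)_k=(q;q)_k\prod_{a=1}^{k}(1+q^a).
\]
Combining the two steps yields \eqref{eq:consecutive-divisibility}.

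None of these steps is a real obstacle. The only points needing care are the Laurent-unit bookkeeping, meaning the powers of $q$ and the sign, and the hypothesis $s\ge1$. That hypothesis guarantees that no factor is $\{0\}$, although a zero factor would in any case give a trivially divisible product, and it keeps the Gaussian coefficient's indices in range. If a more intrinsic argument were preferred, one could instead compare cyclotomic valuations: $\Phi_d$ divides $(q;q)_k$ exactly $\lfloor k/d\rfloor$ times, while among $k$ consecutive integers at least $\lfloor k/d^\sharp\rfloor\ge\lfloor k/d\rfloor$ are multiples of $d^\sharp$. One would then conclude by unique factorization, using that $(q;q)_k$ is primitive. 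The Gaussian factorization above is the route I would actually write.
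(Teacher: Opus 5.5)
Your proof is correct, and your main route differs from the paper's. The paper argues by cyclotomic valuations, which is essentially the alternative you sketch at the end. It uses $\val_{\Phi_d}((q;q)_k)=\lfloor k/d\rfloor$ and counts the indices with $d\mid s+i$ ($d$ odd) or $d/2\mid s+i$ ($d$ even) among $k$ consecutive integers. It then concludes by unique factorization in $\Z[q]$. You instead exhibit the cofactor explicitly, with $Q=q^2$:
\[
 \prod_{i=0}^{k-1}\{s+i\}
 =(-1)^kq^{-ks-\binom k2}\,(q;q)_k\prod_{a=1}^{k}(1+q^a)\,
 \begin{bmatrix}s+k-1\\ k\end{bmatrix}_{Q}^{\mathrm{ord}}.
\]
Your route has three advantages. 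It is shorter. It needs neither valuation bookkeeping nor the factorization argument. It also visibly proves the stronger divisibility by $(q^2;q^2)_k$, reusing the observation \eqref{eq:shifted-factorial-value-divisibility} that the paper already records.

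The paper's count has its own merits. It matches the cyclotomic-local style used throughout, for instance in \cref{prop:exact-cyclotomic-multiplicities}. It also extends to products of quantum integers over index sets where no closed-form Gaussian quotient is available. Its bound $\lfloor 2k/d\rfloor$ for even $d$ is exactly $\val_{\Phi_d}((q^2;q^2)_k)$, so it implicitly gives the same stronger statement. The paper records only the weaker one because that is all the Habiro-filtration argument in \cref{thm:Habiro-diagonal} needs.
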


\begin{proof}
For every cyclotomic polynomial $\Phi_d(q)$,
$\val_{\Phi_d}((q;q)_k)=\lfloor k/d\rfloor$.  Up to a Laurent unit,
$\{s+i\}=1-q^{2(s+i)}$.  If $d$ is odd, the factor $\Phi_d$ occurs when
$d\mid s+i$, and among $k$ consecutive integers there are at least
$\lfloor k/d\rfloor$ such indices.  If $d$ is even, it occurs when
$d/2\mid s+i$, giving at least
$\lfloor2k/d\rfloor\ge\lfloor k/d\rfloor$ indices.  Comparing the
$\Phi_d$-adic valuations for all $d$ and applying unique factorization in
$\Z[q]$ proves \eqref{eq:consecutive-divisibility}.
\end{proof}

\begin{theorem}[Habiro diagonal lift]
\label{thm:Habiro-diagonal}
Let $K$ be a zero-framed knot, let $n\ge2$, and let
$1\le s\le n-1$.  Then
\begin{equation}\label{eq:ordinary-Habiro-series}
 \mathscr J_K^{(n,s)}(q)=\sum_{k=0}^{\infty}
 \left(\prod_{i=0}^{k-1}\{-s-i\}\{n-s+i\}\right)
 H_k^{(n)}(K;q)
\end{equation}
defines an element of $\Hab$.  If $\zeta$ has order $L>s$, then
\begin{equation}\label{eq:root-evaluation}
 \mathscr J_K^{(n,s)}(\zeta)=J_{L-s}^{SU(n)}(K;\zeta).
\end{equation}
Moreover, \eqref{eq:ordinary-Habiro-series} is the image of
\eqref{eq:colored-lift} under the continuous specialization $x=q^{-s}$.
\end{theorem}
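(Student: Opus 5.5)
The plan is to apply \cref{lem:Habiro-elementary} to the summands
\[
 a_k=\Bigl(\prod_{i=0}^{k-1}\{-s-i\}\{n-s+i\}\Bigr)H_k^{(n)}(K;q),
\]
using \cref{thm:main} for $H_k^{(n)}\in\Z[q^{\pm1}]$.

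\emph{Convergence.} We must show $a_k\in(q;q)_k\Z[q^{\pm1}]$. Since $\{-s-i\}=-\{s+i\}$, the first product equals $\pm\prod_{i=0}^{k-1}\{s+i\}$ with $s\ge1$. By \cref{lem:consecutive-quantum} this is already divisible by $(q;q)_k$. The second product consists of $\{m\}$ with $m=n-s+i\ge1$, so it lies in $\Z[q^{\pm1}]$. Hence convergence and membership in $\Hab$ follow. The only subtlety is that \cref{lem:Habiro-elementary} is phrased over $\Z[q^{\pm1}]$; this is exactly the setting it covers, since $q$ is a unit in $\Hab$.

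\emph{Evaluation.} Let $\zeta$ have order $L>s$ and put $r=L-s\ge1$. By \cref{lem:Habiro-elementary}, $\mathscr J_K^{(n,s)}(\zeta)=\sum_{k=0}^{L-1}a_k(\zeta)$. We compare each $a_k(\zeta)$ with the kernel $C_{r+1,k}^{(n)}$ at $q=\zeta$.

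Since $\zeta^L=1$, we have $\{m\}(\zeta)=\{m-L\}(\zeta)$. Therefore
\begin{align*}
\{-s-i\}(\zeta)&=\{L-s-i\}(\zeta)=\{r-i\}(\zeta),\\
\{n-s+i\}(\zeta)&=\{n+L-s+i\}(\zeta)=\{r+n+i\}(\zeta).
\end{align*}
So $a_k(\zeta)=C_{r+1,k}^{(n)}(\zeta)H_k^{(n)}(\zeta)$ for every $k$.

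For $r<k\le L-1$, the factor with $i=r$ gives $\{0\}=0$, so those terms vanish. Specializing the Laurent-polynomial identity \eqref{eq:main-expansion} at $q=\zeta$ then yields \eqref{eq:root-evaluation}. The equation is legitimate because both sides of \eqref{eq:main-expansion} lie in $\Z[q^{\pm1}]$, which may be evaluated at any root of unity.

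\emph{Compatibility with \eqref{eq:colored-lift}.} At $x=q^{-s}$ we have $P_k^{(n)}(q^{-s},q)$ equal to the displayed kernel, since $xq^{-i}-x^{-1}q^i=\{-s-i\}$ and $xq^{n+i}-x^{-1}q^{-n-i}=\{n-s+i\}$. The main point to check is continuity of $\mathscr R_n\to\Z[q^{\pm1}]\to\Hab$, $x\mapsto q^{-s}$, from the $(P_k^{(n)})$-adic topology to the Habiro topology. By the divisibility above, the image of $P_k^{(n)}$ lies in $(q;q)_k\Z[q^{\pm1}]$, so the ideal $(P_k^{(n)})$ maps into $I_k^{\mathrm H}$. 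This gives compatible maps of quotients, and hence a continuous map of inverse limits. Applying it to the partial sums of \eqref{eq:colored-lift} produces the partial sums of \eqref{eq:ordinary-Habiro-series}, which proves the final claim.

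I expect no serious obstacle. The only delicate points are the sign and the index shift in identifying $\{-s-i\}(\zeta)$ with $\{r-i\}(\zeta)$, and making sure the hypothesis $L>s$ guarantees $r\ge1$.
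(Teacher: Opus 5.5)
Your proposal is correct and follows the paper's proof. It uses \cref{lem:consecutive-quantum} and \cref{lem:Habiro-elementary} for convergence, the periodicity $\{m\}(\zeta)=\{m+L\}(\zeta)$ to identify the summands with $C_{r+1,k}^{(n)}(\zeta)H_k^{(n)}(\zeta)$ and truncate at $k>r$, and the compatibility of $x\mapsto q^{-s}$ with the two filtrations for the final claim. The only difference is that you apply the lemma once, getting divisibility by $(q;q)_k$, where the paper records $(q;q)_k^2$; $(q;q)_k$ is all that \cref{lem:Habiro-elementary} requires.
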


\begin{proof}
Since $\{-s-i\}=-\{s+i\}$ and both $s$ and $n-s$ are positive, two
applications of \cref{lem:consecutive-quantum} give
\[
 \prod_{i=0}^{k-1}\{-s-i\}\{n-s+i\}
 \in(q;q)_k^2\Z[q^{\pm1}].
\]
By \cref{thm:main}, $H_k^{(n)}(K;q)\in\Z[q^{\pm1}]$; hence
\cref{lem:Habiro-elementary} proves convergence in $\Hab$.

Let $r=L-s$.  Since $\zeta^L=1$, for every $i$,
\[
 \left.\{r-i\}\right|_{q=\zeta}
 =\left.\{-s-i\}\right|_{q=\zeta},
 \qquad
 \left.\{r+n+i\}\right|_{q=\zeta}
 =\left.\{n-s+i\}\right|_{q=\zeta}.
\]
For $k>r$, the factor indexed by $i=r$ is $\{-L\}$ and vanishes at
$q=\zeta$.  Therefore the Habiro series truncates to
\[
 \sum_{k=0}^{r}C_{r+1,k}^{(n)}(\zeta)H_k^{(n)}(K;\zeta)
 =J_r^{SU(n)}(K;\zeta),
\]
which proves \eqref{eq:root-evaluation}.

Since
$P_k^{(n)}(q^{-s},q)\in(q;q)_k^2\Z[q^{\pm1}]$, evaluation at $x=q^{-s}$
is continuous from the relative Newton filtration to the Habiro
filtration.  By \cref{lem:filtered-extension}, it extends to
$\widehat{\mathscr R}_n\to\Hab$, where it sends
\eqref{eq:colored-lift} to \eqref{eq:ordinary-Habiro-series}.
\end{proof}

\begin{corollary}
\label{cor:Habiro-diagonal-unification}
Fix a zero-framed knot $K$, an integer $n\ge2$, and
$1\le s\le n-1$.  The values
\[
 J_{L-s}^{SU(n)}(K;\zeta),
 \qquad \operatorname{ord}(\zeta)=L>s,
\]
are uniquely determined by the Taylor series of
$\mathscr J_K^{(n,s)}$ at $q=1$.
\end{corollary}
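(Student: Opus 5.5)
The corollary should follow from \cref{thm:Habiro-diagonal} together with the injectivity of Habiro's Taylor map \cite[Theorem~5.4]{Habiro04}. The idea is to pass from the Taylor series at $q=1$ to the element $\mathscr J_K^{(n,s)}\in\Hab$ itself, and then read off the root-of-unity values by evaluation. The logical shape is simple: a map that factors through an injective map is determined by its image under that injective map.

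First, apply \cref{thm:Habiro-diagonal}, which places $\mathscr J_K^{(n,s)}$ in $\Hab$. For every root of unity $\zeta$ of order $L>s$, the same theorem gives
\[
 \operatorname{ev}_\zeta\bigl(\mathscr J_K^{(n,s)}\bigr)=J_{L-s}^{SU(n)}(K;\zeta).
\]
Here $\operatorname{ev}_\zeta$ is the homomorphism of \cref{lem:Habiro-elementary}, so each value in the statement is a function of the single element $\mathscr J_K^{(n,s)}$.

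Second, recall that Habiro's Taylor homomorphism $T_1:\Hab\to\Z[[q-1]]$ is injective, which was already invoked in the proof of \cref{prop:faithful-hadic-comparison}. Suppose two elements $\xi,\xi'\in\Hab$ have the same Taylor series at $q=1$. Then $T_1(\xi-\xi')=0$, so injectivity forces $\xi=\xi'$, and hence $\operatorname{ev}_\zeta(\xi)=\operatorname{ev}_\zeta(\xi')$ for every $\zeta$. Applied to $\mathscr J_K^{(n,s)}$, this says that its Taylor expansion at $q=1$ determines the element, and therefore determines all the values $J_{L-s}^{SU(n)}(K;\zeta)$ with $\operatorname{ord}(\zeta)=L>s$.

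The only point needing care is that "Taylor series at $q=1$" means $T_1$ applied to the element of $\Hab$. The series must not be read as a termwise expansion of the infinite sum \eqref{eq:ordinary-Habiro-series}. The two readings agree: $T_1$ is continuous from the Habiro topology to the $(q-1)$-adic topology, since $(q;q)_k\in(q-1)^k\Z[q]$, so $T_1$ can be applied to \eqref{eq:ordinary-Habiro-series} term by term. I expect no genuine obstacle beyond citing injectivity of $T_1$ correctly.
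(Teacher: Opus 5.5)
Your proposal is correct and is essentially the paper's own proof, which likewise combines \cref{thm:Habiro-diagonal} (membership in $\Hab$ and the evaluations $\operatorname{ev}_\zeta$) with the injectivity of Habiro's Taylor homomorphism $T_1:\Hab\to\Z[[q-1]]$ from \cite[Theorem~5.4]{Habiro04}. Your remark that $T_1$ is continuous because $(q;q)_k\in(q-1)^k\Z[q]$ is a correct and harmless clarification of what ``Taylor series at $q=1$'' means; the argument does not require it.
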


\begin{proof}
Combine \cref{thm:Habiro-diagonal} with the injectivity of Habiro's Taylor
homomorphism $T_1:\Hab\to\Z[[q-1]]$
\cite[Theorem~5.4]{Habiro04}.
\end{proof}

\section{Rank-uniform integer-valued completion}
\label{sec:rank-uniform-completion}

We first construct Laurent differential coefficients
$G_k(K;A,q)$.  The fixed-rank theorem then shows that the associated
Newton coefficients are integral at every node $A=q^n$, yielding the
ring $\mathscr I_q^{\ge2}$ and the two-variable Newton completion.

\subsection{Structural symmetries and finite negative-rank reflection}

\begin{proposition}
\label{prop:colored-HOMFLY-integrality}
For every knot $K$ and partition $\lambda$,
\[
 \HH_\lambda(K;A,q)\in\Z[A^{\pm1},q^{\pm1}].
\]
\end{proposition}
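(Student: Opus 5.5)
We would prove this through the Hecke-algebra (skein) construction of colored HOMFLY--PT invariants, not through fixed-rank quantum groups, because $\Z[A^{\pm1},q^{\pm1}]$-integrality is a two-variable statement. Fix a zero-framed diagram of $K$ as the closure of a braid $\beta\in B_m$. The $\lambda$-colored invariant is obtained by cabling: replace each strand by $|\lambda|$ parallel strands and insert the Aiston--Morton (Gyoja) idempotent $e_\lambda$ of the Hecke algebra $H_{|\lambda|}$. This gives an element of the skein of the annulus. Closing and normalizing by the unknot value then gives the reduced invariant. This is exactly the comparison carried out in \cref{app:CLZ-normalization}, which identifies the skein construction with $\HH_\lambda$ in the normalization \eqref{eq:HOMFLY-skein}.

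The key input is Morton's integrality theorem in the form of the framed-to-reduced statement. The unreduced colored invariant equals $\langle K^{e_\lambda}\rangle=\HH_\lambda(K)\,\langle U^{e_\lambda}\rangle$, where $\langle U^{e_\lambda}\rangle=\prod_{\square\in\lambda}\{c(\square);A\}/\{h(\square)\}$ with the $A$-shifted contents. Naively $e_\lambda$ has denominators, namely products of quantum integers coming from hook lengths, so the unreduced invariant lies only in $\Z[A^{\pm1},q^{\pm1}][\{h\}^{-1}]$. The plan has three steps.
\begin{enumerate}[label=(\roman*)]
\item Write the satellite of $K$ with pattern $e_\lambda$ in the annulus. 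Use the fact that the meridian/twisting map acts on the basis $Q_\mu$ of the annular skein diagonally. From this, deduce that for a knot, $K^{e_\lambda}$ equals $\HH_\lambda(K)$ times the closure $Q_\lambda$.
\item Show that $\HH_\lambda(K)\in\Z[A^{\pm1},q^{\pm1}][\{1\}^{-1}]$. This is the easy half: every crossing resolution only introduces $q-q^{-1}=\{1\}$. We expand $\beta$'s cable in the Hecke algebra, multiply by $e_\lambda$, and use that $e_\lambda H e_\lambda$ is one-dimensional over the base, equal to $(\text{scalar})\,e_\lambda$. The scalar is given by a formula whose only denominators come from the quasi-idempotent normalization.
\item Remove the remaining denominators $\{1\}^{-1}$ and the hook denominators. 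We would invoke the theorem of Morton (with Lukac/Hadji) that the reduced colored HOMFLY--PT of a knot is a Laurent polynomial. Its proof uses the quasi-idempotent $\tilde e_\lambda=\prod\{h\}\,e_\lambda$, which is integral over $\Z[q^{\pm1}]$. It also uses that the diagonal coefficient of $Q_\lambda$ in $K^{\tilde e_\lambda}$ is divisible by $\prod\{h\}$. That divisibility is obtained by comparing with $e_\lambda$-idempotence and the integrality of the structure constants of the Hecke algebra in the Kazhdan--Lusztig basis.
\end{enumerate}

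An alternative, self-contained route for step (iii) uses the fixed-rank integrality already established in the paper. By \cref{prop:all-partition-normalization} together with \cref{prop:BG-coefficient-integrality}, evaluation on $V(\lambda)$ gives $\HH_\lambda(K;q^N,q)\in\Z[q^{\pm1}]$ for all $N\ge\ell(\lambda)$: the scalar $J_K(V(\mu);q^2)$ is a finite $\Z[Q^{\pm1}]$-combination of the $\sigma_\lambda$-eigenvalues, each of which lies in $\Z[q^{\pm1}]$.
\begin{itemize}
\item Combining this with step (ii), write $\HH_\lambda=N(A,q)/\{1\}^e$ with $N\in\Z[A^{\pm1},q^{\pm1}]$.
\item Then $N(q^N,q)\in\{1\}^e\Z[q^{\pm1}]$ for infinitely many $N$.
\item To conclude that $N$ itself is divisible by $\{1\}^e$ in $\Z[A^{\pm1},q^{\pm1}]$, reduce modulo the prime $\Phi_1$ or $\Phi_2$ and argue by valuation. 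Divisibility at infinitely many specializations $A=q^N$ does not immediately give divisibility in two variables, because $q^N$ specializes to $\pm1$ modulo $q\mp1$.
\end{itemize}

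The main obstacle is therefore exactly this lifting of denominator-freeness from the specializations $A=q^N$ to the two-variable ring. We would try to handle it by expanding $N$ in powers of $(A-1)$ over $\Z[q^{\pm1}]$ and using $q^N-1$ for varying $N$ to separate the coefficients. If that fails, we would fall back on citing Morton's theorem directly for step (iii).
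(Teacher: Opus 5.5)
Your proposal stands only through the fallback in (iii), which is the same as the paper's proof: apply Morton's integrality theorem for framed $1$-tangles decorated by a meridian-map eigenvector \cite[Theorem~1]{Morton07} to the eigenvector $Q_\lambda$. Dividing by the decorated unknot gives the reduced invariant, and any framing correction is a monomial $A^{a}q^{b}$, hence a unit. Once you cite this, steps (i) and (ii) are unnecessary. Step (ii) is also not what your sketch delivers. Normalizing the quasi-idempotent divides by the hook quantum integers $\{h(\square)\}$, and their cyclotomic factors $\Phi_d(q)$ with $d>2$ are not inverted in $\Z[A^{\pm1},q^{\pm1}][\{1\}^{-1}]$; you concede this yourself in (iii). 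So the starting point $\HH_\lambda=N/\{1\}^e$ of your alternative route is not established.

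The more serious problem is that the alternative route cannot be completed by any valuation or $(A-1)$-expansion argument, because integrality at the rank nodes does not imply two-variable integrality. Take
\[
 f(A,q)=\frac{A^2-1}{q^2-1}=\frac{q^{-1}(A^2-1)}{\{1\}}.
\]
This $f$ is even in $A$, its denominator is a power of $\{1\}$, and $f(\pm q^{N},q)=(q^{2N}-1)/(q^2-1)\in\Z[q^{\pm1}]$ for every $N\in\Z$. Yet $f\notin\Z[A^{\pm1},q^{\pm1}]$: setting $q=1$ in $A^2-1=(q^2-1)g$ gives a contradiction. So every hypothesis your alternative route uses is satisfied by a non-Laurent function.

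The obstacle you flagged is therefore structural, not technical. Values at $A=q^N$ only place $\HH_\lambda$ in the integer-valued ring $\mathscr I_q^{\ge2}$, which strictly contains $\Z[A^{\pm1},q^{\pm1}]$. The paper exhibits exactly this with $\mathsf H_1(4_1;A,q)=(Aq^{-1}-A^{-1}q)/\{1\}$ in \cref{sec:examples}. Two-variable integrality is genuinely additional, skein-theoretic information. That is why the paper takes it from Morton rather than deducing it from the Beliakova--Gorsky fixed-rank results, and it is needed in \cref{thm:general-rank-uniform-DE}, where the $G_k$ must be Laurent polynomials in $A$. You should drop the alternative route and keep the direct citation.
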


\begin{proof}
This is Morton's integrality theorem for the $(1,1)$-tangle invariant
associated with an annular meridian eigenvector
\cite[Theorem~1]{Morton07}, applied to the eigenvector indexed by
$\lambda$.  Its quotient normalization agrees with the reduced
unknot normalization used here; see also \cite{Zhu23}.
\end{proof}

\begin{lemma}[Reduced sign and transpose symmetries]
\label{lem:rank-uniform-symmetries}
For every zero-framed knot $K$ and every partition $\lambda$,
\begin{align}
 \HH_\lambda(K;-A,q)&=\HH_\lambda(K;A,q),
 \label{eq:reduced-A-even}\\
 \HH_\lambda(K;A,q^{-1})
 &=\HH_{\lambda^t}(K;A,q).
 \label{eq:reduced-transpose}
\end{align}
\end{lemma}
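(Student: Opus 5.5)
We prove both identities at the level of the colored skein (satellite) invariants and then pass to the reduced normalization, using that the unknot values satisfy the same identities.

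For \eqref{eq:reduced-A-even}, we extend the parity argument of \cref{lem:fundamental-factorization}. There we showed $P_L(-A,q)=(-1)^{c(L)-1}P_L(A,q)$ for every oriented link, via the substitution $Q_L=(-1)^{c(L)-1}P_L(-A,q)$ and uniqueness of the skein invariant. The $\lambda$-colored invariant of $K$ is a $\Z[A^{\pm1},q^{\pm1}]$-linear combination of uncolored HOMFLY--PT polynomials of satellites of $K$. Concretely, one decorates $K$ by the idempotent (Aiston--Morton element) $e_\lambda$ in the Hecke algebra $H_{|\lambda|}$, expressed as a combination of braids on $|\lambda|$ strands.

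The point to check is that $e_\lambda$, written with the skein coefficients, transforms under $A\mapsto-A$ compatibly with the component-count sign. The plan is to use the Hecke relation $\sigma-\sigma^{-1}=(q-q^{-1})$ in the convention where the crossing carries a factor $A$. Rescaling crossings by $A$ shows that $A\mapsto -A$ acts on a braid word with $e$ crossings by $(-1)^e$. For a closure of a braid on $m$ strands, $(-1)^{e}$ equals $(-1)^{m-c}$ by the parity of the permutation. Since $K$ is zero-framed, the satellite's self-crossings pair up, so the sign contributed by $K$'s writhe is trivial. The sign from $e_\lambda$ itself is uniform because $e_\lambda$ is homogeneous for the $\Z/2$ grading by permutation parity after rescaling. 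The same sign appears for the unknot colored by $\lambda$, so it cancels in the reduced quotient.

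For \eqref{eq:reduced-transpose}, we use the standard symmetry of the Hecke algebra: the involution $\sigma_i\mapsto -\sigma_i^{-1}$ combined with $q\mapsto q^{-1}$ (and $A$ fixed) is a skein-compatible automorphism. It preserves the skein relation \eqref{eq:HOMFLY-skein} up to the framing normalization and sends the idempotent $e_\lambda$ to $e_{\lambda^t}$. This symmetry is classical (Aiston--Morton, also used in \cite{Zhu23}). The sign $(-1)^{\#\text{crossings}}$ introduced by it is again controlled by the zero framing and the number of strands, and it cancels against the unknot normalization.

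We expect the main obstacle to be bookkeeping of framing and sign factors: twist eigenvalues depend on $\lambda$, and $A\mapsto -A$ or $q\mapsto q^{-1}$ may introduce factors like $(-1)^{|\lambda|}$ or $A^{\pm|\lambda|}$ contents. We would handle this by working with the framed satellite invariant of the zero-framed $K$ (writhe zero), where all such factors coincide with those of the unknot, and then dividing.

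As a consistency check, both identities can be verified at $A=q^N$ against \cref{prop:all-partition-normalization}. However, since the identities are polynomial in $A$, the clean route is the skein argument above.
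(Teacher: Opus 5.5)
Your proof of \eqref{eq:reduced-A-even} can stand in for the paper's citation of the unreduced identities in \cite{CLPZ23}, with one correction. Your justification is false: $e_\lambda$ is not homogeneous for permutation parity, since already $e_{(2)}=(q^{-1}+g_1)/(q+q^{-1})$ involves both $1$ and $g_1$. Homogeneity is also not needed. Write $e_\lambda=\sum_w c_w(q)g_w$ in the framed Hecke generators; the coefficients do not involve $A$. Take a writhe-zero diagram of $K$ and put $m=|\lambda|$. The satellite decorated by $g_w$ has writhe $\ell(w)$, because the $m$-parallel contributes $m^2\cdot0$. It has $c$ components, where $c$ is the number of cycles of $w$. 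So $A\mapsto-A$ multiplies its framed value $A^{\ell(w)}P$ by $(-1)^{\ell(w)}(-1)^{c-1}=(-1)^{m-1}$. This sign is the same for every $w$ because $(-1)^{\ell(w)}=(-1)^{m-c}$. The colored unknot acquires the same sign, so it cancels in the reduced quotient.

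The genuine gap is in \eqref{eq:reduced-transpose}.

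\textbf{Your involution fails.} The map $\sigma_i\mapsto-\sigma_i^{-1}$ combined with $q\mapsto q^{-1}$ does not respect the Hecke relation: it turns $\sigma-\sigma^{-1}=q-q^{-1}$ into $\sigma-\sigma^{-1}=q^{-1}-q$. With $q$ fixed, $\sigma_i\mapsto-\sigma_i^{-1}$ is an automorphism that carries $e_\lambda$ to type $\lambda^t$, but it acts on the pattern only. Switching the pattern crossings trades $K$ for its mirror image, since $K*\widehat{\bar\beta}$ is the mirror of $\bar K*\hat\beta$. So for chiral $K$ it gives no relation between $\HH_\lambda(K;A,q^{-1})$ and $\HH_{\lambda^t}(K;A,q)$. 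Your closing remark about signs ``controlled by the zero framing'' does not supply one.

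\textbf{The missing idea.} Every satellite closure value depends on $q$ only through $z=q-q^{-1}$.
\begin{enumerate}[label=\textup{(\arabic*)}]
\item The substitution $q\mapsto-q^{-1}$ fixes $z$, hence fixes all closure values. It preserves $(g_i-q)(g_i+q^{-1})=0$ with each $g_i$ fixed and swaps the eigenvalues $q$ and $-q^{-1}$. So it sends $e_\lambda$ to an idempotent conjugate to $e_{\lambda^t}$. Since closure in the annulus is a trace, this gives $\langle K*e_\lambda\rangle(A,-q^{-1})=\langle K*e_{\lambda^t}\rangle(A,q)$.
\item You also need $q\mapsto-q$. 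The semilinear automorphism $g_i\mapsto-g_i$, $q\mapsto-q$ preserves types, so $\sum_w(-1)^{\ell(w)}c_w(-q)g_w$ is conjugate to $e_\lambda$. Meanwhile $P_L(A,-z)=(-1)^{c(L)-1}P_L(A,z)$, obtained by combining $P_L(-A,-z)=P_L(A,z)$ with the $A$-parity. The cycle count from the first paragraph then gives $\langle K*e_\lambda\rangle(A,-q)=(-1)^{m-1}\langle K*e_\lambda\rangle(A,q)$.
\item Composing the two substitutions proves the transpose identity up to a sign depending only on $|\lambda|$. That sign is the same for the unknot, so it cancels in the reduced quotient.
\end{enumerate}
This is exactly the input the paper takes from \cite{CLPZ23}.
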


\begin{proof}
The unreduced colored invariants satisfy
\cite[Theorems~3.3 and~3.6, equations~(15) and~(18)]{CLPZ23}
\[
 W_\lambda(K;q,-A)=(-1)^{|\lambda|}W_\lambda(K;q,A),
 \qquad
 W_\lambda(K;q^{-1},A)
 =(-1)^{|\lambda|}W_{\lambda^t}(K;q,A).
\]
The same identities hold for the colored unknot.  Since the reduced
invariant is the quotient by the unknot value, both signs cancel.
\end{proof}

\begin{lemma}[Finite negative-rank reflection]
\label{lem:finite-negative-rank-reflection}
For every zero-framed knot $K$, every $m\ge1$, and every $0\le r\le m$,
\begin{equation}\label{eq:finite-negative-rank-reflection}
 \HH_r(K;q^{-m},q)=\HH_{m-r}(K;q^{-m},q).
\end{equation}
Moreover, for every zero-framed knot $K$ and every $r\ge0$,
\begin{equation}\label{eq:rank-one-trivial}
 \HH_r(K;q,q)=1.
\end{equation}
\end{lemma}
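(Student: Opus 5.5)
The plan is to reduce the negative-rank value to a positive-rank value with the transpose symmetry, and then to use the diagram involution $\vartheta$ to exchange exterior powers of complementary degree. Fix $m\ge1$ and $0\le r\le m$, and put $q'=q^{-1}$. Apply \eqref{eq:reduced-transpose} with $q$ replaced by $q'$, which is legitimate because it is an identity of Laurent polynomials by \cref{prop:colored-HOMFLY-integrality}. Evaluating at $A=q'^{\,m}=q^{-m}$ gives
\[
 \HH_r(K;q^{-m},q)=\HH_{(1^r)}(K;q'^{\,m},q').
\]
The same identity with $r$ replaced by $m-r$ gives $\HH_{m-r}(K;q^{-m},q)=\HH_{(1^{m-r})}(K;q'^{\,m},q')$. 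It therefore suffices to prove
\[
 \HH_{(1^r)}(K;q^m,q)=\HH_{(1^{m-r})}(K;q^m,q)
\]
as an identity in the indeterminate $q$, and then to rename $q'$ as $q$.

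For $m\ge2$, \cref{prop:all-partition-normalization} identifies both sides with the central scalars $c_K(\lambda)$ of the $\mathfrak{gl}_m$ universal knot element on $V((1^r))$ and $V((1^{m-r}))$; both partitions have at most $m$ parts. By \cref{prop:knot-element-automorphism-invariance}, $J_K$ is fixed by $\vartheta$ and by every $\delta_c$. Twisting a module $V(\lambda)$ by $\vartheta$ gives the module of highest weight $-w_0\lambda=(-\lambda_m,\ldots,-\lambda_1)$, because $\vartheta$ exchanges $E_i\leftrightarrow E_{n-i}$ and sends $K_j\mapsto K_{n+1-j}^{-1}$. Twisting by $\delta_c$ shifts every weight coordinate by $c$; on the Harish--Chandra side these are exactly the substitutions $\mathsf D_{n-1}\mathsf I$ and $\mathsf D_c$ of \cref{lem:HC-equivariance}. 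Since a central element acts on a twisted module by the twisted scalar, we obtain $c_K(\lambda)=c_K(-w_0\lambda+c(1,\ldots,1))$ for every $c\in\Z$. Taking $\lambda=(1^r)$ gives $-w_0\lambda=(0^{m-r},(-1)^r)$, and adding $c=1$ produces $(1^{m-r},0^r)=(1^{m-r})$. This proves the claim for $m\ge2$, including the extreme cases $r=0$ and $r=m$. For $m=1$ the only nontrivial case is $\{r,m-r\}=\{0,1\}$. Here $\HH_0=1$, and \eqref{eq:four-fundamental-specializations} gives $\HH_1(K;q^{-1},q)=1$.

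For \eqref{eq:rank-one-trivial}, I would argue directly in the skein. At $A=q$ the assignment $L\mapsto1$ satisfies \eqref{eq:HOMFLY-skein}, as in the proof of \cref{lem:fundamental-factorization}. Hence the specialized HOMFLY--PT evaluation of any satellite of $K$ equals the evaluation of the same pattern placed on the unknot: both are computed from the same skein element, and every link evaluates to $1$. Because $K$ is zero-framed, no framing correction separates the two. The symmetrizer coefficients are regular at $A=q$, and the unknot value is the quantum dimension $\prod_{i=0}^{r-1}\{i+1;A\}/\{i+1\}$ evaluated at $A=q$, which equals $1\neq0$. The reduced quotient is therefore $1$. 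To make this step rigorous, specialize the unreduced Laurent identity first and divide only afterwards; this is justified by \cref{prop:colored-HOMFLY-integrality}.

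I expect the main obstacle to be the identification of the $\vartheta$-twist of $V((1^r))$ with the claimed highest-weight module in the conventions \eqref{eq:Hopf}--\eqref{eq:HL-R-convention}, together with checking that the scalar $c_K$ is transported correctly. The fallback is to compute with the Harish--Chandra images. The $\rho$-shifted evaluation point of $(1^r)$ is sent by $\mathsf D_1^*(\mathsf D_{m-1}\mathsf I)^*$ to the evaluation point of $(1^{m-r})$, and \cref{prop:completed-HC-reflection} then gives the equality of scalars via \eqref{eq:BG-eigenvalue-exact}.
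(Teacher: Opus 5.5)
Your argument for \eqref{eq:finite-negative-rank-reflection} is essentially the paper's. You transpose to column colors at parameter $q^{-1}$ and then let the diagram automorphism exchange $\bigwedge^r$ and $\bigwedge^{m-r}$. The case $m=1$ is handled exactly as in the paper.

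The differences are in packaging.
\begin{itemize}
\item The paper passes to $U_{\bar q^2}(\mathfrak{sl}_m)$ via \cref{lem:BG-gl-sl-interface}. There ${}^{\vartheta}V_r\cong V_{m-r}$ holds directly, and $r=0,m$ are both the trivial module. You stay in $\mathfrak{gl}_m$ and compose $\vartheta$ with $\delta_1$. This treats the endpoints uniformly but uses the same input, since $\delta_c$-invariance is itself proved from the $\mathfrak{gl}/\mathfrak{sl}$ comparison.
\item You reduce first to an identity in the indeterminate $q$. This is slightly cleaner than the paper's detour through parameter $\bar q$ and the embedding with $h\mapsto-h$.
\end{itemize}

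Your identification of ${}^{\vartheta}V(\lambda)$ as having highest weight $-w_0\lambda$ is correct in these conventions. The $E_i$ are only permuted, so the highest-weight vector is preserved. To transport the scalar, however, use the $h$-adic identities $\vartheta_h(J_{K,h})=J_{K,h}$ and $\delta_{c,h}(J_{K,h})=J_{K,h}$ from the proof of \cref{prop:knot-element-automorphism-invariance}, acting on $V_h(\mu)$ for arbitrary dominant integral $\mu$. The interpolation-sector statement by itself only controls central characters on polynomial modules, and ${}^{\vartheta}V((1^r))$ is not one.

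Your proof of \eqref{eq:rank-one-trivial} takes a genuinely different route.
\begin{itemize}
\item The paper identifies the color-$r$ invariant at $A=q$ with a one-dimensional $U_{q^2}(\mathfrak{gl}_1)$ module. It then notes that only the framing-dependent abelian factor survives, which is trivial at zero framing. This is quick, but it tacitly uses the rank specialization at $N=1$, whereas \cref{prop:all-partition-normalization} is stated only for $N\ge2$.
\item Your skein argument needs three facts: $L\mapsto1$ solves the specialized skein relation (as in \cref{lem:fundamental-factorization}); the Hecke idempotent has $A$-independent coefficients; and a zero-framed companion preserves the framings and linking numbers of the pattern components. The last fact gives each satellite of $K$ the same writhe, and hence the same framed value at $A=q$, as the corresponding satellite of the unknot.
\end{itemize}
Your route is more elementary and stays inside the HOMFLY--PT definition.

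There is one slip. The unknot value of the $r$th symmetric color is $\prod_{i=0}^{r-1}\{i;A\}/\{i+1\}$, which equals $\qbinom{N+r-1}{r}$ at $A=q^N$ and hence $1$ at $A=q$. The product you wrote, with $\{i+1;A\}$, evaluates to $[r+1]_q$ there. Only nonvanishing of the unknot value is used, so your conclusion is unaffected.
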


Here $A=q^{-m}$ is an algebraic specialization of the reduced
HOMFLY--PT polynomial, not the definition of a negative-rank
quantum-group invariant.

\begin{proof}
If $m=1$, then $r\in\{0,1\}$.  The color-zero invariant is $1$, and
\cref{lem:fundamental-factorization} at $A=q^{-1}$ gives
$\HH_1(K;q^{-1},q)=1$.  Thus
\eqref{eq:finite-negative-rank-reflection} holds for $m=1$.  Assume
$m\ge2$ and set $\bar q=q^{-1}$.

By \eqref{eq:reduced-transpose},
\begin{equation}\label{eq:negative-rank-to-column}
 \HH_{[r]}(K;q^{-m},q)
 =\HH_{[1^r]}(K;\bar q^m,\bar q).
\end{equation}
By \cref{prop:all-partition-normalization}, applied with quantum parameter
$\bar q$, followed by \cref{lem:BG-gl-sl-interface}, the right-hand side
is the reduced $U_{\bar q^2}(\mathfrak{sl}_m)$ invariant colored by
$V_r=\bigwedge^rV$.

The endpoint representations $V_0$ and $V_m$ restrict to the trivial
$U_{\bar q^2}(\mathfrak{sl}_m)$ module, so the cases $r=0,m$ are equal.
Now assume $1\le r\le m-1$.  Let $\vartheta_{m,\bar q}$ be the diagram
automorphism of \cref{lem:vartheta-ribbon} with $n=m$ and quantum
parameter $\bar q$.  The pullback module ${}^{\vartheta}V_r$ has highest
weight $\omega_{m-r}$ and is therefore isomorphic to $V_{m-r}$.  Choose an
intertwining isomorphism
\[
 \phi:{}^{\vartheta}V_r\xrightarrow{\ \sim\ }V_{m-r}.
\]
Functoriality of the bottom-tangle construction in \textup{(DJ4)} and
\cref{lem:vartheta-ribbon} give
$\vartheta_{m,\bar q}(J_{K,h})=J_{K,h}$.  Writing $\rho_s$ for the action
on $V_s$ and using
$\rho_{{}^{\vartheta}V_r}(x)=\rho_r(\vartheta_{m,\bar q}(x))$, we obtain
\[
 \begin{aligned}
 \rho_r(J_{K,h})
 &=\rho_r\!\left(\vartheta_{m,\bar q}(J_{K,h})\right)\\
 &=\rho_{{}^{\vartheta}V_r}(J_{K,h})\\
 &=\phi^{-1}\rho_{m-r}(J_{K,h})\phi.
 \end{aligned}
\]
The universal knot element is central, so both endomorphisms are scalar.
Their scalar eigenvalues, and therefore their reduced quantum-trace
quotients, are equal.

Both reduced invariants belong to $\Z[\bar q^{\pm1}]$ by
\cref{prop:colored-HOMFLY-integrality}.  Applying the injective coefficient
embedding \eqref{eq:q-hadic-coefficient-embedding} with $h$ replaced by
$-h$ shows that equality in the $h$-adic realization is already equality
of generic Laurent polynomials.  Thus
\[
 \HH_{[1^r]}(K;\bar q^m,\bar q)
 =\HH_{[1^{m-r}]}(K;\bar q^m,\bar q).
\]
Applying \eqref{eq:reduced-transpose} again proves
\eqref{eq:finite-negative-rank-reflection}.

Finally, at $A=q$ the $r$th symmetric color is the one-dimensional
$U_{q^2}(\mathfrak{gl}_1)$ module of weight $r$.  The root part of the
universal $R$-matrix is trivial in rank one, while the abelian Cartan
contribution to a one-component framed link depends only on its framing.
For a zero-framed knot it is $1$, and division by the corresponding unknot
scalar gives \eqref{eq:rank-one-trivial}.
\end{proof}

\subsection{Laurent differential coefficients for arbitrary knots}

For $0\le k\le r$, define
\begin{equation}\label{eq:rank-uniform-Z}
 Z_{r,k}(A,q)=
 \qbinom{r}{k}\{r+k-1;A\}_k\{-1;A\}.
\end{equation}

\begin{theorem}[General Laurent differential expansion]
\label{thm:general-rank-uniform-DE}
For every zero-framed knot $K$, there is a unique sequence
\[
 G_k(K;A,q)\in\Z[A^{\pm1},q^{\pm1}],\qquad k\ge1,
\]
such that
\begin{equation}\label{eq:general-rank-uniform-DE}
 \HH_r(K;A,q)
 =1+\sum_{k=1}^{r}Z_{r,k}(A,q)G_k(K;A,q)
 \qquad(r\ge0).
\end{equation}
It satisfies
\begin{equation}\label{eq:general-G-parity}
 G_k(K;-A,q)=(-1)^{k+1}G_k(K;A,q).
\end{equation}
\end{theorem}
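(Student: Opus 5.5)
Since $Z_{r,r}(A,q)=\{2r-1;A\}_r\{-1;A\}$ is nonzero and $Z_{r,k}=0$ is not an issue for $k\le r$, the system \eqref{eq:general-rank-uniform-DE} is triangular in $r$. I therefore define the $G_k$ recursively over $\Q(q)(A)$ by
\[
 G_r=\frac{\HH_r-1-\sum_{k=1}^{r-1}Z_{r,k}G_k}{Z_{r,r}},
\]
and uniqueness is immediate. The whole content is integrality. I will prove by induction on $r$ that $G_r\in\Z[A^{\pm1},q^{\pm1}]$, assuming $G_1,\dots,G_{r-1}$ are already Laurent. The numerator $N_r=\HH_r-1-\sum_{k<r}Z_{r,k}G_k$ is Laurent by \cref{prop:colored-HOMFLY-integrality}, and the $q$-binomials lie in $R$. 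The denominator factors as
\[
 Z_{r,r}=A^{-r-1}\prod_{\nu=0}^{r}\left(A^2-q^{-2(r-1-\nu)}\right)\cdot(\text{unit})
\]
in the variable $A$. Its zeros are $A=\pm q^{m}$ with $m\in\{-(r-1),\dots,0,\dots,r-1\}$ stepping by $2$, together with the $\{-1;A\}$ zeros $A=\pm q^{-1}$. Let me recheck this. The factors $\{r+k-1-\nu;A\}$ for $k=r$ and $\nu=0,\dots,r-1$ vanish at $A^2=q^{-2(2r-1-\nu)}$, that is at exponents $-(2r-1),\dots,-r$. Adding $\{-1;A\}$ gives the zero $A^2=q^{2}$. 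These $r+1$ values of $A^2$ are distinct monic factors in $R[A]$, pairwise coprime over $\Q(q)$, with differences equal to units times quantum integers.

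The strategy is then the one used in \cref{lem:fundamental-factorization}. First, over $\Q(q)$, I show that $N_r$ vanishes at each root $A=\pm q^{-j}$ ($r\le j\le 2r-1$) and at $A=\pm q$. Second, I do the division in $R[A]$ by the monic product $\prod(A^2-q^{-2j})(A^2-q^2)$, so that only the vanishing of the remainder is needed and no cyclotomic denominators appear. The sign symmetry \eqref{eq:reduced-A-even}, together with the parity of $Z_{r,k}$ in $A$ (it has $k+1$ odd factors), gives \eqref{eq:general-G-parity} inductively. This reduces the $\pm$ roots to one sign.

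The vanishing is the crux.

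At $A=q$, \eqref{eq:rank-one-trivial} gives $\HH_r=1$. Every $Z_{r,k}$ contains $\{-1;q\}=0$, so $N_r(q)=0$.

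At $A=q^{-j}$ with $r\le j\le 2r-1$, I use the finite negative-rank reflection \cref{lem:finite-negative-rank-reflection} with $m=j$. The factor $\{r+k-1-\nu;q^{-j}\}$ vanishes when $\nu=r+k-1-j$. This lies in $[0,k-1]$ exactly when $k\ge j-r+1$. So at $A=q^{-j}$ the expansion truncates to $k\le j-r$, giving $\HH_r(q^{-j})=1+\sum_{k\le j-r}Z_{r,k}G_k$. Set $s=j-r<r$. Since $\HH_r(q^{-j})=\HH_{j-r}(q^{-j})=\HH_s(q^{-j})$, and $\HH_s$ is already expanded with the inductively known $G_k$, $k\le s$, I need the identity
\[
 Z_{r,k}(q^{-(r+s)},q)=Z_{s,k}(q^{-(r+s)},q)\qquad(k\le s),
\]
i.e. $\qbinom rk\{r+k-1;q^{-r-s}\}_k=\qbinom sk\{s+k-1;q^{-r-s}\}_k$. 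This is a direct $q$-Pochhammer check: both sides equal, up to the same sign, $\prod_{i<k}\{r-i\}\{s-i\}/\{k\}!$. Granting it, $N_r(q^{-j})=\HH_s-1-\sum_{k\le s}Z_{s,k}G_k=0$ by the level-$s$ equation.

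I expect the main obstacle to be verifying this binomial identity and the exact matching of root sets, including the boundary value $j=2r-1$, where $s=r-1$.
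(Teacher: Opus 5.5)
Your proposal is correct and follows the paper's proof essentially step for step. It uses the same color induction and shows the numerator vanishes at $A=q$ via \eqref{eq:rank-one-trivial} and at $A=q^{-m}$, $r\le m\le 2r-1$, via \cref{lem:finite-negative-rank-reflection} together with the kernel-collision identity, with $A$-parity supplying the negative roots. The only differences are cosmetic: you finish by monic division in $\Z[q^{\pm1}][A]$, as in \cref{lem:fundamental-factorization}, rather than by the paper's divisibility by the pairwise nonassociate primes $A\pm q^{-m}$, $A\pm q$ in $\Z[A^{\pm1},q^{\pm1}]$. Also, your first factorization of $Z_{r,r}$ should be discarded in favor of the rechecked one.
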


\begin{proof}
Put $S=\Z[A^{\pm1},q^{\pm1}]$.  By
\cref{prop:colored-HOMFLY-integrality}, every $\HH_r$ belongs to $S$.
We proceed by induction on the color.  Suppose that
$G_1,\ldots,G_{k-1}$ have been constructed and satisfy
\eqref{eq:general-G-parity}, and put
\begin{equation}\label{eq:rank-uniform-E}
 E_k(A,q)=\HH_k(K;A,q)-1-
 \sum_{j=1}^{k-1}Z_{k,j}(A,q)G_j(K;A,q)\in S.
\end{equation}
The diagonal kernel is
\begin{equation}\label{eq:rank-uniform-diagonal}
 Z_{k,k}(A,q)
 =\{-1;A\}\prod_{m=k}^{2k-1}\{m;A\}.
\end{equation}

Each $Z_{r,j}$ contains $j+1$ factors that are odd under
$A\mapsto-A$.  Consequently,
\[
 Z_{r,j}(-A,q)=(-1)^{j+1}Z_{r,j}(A,q).
\]
The induction hypothesis and \eqref{eq:reduced-A-even} imply that
\begin{equation}\label{eq:rank-uniform-E-even}
 E_k(-A,q)=E_k(A,q).
\end{equation}

Fix $m\in\{k,k+1,\ldots,2k-1\}$ and set $s=m-k$, so that
$0\le s<k$.  At $A=q^{-m}$, the finite reflection gives
$\HH_k=\HH_s$.  If $j\le s$, then
\begin{align}
 \frac{Z_{k,j}(q^{-m},q)}{\{-1;q^{-m}\}}
 &=\qbinom{k}{j}\prod_{i=0}^{j-1}\{i-s\}\notag\\
 &=(-1)^j
 \frac{\{k\}!\{s\}!}
 {\{j\}!\{k-j\}!\{s-j\}!}
 =\frac{Z_{s,j}(q^{-m},q)}{\{-1;q^{-m}\}}.
 \label{eq:rank-uniform-kernel-collision}
\end{align}
If $s<j<k$, the product defining $Z_{k,j}(q^{-m},q)$ contains
$\{0\}$ and is zero.  Substitution in
\eqref{eq:rank-uniform-E}, followed by the already constructed
color-$s$ expansion, yields
\[
 E_k(q^{-m},q)=0.
\]
Equation \eqref{eq:rank-uniform-E-even} gives the second zero
$E_k(-q^{-m},q)=0$.  After clearing a Laurent power of $A$, the
factor theorem therefore gives
\begin{equation}\label{eq:rank-uniform-m-factor}
 \{m;A\}\mid E_k(A,q)
 \qquad(k\le m\le2k-1).
\end{equation}

At $A=q$, every lower-order term in
\eqref{eq:rank-uniform-E} contains $\{-1;q\}=0$, while
\eqref{eq:rank-one-trivial} gives $\HH_k(K;q,q)=1$.  Thus
$E_k(q,q)=0$, and evenness also gives $E_k(-q,q)=0$.  Hence
\begin{equation}\label{eq:rank-uniform-minus-one-factor}
 \{-1;A\}\mid E_k(A,q).
\end{equation}

The ring $S$ is a UFD, and
\[
 \{m;A\}=A^{-1}q^m(A-q^{-m})(A+q^{-m}),
 \qquad
 \{-1;A\}=A^{-1}q^{-1}(A-q)(A+q).
\]
The relevant linear factors are $A\pm q^{-m}$ for
$k\le m\le2k-1$, together with $A\pm q$.  Each is prime in $S$: the
corresponding quotient identifies with the domain $\Z[q^{\pm1}]$ under
the appropriate evaluation of $A$.  These prime factors are pairwise
nonassociate.  It follows successively that their product divides $E_k$;
using \eqref{eq:rank-uniform-diagonal}, we obtain
$Z_{k,k}\mid E_k$ in $S$.  Define
\[
 G_k=E_k/Z_{k,k}\in S.
\]
The parities of the numerator and denominator give
\eqref{eq:general-G-parity}, closing the induction.  Finally, the
color-$k$ equation is the first equation in which $G_k$ occurs and
$Z_{k,k}\ne0$; triangularity proves uniqueness.
\end{proof}

For $k=1$, the uniqueness in \cref{thm:general-rank-uniform-DE} and
\eqref{eq:fundamental-factorization} give
\begin{equation}\label{eq:F-equals-G1}
 G_1(K;A,q)=F_K(A,q).
\end{equation}

\subsection{The integer-valued coefficient ring}

The symmetric Gaussian identity
\[
 \qbinom{r}{k}
 =\frac{\prod_{i=0}^{k-1}\{r-i\}}{\{k\}!}
\]
and the specialization $\{r+i;q^n\}=\{n+r+i\}$ compare
\eqref{eq:general-rank-uniform-DE} with
\eqref{eq:main-expansion}.  Uniqueness of the fixed-rank Newton
coefficients gives
\begin{equation}\label{eq:rank-uniform-H-comparison}
 \{k\}! H_k^{(n)}(K;q)
 =\{n-1\}G_k(K;q^n,q)
 \qquad(k\ge1).
\end{equation}

Recall the geometric integer-valued Laurent ring
\begin{equation}\label{eq:integer-valued-ring}
 \mathscr I_q^{\ge2}
 =\left\{f(A)\in\Bbbk[A^{\pm1}]:
 f(q^n)\in R\ \text{for every }n\ge2\right\}.
\end{equation}

\begin{corollary}
\label{cor:rank-uniform-Newton-coefficients}
For every zero-framed knot $K$, set
\[
 \mathsf H_0(K;A,q)=1,\qquad
 \mathsf H_k(K;A,q)
 =\frac{\{-1;A\}G_k(K;A,q)}{\{k\}!}\quad(k\ge1).
\]
Then $\mathsf H_k(K;A,q)\in\mathscr I_q^{\ge2}$ and
\[
 \mathsf H_k(K;q^n,q)=H_k^{(n)}(K;q)
 \qquad(n\ge2).
\]
For each $k\ge0$, the element $\mathsf H_k(K;A,q)$ is the unique
element of $\Bbbk[A^{\pm1}]$ with these specializations, and
\begin{equation}\label{eq:rank-uniform-Newton-parity}
 \mathsf H_k(K;-A,q)=(-1)^k\mathsf H_k(K;A,q).
\end{equation}
\end{corollary}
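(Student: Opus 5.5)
The plan is to read off all assertions from the comparison identity \eqref{eq:rank-uniform-H-comparison} together with \cref{thm:main} and the parity statement \eqref{eq:general-G-parity}. The case $k=0$ is immediate: $\mathsf H_0=1$ lies in $\mathscr I_q^{\ge2}$, and $H_0^{(n)}=1$ by \cref{thm:main}. From now on take $k\ge1$.

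First, $\mathsf H_k$ belongs to $\Bbbk[A^{\pm1}]$. Indeed, $\{-1;A\}G_k\in\Z[A^{\pm1},q^{\pm1}]$, and $\{k\}!$ is a nonzero element of $\Bbbk$.

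Next, specialize at $A=q^n$. We have $\{-1;q^n\}=q^{n-1}-q^{1-n}=\{n-1\}$. By \eqref{eq:rank-uniform-H-comparison},
\[
 \mathsf H_k(K;q^n,q)=\frac{\{n-1\}G_k(K;q^n,q)}{\{k\}!}=H_k^{(n)}(K;q).
\]
This lies in $R$ by \cref{thm:main}, so $\mathsf H_k\in\mathscr I_q^{\ge2}$.

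The only point needing care is the derivation of \eqref{eq:rank-uniform-H-comparison} itself, which I would verify along the way. Using the symmetric Gaussian identity and $\{r+k-1-\nu;q^n\}=\{n+r+k-1-\nu\}$ for $0\le\nu\le k-1$, the factor $\{r+k-1;A\}_k$ at $A=q^n$ is the product $\prod_{i=0}^{k-1}\{r+n+i\}$ with the indices reordered. Hence
\[
 Z_{r,k}(q^n,q)=C^{(n)}_{r+1,k}\,\frac{\{n-1\}}{\{k\}!}.
\]
Substituting this into \eqref{eq:general-rank-uniform-DE} at $A=q^n$, and using $\HH_r(K;q^n,q)=J_r^{SU(n)}$, gives a Newton expansion with coefficients $\{n-1\}G_k(q^n,q)/\{k\}!$. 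Uniqueness in \cref{prop:newton-inversion} then identifies these coefficients with $H_k^{(n)}$.

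For uniqueness of $\mathsf H_k$, suppose $f\in\Bbbk[A^{\pm1}]$ has the same values at all $A=q^n$ with $n\ge2$. Then $f-\mathsf H_k$ is a Laurent polynomial over the field $\Bbbk$ with infinitely many distinct zeros $q^n$. After multiplying by a power of $A$ it becomes a polynomial with infinitely many roots, so it is zero.

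For the parity \eqref{eq:rank-uniform-Newton-parity}, note that $\{-1;-A\}=-\{-1;A\}$. Combined with $G_k(-A)=(-1)^{k+1}G_k(A)$ from \eqref{eq:general-G-parity}, this gives the sign $(-1)^{k+2}=(-1)^k$. For $k=0$ the parity is trivial.

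I expect no real obstacle. The only delicate step is the index bookkeeping in matching $Z_{r,k}(q^n,q)$ with the kernel $C^{(n)}_{r+1,k}$, together with the check that the factor $\{n-1\}$ comes out correctly.
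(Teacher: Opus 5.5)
Your proposal is correct and follows the paper's proof. The specialization and integer-valuedness come from \eqref{eq:rank-uniform-H-comparison} with \cref{thm:main}, and uniqueness from the infinitely many distinct nodes $q^n$. Parity comes from \eqref{eq:general-G-parity} together with $\{-1;-A\}=-\{-1;A\}$, and your check that $Z_{r,k}(q^n,q)=C^{(n)}_{r+1,k}\{n-1\}/\{k\}!$ is the same computation the paper gives just before the corollary.
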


\begin{proof}
Equation \eqref{eq:rank-uniform-H-comparison} and \cref{thm:main} give the
specialization statement and the integer-valued property.  If two Laurent
polynomials in $A$ over $\Bbbk$ have the same values at all $A=q^n$, their
difference, after multiplication by a power of $A$, is a polynomial over
$\Bbbk$ with infinitely many distinct zeros.  It is therefore zero.
Equation
\eqref{eq:rank-uniform-Newton-parity} follows from
\eqref{eq:general-G-parity}.  Together with
\cref{thm:general-rank-uniform-DE}, this proves
\cref{thm:rank-uniform-intro}.
\end{proof}

The basis below is a geometric-node analogue of the $q$-binomial bases in
quantum integer-valued polynomial rings \cite{HH17} and of regular bases
arising from $P$-orderings \cite{Bhargava97}.  Since the present coefficient
ring and node set are different from those settings, we give a direct
argument.  The polynomial part of \eqref{eq:integer-valued-ring} has the
following explicit $R$-basis.  Define
\begin{equation}\label{eq:integer-valued-beta}
 \beta_j(A)=
 \frac{\prod_{i=0}^{j-1}(1-Aq^{-2-i})}{(q;q)_j},
 \qquad \beta_0=1.
\end{equation}

\begin{theorem}[Integer-valued Newton basis]
\label{thm:integer-valued-Newton-basis}
One has
\begin{align}
 \{f\in\Bbbk[A]:f(q^n)\in R\ (n\ge2)\}
 &=\bigoplus_{j\ge0}R\,\beta_j(A),
 \label{eq:integer-valued-polynomial-basis}\\
 \mathscr I_q^{\ge2}
 &=\bigcup_{M\ge0}A^{-M}
 \left(\bigoplus_{j\ge0}R\,\beta_j(A)\right).
 \label{eq:integer-valued-Laurent-basis}
\end{align}
\end{theorem}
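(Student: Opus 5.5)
Since $\beta_j$ has degree exactly $j$ in $A$, the $\beta_j$ form a $\Bbbk$-basis of $\Bbbk[A]$. Every $f\in\Bbbk[A]$ therefore has a unique expansion $f=\sum_j c_j\beta_j$ with $c_j\in\Bbbk$. The plan is to prove two things. First, each $\beta_j$ is integer-valued at the nodes $A=q^n$, $n\ge2$. Second, integer-valuedness of $f$ forces every $c_j\in R$. The Laurent statement \eqref{eq:integer-valued-Laurent-basis} then follows formally: if $f\in\mathscr I_q^{\ge2}$, choose $M$ with $A^Mf\in\Bbbk[A]$. Since $q^{nM}$ is a unit of $R$, the polynomial $A^Mf$ is again integer-valued at every node. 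The reverse inclusion holds for the same reason.

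For the first point, evaluate at a node $A=q^{n}$ with $n\ge2$ and write $n-2=t\ge0$. Then
\[
 \beta_j(q^{t+2})=\frac{\prod_{i=0}^{j-1}(1-q^{t-i})}{(q;q)_j}.
\]
If $t<j$, the factor with $i=t$ vanishes. If $t\ge j$, the numerator is $\prod_{a=t-j+1}^{t}(1-q^{a})=(q;q)_t/(q;q)_{t-j}$, so $\beta_j(q^{t+2})$ is the ordinary Gaussian coefficient $\left[\begin{smallmatrix}t\\ j\end{smallmatrix}\right]^{\mathrm{ord}}_q\in\Z[q]\subset R$. (This is \eqref{eq:symmetric-ordinary-Gaussian} with $q$ in place of $Q$.) So $\bigoplus_j R\beta_j$ lies in the left side of \eqref{eq:integer-valued-polynomial-basis}.

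For the converse, use Newton interpolation in the variable $A$ with nodes $x_t=q^{t+2}$, $t=0,1,2,\dots$, as in \cref{prop:newton-inversion}. The Newton polynomial is
\[
 p_j(A)=\prod_{i=0}^{j-1}(A-q^{i+2}).
\]
It equals $\beta_j(A)$ times the element $(-1)^jq^{\sum_{i<j}(i+2)}(q;q)_j$, which is $(q;q)_j$ up to a unit of $R$. The evaluation matrix $(\beta_j(x_t))_{t,j}$ is therefore lower unitriangular:
\begin{itemize}
\item below the diagonal the entries are the Gaussian coefficients computed above, lying in $R$;
\item on the diagonal $\beta_t(x_t)=\left[\begin{smallmatrix}t\\ t\end{smallmatrix}\right]=1$;
\item above the diagonal ($j>t$) the entries are zero.
\end{itemize}
Now solve $f(x_t)=\sum_{j\le t}c_j\beta_j(x_t)$ recursively:
\[
 c_t=f(x_t)-\sum_{j<t}c_j\beta_j(x_t).
\]
Induction on $t$ gives $c_t\in R$ whenever all $f(x_t)\in R$. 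Finally, $\deg f<\infty$ makes the sum finite: since the $\beta_j$ form a $\Bbbk$-basis, $c_j=0$ for $j>\deg f$. So $f\in\bigoplus_j R\beta_j$, and directness of the sum is just $\Bbbk$-linear independence.

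The only delicate step is the node evaluation of $\beta_j$, specifically the index bookkeeping. The shift $q^{-2}$ in \eqref{eq:integer-valued-beta} has to match the first node $n=2$, and one must check that at $t=j$ the numerator exactly reproduces $(q;q)_j$, so the diagonal entry is $1$ rather than a nonunit. Once the matrix is unitriangular over $R$, the rest of the argument is automatic.
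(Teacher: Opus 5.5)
Your proposal is correct and follows essentially the same route as the paper. Both evaluate $\beta_j$ at $A=q^{t+2}$ to get the ordinary Gaussian coefficient, which is zero for $j>t$ and equal to $1$ on the diagonal. Both then use the resulting unitriangular evaluation matrix over $R$ to recover integral coefficients by successive substitution, and clear a power of $A$ (a unit at every node) for the Laurent statement. Your remark identifying $\beta_j$ with the Newton polynomial up to $(q;q)_j$ and a unit is correct but not needed for the argument.
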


\begin{proof}
At the node $A=q^{n+2}$,
\[
 \beta_j(q^{n+2})=
 \begin{cases}
 \displaystyle\frac{(1-q^n)(1-q^{n-1})\cdots
 (1-q^{n-j+1})}{(1-q)(1-q^2)\cdots(1-q^j)},&j\le n,\\[6pt]
 0,&j>n.
 \end{cases}
\]
This is the ordinary Gaussian coefficient
$\genfrac{[}{]}{0pt}{}{n}{j}_q$, and the diagonal value at $j=n$ is
$1$.  Since $\deg_A\beta_j=j$, the polynomials $\beta_0,\ldots,\beta_d$
form a $\Bbbk$-basis of the degree-$\le d$ polynomials.  Evaluation at
$A=q^2,q^3,\ldots,q^{d+2}$ gives a lower triangular matrix with diagonal
entries $1$.  Coefficients in $R$ therefore give node values in $R$, and
successive substitution in the opposite direction recovers every
coefficient in $R$ from integral node values.  This proves
\eqref{eq:integer-valued-polynomial-basis}.  Multiplying a Laurent
polynomial by a sufficiently large power of $A$ gives
\eqref{eq:integer-valued-Laurent-basis}, because every $q^n$ is a unit of
$R$.
\end{proof}

\subsection{The two-variable rank-uniform Newton element}

Define the universal color nodes
\begin{equation}\label{eq:two-variable-nodes}
 X_r(A,q)=Aq^{2r}+A^{-1}q^{-2r},\qquad r\ge0.
\end{equation}
They satisfy
\begin{equation}\label{eq:two-variable-node-difference}
 X_r(A,q)-X_i(A,q)=\{r-i\}\{r+i;A\}.
\end{equation}
Put
\begin{equation}\label{eq:two-variable-Newton-kernel}
 B_k(X;A,q)=\prod_{i=0}^{k-1}(X-X_i(A,q)),
 \qquad B_0=1,
\end{equation}
and define
\begin{equation}\label{eq:two-variable-completion}
 \widehat{\mathscr C}_{A,q}
 =\varprojlim_{m\ge1}
 \mathscr I_q^{\ge2}[X]/(B_m).
\end{equation}
The topology here is the Newton topology defined by the descending
ideals $(B_m)$.  Root-of-unity evaluation is performed only after fixing a
positive rank and a color: first set
\[
 A=q^n,\qquad X=X_r(q^n,q).
\]
The Newton series then truncates to a Laurent polynomial in $q$, which may
be evaluated at $q=\zeta$.

\begin{theorem}[Rank-uniform Newton element]
\label{thm:two-variable-unified-element}
For every zero-framed knot $K$, the series
\begin{equation}\label{eq:two-variable-unified-element}
 \mathscr H_K(X,A,q)=
 \sum_{k=0}^{\infty}
 B_k(X;A,q)\mathsf H_k(K;A,q)
\end{equation}
converges in $\widehat{\mathscr C}_{A,q}$ and has the following
properties.
\begin{enumerate}[label=\textup{(\roman*)},leftmargin=2.8em]
\item For every $r\ge0$,
\begin{equation}\label{eq:two-variable-color-evaluation}
 \mathscr H_K(X_r(A,q),A,q)=\HH_r(K;A,q).
\end{equation}
\item For every $n\ge2$, the specialization $A=q^n$ gives the
fixed-rank Newton series with coefficients $H_k^{(n)}$.  After the
substitution $X=q^nx^2+q^{-n}x^{-2}$, it is the colored lift
$\mathscr J_K^{(n)}(x,q)$ of
\eqref{eq:colored-lift}.
\item For every $n\ge2$, $r\ge0$, and complex root of unity $\zeta$,
there is a canonical finite evaluation
\begin{equation}\label{eq:two-variable-root-evaluation}
 \operatorname{ev}_{n,r,\zeta}(\mathscr H_K)
 =J_r^{SU(n)}(K;\zeta),
\end{equation}
where the right side means evaluation of the generic Laurent polynomial,
and $\operatorname{ev}_{n,r,\zeta}$ means first specializing $A=q^n$,
then $X=X_r(q^n,q)$, and finally $q=\zeta$.
\item The Newton coefficients in
\eqref{eq:two-variable-unified-element} are unique, and the full
family of color evaluations determines the element.
\end{enumerate}
\end{theorem}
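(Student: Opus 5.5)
Convergence is immediate from the shape of the ideals. By \cref{cor:rank-uniform-Newton-coefficients}, each $\mathsf H_k(K;A,q)$ lies in $\mathscr I_q^{\ge2}$. The node $X_i(A,q)$ lies in $\Z[A^{\pm1},q^{\pm1}]$; since $\mathscr I_q^{\ge2}$ is a ring containing $A^{\pm1}$ and $R$, this places $B_k$ in $\mathscr I_q^{\ge2}[X]$. Because $B_m\mid B_k$ for $k\ge m$, the series reduces modulo $(B_m)$ to the finite sum over $k<m$. These classes are compatible, so they define an element of $\widehat{\mathscr C}_{A,q}$, exactly as in \cref{prop:colored-Newton-lift}.

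For (i), evaluation $X\mapsto X_r(A,q)$ kills $B_m$ for $m>r$ and is therefore well defined on the completion. Only $k\le r$ survive, and by \eqref{eq:two-variable-node-difference}
\[
 B_k(X_r;A,q)\mathsf H_k=\prod_{i=0}^{k-1}\{r-i\}\{r+i;A\}\cdot\frac{\{-1;A\}G_k}{\{k\}!}
 =\qbinom{r}{k}\{r+k-1;A\}_k\{-1;A\}G_k=Z_{r,k}G_k .
\]
Here the symmetric Gaussian identity is used, and $\{r+i;A\}$ for $0\le i<k$ is reindexed as $\{r+k-1-\nu;A\}$. Then \eqref{eq:general-rank-uniform-DE} gives $\HH_r$.

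For (ii), specializing $A=q^n$ turns $\mathscr I_q^{\ge2}$ into $R$, so it defines a ring map $\mathscr I_q^{\ge2}[X]\to R[X]$. This map sends $B_m$ to the fixed-rank Newton kernel with nodes $X_i^{(n)}=X_i(q^n,q)$, so it is continuous on the completions. The coefficients become $H_k^{(n)}$ by the corollary. Substituting $X=q^nx^2+q^{-n}x^{-2}$ factors each $X-X_i(q^n,q)$ as $(xq^{-i}-x^{-1}q^{i})(xq^{n+i}-x^{-1}q^{-n-i})$ by the computation of \cref{lem:node-factorization}. This carries $B_k$ to $P_k^{(n)}$, and so sends the series termwise to \eqref{eq:colored-lift}; continuity again holds because ideals map into ideals.

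For (iii), compose (ii) with the evaluation at $X_r$. The result is the finite Laurent polynomial $J_r^{SU(n)}(K;q)$ by \cref{thm:main}, and we then evaluate at $\zeta$. Canonicity holds because each step is a ring map, applied in the stated order, that truncates before $q=\zeta$.

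For (iv), the main issue is that $\widehat{\mathscr C}_{A,q}$ need not be separated a priori. First, each $B_m$ is monic in $X$ over $\mathscr I_q^{\ge2}$. Hence $\mathscr I_q^{\ge2}[X]/(B_m)$ is free with basis $B_0,\dots,B_{m-1}$; this is the Newton basis, by triangularity against monomials. Consequently every element of the completion has a unique expansion $\sum_k B_k c_k$ with $c_k\in\mathscr I_q^{\ge2}$, obtained from compatible truncations, which gives uniqueness of coefficients. To show the color evaluations determine the element, suppose $\sum_k B_k c_k$ vanishes at every $X_r(A,q)$. Evaluation at $X_0$ gives $c_0=0$. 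Inductively, evaluation at $X_r$ gives $B_r(X_r)c_r=0$ with $B_r(X_r)=\prod_{i<r}\{r-i\}\{r+i;A\}\ne0$ in the domain $\Bbbk[A^{\pm1}]\supset\mathscr I_q^{\ge2}$. Hence $c_r=0$, and the element is zero. The main obstacle I expect is making the freeness and the inverse-limit description precise over the non-Noetherian ring $\mathscr I_q^{\ge2}$. Monicity of $B_m$ should suffice, since division by a monic polynomial works over any commutative ring.
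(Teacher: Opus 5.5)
Your proposal is correct and follows the paper's proof essentially step for step. Convergence comes from truncation modulo $(B_m)$. For (i), the node factorization $X_r-X_i=\{r-i\}\{r+i;A\}$ and the symmetric Gaussian identity recover $Z_{r,k}G_k$. For (ii)--(iii), the ring map $A\mapsto q^n$ and the factorization of $X-X_i$ under $X=q^nx^2+q^{-n}x^{-2}$ do the work. For (iv), you use the free Newton basis $B_0,\ldots,B_{m-1}$ of $\mathscr I_q^{\ge2}[X]/(B_m)$ and successive evaluation at $X_0,X_1,\ldots$ in the domain $\Bbbk[A^{\pm1}]$.

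The only cosmetic difference is ordering: the paper factors $B_k$ under $X=Ax^2+A^{-1}x^{-2}$ before setting $A=q^n$, whereas you specialize first. Your worry about separatedness is unnecessary, since an inverse limit is automatically separated and monicity of $B_m$ is all that is needed.
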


\begin{proof}
For $k\ge m$, one has $B_m\mid B_k$, so modulo $(B_m)$ the series has
only its first $m$ terms.  These residues are compatible and prove
convergence.

At $X=X_r$, every term with $k>r$ vanishes.  For $k\le r$,
\eqref{eq:two-variable-node-difference} gives
\[
 B_k(X_r;A,q)
 =\prod_{i=0}^{k-1}\{r-i\}\{r+i;A\}.
\]
Substituting
$\mathsf H_k=\{-1;A\}G_k/\{k\}!$ turns the resulting finite sum into
\eqref{eq:general-rank-uniform-DE}, proving
\eqref{eq:two-variable-color-evaluation}.

By definition of $\mathscr I_q^{\ge2}$, evaluation at $A=q^n$ is a
ring homomorphism to $R$.  It sends the coefficient
$\mathsf H_k$ to $H_k^{(n)}$ and the universal Newton kernel to the
fixed-rank one.  If $X=Ax^2+A^{-1}x^{-2}$, direct factorization gives
\begin{equation}\label{eq:two-variable-x-factorization}
 B_k(X;A,q)=\prod_{i=0}^{k-1}
 (xq^{-i}-x^{-1}q^i)
 (Axq^i-A^{-1}x^{-1}q^{-i}).
\end{equation}
At $A=q^n$, this is $P_k^{(n)}(x,q)$, proving the second assertion.
Setting $x=q^r$ makes the series finite; its value is a Laurent
polynomial in $q$, which can then be evaluated at $q=\zeta$.  This
proves \eqref{eq:two-variable-root-evaluation} without dividing by a
possibly vanishing quantum dimension.

Finally, every $B_k$ is monic of degree $k$ in $X$, so
$B_0,\ldots,B_{m-1}$ form a free
$\mathscr I_q^{\ge2}$-basis of
$\mathscr I_q^{\ge2}[X]/(B_m)$.  Hence every inverse-limit element has
a unique Newton expansion.  If all color evaluations of an expansion
vanish, evaluate successively at $r=0,1,\ldots$.  At step $r$, all
earlier coefficients are already zero and the only new term is
$B_r(X_r)$ times the $r$th coefficient.  The coefficient ring is a
domain and $B_r(X_r)\ne0$, so induction makes every coefficient zero.
\end{proof}

\section{Examples}\label{sec:examples}

For the unknot $U$, one has $J_r^{SU(n)}(U;q)=1$ for every $r\ge0$.
By uniqueness of the Newton expansion,
$H_0^{(n)}(U;q)=1$ and $H_k^{(n)}(U;q)=0$ for $k\ge1$.

\subsection{Sharpness of the rank-uniform coefficient ring}

For the figure-eight knot, the double-twist formula
\cite[Theorem~1.2 and equation~(3.19)]{CLZ21}, compared with
\eqref{eq:general-rank-uniform-DE}, gives
\begin{equation}\label{eq:figure-eight-rank-uniform-coefficient}
 \mathsf H_k(4_1;A,q)=\frac{\{k-2;A\}_k}{\{k\}!}.
\end{equation}
For $k=1$ this is
\[
 \frac{Aq^{-1}-A^{-1}q}{q-q^{-1}},
\]
which does not belong to $\Z[A^{\pm1},q^{\pm1}]$: otherwise its numerator
would be divisible by $q-q^{-1}$, contrary to evaluation at $q=1$.

Put $Y=A^2$ and define $\gamma_0^{4_1}=1$ and,
for $k\ge1$,
\begin{equation}\label{eq:figure-eight-gamma}
 A^kq^{-2k}\mathsf H_k(4_1;A,q)
 =\gamma_k^{4_1}(Y)
 :=\frac{\prod_{m=-1}^{k-2}(1-YQ^m)}{(Q;Q)_k}.
\end{equation}
Then
\begin{equation}\label{eq:figure-eight-gamma-values}
 \gamma_k^{4_1}(Q^n)=\genfrac{[}{]}{0pt}{}{n+k-2}{k}_{Q}
 \qquad(n\ge2).
\end{equation}

\begin{proposition}[Sharp denominator growth]
\label{prop:figure-eight-sharp-denominators}
For every $d\ge0$, the polynomials
$\gamma_0^{4_1},\ldots,\gamma_d^{4_1}$ form an $R$-basis of the $R$-module of
polynomials in $\Bbbk[Y]$ of degree at most $d$ that take values in $R$
at every node
$Y=Q^n$, $n\ge2$.  The leading coefficient of $\gamma_k^{4_1}(Y)$ is
\begin{equation}\label{eq:figure-eight-leading-coefficient}
 \frac{(-1)^kQ^{k(k-3)/2}}{(Q;Q)_k}.
\end{equation}
In particular, it has a pole of order $k$ along $Q=1$, so the
quantum-factorial denominator length is unbounded.
\end{proposition}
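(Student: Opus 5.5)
The plan mirrors the proof of \cref{thm:integer-valued-Newton-basis}: use a triangular evaluation matrix with unit diagonal. Fix $d$. The space of polynomials of degree at most $d$ in $\Bbbk[Y]$ has $\Bbbk$-basis $\gamma_0^{4_1},\ldots,\gamma_d^{4_1}$, since $\gamma_k^{4_1}$ has degree exactly $k$ in $Y$. Its leading coefficient is $\prod_{m=-1}^{k-2}(-Q^m)/(Q;Q)_k$. Because $\sum_{m=-1}^{k-2}m=k(k-3)/2$, this gives \eqref{eq:figure-eight-leading-coefficient} directly.

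The first step is to verify \eqref{eq:figure-eight-gamma-values}. At $Y=Q^n$ the numerator is
\[
\prod_{m=-1}^{k-2}(1-Q^{n+m})=(1-Q^{n-1})\cdots(1-Q^{n+k-2}).
\]
Divided by $(Q;Q)_k$, this is the ordinary Gaussian coefficient $\genfrac{[}{]}{0pt}{}{n+k-2}{k}_Q$, which lies in $\Z[Q]\subset R$. The key observation is the behavior at the nodes $Y=Q^{j+2}$ with $j\ge0$. There the value is $\genfrac{[}{]}{0pt}{}{j+k}{k}_Q$. This is not zero for $k>j$, so I would reorder the nodes rather than expect vanishing.

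Instead I would use the numerator zeros. The numerator vanishes at $Y=Q^{-m}$ for $-1\le m\le k-2$, that is at $Y=Q^{1},Q^0,\ldots,Q^{2-k}$, and none of these are geometric nodes. So triangularity is not literal at $Y=Q^{n}$, and the step I expect to be the main obstacle is finding a unimodular evaluation system.

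The fix I would try first is the Pascal-type structure of the matrix $\bigl(\genfrac{[}{]}{0pt}{}{j+k}{k}_Q\bigr)_{0\le j,k\le d}$ for nodes $n=j+2$. This $Q$-Pascal matrix factors as $L\,U$, where $L=\bigl(\genfrac{[}{]}{0pt}{}{j}{i}_Q Q^{\ast}\bigr)$ is lower unitriangular and $U$ is upper unitriangular, both over $\Z[Q^{\pm1}]$. This follows from the $q$-Vandermonde identity
\[
\genfrac{[}{]}{0pt}{}{j+k}{k}_Q=\sum_i Q^{(j-i)(k-i)}\genfrac{[}{]}{0pt}{}{j}{i}_Q\genfrac{[}{]}{0pt}{}{k}{i}_Q .
\]
Hence the matrix has determinant a power of $Q$, which is a unit of $R$.

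Equivalently, I can change basis between $\gamma_k^{4_1}$ and the $\beta_j$ of \cref{thm:integer-valued-Newton-basis}, rewritten in $Y=A^2$. One checks that $\gamma_k^{4_1}$ and the polynomials $\prod_{i<k}(1-YQ^{-2-i})/(Q;Q)_k$ differ by a unitriangular $R$-matrix, again by $q$-Vandermonde. The second family is an $R$-basis of the integer-valued polynomials by the argument of \cref{thm:integer-valued-Newton-basis}, applied with $Q$ in place of $q$ and nodes $Y=Q^{n}$.

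Given unimodularity, the argument closes as follows. Take $f=\sum c_k\gamma_k^{4_1}$ of degree at most $d$ with $R$-valued node values. Inverting the evaluation matrix on the nodes $n=2,\ldots,d+2$ gives $c_k\in R$. Conversely, $R$-combinations of the $\gamma_k^{4_1}$ take $R$-values by \eqref{eq:figure-eight-gamma-values}. So the $\gamma_k^{4_1}$ form an $R$-basis.

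The pole statement follows from the leading coefficient. The factor $1-Q$ divides each $1-Q^a$ exactly once, so $\val_{\Phi_1}$ of $(Q;Q)_k$ equals $k$, while the numerator $(-1)^kQ^{k(k-3)/2}$ is a unit. This gives a pole of order $k$ along $Q=1$, and hence unbounded denominator length.
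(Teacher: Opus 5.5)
Your proof is correct in substance, but it establishes unimodularity of the evaluation matrix by a different route from the paper.

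The paper never factors the matrix $\bigl(\genfrac{[}{]}{0pt}{}{n+k}{k}_Q\bigr)_{0\le n,k\le d}$. It writes the determinant as the Vandermonde determinant of the nodes $Q^2,\ldots,Q^{d+2}$ times the product of the leading coefficients \eqref{eq:figure-eight-leading-coefficient}. It then notes that $\prod_{i<j}(Q^{j+2}-Q^{i+2})$ and $\prod_{k\le d}(Q;Q)_k$ both equal $\prod_{h=1}^{d}(1-Q^h)^{d+1-h}$ up to a signed power of $Q$. That is a two-line computation, and it also explains the sharpness: the denominators $(Q;Q)_k$ are exactly paid for by the node collisions. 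Your route gives more structure, namely an explicit triangular transition to the $\beta$-type basis of \cref{thm:integer-valued-Newton-basis}.

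One correction is needed. The $q$-Vandermonde form you display, with weight $Q^{(j-i)(k-i)}$, is a true identity, but it does not give a matrix factorization, because the factor $Q^{jk}$ couples the row and column indices. Use instead
\[
 \genfrac{[}{]}{0pt}{}{j+k}{k}_Q
 =\sum_{i}Q^{i^2}\genfrac{[}{]}{0pt}{}{j}{i}_Q\genfrac{[}{]}{0pt}{}{k}{i}_Q .
\]
This gives $P=L\operatorname{diag}(Q^{i^2})L^{T}$ with $L=\bigl(\genfrac{[}{]}{0pt}{}{j}{i}_Q\bigr)$ lower unitriangular, hence $\det P=Q^{\sum_i i^2}$.

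Equivalently, both sides are polynomials in $Y$ agreeing at every node $Y=Q^{n+2}$, so
\[
 \gamma_k^{4_1}=\sum_{i\le k}Q^{i^2}\genfrac{[}{]}{0pt}{}{k}{i}_Q\tilde\beta_i,
 \qquad
 \tilde\beta_i(Y)=\prod_{t<i}(1-YQ^{-2-t})/(Q;Q)_i .
\]
The transition matrix is triangular with unit diagonal entries $Q^{k^2}$, not literally $1$.

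Your preliminary remarks about reordering nodes and about the numerator zeros are not needed and can be dropped.
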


\begin{proof}
Evaluate at $Y_n=Q^{n+2}$ for $0\le n\le d$.  By
\eqref{eq:figure-eight-gamma-values}, the $(n,k)$ entry of the evaluation
matrix is $\genfrac{[}{]}{0pt}{}{n+k}{k}_{Q}$.  Its determinant is the
Vandermonde determinant of the nodes multiplied by the leading
coefficients of the $\gamma_k^{4_1}$:
\[
 \prod_{0\le i<j\le d}(Q^{j+2}-Q^{i+2})
 \prod_{k=0}^{d}\frac{(-1)^kQ^{k(k-3)/2}}{(Q;Q)_k}.
\]
Writing $C=\binom{d+1}{2}$ and
$E=\sum_{0\le i<j\le d}(i+2)$, one has
\[
 \prod_{0\le i<j\le d}(Q^{j+2}-Q^{i+2})
 =(-1)^CQ^E\prod_{h=1}^{d}(1-Q^h)^{d+1-h},
\]
whereas
\[
 \prod_{k=0}^{d}(Q;Q)_k
 =\prod_{h=1}^{d}(1-Q^h)^{d+1-h}.
\]
The products cancel, leaving a signed power of $Q$, which is a unit in
$R$.  Thus the evaluation matrix is invertible over $R$ and the stated
basis property follows.  Reading the coefficient of $Y^k$ in
\eqref{eq:figure-eight-gamma} gives
\eqref{eq:figure-eight-leading-coefficient}.  Since each factor
$1-Q^a$ has a simple zero at $Q=1$, its denominator has order $k$ there.
\end{proof}

\subsection{Explicit fixed-rank coefficients for small twist knots}

We extract the Newton coefficients from the HOMFLY--PT formulas in
\cite[Theorem~1.2 and Section~3]{CLZ21}, using the notation
\eqref{eq:HOMFLY-differential-factor}.  Related rigorous formulas for the
trefoil, the figure-eight knot, and general twist knots are given in
\cite{Kawagoe25}.
The common $k$th kernel in those formulas is
\[
 \qbinom{r}{k}\{r+k-1;A\}_k\{k-2;A\}_k.
\]
For $n\ge2$ and $0\le k\le r$, specializing $A=q^n$ gives
\begin{align*}
 \qbinom{r}{k}
 &=\frac{\prod_{i=0}^{k-1}\{r-i\}}{\{k\}!},\\
 \{r+k-1;q^n\}_k
 &=\prod_{i=0}^{k-1}\{r+n+i\},\\
 \{k-2;q^n\}_k
 &=\prod_{j=0}^{k-1}\{n-1+j\}.
\end{align*}
Consequently,
\begin{align}
 \left.
 \qbinom{r}{k}\{r+k-1;A\}_k\{k-2;A\}_k
 \right|_{A=q^n}
 &=C_{r+1,k}^{(n)}
   \frac{\prod_{j=0}^{k-1}\{n-1+j\}}{\{k\}!}\notag\\
 &=C_{r+1,k}^{(n)}\qbinom{n+k-2}{k}.
 \label{eq:CLZ-kernel-extraction}
\end{align}
The last equality is the factorial definition of the symmetric Gaussian
coefficient after cancelling the common powers of $\{1\}$.  Thus the common
kernel contributes the color-independent factor
$\qbinom{n+k-2}{k}$.

The knot-dependent factors in
\cite[equations~(3.18)--(3.20) and~(3.22)]{CLZ21} are
\begin{align*}
 E_k^{4_1}(A,q)&=1,\\
 E_k^{3_1}(A,q)&=(-1)^kA^{2k}q^{k(k-1)},\\
 E_k^{5_2}(A,q)&=(-1)^kA^{4k}q^{3k(k-1)}
  \sum_{\ell=0}^k A^{-2\ell}q^{-3k\ell+\ell(\ell+2)}
  \qbinom{k}{\ell},\\
 E_k^{6_1}(A,q)&=A^{-2k}q^{-2k(k-1)}
  \sum_{\ell=0}^k A^{2\ell}q^{3k\ell-\ell(\ell+2)}
  \qbinom{k}{\ell}.
\end{align*}
Since the colored formula is the sum of $E_k^K(A,q)$ times the common
kernel, uniqueness of the Newton expansion gives
\[
 H_k^{(n)}(K;q)=E_k^K(q^n,q)\qbinom{n+k-2}{k}.
\]
In the normalization of Chen--Liu--Zhu \cite{CLZ15,CLZ21}, where $3_1$
denotes the left-handed trefoil, the corresponding Newton coefficients are
\begin{align}
 H_k^{(n)}(4_1;q)
 &=\qbinom{n+k-2}{k},
 \label{eq:example-41}\\
 H_k^{(n)}(3_1;q)
 &=(-1)^kq^{k(2n+k-1)}\qbinom{n+k-2}{k}.
 \label{eq:example-31}
\end{align}
For the twist knots $5_2$ and $6_1$, the double-twist formulas in
\cite{CLZ21} give
\begin{align}
 H_k^{(n)}(5_2;q)
 &=(-1)^kq^{4nk+3k(k-1)}\qbinom{n+k-2}{k}
 \sum_{\ell=0}^{k}
 q^{-2n\ell-3k\ell+\ell(\ell+2)}\qbinom{k}{\ell},
 \label{eq:example-52}\\
 H_k^{(n)}(6_1;q)
 &=q^{-2nk-2k(k-1)}\qbinom{n+k-2}{k}
 \sum_{\ell=0}^{k}
 q^{2n\ell+3k\ell-\ell(\ell+2)}\qbinom{k}{\ell}.
 \label{eq:example-61}
\end{align}
Since every Gaussian coefficient appearing above belongs to
$\Z[q^{\pm1}]$ and the inner sums are finite, all four expressions lie in
$\Z[q^{\pm1}]$.

\section{Conclusion and further questions}
\label{sec:outlook}

For each fixed $n\ge2$, completed Harish--Chandra reflection and integral
local descent convert the one-sided interpolation expansion into the
two-sided Chen--Liu--Zhu expansion, proving the conjecture for every
zero-framed knot.  Across all positive ranks, the Laurent differential
coefficients give integer-valued Newton coefficients in
$\mathscr I_q^{\ge2}$ and a single two-variable Newton inverse-limit
element.  Fixing rank and color makes the expansion finite before any
root-of-unity evaluation.  The figure-eight example shows both that the
coefficient ring cannot generally be replaced by
$\Z[A^{\pm1},q^{\pm1}]$ and that the quantum-factorial denominator length
is unbounded.

Two structural questions remain.  First, does
$\widehat{\mathscr C}_{A,q}$ admit a canonical realization as a one-row
stable quotient of the Beliakova--Gorsky interpolation center, with the
basis \eqref{eq:integer-valued-beta} induced by a stable central basis?
Such a result would compare the two constructions of the unified object.
Second, rectangular colors lead to a multivariable Harish--Chandra
quotient with a stratified ramification locus; compare the explicit
conjectural double-twist formulas in \cite{KNTZ20}.  An integral
formal-descent theorem for that quotient would extend the one-row method
developed here.

\appendix
\section{Comparison of the colored normalization conventions}
\label{app:CLZ-normalization}

This appendix proves the convention comparisons used in
\cref{prop:CLZ-normalization,prop:all-partition-normalization}.

\begin{proof}[Proofs of \cref{prop:CLZ-normalization,prop:all-partition-normalization}]
Write $A$ for the variable denoted by $t$ in \cite{CLZ15} and by $a$ in
\cite{CLZ21}.  For a partition $\lambda$, the standard reduced colored
HOMFLY--PT invariant is
\[
 \HH_\lambda(K;A,q)
 =\frac{W_\lambda(K;q,A)}{S_\lambda(q,A)}
 =\frac{W_\lambda(K;q,A)}{W_\lambda(U;q,A)}.
\]
For the one-row partition, Chen--Liu--Zhu write
\[
 P_r^{\mathrm{CLZ}}(K;q,A)
 =\frac{W_{(r)}(K;q,A)}{S_{(r)}(q,A)}
 =\frac{W_{(r)}(K;q,A)}{W_{(r)}(U;q,A)},
 \qquad
 J_{r,\mathrm{CLZ}}^{SU(n)}(K;q)
 =P_r^{\mathrm{CLZ}}(K;q,q^n);
\]
see \cite[(2.2) and (2.4)]{CLZ15} and
\cite[Introduction and (1.1)]{CLZ21}.
For the empty partition, all invariants under comparison equal $1$, so
assume $|\lambda|\ge1$; the one-row conclusion then includes every
$r\ge1$.

\medskip
\noindent\emph{Skein and crossing conventions.}
The comparison between the Hecke and quantum-group constructions is
developed in \cite[Sections~3--4, especially Theorem~4.3 and
Definition~4.4]{LZ10}.  In the variables used there, the substitution
relevant here is
\[
 t_{\mathrm{LZ}}^{1/2}=q^{-1},
 \qquad
 \nu_{\mathrm{LZ}}^{1/2}=A^{-1}.
\]
Lin--Zheng use the skein relation
\[
 \nu_{\mathrm{LZ}}^{-1/2}P_{L_+}
 -\nu_{\mathrm{LZ}}^{1/2}P_{L_-}
 =(t_{\mathrm{LZ}}^{-1/2}-t_{\mathrm{LZ}}^{1/2})P_{L_0}.
\]
Under the displayed substitution it becomes
\[
 A P_{L_+}-A^{-1}P_{L_-}=(q-q^{-1})P_{L_0},
\]
which is precisely the crossing convention in
\cite[Section~2.1]{CLZ15}.  In particular, $L_+$ and $L_-$ retain their
labels, so the crossing and quantum-group parameter conventions agree.
The ordinary polynomial $P$
in \cite{LZ10} is normalized to be $1$ on the unknot, whereas the
framed polynomial $\mathcal H$ in \cite{CLZ15} has unknot value
$(A-A^{-1})/(q-q^{-1})$.  Thus the normalized object to compare with
Chen--Liu--Zhu is Lin--Zheng's colored invariant $W$ from
\cite[Definition~4.4]{LZ10}, not their ordinary polynomial $P$.

\medskip
\noindent\emph{Color, twist, and framing.}
The braid generator for a positive crossing in
\cite[Section~4]{LZ10} is the standard positive
$U_q(\mathfrak{sl}_n)$ braiding.  If $m=|\lambda|$ and
$\ell(\lambda)\le n$, a primitive Hecke idempotent $p_\lambda\in H_m(q)$
selects the simple polynomial module whose $\mathfrak{gl}_n$ highest
weight is $\lambda$.
Let $\mathcal D$ be a diagram of $K$, and let $w(\mathcal D)$ be its writhe.
Lin--Zheng remove blackboard framing by the factor $\theta_V^{-w(\mathcal D)}$ in
\cite[equation~(2.12)]{LZ10}.  Their
\cite[Definition~4.4]{LZ10} gives
\[
 W_{L;\lambda^1,\ldots,\lambda^\ell}
 \big|_{t_{\mathrm{LZ}}^{1/2}=q^{-1},\,
          \nu_{\mathrm{LZ}}^{1/2}=q^{-n}}
 =q^{\frac{2}{n}\sum_{i<j}|\lambda^i||\lambda^j|
                   \operatorname{lk}(L_i,L_j)}
   I_{L;V_{\lambda^1},\ldots,V_{\lambda^\ell}}.
\]
Here $I_{L;V_{\lambda^1},\ldots,V_{\lambda^\ell}}$ denotes the
Reshetikhin--Turaev invariant in the convention of \cite{LZ10}.
For a knot the linking factor is $1$.  Moreover,
\cite[Corollary~4.5]{LZ10} gives the writhe factor
\[
 t_{\mathrm{LZ}}^{\kappa_\lambda w(\mathcal D)/2}
 \nu_{\mathrm{LZ}}^{|\lambda|w(\mathcal D)/2}
 =q^{-\kappa_\lambda w(\mathcal D)}A^{-|\lambda|w(\mathcal D)},
 \qquad
 \kappa_\lambda=\sum_i\lambda_i(\lambda_i-2i+1).
\]
For $\lambda=(r)$ this is exactly the correction in
\cite[Section~2.1]{CLZ15}.  The $U_q(\mathfrak{sl}_n)$ twist in
\cite[Theorem~4.1]{LZ10} contains the scalar-summand correction; the
computation following \cite[equation~(4.21)]{LZ10}
combines this term with the cabling contribution and leaves precisely the
displayed intercomponent-linking factor, which equals one for a knot.

In our square-parameter convention, the module just obtained is the
restriction of the $U_{q^2}(\mathfrak{gl}_n)$-module $V(\lambda)$; see
\eqref{eq:parameter-translation}.  A direct calculation from the ribbon
data in
\cite[Section~3G2, equations~(70)--(72)]{HL16},
with the full $\mathfrak{gl}_n$ Cartan factor, gives the positive twist
\[
 \theta_\lambda
 =v^{n|\lambda|}\mfrakq^{c(\lambda)}
 =q^{n|\lambda|+\kappa_\lambda},
 \qquad c(\lambda)=\frac12\kappa_\lambda.
\]
After $A=q^n$, the displayed HOMFLY--PT correction is precisely
$\theta_\lambda^{-w(\mathcal D)}$.  This confirms the braiding, twist, and
zero-framing conventions for every partition; for
$\lambda=(r)$ one has $\kappa_{(r)}=r(r-1)$.

\medskip
\noindent\emph{Closure traces and normalization.}
We next compare the closure traces.  Let $V$ be the defining module, with
ordered basis $v_1,\ldots,v_n$, and set $Jv_i=v_{n+1-i}$.  In the Hopf
convention
\eqref{eq:Hopf}, use the defining representation of
\cite[Section~3.1]{BG24} and the Hopf-compatible normalized positive
braiding fixed above; equivalently, this is
$\mathcal R=D\Theta_{\mathrm{HL}}^{-1}$ in
the notation of \cite[Section~3G2, equation~(70)]{HL16}.  Its Hecke
operator is
\[
 g_{\mathrm{HL}}(v_i\otimes v_j)=
 \begin{cases}
  qv_i\otimes v_i,&i=j,\\
  v_j\otimes v_i+(q-q^{-1})v_i\otimes v_j,&i<j,\\
  v_j\otimes v_i,&i>j.
 \end{cases}
\]
After the factor $q^{1/n}$ in
\cite[equations~(4.6)--(4.8)]{LZ10} converts the
$\mathfrak{sl}_n$ braiding to the $\mathfrak{gl}_n$ Hecke operator, the
Lin--Zheng operator is
\[
 g_{\mathrm{LZ}}(v_i\otimes v_j)=
 \begin{cases}
  qv_i\otimes v_i,&i=j,\\
  v_j\otimes v_i,&i<j,\\
  v_j\otimes v_i+(q-q^{-1})v_i\otimes v_j,&i>j.
 \end{cases}
\]
Thus both operators satisfy $(g-q)(g+q^{-1})=0$, but their matrices are
related by basis reversal rather than equality:
\begin{equation}\label{eq:normalization-Hecke-conjugacy}
 g_{\mathrm{LZ}}
 =(J\otimes J)g_{\mathrm{HL}}(J\otimes J)^{-1}.
\end{equation}
If $\rho_{\mathrm{HL}},\rho_{\mathrm{LZ}}:H_m(q)\to
\operatorname{End}(V^{\otimes m})$ are the resulting representations of
the abstract type-$A$ Hecke algebra, then
\begin{equation}\label{eq:normalization-Hecke-representation-conjugacy}
 \rho_{\mathrm{LZ}}(x)
 =J^{\otimes m}\rho_{\mathrm{HL}}(x)(J^{-1})^{\otimes m}
 \qquad(x\in H_m(q)).
\end{equation}

The definition of $K_{\pm2\rho}$ gives
\begin{equation}\label{eq:normalization-pivotal-conjugacy}
 K_{2\rho}=JK_{-2\rho}J^{-1}
 \quad\text{on }V.
\end{equation}
For $m\ge1$, the Lin--Zheng and Habiro--L\^e blackboard-framed closure
functionals are therefore, respectively,
\[
 \tau_m^+(x)
 =\operatorname{Tr}_{V^{\otimes m}}
   \bigl(K_{2\rho}^{\otimes m}\rho_{\mathrm{LZ}}(x)\bigr),
 \qquad
 \tau_m^-(x)
 =\operatorname{Tr}_{V^{\otimes m}}
   \bigl(K_{-2\rho}^{\otimes m}\rho_{\mathrm{HL}}(x)\bigr).
\]
Equations \eqref{eq:normalization-Hecke-representation-conjugacy} and
\eqref{eq:normalization-pivotal-conjugacy}, followed by invariance of the
ordinary trace under conjugacy, give
\begin{equation}\label{eq:normalization-closure-trace-equality}
 \tau_m^+(x)=\tau_m^-(x)
 \qquad(x\in H_m(q)).
\end{equation}
In particular, their common fundamental unknot value is
\begin{equation}\label{eq:two-fundamental-quantum-dimensions}
 \operatorname{Tr}_V(K_{2\rho})
 =\operatorname{Tr}_V(K_{-2\rho})
 =\frac{q^n-q^{-n}}{q-q^{-1}}.
\end{equation}

For every partition $\lambda\vdash m$, both constructions use the same
abstract Hecke idempotent $p_\lambda\in H_m(q)$; see
\cite[Lemma~3.3 and equation~(4.11)]{LZ10} and
\cite{AM98,Lukac05,ML03}.  Equation
\eqref{eq:normalization-Hecke-representation-conjugacy} conjugates the two
Hecke actions, while \eqref{eq:normalization-closure-trace-equality}
identifies their closure traces.  Hence the unreduced $\lambda$-colored
invariants and the corresponding unknot values coincide.  Since both
conventions use the framing correction $\theta_\lambda^{-w(\mathcal D)}$, their
reduced invariants coincide as well.

The normalizing denominator agrees with our quantum dimension.  Indeed,
the Frobenius formula in
\cite[Section~2.1]{CLZ15}, together with
\[
 \frac{q^{nm}-q^{-nm}}{q^m-q^{-m}}
 =\sum_{i=1}^{n}q^{m(n+1-2i)},
\]
gives the principal specialization
\[
 S_\lambda(q,q^n)
 =s_\lambda(q^{n-1},q^{n-3},\ldots,q^{1-n})
 =\dim_qV(\lambda).
\]
Here $K_{-2\rho}$ has eigenvalues
$q^{1-n},q^{3-n},\ldots,q^{n-1}$ on the defining module; symmetry of the
Schur polynomial permits the reversed order displayed above.  Dividing
the quantum trace by the unknot value therefore gives exactly the
reduced trace convention in \eqref{eq:reduced-normalization}.

By centrality and \eqref{eq:reduced-normalization}, the scalar colored
invariant in our convention is the quantum trace quotient normalized to
$1$ on the unknot.  The preceding Hecke calculation therefore proves
\[
 \HH_\lambda(K;q^n,q)=J_K(V(\lambda);q^2)
\]
for every $\ell(\lambda)\le n$, which is
\eqref{eq:all-partition-normalization}.  When $\lambda=(r)$,
$\HH_{(r)}=P_r^{\mathrm{CLZ}}$ and the right side is
\eqref{eq:SU-normalization}; this also proves
\cref{prop:CLZ-normalization}.
\end{proof}

\section{The Newton transform and \texorpdfstring{$q$}{q}-holonomicity}\label{app:qholonomic}

\begin{definition}
\label{def:q-holonomic}
For a sequence $f=(f_r)_{r\ge0}$ with values in $\Bbbk=\Q(q)$, let
\[
 (Lf)_r=f_{r+1},\qquad (Mf)_r=q^rf_r;
\]
then $LM=qML$.  In one discrete variable, the standard
$q$-Weyl-algebra definition of $q$-holonomicity is equivalent to the
existence of a nonzero recurrence
\begin{equation}\label{eq:q-holonomic-recurrence}
 \sum_{a=0}^{d}p_a(q,q^r)f_{r+a}=0,
 \qquad p_a(q,M)\in\Q(q,M),
\end{equation}
after clearing denominators if necessary.  This equivalence and the
multivariate $q$-Weyl-algebra definition used below are recalled in
\cite[Section~3]{GLsurvey}.
\end{definition}

\begin{proposition}[Newton transforms preserve $q$-holonomicity]
\label{prop:qholonomic-appendix}
Fix $n\ge2$.  If $f_r\in\Bbbk$ is $q$-holonomic in $r$, then its
Newton coefficient sequence $h_k$ at the nodes $X_r^{(n)}$ is
$q$-holonomic in $k$.
\end{proposition}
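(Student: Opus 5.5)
We will write the Newton coefficients as a closure of $q$-holonomic operations, using the barycentric formula \eqref{eq:newton-barycentric} together with the explicit denominators of \cref{lem:barycentric-denominator}:
\[
 h_k=\sum_{j=0}^{k}\frac{f_j}{D_{k,j}^{(n)}},\qquad
 \frac{1}{D_{k,j}^{(n)}}
 =(-1)^{k-j}\frac{\{n+j-1\}!\,\{n+2j\}}{\{j\}!\,\{k-j\}!\,\{n+j+k\}!}.
\]
The first step is to regard $w(k,j)=1/D_{k,j}^{(n)}$ as a function of two discrete variables. Its shift ratios in $k$ and in $j$ are rational functions of $q,q^k,q^j$:
\[
 \frac{w(k+1,j)}{w(k,j)}=\frac{-1}{\{k+1-j\}\{n+j+k+1\}},
\]
and the ratio under $j\mapsto j+1$ is similarly explicit. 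So $w$ is a proper $q$-hypergeometric term. By the standard closure theory recalled in \cite[Section~3]{GLsurvey}, it is $q$-holonomic in $(k,j)$; the multivariable $q$-Weyl-algebra definition is the one used for this.

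Next, the sequence $(k,j)\mapsto f_j$ is $q$-holonomic in two variables. It satisfies the one-variable recurrence \eqref{eq:q-holonomic-recurrence} in $j$ and the relation $L_k-1$ in $k$. The product $F(k,j)=w(k,j)f_j$ is then $q$-holonomic, since $q$-holonomic functions are closed under products. To restrict the sum to $0\le j\le k$, we have two options. We can multiply by the characteristic function of the cone $\{0\le j\le k\}$, which is $q$-holonomic. Alternatively, we note that $w$ vanishes naturally outside this range once $1/\{k-j\}!$ is interpreted as zero for $j>k$.

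Finally, we apply closure under definite summation over $j$. Because the summand has finite support $0\le j\le k$, the sum $h_k=\sum_j F(k,j)$ is $q$-holonomic in $k$. This is again one of the closure properties in \cite{GLsurvey}, and it rests on Bernstein-type dimension estimates / elimination in the $q$-Weyl algebra (Zeilberger's creative telescoping in its $q$-holonomic form).

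We expect the main obstacle to be the rigorous treatment of the boundary terms and of the multivariable definition. Creative telescoping yields an operator $P(k,L_k)+(L_j-1)R$ annihilating $F$. One must then check that the telescoped boundary contributions at $j=-1$ and $j=k+1$ are themselves $q$-holonomic, or vanish, so that they can be absorbed. Using the cutoff by a $q$-holonomic characteristic function avoids this issue, since summation over all $j\in\Z$ of a finitely supported holonomic function is directly covered by the closure theorem. We will therefore adopt that route and simply cite the closure statements: products, restriction to a cone, and summation over a variable.
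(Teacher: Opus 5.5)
Your proposal is correct and follows essentially the same route as the paper. Both arguments use the barycentric formula with the explicit kernel from \cref{lem:barycentric-denominator}, show that kernel is $q$-holonomic as a product of pulled-back $q$-factorials, apply product closure with the pullback $(k,j)\mapsto f_j$, and finish with finite-sum closure over $j$. The paper makes your ``natural vanishing'' option precise with $b(m)=(q^2;q^2)_m^{-1}$ for $m\ge0$ and $b(m)=0$ for $m<0$, pulled back along $(k,j)\mapsto k-j$; note also that properness of $w$ comes from this explicit factorial form, not merely from the shift ratios being rational.
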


\begin{proof}
By \eqref{eq:newton-barycentric} and \eqref{eq:Dkj},
\[
 h_k=\sum_{j=0}^{k}\frac{f_j}{D_{k,j}^{(n)}}.
\]
Define the kernel on $\mathbb N^2$ by
\[
 K(k,j)=
 \begin{cases}
  D_{k,j}^{(n)\,-1},&0\le j\le k,\\
  0,&j>k.
 \end{cases}
\]
On its triangular support, \eqref{eq:Dkj} gives
\begin{equation}\label{eq:qholonomic-kernel-factorial}
 K(k,j)=(-1)^{k-j}
 \frac{\{n+j-1\}!\{n+2j\}}
 {\{j\}!\{k-j\}!\{n+j+k\}!}.
\end{equation}
The identity
\[
 \{m\}!=(-1)^m q^{-m(m+1)/2}(q^2;q^2)_m
\]
expresses every factorial in \eqref{eq:qholonomic-kernel-factorial} in
terms of a $q$-Pochhammer symbol and a quadratic monomial.  To make the
triangular support precise, define a function on $\Z$ by
\[
 b(m)=
 \begin{cases}
  (q^2;q^2)_m^{-1},&m\ge0,\\
  0,&m<0.
 \end{cases}
\]
Examples~2.2(e), Lemma~2.5(b), and Theorem~5.2(f) of
\cite{GLsurvey} show that $b$ is $q$-holonomic on $\Z$.
Theorem~5.2(c) there then shows that $(k,j)\mapsto b(k-j)$ is
$q$-holonomic on $\Z^2$, and Proposition~5.4(a) permits restriction to
$\mathbb N^2$.  This factor vanishes exactly when $j>k$ and therefore
supplies the zero extension in the definition of $K(k,j)$.

The remaining factorial factors in
\eqref{eq:qholonomic-kernel-factorial} satisfy first-order
$q$-hypergeometric recurrences on their natural supports and are therefore
$q$-holonomic.  Signs, quadratic monomials, affine pullback, and products
preserve $q$-holonomicity
\cite[Examples~2.2(a)--(b),(d)--(e), Lemma~2.5,
Theorem~5.2(a)--(c),(f), and Proposition~5.4(a)]{GLsurvey}.
Hence $K(k,j)$ is $q$-holonomic on $\mathbb N^2$, including the boundary
$j=k$ and the stipulated zero extension.

The pullback $(k,j)\mapsto f_j$ is $q$-holonomic.  Since
$K(k,j)=0$ for $j>k$, the summand has finite support in $j$ for every
fixed $k$; product closure followed by finite-sum closure therefore gives
\[
 h_k=\sum_{j\ge0}f_jK(k,j)
     =\sum_{j=0}^{k}\frac{f_j}{D_{k,j}^{(n)}}
 \]
as a $q$-holonomic function of $k$
\cite[Theorems~5.2--5.3 and Corollary~5.5]{GLsurvey}.
\end{proof}

\begin{corollary}
\label{cor:qholonomic-coefficients}
For every zero-framed knot $K$ and every fixed $n\ge2$, the sequence
$k\mapsto H_k^{(n)}(K;q)$ is $q$-holonomic.
\end{corollary}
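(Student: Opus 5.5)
The corollary should follow directly from \cref{prop:qholonomic-appendix}, applied with $f_r=J_r^{SU(n)}(K;q)$. Then $H_k^{(n)}(K;q)$ is, by \eqref{eq:formal-SU} and uniqueness in \cref{prop:newton-inversion}, exactly the Newton coefficient sequence $h_k$ at the nodes $X_r^{(n)}$. So the only input left to supply is that $r\mapsto J_r^{SU(n)}(K;q)$ is $q$-holonomic in the sense of \cref{def:q-holonomic}.

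For that input I would use the fixed-row $q$-holonomicity of colored HOMFLY--PT invariants \cite{GLL18}, cited in the introduction. It says that $r\mapsto \HH_r(K;A,q)$ satisfies a nontrivial linear $q$-difference recurrence with coefficients in $\Q(A,q,q^r)$. I would then specialize $A=q^n$ using \eqref{eq:HOMFLY-SU-specialization}. Alternatively, for $n$ fixed, one could invoke directly the $q$-holonomicity of quantum invariants colored by one-row representations in the Garoufalidis--L\^e framework \cite{GL05,GLsurvey}. That framework writes the state sum of the $R$-matrix over a braid closure as a multisum of $q$-proper-hypergeometric terms in $r$, and $q$-holonomicity is preserved under the product, specialization and finite-sum closures \cite[Theorems~5.2--5.3]{GLsurvey}.

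The main obstacle is making sure the specialization $A=q^n$ does not destroy the recurrence. A recurrence $\sum_a p_a(A,q,q^r)\HH_{r+a}=0$ might have all of its coefficients vanish identically at $A=q^n$. I would handle this by first dividing all the $p_a$ by the highest power of $(A-q^n)$ that divides all of them in $\Q(q)[A,M]$. This leaves a recurrence that is still valid for generic $A$ and has some coefficient nonzero after specialization. Because each $\HH_r$ is a Laurent polynomial in $A$ by \cref{prop:colored-HOMFLY-integrality}, evaluation at $A=q^n$ is legitimate and yields a nonzero recurrence of the form \eqref{eq:q-holonomic-recurrence}.

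If one prefers to avoid this issue altogether, the direct fixed-rank route sidesteps it, since it never introduces $A$. With $f_r$ known to be $q$-holonomic, \cref{prop:qholonomic-appendix} completes the proof.
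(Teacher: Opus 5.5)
Your proposal is correct and follows the paper's proof. You identify $H_k^{(n)}$ as the Newton coefficients of $f_r=J_r^{SU(n)}(K;q)$, take $q$-holonomicity of $f_r$ from \cite{GLL18}, and conclude with \cref{prop:qholonomic-appendix}. The one difference is how the input is sourced: the paper cites the fixed-rank statement \cite[Corollary~1.2 and Remark~1.3]{GLL18} for the \emph{unreduced} sequence $W_K(h_r;q^n,q)$ and divides by the $q$-hypergeometric quantum dimension $\qbinom{n+r-1}{r}$, whereas you specialize a two-variable recurrence at $A=q^n$, and saturating by powers of $A-q^n$ is a valid way to keep that recurrence nonzero. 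Since \cite{GLL18} is stated for the unreduced invariant, you should also make explicit the routine division by the $q$-hypergeometric colored unknot before specializing.
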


\begin{proof}
For fixed $n$, the unreduced sequence
$r\mapsto W_K(h_r;q^n,q)$ is $q$-holonomic by
\cite[Corollary~1.2 and Remark~1.3]{GLL18}.  In the normalization fixed in
\cref{prop:CLZ-normalization},
\[
 J_r^{SU(n)}(K;q)
 =\frac{W_K(h_r;q^n,q)}{\qbinom{n+r-1}{r}}.
\]
The reciprocal quantum dimension is $q$-hypergeometric, since the ratio of
successive terms is $[r+1]_q/[n+r]_q$.  Product closure therefore makes
$r\mapsto J_r^{SU(n)}(K;q)$ $q$-holonomic, and
\cref{prop:qholonomic-appendix} gives the result.
\end{proof}

\begingroup
\setlength{\emergencystretch}{2em}

\endgroup

\vspace{1.25em}
\begingroup
\small
\noindent\textsc{Honghuai Fang}\par
\noindent Institute for Theoretical Sciences, Westlake University,\par
\noindent No.~600 Dunyu Road, Xihu District, Hangzhou, Zhejiang 310030, China\par
\noindent\textit{Email address:} \texttt{fanghonghuai@westlake.edu.cn}\par

\medskip
\noindent\textsc{Tian Zhou}\par
\noindent Peking University,\par
\noindent No.~5 Yiheyuan Road, Haidian District, Beijing 100871, China\par
\noindent\textit{Email address:} \texttt{2201110034@pku.edu.cn}\par
\endgroup


\begin{thebibliography}{99}

\bibitem{AM98}
A.~K. Aiston and H.~R. Morton,
\emph{Idempotents of Hecke algebras of type $A$},
J. Knot Theory Ramifications \textbf{7} (1998), no.~4, 463--487,
\href{https://doi.org/10.1142/S0218216598000243}
{doi:10.1142/S0218216598000243}.

\bibitem{BG24}
A.~Beliakova and E.~Gorsky,
\emph{Cyclotomic expansions for $\mathfrak{gl}_N$ link invariants via interpolation Macdonald polynomials},
Selecta Math. (N.S.) \textbf{30} (2024), Paper No.~101,
\href{https://doi.org/10.1007/s00029-024-00990-y}{doi:10.1007/s00029-024-00990-y}.

\bibitem{Bhargava97}
M.~Bhargava,
\emph{$P$-orderings and polynomial functions on arbitrary subsets of
Dedekind rings},
J. Reine Angew. Math. \textbf{490} (1997), 101--128,
\href{https://doi.org/10.1515/crll.1997.490.101}
{doi:10.1515/crll.1997.490.101}.

\bibitem{CLZ15}
Q.~Chen, K.~Liu, and S.~Zhu,
\emph{Volume conjecture for $SU(n)$-invariants},
\href{https://arxiv.org/abs/1511.00658}{arXiv:1511.00658 [math.QA]}.

\bibitem{CLZ21}
Q.~Chen, K.~Liu, and S.~Zhu,
\emph{Cyclotomic expansions for the colored HOMFLY--PT invariants of double twist knots},
\href{https://arxiv.org/abs/2110.03616}{arXiv:2110.03616 [math.GT]}.

\bibitem{CLPZ23}
Q.~Chen, K.~Liu, P.~Peng, and S.~Zhu,
\emph{Congruence skein relations for colored HOMFLY--PT invariants},
Comm. Math. Phys. \textbf{400} (2023), 683--729,
\href{https://doi.org/10.1007/s00220-022-04604-6}
{doi:10.1007/s00220-022-04604-6}.

\bibitem{FYHLMO85}
P.~Freyd, D.~Yetter, J.~Hoste, W.~B.~R. Lickorish, K.~Millett, and A.~Ocneanu,
\emph{A new polynomial invariant of knots and links},
Bull. Amer. Math. Soc. (N.S.) \textbf{12} (1985), no.~2, 239--246,
\href{https://doi.org/10.1090/S0273-0979-1985-15361-3}{doi:10.1090/S0273-0979-1985-15361-3}.

\bibitem{GL05}
S.~Garoufalidis and T.~T.~Q. L\^e,
\emph{The colored Jones function is $q$-holonomic},
Geom. Topol. \textbf{9} (2005), 1253--1293,
\href{https://doi.org/10.2140/gt.2005.9.1253}{doi:10.2140/gt.2005.9.1253}.

\bibitem{GLsurvey}
S.~Garoufalidis and T.~T.~Q. L\^e,
\emph{A survey of $q$-holonomic functions},
Enseign. Math. \textbf{62} (2016), no.~3--4, 501--525,
\href{https://doi.org/10.4171/LEM/62-3/4-7}{doi:10.4171/LEM/62-3/4-7}.

\bibitem{GLL18}
S.~Garoufalidis, A.~D. Lauda, and T.~T.~Q. L\^e,
\emph{The colored HOMFLYPT function is $q$-holonomic},
Duke Math. J. \textbf{167} (2018), no.~3, 397--447,
\href{https://doi.org/10.1215/00127094-2017-0030}{doi:10.1215/00127094-2017-0030}.

\bibitem{Habiro02}
K.~Habiro,
\emph{On the quantum $\mathfrak{sl}_2$ invariants of knots and integral homology spheres},
in \emph{Invariants of Knots and $3$-Manifolds (Kyoto, 2001)},
Geom. Topol. Monogr. \textbf{4} (2002), 55--68,
\href{https://doi.org/10.2140/gtm.2002.4.55}{doi:10.2140/gtm.2002.4.55}.

\bibitem{Habiro04}
K.~Habiro,
\emph{Cyclotomic completions of polynomial rings},
Publ. Res. Inst. Math. Sci. \textbf{40} (2004), no.~4, 1127--1146,
\href{https://doi.org/10.2977/PRIMS/1145475444}{doi:10.2977/PRIMS/1145475444}.

\bibitem{Habiro06}
K.~Habiro,
\emph{Bottom tangles and universal invariants},
Algebraic \& Geometric Topology \textbf{6} (2006), 1113--1214,
\href{https://doi.org/10.2140/agt.2006.6.1113}{doi:10.2140/agt.2006.6.1113}.

\bibitem{HL16}
K.~Habiro and T.~T.~Q. L\^e,
\emph{Unified quantum invariants for integral homology spheres associated
with simple Lie algebras},
Geom. Topol. \textbf{20} (2016), no.~5, 2687--2835,
\href{https://doi.org/10.2140/gt.2016.20.2687}
{doi:10.2140/gt.2016.20.2687}.

\bibitem{HH17}
N.~Harman and S.~Hopkins,
\emph{Quantum integer-valued polynomials},
J. Algebraic Combin. \textbf{45} (2017), no.~2, 601--628,
\href{https://doi.org/10.1007/s10801-016-0717-3}
{doi:10.1007/s10801-016-0717-3}.

\bibitem{Jantzen96}
J.~C. Jantzen,
\emph{Lectures on Quantum Groups},
Graduate Studies in Mathematics, vol.~6, American Mathematical Society, 1996.

\bibitem{KNTZ20}
M.~Kameyama, S.~Nawata, R.~Tao, and H.~D.~Zhang,
\emph{Cyclotomic expansions of HOMFLY--PT colored by rectangular
Young diagrams},
Lett. Math. Phys. \textbf{110} (2020), 2573--2583,
\href{https://doi.org/10.1007/s11005-020-01318-5}
{doi:10.1007/s11005-020-01318-5}.

\bibitem{Kassel95}
C.~Kassel,
\emph{Quantum Groups},
Graduate Texts in Mathematics, vol.~155, Springer, 1995.

\bibitem{Kawagoe25}
K.~Kawagoe,
\emph{The colored HOMFLY--PT polynomials of the trefoil knot, the
figure-eight knot, and twist knots},
J. Geom. Phys. \textbf{213} (2025), Paper No.~105488,
\href{https://doi.org/10.1016/j.geomphys.2025.105488}
{doi:10.1016/j.geomphys.2025.105488}.

\bibitem{LZ10}
X.-S.~Lin and H.~Zheng,
\emph{On the Hecke algebras and the colored HOMFLY polynomial},
Trans. Amer. Math. Soc. \textbf{362} (2010), no.~1, 1--18,
\href{https://doi.org/10.1090/S0002-9947-09-04691-1}
{doi:10.1090/S0002-9947-09-04691-1}.

\bibitem{Lukac05}
S.~G. Lukac,
\emph{Idempotents of the Hecke algebra become Schur functions in the
skein of the annulus},
Math. Proc. Cambridge Philos. Soc. \textbf{138} (2005), no.~1, 79--96,
\href{https://doi.org/10.1017/S0305004104007984}
{doi:10.1017/S0305004104007984}.

\bibitem{ML03}
H.~R. Morton and S.~G. Lukac,
\emph{The Homfly polynomial of the decorated Hopf link},
J. Knot Theory Ramifications \textbf{12} (2003), no.~3, 395--416,
\href{https://doi.org/10.1142/S0218216503002536}
{doi:10.1142/S0218216503002536}.

\bibitem{Morton07}
H.~R. Morton,
\emph{Integrality of Homfly 1-tangle invariants},
Algebr. Geom. Topol. \textbf{7} (2007), 327--338,
\href{https://doi.org/10.2140/agt.2007.7.327}
{doi:10.2140/agt.2007.7.327}.

\bibitem{Okounkov98}
A.~Okounkov,
\emph{On Newton interpolation of symmetric functions: a characterization
of interpolation Macdonald polynomials},
Adv. in Appl. Math. \textbf{20} (1998), no.~4, 395--428,
\href{https://doi.org/10.1006/aama.1998.0590}
{doi:10.1006/aama.1998.0590}.

\bibitem{PT87}
J.~H. Przytycki and P.~Traczyk,
\emph{Invariants of links of Conway type},
Kobe J. Math. \textbf{4} (1987), no.~2, 115--139.

\bibitem{RT90}
N.~Yu. Reshetikhin and V.~G. Turaev,
\emph{Ribbon graphs and their invariants derived from quantum groups},
Comm. Math. Phys. \textbf{127} (1990), no.~1, 1--26,
\href{https://doi.org/10.1007/BF02096491}
{doi:10.1007/BF02096491}.

\bibitem{Sahi96}
S.~Sahi,
\emph{Interpolation, integrality, and a generalization of Macdonald's
polynomials},
Internat. Math. Res. Notices \textbf{1996} (1996), no.~10, 457--471,
\href{https://doi.org/10.1155/S107379289600030X}
{doi:10.1155/S107379289600030X}.

\bibitem{Zhu23}
S.~Zhu,
\emph{New structures for colored HOMFLY--PT invariants},
Sci. China Math. \textbf{66} (2023), no.~2, 341--392,
\href{https://doi.org/10.1007/s11425-021-1951-7}
{doi:10.1007/s11425-021-1951-7}.

\end{thebibliography}
\end{document}